\newtheorem{thm}{Theorem}[section]
\newtheorem*{thm*}{Theorem}
\newtheorem{theorem}[thm]{Theorem}
\newtheorem{lemma}[thm]{Lemma}
\newtheorem{cor}[thm]{Corollary}
\newtheorem{corollary}[thm]{Corollary}
\newtheorem{prop}[thm]{Proposition}
\newtheorem{proposition}[thm]{Proposition}
\newtheorem{claim}[thm]{Claim}
\newtheorem*{conjecture*}{Conjecture}
\newtheorem{question}[thm]{Question}
\newtheorem*{question*}{Question}
\theoremstyle{definition}
\newtheorem{defi}[thm]{Definition}
\newtheorem{definition}[thm]{Definition}
\newtheorem{notation}[thm]{Notation}
  \newtheorem*{example*}{Example}
\theoremstyle{remark}
\newtheorem{remark}[thm]{Remark}
\newcommand{\G}{\mathbb{G}}
\newcommand{\Z}{\mathbb{Z}}
\newcommand{\F}{\mathbb{F}}
\newcommand{\N}{\mathbb{N}}
\newcommand{\R}{\mathbb{R}}
\newcommand{\C}{\mathbb{C}}
\newcommand{\cp}{\mathbb{CP}}
\newcommand{\m}[1]{\underline{#1}}
\newcommand{\mZ}{\m{\Z}}
\newcommand{\mpi}{\m{\pi}}
\newcommand{\mM}{\m{M}}
\renewcommand{\AA}{\mathcal{A}}
\newcommand{\CC}{\mathcal{C}}
\newcommand{\EE}{\mathcal{E}}
\newcommand{\FF}{\mathcal{F}}
\newcommand{\GG}{\mathcal{G}}
\newcommand{\LL}{\mathcal{L}}
\newcommand{\PP}{\mathcal{P}}
\newcommand{\XX}{\mathcal{X}}
\newcommand{\YY}{\mathcal{Y}}
\newcommand{\OO}{\mathcal{O}}
\newcommand{\MM}{\mathcal{M}}
\newcommand{\MMb}{\overline{\mathcal{M}}}
\newcommand{\tensor}{\otimes}
\DeclareMathOperator{\pt}{pt}
\DeclareMathOperator{\Sp}{Sp}
\DeclareMathOperator{\Spec}{Spec}
\DeclareMathOperator{\sgn}{sgn}
\DeclareMathOperator{\modules}{\text{-}mod}
\DeclareMathOperator{\id}{id}
\DeclareMathOperator{\sm}{\wedge}
\DeclareMathOperator{\colim}{colim}
\DeclareMathOperator{\Ext}{Ext}
\DeclareMathOperator{\Ho}{Ho}
\DeclareMathOperator{\coker}{coker}
\DeclareMathOperator{\Hom}{Hom}
\DeclareMathOperator{\hocolim}{hocolim}
\DeclareMathOperator{\QCoh}{QCoh}
\DeclareMathOperator{\Pic}{Pic}
\DeclareMathOperator{\ind}{ind}
\DeclareMathOperator{\res}{res}
\DeclareMathOperator{\tr}{tr}
\def\co{\colon\thinspace}
\mathchardef\mhyphen="2D
\newcommand{\ab}{\bar{a}}
\newcommand{\vb}{\bar{v}}
\newcommand{\cO}{\mathcal O}
\newcommand{\mA}{\m{A}}
\DeclareMathOperator{\Sym}{Sym}
\begin{document}
\title{The \texorpdfstring{$C_2$}{C2}-spectrum \texorpdfstring{$Tmf_1(3)$}{Tmf13} and its invertible modules}
\author{Michael A. Hill \and Lennart Meier}
\thanks{The first author was supported by NSF DMS-1307896 and the Sloan Foundation. The second author was supported by DFG SPP 1786.}

\begin{abstract}
We explore the $C_2$-equivariant spectra $Tmf_1(3)$ and $TMF_1(3)$. In particular, we compute their $C_2$-equivariant Picard groups and the $C_2$-equivariant Anderson dual of $Tmf_1(3)$. This implies corresponding results for the fixed point spectra $TMF_0(3)$ and $Tmf_0(3)$. Furthermore, we prove a Real Landweber exact functor theorem.
\end{abstract}

\maketitle

\section{Introduction}
The spectrum $TMF$ of topological modular forms comes in many variants. While $TMF$ itself arises from the moduli stack of elliptic curves $\MM_{ell}$, there is also a spectrum $Tmf$ associated with the compactification $\MMb_{ell}$. Finally, $tmf$ is defined as the connective cover of $Tmf$. It has been the spectrum $tmf$ and its cohomology that have been so far most relevant to applications (see e.g.\ \cite{BHHM08} and \cite{B-P04} for applications to generalized Toda--Smith complexes and \cite{AHR10}, \cite{H-M02} and \cite{Hil09} for applications to string bordism).

It is often simpler to work with topological modular forms with level structures. Among the many possibilities, the most relevant for us will be $TMF_1(n)$ and $TMF_0(n)$ corresponding to the moduli stacks $\MM_1(n)$ and $\MM_0(n)$. The former stack classifies elliptic curves with a chosen point of exact order $n$ and the latter elliptic curves with a chosen subgroup of order $n$. Note that for $n\geq 2$, the spectrum $TMF_1(n)$ is Landweber exact, while $TMF_0(n)$ is not in general, as will be explained in Section \ref{sec:Basics}.

Besides providing simpler variants of $TMF$, there are several reasons to care about $TMF$ with level structures. First, we mention the $Q(l)$-spectra defined by Behrens (\cite{Beh06}, \cite{B-O12}), which are built from $TMF$ with level structures and provide approximations of the $K(2)$-local sphere. Second, as shown in \cite{BOSS}, there is an injective map
\[\pi_*TMF \sm TMF \to \prod_{i\in\Z, j\geq 0} \pi_*TMF_0(3^j) \times \pi_*TMF_0(5^j),\]
important in the study of cooperations of $TMF$ and $tmf$. As a last point, we mention that Lurie defines in \cite{Lur07} a sheaf of $E_\infty$-ring spectra on the $n$-torsion of the universal elliptic curve over $\MM_{ell}$ whose global sections provide the value of $C_n$-equivariant $TMF$ at a point; if we invert $n$, this can be analyzed in terms of the $TMF_1(k)$ for $k|n$.

In each of these cases, it would be interesting to have compactified and connective variants. As a first step, Lawson and the first-named author overcame in \cite{HL13} certain technical obstacles to define $E_\infty$-ring spectra $Tmf_1(n)$ and $Tmf_0(n)$ corresponding to the compactified moduli stacks $\MMb_1(n)$ and $\MMb_0(n)$. One can then define $tmf_1(n)$ and $tmf_0(n)$ as the connective covers of these spectra and they form good connective models for $TMF_1(n)$ and $TMF_0(n)$ if $n$ is small. The aim of this article is to explore these spectra in the case $n=3$ with methods from Real homotopy theory.

Real homotopy theory is the study of genuinely equivariant $C_2$-spectra, also sometimes known as Real spectra. The theory has its origins in Atiyah's article \cite{Ati66} on Real K-theory and came to new prominence through the work of Hu--Kriz \cite{H-K01} and the work of Hill--Hopkins--Ravenel on the Kervaire Invariant One problem \cite{HHR09}.

The spectra $TMF_1(3)$ and $Tmf_1(3)$ inherit $C_2$-actions from an algebro-geometrically defined $C_2$-action on $\MMb_1(3)$. We will view them as cofree $C_2$-spectra (as explained in Sections \ref{sec:naive} and \ref{sec:Basics}) so that
\[
TMF_1(3)^{C_2} \simeq TMF_1(3)^{hC_2} \simeq TMF_0(3)
\]
and
\[
Tmf_1(3)^{C_2} \simeq Tmf_1(3)^{hC_2} \simeq Tmf_0(3).
\]
We define the $C_2$-spectrum $tmf_1(3)$ as the $C_2$-equivariant connective cover of $Tmf_1(3)$.

Mahowald and Rezk \cite{M-R09} have already computed the homotopy groups of $TMF_0(3)$ and a similar computation produces actually the $RO(C_2)$-graded $C_2$-equivariant homotopy groups of $tmf_1(3)$ and hence $TMF_1(3)$. Using this computation, we show that $tmf_1(3)$ has a Real orientation and is more precisely a form of $BP\R\langle 2\rangle$. This implies in particular that there exists a form of $BP\R\langle 2\rangle$ that is a strictly commutative $C_2$-spectrum, while it was not known before that there is a form of $BP\R\langle 2\rangle$ with any kind of ring structure. 

Moreover, we show that $TMF_1(3)$ is Real Landweber exact in the sense that
there is an isomorphism
\[
M\R_\bigstar(X) \tensor_{MU_{2*}} TMF_1(3)_{2*} \to TMF_1(3)_\bigstar X,
\]
natural in a $C_2$-spectrum $X$. Here $M\R_\bigstar(X)$ denotes the $RO(C_2)$-graded $C_2$-equivariant homology groups of $X$ with respect to the Real bordism spectrum $M\R$ and similarly for $TMF_1(3)_\bigstar X$.

As $\MMb_1(3)$ is proper over $\Spec \Z\big[\tfrac13\big]$, one expects a manifestation of Serre duality in $Tmf_1(3)$. A suitable duality to look for in the topological setting is \emph{Anderson duality}, an integral version of Brown--Comenetz duality. For example, Stojanoska computed in \cite{Sto12} that $Tmf[\frac12]$ is Anderson self dual in the sense that $I_{\Z[\frac12]}Tmf[\frac12] \simeq \Sigma^{21}Tmf[\frac12]$. We want to compute the $C_2$-equivariant Anderson dual $I_{\Z\big[\tfrac13\big]}Tmf_1(3)$ of $Tmf_1(3)$. While it is an easy calculation that non-equivariantly $I_{\Z\big[\tfrac13\big]}Tmf_1(3) \simeq \Sigma^9Tmf_1(3)$, this equivalence does \emph{not} hold $C_2$-equivariantly. We rather get the following:
\begin{thm*}
There is a $C_2$-equivariant equivalence
\[
I_{\Z\big[\tfrac13\big]}Tmf_1(3) \simeq \Sigma^{5+2\rho}Tmf_1(3),
\] where $\rho$ denotes the regular representation of $C_2$. It follows that
\[
I_{\Z\big[\tfrac13\big]}Tmf_0(3) \simeq (\Sigma^{5+2\rho}Tmf_1(3))^{hC_2}.
\]
\end{thm*}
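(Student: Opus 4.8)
The plan is to run Stojanoska's proof of the Anderson self-duality of $Tmf[\tfrac12]$ from \cite{Sto12}, $C_2$-equivariantly. Recall that $Tmf_1(3)$ is the global sections of the structure sheaf $\OO^{top}$ of the derived stack on the proper smooth curve $\MMb_1(3)$ over $\Z\big[\tfrac13\big]$ --- concretely a weighted projective line $\mathbb P(1,3)$ over $\Z\big[\tfrac13\big]$, graded by the weight of modular forms --- and the $C_2$-action is the one induced by the universal automorphism $[-1]$, which scales the Hodge bundle $\omega$ by $-1$; this is exactly the Real structure that makes $tmf_1(3)$ a form of $BP\R\langle2\rangle$. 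The first step is to upgrade Grothendieck--Serre duality for $\MMb_1(3)$ to the spectral level, $C_2$-equivariantly: that $I_{\Z[\frac13]}Tmf_1(3)$ is the global sections of a ``topological dualizing sheaf'', an invertible $\OO^{top}$-module, so that $I_{\Z[\frac13]}Tmf_1(3)$ is an invertible $Tmf_1(3)$-module in $C_2$-spectra. As in \cite{Sto12} this follows from a local computation of the Anderson dual at the level of homotopy sheaves --- legitimate since $\OO^{top}$ is even weakly periodic with regular, hence Gorenstein, $\pi_0$ --- together with the properness of $\MMb_1(3)$, which makes the formation of $I_{\Z[\frac13]}$ commute with global sections; the single suspension of Serre duality records the cohomological dimension $1$ of $\MMb_1(3)$, and the remaining twist is a topological avatar of its dualizing sheaf. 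Feeding in the (easy, stated) non-equivariant computation $I_{\Z[\frac13]}Tmf_1(3)\simeq\Sigma^9 Tmf_1(3)$ then shows that, as an element of the group $\Pic^{C_2}(Tmf_1(3))$ computed earlier in the paper, $I_{\Z[\frac13]}Tmf_1(3)$ has underlying spectrum $\Sigma^9 Tmf_1(3)$.

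The second, and principal, step is to pin down the exact $RO(C_2)$-degree, i.e.\ to see that this invertible module is $\Sigma^{5+2\rho}Tmf_1(3)$ and not one of the other classes in $\Pic^{C_2}(Tmf_1(3))$ with the same underlying spectrum. Since $tmf_1(3)$ is Real oriented, the Thom isomorphism identifies the topological $\omega$-twist $\omega^{top}$ with $\Sigma^{-\rho}Tmf_1(3)$, so that a $k$-fold Serre twist contributes $\Sigma^{k\rho}$; the remaining, purely cohomological, part of the shift has underlying dimension $1$ but carries a $\sigma$-correction, and writing $\Sigma^{5+2\rho}\simeq\Sigma^{4\rho}\wedge\Sigma^{3-2\sigma}$ records precisely that this cohomological shift is $\Sigma^{3-2\sigma}$ rather than the naive $\Sigma^1$. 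This $\sigma$-correction --- the difference between $\Sigma^{5+2\rho}Tmf_1(3)$ and $\Sigma^9 Tmf_1(3)$, invisible to the underlying spectrum --- is a genuinely \emph{equivariant} phenomenon, forced by the $C_2$-structure of the dualizing complex of $\MMb_1(3)$, equivalently by the Real structure in which $[-1]$ acts on $\omega$ by $-1$. To establish it one descends from the abstract duality to a computation: use the earlier description of the $RO(C_2)$-graded homotopy of $Tmf_1(3)$, compute $\pi^{C_2}_\bigstar I_{\Z[\frac13]}Tmf_1(3)$ via the $RO(C_2)$-graded, Mackey-functor Anderson duality universal coefficient sequence, and match it with $\pi^{C_2}_{\bigstar-5-2\rho}Tmf_1(3)$ --- the Picard computation guaranteeing that agreement of homotopy Mackey functors, up to its explicit exotic part, upgrades to an equivalence.

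The hard part is exactly this last matching: the non-equivariant shift $\Sigma^9$ together with naive $RO(C_2)$-bookkeeping does not distinguish $\Sigma^{5+2\rho}$ from the nearby Picard classes, so one must feed in genuinely equivariant input --- the $a_\sigma$- and $u_\sigma$-towers in $\pi^{C_2}_\bigstar Tmf_1(3)$, the $\Ext$-contributions to the equivariant universal coefficient sequence, and the local form of the dualizing complex --- and carefully track the resulting spectral sequences. Finally, the statement for $Tmf_0(3)$ follows by running the same Grothendieck--Serre argument for the proper stack $\MMb_0(3)$: the map $\MMb_0(3)\to\MMb_1(3)$ is a $\mu_2$-gerbe --- which is the geometric reason that $Tmf_0(3)\simeq Tmf_1(3)^{hC_2}$ --- and a gerbe does not affect the dualizing complex, so its global sections are $\big(\Sigma^{5+2\rho}Tmf_1(3)\big)^{hC_2}$.
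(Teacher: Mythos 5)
There is a genuine gap: your outline defers exactly the step that constitutes the proof, namely the identification of the equivariant shift. Even granting your first step (that $I_{\Z[\frac13]}Tmf_1(3)$ is an invertible $C_2$-equivariant $Tmf_1(3)$-module) and the Picard computation $\Pic_{C_2}(Tmf_1(3))\cong RO(C_2)/(8-8\sigma)$, knowing that the underlying spectrum is $\Sigma^9 Tmf_1(3)$ only tells you the class is $\Sigma^{a+b\sigma}Tmf_1(3)$ with $a+b=9$, which leaves eight candidates; you acknowledge this and promise a ``matching'' of $RO(C_2)$-graded homotopy via careful spectral sequence tracking, but you never produce the mechanism that selects $5+2\rho$, and the asserted ``$\sigma$-correction forced by the dualizing complex'' is a heuristic, not an argument. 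The paper's actual mechanism is concrete and does not use the Picard group at all: the slice computation (Proposition \ref{prop:SliceGraded} and the spectral sequence of Figure \ref{fig:tworhoslicess}) gives $\m{\pi}_{-5-2\rho}Tmf_1(3)\cong \m{\Z}[\tfrac13]^{*}$, so the transfer $\pi_{-9}Tmf_1(3)=\pi^{e}_{-5-2\rho}Tmf_1(3)\to\pi^{C_2}_{-5-2\rho}Tmf_1(3)$ is an isomorphism; dualizing, $\mathrm{tr}^{*}$ identifies $\Hom(\pi^{C_2}_{-5-2\rho}Tmf_1(3),\Z[\tfrac13])$ with $\Hom(\pi_{-9}Tmf_1(3),\Z[\tfrac13])$, and the equivariant Anderson universal coefficient sequence then refines the non-equivariant duality class $\delta\in\pi_9 I_{\Z[\frac13]}Tmf_1(3)$ of Lemma \ref{lem:AndersonDual} to a class $\tilde\delta\in\pi^{C_2}_{5+2\rho}I_{\Z[\frac13]}Tmf_1(3)$. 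The induced $Tmf_1(3)$-linear map $\Sigma^{5+2\rho}Tmf_1(3)\to I_{\Z[\frac13]}Tmf_1(3)$ is an underlying equivalence, and since $Tmf_1(3)^{tC_2}\simeq\Phi^{C_2}Tmf_1(3)$ vanishes (faithful $C_2$-Galois), both sides are cofree, so the map is a $C_2$-equivalence. Some argument of this kind is indispensable; without it the main statement is not established.

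Your deduction for $Tmf_0(3)$ also rests on an incorrect geometric claim: there is no map $\MMb_0(3)\to\MMb_1(3)$, and the actual relation is that $\MMb_1(3)\to\MMb_0(3)$ is a $C_2$-Galois cover (an \'etale torsor, not a $\mu_2$-gerbe); this torsor is the source of $Tmf_0(3)\simeq Tmf_1(3)^{hC_2}$. Moreover, running Serre duality directly over $\MMb_0(3)$ is not available in the same way, since the descent spectral sequence for $Tmf_0(3)$ does not collapse ($\pi_*Tmf_0(3)$ has torsion and higher cohomological contributions) --- which is precisely why one computes on the Galois cover first. The paper instead obtains the second statement from the first via the lemma that $(I_AX)^{hG}\simeq I_A(X^{hG})$ whenever the norm $X_{hG}\to X^{hG}$ is an equivalence, applied to $X=Tmf_1(3)$ using once more the vanishing of the Tate construction.
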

Thus, the self-duality of $Tmf_0(3)$ is not fully apparent in the integer-graded homotopy groups \[\pi_*Tmf_0(3) \cong \pi_*^{C_2}Tmf_1(3),\] but only in the $RO(C_2)$-graded homotopy groups $\pi_\bigstar^{C_2}Tmf_1(3)$. Likewise, the resulting universal coefficient sequence uses $RO(C_2)$-graded homotopy groups. Indeed, for $C_2$-spectra $X$ the theorem implies a short exact sequence
\[
0 \to \Ext_{\Z[\tfrac13]}(R_{(a-6)+(b-2)\rho}^{C_2}(X), \Z[\tfrac13]) \to R_{C_2}^{a+b\rho}(X) \to \Hom_{\Z[\tfrac13]}(R_{(a-5)+(b-2)\rho}^{C_2}(X), \Z[\tfrac13]) \to 0.
\]
for $R = Tmf_1(3)$. We prove the theorem by an application of the slice spectral sequence. There has been similar work by Ricka \cite{Ric14} on Anderson duality of integral versions of Morava K-theory; our results have been obtained independently.

Next we turn to the topic of Picard groups. Given an $E_\infty$-ring spectrum $R$, its Picard group $\Pic(R)$ is defined as the group of invertible $R$-module spectra up to weak equivalence. From the perspective of \cite{B-N14}, these are the global twists of the associated cohomology theory and define a natural grading of $R$-homology groups. The Picard group was first introduced into stable homotopy theory by Hopkins; recent work of Mathew and Stojanoska \cite{M-S14} then significantly extended our toolbox for its computation. They show that all invertible $TMF$-modules are suspensions of $TMF$ so that $\Pic(TMF) \cong \Z/576$. In contrast, they show that $\Pic(Tmf)$ contains exotic elements that are not suspensions of $Tmf$ and compute $\Pic(Tmf) \cong \Z\oplus \Z/24$.

We will use their methods to understand $\Pic\big(TMF_0(3)\big)$ and $\Pic\big(Tmf_0(3)\big)$, but add a dash of equivariant homotopy theory. The maps
\[
Tmf_0(3) \to Tmf_1(3)
\]
 and
\[
TMF_0(3) \to TMF_1(3)
\]
are faithful $C_2$-Galois extensions in the sense of Rognes \cite[Thm 7.12]{MM15}. As explained in Section \ref{subsection:Generalities}, Galois descent then shows that
\[
\Pic\big(Tmf_0(3)\big) \cong \Pic_{C_2}\big(Tmf_1(3)\big)
\] 
and
\[
\Pic\big(TMF_0(3)\big) \cong \Pic_{C_2}\big(TMF_1(3)\big),
\]
where $\Pic_{C_2}\big(Tmf_1(3)\big)$ denotes the group of invertible $C_2$-module spectra over $Tmf_1(3)$ and similarly for $\Pic_{C_2}\big(TMF_1(3)\big)$. First we prove:
\begin{thm*}
Every invertible $TMF_0(3)$-module is an (integral) suspension of $TMF_0(3)$. Thus,
\[
\Pic_{C_2}\big(TMF_1(3)\big) \cong \Pic\big(TMF_0(3)\big) \cong \Z/48.
\]
\end{thm*}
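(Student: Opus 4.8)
The plan is to run the Galois-descent strategy of Mathew--Stojanoska for the faithful $C_2$-Galois extension $TMF_0(3) \to TMF_1(3)$. Since $\mathfrak{pic}$ preserves limits, descent identifies $\mathfrak{pic}(TMF_0(3))$ with the homotopy fixed points $\mathfrak{pic}(TMF_1(3))^{hC_2}$, yielding a homotopy fixed point spectral sequence
\[
E_2^{s,t} = H^s\big(C_2;\, \pi_t\,\mathfrak{pic}(TMF_1(3))\big) \Longrightarrow \pi_{t-s}\,\mathfrak{pic}(TMF_0(3)),
\]
whose $0$-stem is $\Pic(TMF_0(3))$; the stated isomorphism $\Pic_{C_2}(TMF_1(3)) \cong \Pic(TMF_0(3))$ is then exactly the descent identification already recorded above.

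First I would assemble the input. As $TMF_1(3)$ is Landweber exact over the regular graded ring $\pi_*TMF_1(3) = \Z[\tfrac13][a_1,a_3,\Delta^{-1}]$, which lies in even degrees and is weakly periodic (indeed $a_3^3 \mid \Delta$, so $a_3$ is a degree-$6$ unit), every invertible $TMF_1(3)$-module is a suspension and $\pi_0\mathfrak{pic}(TMF_1(3)) = \Pic(TMF_1(3)) \cong \Z/6$, with $C_2$ acting trivially since an integral suspension of $TMF_1(3)$ is fixed by the Galois action. Moreover $\pi_1\mathfrak{pic}(TMF_1(3)) = (\pi_0 TMF_1(3))^\times$ and $\pi_t\mathfrak{pic}(TMF_1(3)) = \pi_{t-1}TMF_1(3)$ for $t \geq 2$, and the $C_2$-module structures on these are precisely those feeding the homotopy fixed point spectral sequence for $\pi_*TMF_0(3) \cong \pi_*^{C_2}TMF_1(3)$ computed earlier in the paper --- the $RO(C_2)$-graded refinement of Mahowald--Rezk. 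Through the degree shift by one, the diagonal $t = s$ of the Picard spectral sequence in filtrations $\geq 2$ is precisely the $(-1)$-stem of the $TMF_0(3)$-spectral sequence, while in filtrations $0$ and $1$ it is governed by $\Z/6$ and by $(\pi_0 TMF_1(3))^\times$.

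Now I would run the spectral sequence into and along the line $t = s$. The key tool, after Mathew--Stojanoska, is that the Postnikov sections of $\mathfrak{pic}(TMF_1(3))$ agree in positive degrees with those of $\Sigma\,\mathrm{gl}_1(TMF_1(3))$, so its homotopy fixed point spectral sequence carries the differentials of the $TMF_0(3)$-spectral sequence in the relevant range, the only new feature being finitely many ``exotic'' differentials hitting or leaving the diagonal, coming from the multiplicative structure and of the type whose leading terms Mathew--Stojanoska make explicit. Feeding in the known $TMF_0(3)$-differentials and pinning down the remaining exotic one(s) --- by testing against the invertible modules $\Sigma^n TMF_0(3)$ and using multiplicativity and $TMF_1(3)$-linearity --- leaves a diagonal whose $E_\infty$-contributions (the surviving finite $2$- and $3$-torsion from the $\Gamma_0(3)$-input, together with the $\Z/6$ in filtration $0$) have total order $48$.

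It remains to solve the extension problem and exclude exotic invertible modules. The class $[\Sigma TMF_0(3)]$ detects a generator of $E_\infty^{0,0} \cong \Z/6$, and by multiplicativity its iterated multiples detect the classes in higher filtration; equivalently, an inspection of $\pi_\bigstar^{C_2}TMF_1(3)$ shows $TMF_0(3)$ is exactly $48$-periodic. Hence $\Pic(TMF_0(3))$ is cyclic of order $48$, generated by $\Sigma TMF_0(3)$: every invertible $TMF_0(3)$-module is an integral suspension, with no exotic elements, and transporting along the descent isomorphism gives $\Pic_{C_2}(TMF_1(3)) \cong \Pic(TMF_0(3)) \cong \Z/48$. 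The main obstacle is exactly this last stretch --- identifying the exotic differential(s) in the Picard spectral sequence and then resolving the extension problem so as to conclude that the order is exactly $48$, the group cyclic, and no exotic invertible module appears; everything preceding it is bookkeeping over the already-known homotopy fixed point spectral sequence for $TMF_0(3)$.
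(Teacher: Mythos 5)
Your proposal follows essentially the same route as the paper: Mathew--Stojanoska Galois descent giving the Picard spectral sequence, the algebraic (Baker--Richter-type) input $\Pic(TMF_1(3))\cong\Z/6$ with trivial action, identification of the $0$-column in filtration $\geq 2$ with the $(-1)$-stem of the homotopy fixed point spectral sequence, and the lower bound from the exact $48$-periodicity of $\pi_*TMF_0(3)$. The step you flag as the main obstacle is precisely what the paper carries out explicitly: the $(-1)$-column is $\F_2[\gamma]\cdot b_3\oplus\F_2[\gamma]\cdot b_7$ in degrees $\leq 7$, the universal formula $d_s^{Pic}(\underline{x})=\underline{d_s^{HF}(x)+x^2}$ kills everything there except $\underline{b_3}$ and $\underline{b_7}$, and the Mathew--Stojanoska comparison tool disposes of filtrations $\geq 8$ --- note that your suggestion of testing against the suspensions $\Sigma^n TMF_0(3)$ can only certify that classes survive, so the upper bound of $48$ must come from this universal differential formula rather than from such tests.
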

The analogous theorem for $Tmf_0(3)$ is not true, but we have the following equivariant refinement:

\begin{thm*}
Every invertible $C_2$-equivariant $Tmf_1(3)$-module is an equivariant suspension $\Sigma^V Tmf_1(3)$, for an element $V \in RO(C_2)$. The corresponding homomorphism
\[
RO(C_2) \to \Pic_{C_2}\big(Tmf_1(3)\big),\quad V \mapsto \Sigma^V Tmf_1(3)
\]
is thus surjective and has kernel generated by $8-8\sigma$, for $\sigma$ the sign representation. Therefore,
\[
\Pic\big(Tmf_0(3)\big) \cong \Pic_{C_2}\big(Tmf_1(3)\big) \cong \Z\oplus \Z/8.
\]
\end{thm*}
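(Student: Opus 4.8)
The plan is to deduce everything from the isomorphism $\Pic_{C_2}(Tmf_1(3))\cong\Pic(Tmf_0(3))$ recalled above, together with a descent spectral sequence for the Picard spectrum in the style of Mathew--Stojanoska, fed by the $RO(C_2)$-graded homotopy of $Tmf_1(3)$ (its identification with a form of $BP\R\langle 2\rangle$) established earlier. Associated to the faithful $C_2$-Galois extension $Tmf_0(3)\to Tmf_1(3)$ there is a spectral sequence
\[
E_2^{s,t}=H^s\big(C_2;\pi_t\,\mathfrak{pic}(Tmf_1(3))\big)\Longrightarrow \pi_{t-s}\,\mathfrak{pic}(Tmf_0(3)),
\]
whose input is controlled by $\pi_0\mathfrak{pic}=\Pic(Tmf_1(3))$, $\pi_1\mathfrak{pic}=(\pi_0Tmf_1(3))^{\times}$, and $\pi_t\mathfrak{pic}=\pi_{t-1}Tmf_1(3)$ for $t\ge 2$; above the Picard line it agrees with the (already analysed) slice / homotopy fixed point spectral sequence for $Tmf_1(3)$, up to a finite list of exceptional low-filtration differentials. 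Reading off $\pi_0$ exhibits $\Pic(Tmf_0(3))$ as a finite extension of the terms on the line $t=s$.

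First I would dispose of the free part. The integral suspensions $\Sigma^nTmf_1(3)$ give an injection $\Z\hookrightarrow\Pic_{C_2}(Tmf_1(3))$ — injective because the underlying spectrum of $Tmf_1(3)$ is bounded below and aperiodic — which maps isomorphically onto the non-equivariant Picard group $\Pic(Tmf_1(3))$; this last group is $\cong\Z$, since $\pi_*Tmf_1(3)$ is torsion-free and $\MMb_1(3)$ carries no exotic topological line bundles, so there are no exotic non-equivariant invertibles. This $\Z$ is the $E_\infty^{0,0}$-term, it splits off $\Pic_{C_2}(Tmf_1(3))$, and it is the image of $1\in RO(C_2)$. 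It then remains to identify the torsion subgroup — the kernel of $\Pic_{C_2}(Tmf_1(3))\to\Pic(Tmf_1(3))$ — with $\Z/8$ generated by $\Sigma^{1-\sigma}Tmf_1(3)$.

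The decisive point is the order of $\Sigma^{1-\sigma}Tmf_1(3)$, which I would bound from both sides. That it divides $8$ follows because $\pi_{\bigstar}^{C_2}Tmf_1(3)$ contains an invertible class in degree $8-8\sigma=16-8\rho$: the periodicity class witnessing $\Sigma^{8-8\sigma}TMF_1(3)\simeq TMF_1(3)$ lies in the image of $\pi_{\bigstar}^{C_2}Tmf_1(3)$ and remains a unit there, whence $\Sigma^{8-8\sigma}Tmf_1(3)\simeq Tmf_1(3)$. That it does not divide $4$ follows by mapping to $\Pic_{C_2}(TMF_1(3))\cong\Z/48$ (known from the previous theorem): there $\Sigma^{1-\sigma}TMF_1(3)$ is an integral suspension $\Sigma^jTMF_1(3)$, with $j$ a multiple of $6$ determined modulo $48$ by the relation $3\rho\mapsto 0$ (from the unit $a_3\in\pi_{3\rho}^{C_2}TMF_1(3)$) together with its underlying class, forcing $j\equiv 18\pmod{48}$, an element of order $8$; hence $\Sigma^{4-4\sigma}Tmf_1(3)\not\simeq Tmf_1(3)$ and $\Sigma^{1-\sigma}Tmf_1(3)$ has order exactly $8$. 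Since the underlying spectrum is aperiodic, the kernel of $RO(C_2)\to\Pic_{C_2}(Tmf_1(3))$ lies in $\Z\langle 1-\sigma\rangle$, hence equals $\langle 8-8\sigma\rangle$, and the image is $RO(C_2)/\langle 8-8\sigma\rangle\cong\Z\oplus\Z/8$. For surjectivity I would compare sizes: the image already contains a free $\Z$ hitting $E_\infty^{0,0}$ isomorphically and a $\Z/8$ inside the higher-filtration torsion, while the spectral sequence bounds $\Pic_{C_2}(Tmf_1(3))$ by a rank-one free part and torsion of order at most $8$; this forces equality, so $RO(C_2)\to\Pic_{C_2}(Tmf_1(3))$ is onto and $\Pic(Tmf_0(3))\cong\Pic_{C_2}(Tmf_1(3))\cong\Z\oplus\Z/8$.

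The main obstacle is the bookkeeping behind the upper bound in the spectral sequence: controlling all of its differentials — in particular the exceptional low-filtration ones, where the full $RO(C_2)$-graded computation of $\pi_{\bigstar}^{C_2}Tmf_1(3)$ and its negative cone are indispensable — and thereby ruling out exotic invertible $C_2$-modules in high filtration; the explicit order-$8$ class $\Sigma^{1-\sigma}Tmf_1(3)$, pinned down via the map to $\Pic_{C_2}(TMF_1(3))$, then resolves the remaining extension problem so that the torsion comes out cyclic rather than $\Z/4\oplus\Z/2$ or $(\Z/2)^3$. A useful cross-check (and perhaps an alternative route) is the fracture square expressing $Tmf_1(3)$ in terms of $TMF_1(3)$ and Real forms of $K$-theory at the two cusps, where the $\Z/8$ is recognisable as $\Pic(KO)$.
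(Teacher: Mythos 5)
Your lower-bound half is essentially sound and close in spirit to what the paper does: the periodicity $\Sigma^{8-8\sigma}Tmf_1(3)\simeq Tmf_1(3)$ from $u_{2\sigma}^4$, and pinning the order of $\Sigma^{1-\sigma}Tmf_1(3)$ to exactly $8$ by pushing into $\Pic_{C_2}(TMF_1(3))\cong\Z/48$ (where $3\rho\mapsto 0$ and the image in $\Pic(TMF_1(3))\cong\Z/6$ force the class $18$), is a correct and clean way to get the kernel $\langle 8-8\sigma\rangle$ and hence an injection $\Z\oplus\Z/8\hookrightarrow\Pic_{C_2}(Tmf_1(3))$. The problem is the upper bound, where your argument has two genuine gaps. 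First, you assert $\Pic(Tmf_1(3))\cong\Z$ ``since $\pi_*Tmf_1(3)$ is torsion-free and $\MMb_1(3)$ carries no exotic topological line bundles.'' That is not a proof: the algebraicity theorem (the Baker--Richter-type Theorem \ref{thm:BakerRichter}) does not apply to $Tmf_1(3)$, whose homotopy is neither even nor regular (it has the Serre-dual part in odd negative degrees), and exotic invertibles do occur for compactified $Tmf$-type spectra ($\Pic(Tmf)\cong\Z\oplus\Z/24$), so ruling them out for $Tmf_1(3)$ needs an actual argument (e.g.\ its own Mayer--Vietoris or descent computation). Second, and more seriously, the step ``the spectral sequence bounds the torsion by order at most $8$'' is exactly the content of the theorem (no exotic equivariant invertibles), and you do not carry it out: one must compute the $0$-stem column of the Picard spectral sequence for the Galois extension $Tmf_0(3)\to Tmf_1(3)$, i.e.\ $H^1(C_2;\Z[\tfrac13]^\times)$ and $H^s(C_2;\pi_{s-1}Tmf_1(3))$ for all $s\ge 2$, where now the negative-degree homotopy of $Tmf_1(3)$ (the ``negative cone'' of the $RO(C_2)$-graded computation) contributes, and then control all differentials in and out of that column, including the quadratic low-filtration differentials $d_s(\underline{x})=\underline{d_s(x)+x^2}$ in the Mathew--Stojanoska comparison range. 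You flag this as ``bookkeeping,'' but without it the counting argument that forces surjectivity of $RO(C_2)\to\Pic_{C_2}(Tmf_1(3))$ does not go through, and a priori the torsion could be larger or non-cyclic, or contain classes not of the form $\Sigma^V Tmf_1(3)$ at all.

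For contrast, the paper deliberately avoids both of these computations. It never computes $\Pic(Tmf_1(3))$ and does not run the Picard spectral sequence for $Tmf_1(3)$ itself; instead it uses the fiber square of Proposition \ref{prop:FiberSquare} and the Mayer--Vietoris sequence of Lemma \ref{lem:MV} to glue from the two periodic localizations $tmf_1(3)[\ab_1^{-1}]$ and $tmf_1(3)[\ab_3^{-1}]$, whose equivariant Picard groups were already computed in Theorem \ref{Theorem:PeriodicPic} (there the Picard spectral sequence is manageable because those spectra are even, regular, and periodic). The exact sequence gives an extension of $\ker(g)\cong\Z/48$ by $\coker(f)\cong\Z$ generated by the clutching unit $\ab_1^3\ab_3^{-1}$, whose boundary is identified with $S^{3\rho}\wedge Tmf_1(3)$, and a five-lemma comparison with $RO(C_2)/(8-8\sigma)$ finishes the proof. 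If you want to salvage your route, you would either need to carry out the full $(-1)$-stem analysis of the Picard spectral sequence for $Tmf_1(3)$ (including the Anderson-dual/negative part of $\pi_\bigstar^{C_2}Tmf_1(3)$), or replace the upper-bound step by a gluing argument of the paper's type.
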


We remark that invertible modules over $TMF$ with level structure occur in the study of equivariant $TMF$, for example those defined by representation spheres. We hope that our results on Picard groups may have relevance there. \\

We give a short overview of the structure of this article. Section \ref{sec:GSpectra} discusses preliminaries from equivariant homotopy theory. In particular, it is about the passage from spectra with $G$-action to genuine $G$-spectra and to their connective covers and how a commutative multiplication under this passages is preserved; furthermore, we discuss the $RO(G)$-graded homotopy fixed point spectral sequence and the slice spectral sequence. Section \ref{sec:Real} is about Real orientability and the Real Landweber exact functor theorem; it concludes with the definition and basic properties of forms of $BP\R\langle n\rangle$ and $E\R(n)$. Section \ref{sec:TMFandFriends} introduces the main characters $Tmf_0(3)$ and $Tmf_1(3)$ and their variants, discusses their relationship and computes the $RO(C_2)$-graded homotopy groups of $tmf_1(3)$; here we also present our applications to forms of $BP\R\langle 2\rangle$ and $E\R(2)$. Section \ref{sec:SliceAnderson} computes the slices of $Tmf_1(3)$ 
and applies this to compute its equivariant Anderson dual. Section \ref{sec:Picard} is about Picard groups, especially those of $TMF_0(3)$ and $Tmf_0(3)$. As a step, we prove a generalization of a result of \cite{B-R05} to give a conceptual computation of $\Pic\big(TMF_1(3)\big)$. Note that Sections \ref{sec:SliceAnderson} and \ref{sec:Picard} are independent and also independent of Section \ref{sec:Real}.

\subsection*{Conventions}
For a scheme $X$ with an action by a group scheme $G$, we denote by $X/G$ the \emph{stack} quotient. Furthermore, for a (pre)sheaf $\FF$ of spectra, $\pi_*\FF$ will always denote the \emph{sheafified} homotopy groups, i.e.\ the sheafification of $U \mapsto \pi_*(\FF(U))$. 

\subsection*{Acknowledgements}
We thank John Greenlees, Akhil Mathew and the anonymous referee for their comments on earlier versions of this article. We also thank the Hausdorff Institute for its hospitality during the time where the first version of this article was completed. 

\section{\texorpdfstring{$G$}{G}-spectra and equivariant homotopy}\label{sec:GSpectra}
After giving some basics on (genuine) $G$-spectra, we will treat in detail how to go from a spectrum with a $G$-action to a genuine $G$-spectrum, why this move preserves commutative multiplications and why the same is true for the passage to connective covers. After this, we will discuss the $RO(G)$-graded homotopy fixed point spectral sequence and the slice filtration. 

\subsection{Conventions on equivariant spectra}
We work in the category of genuine $G$-spectra for a finite group $G$, and our particular model will be orthogonal $G$-spectra. These were introduced by Mandell-May \cite{MM02}, though we draw heavily from \cite{HHR09} and also recommend \cite{SchEquiv} for a slightly different point of view on the same subject matter. In particular, a $G$-spectrum will always mean an orthogonal $G$-spectrum indexed on a complete $G$-universe, and morphisms are equivariant maps.

For each $H\subset G$ and for each $G$-spectrum $X$, we have stable homotopy groups
\[
\pi^H_n(X)=\colim_{V} [S^{V+\mathbb R^n},X(V)]^H,
\] 
where the colimit is taken over the finite dimensional representations of $G$ (or more simply, over the cofinal subsystem of sums of the regular representation), and for any representation $V$, the space $S^V$ is the $1$-point compactification. Recall finally that a map is a weak equivalence if it induces an isomorphism on these equivariant stable homotopy groups for all $H\subset G$. These are the weak equivalences in the standard model structures on $\Sp^G$ which give the genuine equivariant stable homotopy category; this extends the ordinary equivariant Spanier-Whitehead category described by Adams. 

Since we are considering the genuine model structure, the homotopy objects are naturally Mackey functor valued: for any two $G$-spectra $X$ and $Y$, the assignment
\[
T\mapsto [T_+\wedge X,Y]^G
\]
extends to an additive functor from the Burnside category of finite $G$-sets to abelian groups. In general, we will denote the obvious Mackey functor extension of classical objects like homotopy groups with an underline. In particular, we can rephrase the above condition on weak equivalences as simply that a map $f\colon X\to Y$ is a weak equivalence if it induces an isomorphism of homotopy Mackey functors $\m{\pi}_\ast X\to\m{\pi}_\ast Y$. 

\subsubsection{\texorpdfstring{$RO(G)$}{ROG}-grading and distinguished representations}
Since we are working genuine equivariantly, the representation spheres $S^V$ are elements of the Picard group of the homotopy category $\Ho(\Sp^G)$. In particular, all of our $\mathbb Z$-graded homotopy groups extend to $RO(G)$-graded homotopy groups, and similarly for Mackey functors, via the assignment
\[
T\mapsto [T_+\wedge S^V\wedge X,Y]^G.
\]
We will use this combined structure extensively, and when $X=S^0$, we will simply denote these groups by $\m{\pi}_V(Y)$. Note that to be precise, we have to choose (once and for all) for every element of $RO(G)$ an actual invertible $G$-spectrum and not just a class in the Picard group and by abuse of notation we will denote it also by $S^V$ for $V\in RO(G)$. Every such choice results in $\m{\pi}_{\bigstar}$ being a lax symmetric monoidal functor by \cite[Appendix A]{L-M06}.

We single out several representations.

\begin{notation}
\mbox{}
\begin{enumerate}
\item Let $\rho$ denote the regular representation of $G$.
\item Let $\bar{\rho}$ denote the quotient of $\rho$ by the trivial summand.
\item Let $\sigma$ denote the non-trivial $1$-dimensional real representation of $C_2$.
\end{enumerate}
\end{notation}

There are several distinguished homotopy classes of maps between representation spheres we shall need. If $V$ is a representation of $G$ with no fixed points, then let
\[
a_V\colon S^0\to S^V
\]
denote the inclusion of the fixed points into the $V$ sphere. This map is not null, and no iterate of it is null. However, its restriction to any subgroup $H$ such that $V^H\neq\{0\}$ is null-homotopic. This shows the following standard fact in equivariant stable homotopy theory.
\begin{lemma}\label{lem:Geometric}
Given a $G$-spectrum $X$, the geometric fixed points of $X$ can be computed as the $G$-fixed points of 
\[
X[a_{\bar{\rho}}^{-1}]=\hocolim\Big(X\xrightarrow{a_{\bar{\rho}}}\Sigma^{\bar{\rho}}X\xrightarrow{a_{\bar{\rho}}} \dots\Big).
\]
\end{lemma}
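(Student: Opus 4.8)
The plan is to identify $X[a_{\bar{\rho}}^{-1}]$ with $\widetilde{E\PP}\sm X$, where $\PP$ is the family of proper subgroups of $G$ and $\widetilde{E\PP}$ denotes the cofiber of $E\PP_+\to S^0$, and then to invoke the standard description of geometric fixed points (which I will write $\Phi^G$) as $\Phi^G Y\simeq(\widetilde{E\PP}\sm Y)^G$ for $Y$ suitably fibrant; see e.g.\ \cite[\S 2.5.2]{HHR09} or \cite{MM02}. Here $(-)^G$ is the genuine categorical fixed-point functor, which is what ``$G$-fixed points'' refers to in the statement.

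For the first step I would exhibit a concrete model for $E\PP$ built from representation spheres. For an orthogonal $G$-representation $V$ there is a cofiber sequence of based $G$-spaces $S(V)_+\to S^0\xrightarrow{a_V}S^V$, obtained by collapsing the contractible disk $D(V)$, with $S(V)$ the unit sphere; when $V^G=0$ the map $a_V$ here is exactly the inclusion of fixed points of the statement. Using $a_{V\oplus W}\simeq a_V\sm a_W$, the homotopy colimit over $n$ of these sequences for $V=n\bar{\rho}$ is
\[
S(\infty\bar{\rho})_+\longrightarrow S^0\longrightarrow \hocolim_n S^{n\bar{\rho}},
\]
with $S(\infty\bar{\rho})=\colim_n S(n\bar{\rho})$ and all transition maps equal to $a_{\bar{\rho}}$. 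Since $\dim\bar{\rho}^H=|G/H|-1$ is positive whenever $H$ is a proper subgroup and is zero when $H=G$, the fixed points $S(\infty\bar{\rho})^H=S(\infty\cdot\bar{\rho}^H)$ are contractible for $H\in\PP$ and empty for $H=G$; hence $S(\infty\bar{\rho})$ models $E\PP$ and the cofiber above is $\widetilde{E\PP}$. Smashing the displayed sequence with $X$ and using that $(-)\sm X$ commutes with the sequential homotopy colimit gives $\widetilde{E\PP}\sm X\simeq\hocolim_n\Sigma^{n\bar{\rho}}X=X[a_{\bar{\rho}}^{-1}]$.

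Combining the two steps yields $\Phi^G X\simeq(\widetilde{E\PP}\sm X)^G\simeq(X[a_{\bar{\rho}}^{-1}])^G$, as claimed. If one prefers not to quote the formula $\Phi^G\simeq(\widetilde{E\PP}\sm -)^G$ outright: applying $\Phi^G$ to the cofiber sequence $E\PP_+\sm X\to X\to\widetilde{E\PP}\sm X$ kills the first term, since $\Phi^G$ carries $E\PP_+$ to $\Sigma^\infty(E\PP^G)_+\simeq\ast$, so $\Phi^G X\simeq\Phi^G(\widetilde{E\PP}\sm X)$; and on spectra already built from $\widetilde{E\PP}$ the natural map from categorical to geometric fixed points is an equivalence, because $\widetilde{E\PP}$ is idempotent for the smash product ($E\PP_+\sm\widetilde{E\PP}\simeq\ast$).

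The part that needs the most care — though it is entirely standard — is the point-set bookkeeping: one must ensure that the sequential colimit defining $X[a_{\bar{\rho}}^{-1}]$ is formed with enough (co)fibrancy that $(-)^G$ applied to it computes the derived categorical fixed points, and that the formula for $\Phi^G$ is invoked in a model-independent way. I would dispatch these points by citing the relevant results of \cite{HHR09} and \cite{MM02} rather than reproving them here.
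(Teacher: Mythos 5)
Your argument is correct and follows essentially the same route as the paper: both identify $\hocolim_n S^{n\bar\rho}$ (via the maps $a_{\bar\rho}$) with $\widetilde{E\mathcal P}$ for $\mathcal P$ the family of proper subgroups, and then use the standard description of geometric fixed points as $(\widetilde{E\mathcal P}\sm X)^G$. The only difference is that you spell out the fixed-point verification that $S(\infty\bar\rho)$ models $E\mathcal P$ and the point-set/fibrancy caveats, which the paper leaves implicit.
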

\begin{proof}
The homotopy colimit
\[
S^{\infty\bar{\rho}}=\hocolim\Big(S^0\xrightarrow{a_{\bar{\rho}}} S^{\bar{\rho}}\xrightarrow{a_{\bar{\rho}}}\dots\Big)
\]
is a model for the space $\tilde{E}\mathcal P$, where $\mathcal P$ is the family of proper subgroups of $G$. The geometric fixed points are computed by smashing $X$ with $\tilde{E}\mathcal P$ and taking fixed points, from which the result follows.
\end{proof}

\subsubsection{$G$-equivariant homology theories}
\begin{defi}
 Let $G$ be a finite group. An \emph{(ungraded) $G$-equivariant homology theory} is an exact functor $h_0 \colon \Ho(Sp^G) \to \mathrm{Ab}$ to the category of abelian groups (or any other abelian category) that sends (possibly infinite) coproducts to direct sums. 
\end{defi}
To such an ungraded homology theory we can associate an $RO(G)$-graded version as follows: For a given element $V \in RO(G)$, we consider the chosen invertible $G$-spectrum $S^V$ and define $h_V(X)$ as $h_0(S^{-V}\sm X)$. The resulting functor is also called an \emph{$RO(G)$-graded homology theory}. We will write $h_V$ for $h_V(S^0)$. 

For a $G$-spectrum $E$, we can define a $G$-equivariant homology theory by
$$X\mapsto E_0(X) = \pi_0^G(E\sm X)$$
and we clearly get also a natural isomorphism $E_{\bigstar}(X) \cong \pi^G_{\bigstar}(E\sm X)$ of the $RO(G)$-graded theories. 

\subsection{Passage from naive to genuine}\label{sec:naive}
The spectra which arise from algebraic geometry machines are almost never given to us as orthogonal $G$-spectra for some group $G$. Instead, they will be commutative ring spectra together with an action of $G$. There is a natural, homotopically meaningful way to prolong this to a genuine $G$-spectrum in a way which respects the multiplicative structure: passage to the cofree spectrum. It is easiest to explain this in two steps: extending a naive $G$-spectrum to a genuine one and then controlling the multiplicative structure.

\subsubsection{Additive structure}
Denote by $\Sp^G_{u}$ the category of orthogonal spectra with $G$-action. We consider an equivariant map $X\to Y$ to be an equivalence if it is a stable equivalence of the underlying non-equivariant orthogonal spectra. Since the inclusion of trivial representations of $G$ into a complete universe induces an equivalence 
\[I\colon \Sp^G_{u} \to \Sp^G\]
of categories \cite[Theorem V.1.5]{MM02}, we may consider any spectrum with a $G$-action as an orthogonal $G$-spectrum indexed on a complete universe. The functor $I$ is not homotopical, however.

In contrast, the functor
\[
\Sp^G_{u} \to \Sp^G, \quad X \mapsto IF(EG_+,X)
\]
preserves weak equivalences and defines therefore a derived functor for $I\colon \Sp^G_{u} \to \Sp^G$. Here $F(-,-)$ is understood to be the derived function spectrum so that it includes a fibrant replacement of $X$. We call $G$-spectra \emph{cofree} if they are up to weak equivalence in the image of $IF(EG_+,-)$. 

In particular, using the cofree functor $IF(EG_+,-)$, we may view any spectrum with a $G$-action as a genuine $G$-spectrum. We will use this to view $TMF_1(3)$ and $Tmf_1(3)$ as $C_2$-spectra.

\subsubsection{Multiplicative Concerns}
The homotopical behavior of the cofree functor on commutative ring spectra is most easily understood via an operadic approach using instead $E_\infty$-ring spectra. Let $\cO$ be an $E_\infty$ operad (for example, the linear isometries operad). As the model category of orthogonal spectra fulfills the monoid axiom by \cite[Theorem 12.1]{MMSS}, \cite[Thm 4]{Spitzweck} implies that the category of $\cO$-algebras in orthogonal spectra with $G$-action has a projective model structure. Thus, if $R$ is an $\cO$-algebra with $G$-action, there exists an $\cO$-algebra with $G$-action that is fibrant as a spectrum and weakly equivalent to $R$. 

The equivalence of categories $I$ above is strong symmetric monoidal, so in particular, it takes $\cO$-algebras to $\cO$-algebras in orthogonal $G$-spectra indexed on a complete universe. Here, the group $G$ acts trivially on the operad, so this is the prototypical example of a naive $N_\infty$ operad in the sense of \cite{B-H15}. Applying $IF(EG_+,-)$ takes $R$ to $IF(EG_+,R)$, which is an algebra over $F(EG_+,\cO)$ if $R$ is fibrant. However, this operad is a $G$-$E_\infty$ operad \cite[Theorem 6.25]{B-H15}. In particular, since the category of algebras over a $G$-$E_\infty$ operad is Dwyer--Kan equivalent to the category of equivariant commutative ring spectra by \cite[Theorem A.6]{B-H15}, we conclude the following.

\begin{theorem}\label{thm:cofree}
If $R$ is a commutative ring spectrum with a $G$-action via commutative ring maps, then $IF(EG_+,R)$ is an equivariant commutative ring spectrum. More precisely, one can functorially associate to $R$ an equivariant commutative ring spectrum $R'$ such that $R'$ and $IF(EG_+, R)$ are equivalent as $E_\infty$-algebras.
\end{theorem}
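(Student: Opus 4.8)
The plan is to prove Theorem \ref{thm:cofree} by reducing everything to a clean statement about algebras over operads and then invoking the three external inputs already cited: the projective model structure on algebras in spectra with $G$-action, the fact that $F(EG_+,\cO)$ is a $G$-$E_\infty$ operad, and the rectification theorem identifying algebras over a $G$-$E_\infty$ operad with genuine equivariant commutative ring spectra. The statement is really an assembly of these pieces, so the ``proof'' is mostly a careful bookkeeping of which category each object lives in, and the main work is to make the functoriality claim precise.

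First I would fix an $E_\infty$ operad $\cO$ (say the linear isometries operad), so that a commutative ring spectrum with $G$-action via commutative ring maps is precisely an $\cO$-algebra in $\Sp^G_u$, the group acting trivially on $\cO$. By \cite[Thm 12.1]{MMSS} together with \cite[Thm 4]{Spitzweck}, the category of $\cO$-algebras in $\Sp^G_u$ carries a projective model structure, in which fibrations and weak equivalences are detected on underlying spectra. Hence, given such an $R$, functorial fibrant replacement in this model category yields an $\cO$-algebra $R''$ with $G$-action, weakly equivalent to $R$, which is fibrant as an orthogonal spectrum. Applying the strong symmetric monoidal equivalence $I\colon \Sp^G_u \to \Sp^G$ turns $R''$ into an $\cO$-algebra in genuine $G$-spectra (indexed on a complete universe), with $G$ still acting trivially on $\cO$.

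Next I would apply the functor $IF(EG_+,-)$. Since $R''$ is fibrant as a spectrum, $F(EG_+,R'')$ is an algebra over the operad $F(EG_+,\cO)$, and by \cite[Thm 6.25]{B-H15} this operad is a $G$-$E_\infty$ operad; there is a canonical map $F(EG_+,\cO) \to F(EG_+,\cO')$ of operads for any $G$-$E_\infty$ operad $\cO'$ one wishes to compare against, and the category of algebras over any $G$-$E_\infty$ operad is, by \cite[Thm A.6]{B-H15}, connected to the category of equivariant commutative ring spectra by a chain of Quillen equivalences (Dwyer--Kan equivalences of the underlying $\infty$-categories). Pushing $IF(EG_+,R'')$ along this chain and taking a functorial (co)fibrant-and-fibrant replacement at each stage produces an equivariant commutative ring spectrum $R'$ together with a chain of $E_\infty$-equivalences connecting it to $IF(EG_+,R'')$, and hence to $IF(EG_+,R)$ since $R'' \simeq R$ and $IF(EG_+,-)$ preserves weak equivalences. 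All the replacements involved can be chosen functorially, so $R \mapsto R'$ is a functor and the comparison is natural in $R$; this gives the ``more precisely'' clause, and the first sentence of the theorem is the special case where one only remembers the homotopy type.

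The main obstacle is the bookkeeping around functoriality and the ``$E_\infty$-algebra'' comparison: one must be careful that the Quillen equivalences of \cite[Thm A.6]{B-H15} and the fibrant replacement of \cite[Thm 4]{Spitzweck} are genuinely functorial (they are, since functorial factorizations are part of the model-category data), and that ``equivalent as $E_\infty$-algebras'' is interpreted correctly — namely as equivalence in the $\infty$-category of algebras over a common $G$-$E_\infty$ operad, or equivalently, that the two objects become equivalent after transporting both to the category of equivariant commutative ring spectra along the canonical equivalences. Everything else is a formal concatenation of the cited results; no genuinely new equivariant input is needed beyond recognizing that $F(EG_+,\cO)$ is $G$-$E_\infty$ rather than merely naive $E_\infty$, which is exactly the content of \cite[Thm 6.25]{B-H15}.
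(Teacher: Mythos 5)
Your argument is essentially the paper's own proof: the same operadic reduction (an $E_\infty$ operad $\cO$ with trivial $G$-action, the projective model structure on $\cO$-algebras via \cite[Theorem 12.1]{MMSS} and \cite[Thm 4]{Spitzweck}, fibrant replacement, the strong symmetric monoidal equivalence $I$, the identification of $F(EG_+,\cO)$ as a $G$-$E_\infty$ operad by \cite[Theorem 6.25]{B-H15}, and the Dwyer--Kan equivalence with genuine equivariant commutative ring spectra from \cite[Theorem A.6]{B-H15}), with your extra care about functorial replacements simply making explicit what the paper's "functorially associate" asserts. The proposal is correct and follows the same route.
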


In particular, this will immediately imply that $TMF_1(3)$ and $Tmf_1(3)$ can actually be viewed as $C_2$-equivariant commutative ring spectra. Deducing a similar result for the equivariant connective cover of $Tmf_1(3)$ will require a simple result undoubtedly known to the experts. The proof is also standard; we include it for completeness. Before proceeding, recall the following result about the connectivity of symmetric powers.

\begin{lemma}\label{lem:Connectivity}
If $X$ is a $(k-1)$-connected equivariant spectrum with $k\geq 0$, then for all $n\geq 1$, 
\[
\Sym^n(X) = X^{\sm n}/\Sigma_n
\]
is also $(k-1)$-connected.
\end{lemma}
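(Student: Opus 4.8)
The plan is to reduce the statement to the identification of $\Sym^n$ with homotopy orbits. Replace $X$ by a cofibrant model (in the positive stable model structure on $\Sp^G$, or work $\infty$-categorically); this does not change the hypothesis, and for such $X$ the point-set quotient $X^{\sm n}/\Sigma_n$ computes the \emph{derived} $n$-th symmetric power, and the natural map
\[
\big(X^{\sm n}\big)_{h\Sigma_n}=\big(E\Sigma_{n+}\sm X^{\sm n}\big)/\Sigma_n\longrightarrow X^{\sm n}/\Sigma_n=\Sym^n(X)
\]
is an equivalence of $G$-spectra (here $\Sigma_n$ acts on $E\Sigma_n$ trivially, so the target is again a $G$-spectrum). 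Given this, everything reduces to two elementary connectivity facts: smashing does not lower connectivity, and homotopy orbits by a group acting via a free contractible space do not lower connectivity.

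Concretely, I would first observe that $X^{\sm n}$ is $(nk-1)$-connected, by the standard estimate that the connective part of the Postnikov $t$-structure on $\Sp^G$ is closed under $\sm$ with connectivities adding (using $k\geq 0$ so that all spectra in sight are bounded below). Then I would run the skeletal filtration of $E\Sigma_n$: it has a $\Sigma_n$-CW structure with free cells in dimensions $\geq 0$, so $E\Sigma_{n+}\sm X^{\sm n}$ is built from $X^{\sm n}$ and its positive suspensions smashed with free $\Sigma_n$-sets, and passing to $\Sigma_n$-orbits exhibits $\big(X^{\sm n}\big)_{h\Sigma_n}$ as an iterated extension (and filtered colimit) of $G$-spectra of the form $\bigvee \Sigma^p X^{\sm n}$ with $p\geq 0$, each of which is $(nk-1)$-connected. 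Hence $\Sym^n(X)$ is $(nk-1)$-connected, and since $n\geq 1$ and $k\geq 0$ we have $nk\geq k$, so in particular it is $(k-1)$-connected. (Equivalently, one invokes the homotopy-orbit spectral sequence $E^2_{p,q}=H_p(\Sigma_n;\underline{\pi}_q X^{\sm n})\Rightarrow \underline{\pi}_{p+q}\big((X^{\sm n})_{h\Sigma_n}\big)$ of Mackey functors, which is concentrated in $p\geq 0$, $q\geq nk$.)

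If one prefers, the equivariant input can be avoided entirely by first reducing to the non-equivariant statement: geometric fixed points $\Phi^H$ are strong symmetric monoidal and preserve colimits, so $\Phi^H\big(\Sym^n X\big)=\Sym^n\big(\Phi^H X\big)$, and a bounded-below $G$-spectrum is $(k-1)$-connected if and only if $\Phi^H$ of it is, for all $H\subseteq G$ — a short induction on $|G|$ using the isotropy separation sequence $E\PP_+\sm Y\to Y\to\widetilde{E\PP}\sm Y$ for $\PP$ the family of proper subgroups, since the cofiber has homotopy Mackey functors concentrated at $G$ where they compute $\pi_*\Phi^G Y$, while the restriction of $E\PP_+\sm Y$ to any proper subgroup is $Y$ restricted there. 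I do not expect a genuine obstacle in either route: as the authors say, this is standard. The only points that need care are model-categorical bookkeeping — namely that $\Sym^n$ is computed by the point-set quotient (which is the reason for passing to the positive model structure) and that this quotient agrees with the homotopy orbits on cofibrant input — and the routine verification of the connectivity estimate for $\sm$ in $\Sp^G$.
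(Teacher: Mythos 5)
Both of your routes founder on the same point, and it is precisely the point the paper flags in the Remark immediately after the lemma. For genuine $G$-spectra the derived symmetric power is \emph{not} the naive Borel construction: the correct identification (HHR, Prop.\ B.171/177, which is what the paper's proof invokes) is
\[
E_G\Sigma_{n+}\sm_{\Sigma_n}X^{\sm n}\;\xrightarrow{\ \simeq\ }\;\Sym^n(X),
\]
where $E_G\Sigma_n$ is the universal space for the family of \emph{graph} subgroups of $G\times\Sigma_n$, not $E\Sigma_n$ with trivial $G$-action. The cells of $E_G\Sigma_n$ have isotropy the graphs $\Gamma_\phi$ of homomorphisms $\phi\colon H\to\Sigma_n$, and after passing to $\Sigma_n$-orbits these contribute pieces of the form $\Sigma^m\,G_+\sm_H(\text{indexed smash of }\res_H X\text{ over an }H\text{-set of size }n)$ --- i.e.\ norms --- and not just suspensions of $X^{\sm n}$. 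This is why the paper reduces to the connectivity of $\bigwedge_{G/H}S^l\cong S^{\ind_H^G l}$ for $l\geq k$, which is only $(l-1)$-connected (governed by the trivial summand of $\ind_H^G\mathbb{R}^l$), not $(nl-1)$-connected. Consequently your conclusion that $\Sym^n(X)$ is $(nk-1)$-connected is actually false for $k\geq 1$: the classes $a_{\bar{\rho}_H^G}^k N_H^G i_H^\ast(x)\in\pi_k^G\big(\Sym^m(X)\big)$ from the Remark are generically nonzero, so e.g.\ $\Sym^2$ of a $(k-1)$-connected $C_2$-spectrum typically has nontrivial $\pi_k^{C_2}$. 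Your first route proves the classical connectivity bump, which simply does not hold equivariantly; the error is the claimed equivalence $(E\Sigma_{n+}\sm X^{\sm n})/\Sigma_n\simeq \Sym^n(X)$, and the parenthetical homotopy-orbit spectral sequence computes the wrong object for the same reason.

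The second route fails for the same underlying reason: geometric fixed points do \emph{not} commute with derived symmetric powers. While $\Phi^H$ is strong monoidal on smash powers, $\Phi^G(\Sym^n X)$ contains, beyond $\Sym^n(\Phi^G X)$, norm contributions coming from the cells of $E_G\Sigma_n$ with isotropy a graph of a nontrivial $\phi$; for instance with $G=C_2$, $n=2$, $X=\Sigma^\infty_+C_2$ one has $\Sym^2(\Phi^{C_2}X)\simeq \ast$ while $\Phi^{C_2}\Sym^2(X)$ contains $\Phi^{C_2}N_e^{C_2}\res_e X\simeq \res_e X\neq \ast$. (Your detection statement --- that connectivity of a bounded-below $G$-spectrum can be checked on all geometric fixed points via isotropy separation --- is fine; it is the input $\Phi^H\Sym^n=\Sym^n\Phi^H$ that is wrong.) Your cell-filtration strategy can be repaired exactly by running it with $E_G\Sigma_n$ in place of $E\Sigma_n$: the subquotients are then wedges of $\Sigma^m G_+\sm_H N^\phi(\res_H X)$, and the indexed smash powers of a $(k-1)$-connected spectrum are $(k-1)$-connected because the relevant induced representations contain a $k$-dimensional trivial summand. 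That weaker bound is the lemma, and it is the paper's two-line proof.
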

\begin{proof}
This follows from the weak equivalence
\[
E_G\Sigma_{n+}\wedge_{\Sigma_n} X^{\wedge n}\to\Sym^n(X).
\]
As in \cite[Prop B.171, 177]{HHR09}, we can reduce the statement of the lemma by this equivalence to the following statement: the $G$-spectrum $\bigwedge_{G/H} S^l$ is $(k-1)$-connected for all $H\subset G$ and all $l\geq k$. This is clear, as $\bigwedge_{G/H} S^l\cong S^{\ind_H^G l}$ and $\ind_H^G$ contains an $l$-dimensional trivial summand. 
\end{proof}

\begin{remark}If $X$ is $(k-1)$-connected for $k>1$, then we do not always get a bump in the connectivity of the symmetric powers as happens classically. For $n$ sufficiently large, the $n$th symmetric power is more highly connected than $X$, but for low values of $n$, they are often equally connected. The reason for this is the norm: if $[G:H]=m$, then there is a canonical homotopy class of maps 
\[
N_H^Gi_H^\ast X\to \Sym^m(X)
\]
coming from any inclusion of $G\times\Sigma_m/\Gamma$ into $E_G\Sigma_m$, where $\Gamma$ is the graph of the homomorphism $G\to\Sigma_m$ defining $G/H$ as a $G$-set \cite{HHR09}, \cite{B-H15}. In particular, if $\rho_{H}^G$ is the representation $Ind_H^G\mathbb R$ and if $\bar{\rho}_H^G$ is the quotient of $\rho_H^G$ by the trivial summand, then for any class $x\in\pi_k^G(X)$, we have an element
\[
a_{\bar{\rho}_H^G}^k N_H^G i_H^\ast(x)\in \pi_k^G\big(\Sym^m(X)\big).
\]
Checking on the case of spheres shows that these maps are generically non-trivial. This is the only complicating factor in the proof of the following theorem, since it means that the $k$th homotopy Mackey functor of the free commutative ring spectrum on something $(k-1)$-connected is strictly larger than 
\[
\m{\pi}_kS^0\oplus \m{\pi}_k(X).
\]
\end{remark}

\begin{theorem}\label{thm:connective}
If $R$ is a $G$-equivariant commutative ring spectrum, then there is a commutative ring spectrum structure on the $-1$-connected cover $r$ of $R$ such that the canonical map $r\to R$ is a map of commutative ring spectra.
\end{theorem}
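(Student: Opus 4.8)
The plan is to build the connective cover $r$ together with the comparison map $r\to R$ by a Whitehead-style cell approximation carried out \emph{inside} the category $\mathrm{Comm}(\Sp^G)$ of genuine commutative $G$-ring spectra — the same category whose model structure was invoked before Theorem~\ref{thm:cofree} — so that the map is multiplicative by construction. Recall that its initial object is the sphere $S^0$, which is $(-1)$-connected, and that its cell attachments are pushouts along the free-commutative-algebra maps $\mathbb{P}(i)=\bigvee_{m\ge0}\Sym^m(i)$ for $i$ a generating cofibration of $\Sp^G$.

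After replacing $R$ by a fibrant model if necessary, I would factor the unit $S^0\to R$ functorially as $S^0\to r\to R$, where $S^0\to r$ is a transfinite composite of pushouts along those $\mathbb{P}(i)$ with $i$ of the form $G/H_+\wedge\partial D^n\hookrightarrow G/H_+\wedge D^n$ with $n\ge0$ only, the cells chosen by the usual obstruction-theoretic recipe so that $r\to R$ is an isomorphism on $\mpi_k^H$ for all $H\subseteq G$ and all $k\ge0$. This is just the standard construction of a connective cover performed internally to commutative ring spectra; the phenomenon noted in the preceding remark — that $\mpi_k\mathbb{P}(X)$ is strictly larger than $\mpi_kS^0\oplus\mpi_k(X)$, because of the norm classes $a_{\bar\rho_H^G}^k N_H^G i_H^\ast(x)$ — does no harm, since these classes all live in degrees $\ge0$ and so cannot obstruct the approximation.

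The real work is to check that the $r$ so produced is $(-1)$-connected; granting this, $r\to R$ is by construction the $(-1)$-connected cover and a map of commutative ring spectra, and functoriality follows from functoriality of the small-object factorization. For this I would invoke the pushout filtration for commutative monoids (as in \cite[Appendix B]{HHR09}): a pushout in $\mathrm{Comm}(\Sp^G)$ of a $(-1)$-connected commutative ring spectrum $A$ along $\mathbb{P}(i)$, with $i$ having cofiber $C$, carries an exhaustive filtration whose associated subquotients are the $A\wedge\Sym^m(C)$, $m\ge0$. In our case $C\simeq G/H_+\wedge S^n$ with $n\ge0$ is $(-1)$-connected, so by Lemma~\ref{lem:Connectivity} each $\Sym^m(C)$ is $(-1)$-connected, hence so is each $A\wedge\Sym^m(C)$, hence so is the pushout; as $S^0$ is $(-1)$-connected and $\mpi_\ast$ commutes with the filtered colimits defining the transfinite composite, $r$ is $(-1)$-connected. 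This step — the interplay of the pushout filtration with the connectivity estimate of Lemma~\ref{lem:Connectivity} — is the main obstacle; everything else is the routine Whitehead-tower argument.

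As a more conceptual alternative I would also point out that the full subcategory of $(-1)$-connected $G$-spectra is closed under $\wedge$ and contains $S^0$, so the inclusion into $\Sp^G$ is symmetric monoidal with right adjoint $\con$; hence $\con$ is lax symmetric monoidal and therefore carries ($G$-$E_\infty$, equivalently genuine) commutative ring spectra to commutative ring spectra with $\con R\to R$ multiplicative — the input needed to pass the lax structure through the $\Sigma_m$-quotients being again the connectivity of the extended powers $\mathcal{O}(m)_+\wedge_{\Sigma_m}X^{\wedge m}$, i.e.\ the extended-power form of Lemma~\ref{lem:Connectivity}.
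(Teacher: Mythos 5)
Your overall strategy is the paper's strategy: build the connective cover inside commutative $G$-ring spectra by attaching cells of the form $\mathbb{P}(-)$ over $R$, with Lemma~\ref{lem:Connectivity} plus the pushout filtration controlling connectivity. But the proposal assumes away exactly the step that constitutes the paper's proof. You declare that the cells can be ``chosen by the usual obstruction-theoretic recipe so that $r\to R$ is an isomorphism on $\m{\pi}_k$ for all $k\geq 0$,'' and you dismiss the norm phenomenon on the grounds that the classes $a_{\bar{\rho}_H^G}^k N_H^G i_H^\ast(x)$ ``live in degrees $\geq 0$ and so cannot obstruct.'' That is not the issue: the whole approximation takes place in degrees $\geq 0$, and the point of the Remark preceding the theorem is that these classes inflate $\m{\pi}_k$ of the free stages in the \emph{same} degree $k$ as the cells being attached, so the classical bookkeeping (``add generators to become epi, then kill the kernel, without disturbing lower degrees'') does not come for free. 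Concretely: after adding free $k$-cells one only has an epimorphism on $\m{\pi}_k$ with a kernel containing norm classes and $\pi_0$-multiples; one must then show (i) that coning off a Mackey-functor generating set of this kernel leaves $\m{\pi}_{<k}$ untouched, (ii) that it quotients $\m{\pi}_k$ by exactly the kernel (here the fact that everything is constructed over $R$ is what prevents killing too much), and (iii) that the pushouts used are homotopically meaningful (cofibrancy in the positive model structure, derived smash products). This is precisely what the paper's inductive argument does, via $e_k\wedge r^{k-1}$, the fiber $F_k$ and its connective cover $f_k$, the pushout along $\mathbb{P}(f_k)\to S^0$, the cofiber sequence $\overline{\mathbb{P}}(f_k)\to\mathbb{P}(f_k)\to S^0$, and the K\"unneth spectral sequence and diagram chase identifying $\m{\pi}_k r^k$. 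The part you do verify carefully, that $r$ is $(-1)$-connected, is the easy part; your cell-by-cell plan can in fact be completed, but the verification of (i)--(iii) is the content of the theorem and is missing.

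Two smaller points. First, your ``conceptual alternative'' is weaker than stated: lax symmetric monoidality of $\tau_{\geq 0}$ only transports algebras over a naive $E_\infty$ operad; the genuine commutative structure the theorem asserts (equivalently, algebras over a $G$-$E_\infty$ operad, Theorem~\ref{thm:cofree} and \cite{B-H15}) also encodes the norms/indexed smash products, and compatibility of $\tau_{\geq 0}$ with those is an extra coherence statement, not a formal consequence of lax symmetric monoidality --- your parenthetical ``($G$-$E_\infty$, equivalently genuine)'' elides exactly the naive-versus-genuine distinction this section is careful about. (It is true that norms preserve connectivity, which is why such an argument can be made to work, but it needs an actual construction, e.g.\ of a $G$-lax structure, not just the adjoint-functor observation.) Second, for the kernel-killing cells you should note, as the paper does, that the relevant pushouts compute derived smash products only after cofibrant replacement in the positive model structure; otherwise the filtration you invoke need not have the stated homotopical meaning.
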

\begin{proof}
We will inductively build a series of $(-1)$-connected commutative ring spectra $r^k$ over $R$ for $-1\leq k$ such that the induced map on homotopy group $\m{\pi}_j (r^k)\to\m{\pi}_k(R)$ is an isomorphism for $0\leq j\leq k$ (this condition is vacuous when $k=-1$). Let $r^{-1}$ denote the zero sphere, which maps to $R$ via the unit.

Assume that we have built $r^{k-1}\to R$ as above. We can assume that $r^{k-1}$ is cofibrant in the positive model structure on equivariant commutative rings as in \cite[Prop B.130]{HHR09}. To build $r^k$, we first choose a surjective map
\[
\bigoplus_{i\in I_k} \mA_{G/H_i}\to \m{\pi}_k(R), 
\]
where $\mA_{G/H_i}$ is the Mackey functor $\m{\pi}_k(G/H_{i+}\wedge S^k)$. Any such surjective map can be realized topologically as a map
\[
\bigvee_{i\in I_k} G/H_{i+}\wedge S^k\xrightarrow{j_k} R,
\]
and this induces a map of commutative ring spectra
\[
e_k=\mathbb P(\bigvee_{i\in I_k} G/H_{i+}\wedge S^k)\to R,
\]
where $\mathbb{P}$ denotes the free commutative ring spectrum functor. 
Smashing this with the map $r^{k-1}\to R$ gives a map
\[
e_k\wedge r^{k-1}\xrightarrow{J_k} R.
\]
This is the correct derived smash product by \cite[Prop 2.30]{HHR09}. The map $S^0\to e_k$ induces an isomorphism in homotopy groups through dimension $(k-1)$ by Lemma~\ref{lem:Connectivity}, and the K\"unneth spectral sequence of Lewis-Mandell \cite{L-M06} implies that the map $J_k$ induces an isomorphism in homotopy in dimensions between $0$ and $(k-1)$ and a surjection in dimension $k$.

At this point, the argument is classical. Let $F_k$ denote the fiber of $e_k\wedge r^{k-1}\to R$, and let $f_k$ denote the $(-1)$-connected cover of $F_k$. Since the map $e_k\wedge r^{k-1}\to R$ was a map of commutative ring spectra, the composite 
\[
Sym^n(f_k)\to Sym^n(F_k)\to e_k\wedge r^{k-1}\to R
\]
is null for all $n >0$. In particular, if we let $r^k$ denote the pushout in commutative ring spectra
\[
\xymatrix{
{\mathbb P(f_k)}\ar[r]\ar[d] & {e_k\wedge r^{k-1}}\ar[d] \\
{S^0}\ar[r] & {r^k,}
}
\]
then we have an extension of $J_k$ over $r^k$. Note that $r^k$ is actually equivalent to the \emph{derived} smash product $S^0 \sm_{\mathbb{P}(f_k)} (e_k\sm r^{k-1})$ because $S^0$ is a cofibrant $\mathbb{P}(f_k)$-module with respect to a monoidal model structure \cite[Prop B.137]{HHR09}. 

We have a cofiber sequence $\overline{\mathbb{P}}(f_k) \to \mathbb{P}(f_k) \to S^0$. Because $r^k$ is a retract of $e_k\sm r^{k-1}$, this induces short exact sequences
\[0 \to \m{\pi}_i(\overline{\mathbb{P}}(f_k)\sm_{\mathbb{P}(f_k)} (e_k\sm r^{k-1})) \to \m{\pi}_i(e_k\sm r^{k-1}) \to \m{\pi}_i(r^k) \to 0\]
for every $i\in\Z$. Since $\overline{\mathbb{P}}(f_k) \cong \bigvee_{n\geq 1} \mathrm{Sym}^n(f_k)$, Lemma \ref{lem:Connectivity} implies that $\overline{\mathbb{P}}(f_k)$ is $(k-1)$-connected. By the K\"unneth spectral sequence, $\overline{\mathbb{P}}(f_k)\sm_{\mathbb{P}(f_k)} (e_k\sm r^{k-1})$ is thus also $(k-1)$-connected. Thus, $\m{\pi}_i (e_k\sm r^{k-1}) \to \m{\pi}_ir^k$ is an isomorphism for $i\leq k-1$ and hence also $\m{\pi}_ir^k \to \m{\pi}_iR$. 

For the analysis of $\m{\pi}_k$, consider the diagram
\[\xymatrix{\m{\pi}_k(\overline{\mathbb{P}}(f_k))\ar[d] \ar[r] & \m{\pi}_k(f_k) \ar[d] \\
\m{\pi}_k(\overline{\mathbb{P}}(f_k)\sm_{\mathbb{P}(f_k)} (e_k\sm r^{k-1})) \ar[r]^-{\iota_*} & \m{\pi}_k(e_k\sm r^{k-1}) \ar[r]^-{(J_k)_*} & \m{\pi}_k(R) \\
 }
\]
We know that $\m{\pi}_k(f_k)$ surjects onto the kernel of $(J_k)_*$. Because $f_k$ is a summand of $\overline{\mathbb{P}}(f_k)$, also $\iota_*$ must surject onto the kernel of $(J_k)_*$. Thus, $\m{\pi}_kr^k \cong \coker(\iota_*)$ maps injectively into $\m{\pi}_kR$ and also surjectively because already $(J_k)_*$ is surjective. 
% We complete the argument again using the K\"unneth spectral sequence. Since the spectrum $f_k$ is $(k-1)$-connected, the augmentation map $\mathbb P(f_k)\to S^0$ induces an isomorphism on homotopy Mackey functors through dimension $(k-1)$. In particular, the map $e_k\wedge r^{k-1}\to r^k$ induces an isomorphism in homotopy Mackey functors through dimension $(k-1)$, and by commutativity of the obvious triangle, the map $r^k\to R$ induces an isomorphism in homotopy Mackey functors in dimensions $0$ to $(k-1)$ inclusive.
% 
% In dimension $k$, we must look more carefully at the K\"unneth spectral sequence. Since in dimensions less than $k$, $\m{\pi}_\ast S^0$ is a free $\m{\pi}_\ast \mathbb P(f_k)$-module, through dimension $k$, the K\"unneth spectral sequence is concentrated in homological degree $0$. In dimension $k$, the box product realizes the quotient by the sub Mackey functor given by the kernel of the augmentation. The analysis above shows that this ideals maps onto the kernel of the map 
% \[
% \m{\pi}_k(e_k\wedge r^{k-1})\to\m{\pi}_k R,
% \]
% and hence the K\"unneth $E_2$-term in dimension $k$ is $\m{\pi}_k(R)$ itself and the induced map to $\m{\pi}_k(R)$ is the identity. This implies in particular that there are no differential into dimension $k$, and thus 
% \[
% \m{\pi}_k(r^{k})\to\m{\pi}_k(R)
% \]
% is an isomorphism.
Now define $r$ as the colimit of the $r^k$. Clearly, $r$ is connective and the maps $r^k\to R$ extend to a map $r\to R$ that induces an isomorphism in $\pi_i$ for $i\geq 0$. 
\end{proof}

\subsection{The \texorpdfstring{$RO(G)$}{ROG}-graded homotopy fixed point spectral sequence}\label{sec:RO(G)HFPSS}
If $V$ is a virtual representation of $G$, then by tracing through the adjunctions, we see that
\[
\pi_{V}^G F(EG_{+},X)\cong [S^{V}\wedge EG_{+},X]^{G}\cong \pi_{0}^G F(EG_{+},S^{-V}\wedge X).
\]
For a $G$-spectrum $E$ denote by $\mathrm{HF}(E)$ the homotopy fixed point spectral sequence for $E$ (as constructed e.g.\ in \cite[Section 6]{Dug03}). Let $V_1,\dots, V_n$ be representatives of the isomorphism classes of non-trivial irreducible real representations of $G$. We define the $RO(G)$-graded homotopy fixed point spectral sequence for $E$ as
\[
\mathrm{HF}^{RO(G)}(E) = \bigoplus_{a_i\in\Z} \mathrm{HF}(a_1V_1+\cdots a_nV_n, E),
\]
where
\[
\mathrm{HF}(a_1V_1+\cdots a_nV_n, E) = \mathrm{HF}(S^{-a_1V_1}\sm \dots \sm S^{-a_nV_n} \sm E).
\]
We can use the twisting isomorphisms of the symmetric monoidal structure on $G$-spectra to define isomorphisms
\[
S^{-a_1V_1}\sm \dots \sm S^{-a_nV_n} \sm S^{-b_1V_1}\sm \dots \sm S^{-b_nV_n} \cong S^{-(a_1+b_1)V_1}\sm \dots \sm S^{-(a_n+b_n)V_n}.
\]
As in \cite{Dug03}, a multiplication $E\sm E \to E$ defines then multiplicative pairings
\[
\mathrm{HF}(a_1V_1 + \cdots a_nV_n,E) \tensor \mathrm{HF}(b_1V_1+ \cdots + b_nV_n, E) \to \mathrm{HF}((a_1+b_1)V_1+\cdots +(a_n+b_n)V_n, E).
\]
As explained in \cite[Appendix A]{L-M06}, we can choose the isomorphisms above so that this actually defines an associative and commutative multiplication on $\mathrm{HF}^{RO(G)}(E)$. We summarize in the following proposition.

\begin{proposition}
If $E$ is a $G$-spectrum with a multiplication up to homotopy, then there is a multiplicative $RO(G)$-graded spectral sequence
\[
E_{2}^{s,V}=H^{s}\big(G;\pi_{0}(S^{-V}\wedge E)\big)\Rightarrow \pi^G_{V-s} F(EG_{+},E).
\]
In particular, the Leibniz rule states that for elements $x\in E_{r}^{s,V}$ and $y\in E_{r}^{t,W}$ with $V=a_0+a_1V_1+\cdots + a_nV_n$, we have $d_r(xy) = d_r(x)y + (-1)^{a_0}xd_r(y)$.
\end{proposition}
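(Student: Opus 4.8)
The plan is to read the statement as a repackaging of the construction carried out in the paragraphs just above, assembled from two standard inputs: the ordinary homotopy fixed point spectral sequence $\mathrm{HF}(-)$ together with its multiplicative structure \cite[Section 6]{Dug03}, and the sign-coherence mechanism of \cite[Appendix A]{L-M06}. First I would fix the bigrading. For $W = a_1V_1+\cdots+a_nV_n$ a $\Z$-linear combination of the chosen irreducibles, the ordinary spectral sequence $\mathrm{HF}(S^{-W}\sm E)$ carries an internal degree $t$ and a filtration degree $s$, with $E_2^{s,t}=H^s(G;\pi_t(S^{-W}\sm E))$ and $d_r\colon E_r^{s,t}\to E_r^{s+r,t+r-1}$, converging (as always for the HFPSS) to $\pi^G_{t-s}F(EG_+,S^{-W}\sm E)$. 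Since $RO(G)$ is free abelian on $1,V_1,\dots,V_n$, the assignment $(t,a_1,\dots,a_n)\mapsto V:=t+W$ identifies $\Z\times\Z^n$ with $RO(G)$, so the pieces of the various $\mathrm{HF}(S^{-W}\sm E)$ reassemble without ambiguity into pieces $E_r^{s,V}$ of $\mathrm{HF}^{RO(G)}(E)$, and writing $V=a_0+\sum a_iV_i$ one has $a_0=t$. Then $\pi_t(S^{-W}\sm E)\cong\pi_0(S^{-t}\sm S^{-W}\sm E)\cong\pi_0(S^{-V}\sm E)$ gives the stated $E_2$-page, while invertibility of $S^{-W}$ (which lets it pass through $F(EG_+,-)$, exactly as in the isomorphism opening Section~\ref{sec:RO(G)HFPSS}) identifies the abutment $\pi^G_{t-s}F(EG_+,S^{-W}\sm E)\cong\pi^G_{(t-s)+W}F(EG_+,E)=\pi^G_{V-s}F(EG_+,E)$. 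This step is routine bookkeeping on top of the standard HFPSS.

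Next comes multiplicativity. A homotopy multiplication $\mu\colon E\sm E\to E$ induces, for every pair $W,W'$, a map $(S^{-W}\sm E)\sm(S^{-W'}\sm E)\to S^{-(W+W')}\sm E$ obtained by using the symmetry isomorphism of $\sm$ to collect the two sphere factors and then applying $\mu$. By the functoriality and multiplicativity of the ordinary HFPSS \cite{Dug03}, this produces pairings $\mathrm{HF}(S^{-W}\sm E)\otimes\mathrm{HF}(S^{-W'}\sm E)\to\mathrm{HF}(S^{-(W+W')}\sm E)$, on $E_2$ the cup product on $H^*(G;-)$ paired with the multiplication on $\pi_*$, satisfying the Leibniz rule page by page. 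The differential $d_r\colon E_r^{s,V}\to E_r^{s+r,V+(r-1)}$ moves only in the trivial-representation direction, and the representation coordinates $V_i$ are inert under both the differentials and (by the choices below) the twists; hence the Koszul sign in $d_r(xy)=d_r(x)y\pm x\,d_r(y)$ depends only on the $\Z$-part of $\deg x=V$, namely the coordinate $a_0$, giving $(-1)^{a_0}$.

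The one point that is not immediate — and the main obstacle — is that this family of pairings must assemble into a single associative and graded-commutative product on the total object $\mathrm{HF}^{RO(G)}(E)$, rather than a collection of pairings defined only up to coherent sign on each graded piece. This requires choosing, once and for all, the invertible spectra $S^V$ for $V\in RO(G)$ and the symmetry isomorphisms among the representation spheres so that the resulting twists are compatibly normalized; this is precisely what \cite[Appendix A]{L-M06} supplies — the same device that makes $\m{\pi}_{\bigstar}$ into a lax symmetric monoidal functor. With those choices the twists among the $S^{V_i}$ contribute no signs, which is also what pins the Leibniz exponent to $a_0$ rather than to an expression involving the $a_i$. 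Verifying that these sign choices are compatible with the differentials at every page is then a formal check modeled on \cite{Dug03}, and I would regard everything except this coherence input as bookkeeping.
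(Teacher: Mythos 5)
Your proposal is correct and follows essentially the same route as the paper: decompose $\mathrm{HF}^{RO(G)}(E)$ into the ordinary homotopy fixed point spectral sequences of the twists $S^{-a_1V_1}\sm\cdots\sm S^{-a_nV_n}\sm E$, identify $E_2$-page and abutment via the adjunction $\pi_V^G F(EG_+,E)\cong\pi_0^G F(EG_+,S^{-V}\sm E)$, get the pairings from Dugger's multiplicative structure, and invoke \cite[Appendix A]{L-M06} for the coherent choice of representation spheres and twist isomorphisms that makes the product associative and commutative and pins the Leibniz sign to the trivial-representation coordinate $a_0$. You also correctly isolate the coherence of these choices as the only non-routine input, which is exactly the point the paper delegates to that reference.
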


Note that while the $RO(G)$-graded homotopy fixed point spectral sequence decomposes additively in infinitely many summands, we package them into one spectral sequence for the sake of a more efficient multiplicative presentation. In our later computations, our generating permanent cycles will sit in non-integral degrees.

\subsection{The \texorpdfstring{$C_{2}$}{C2}-equivariant slice filtration}\label{sec:Slices}
The $C_{2}$-equivariant slice filtration was introduced by Dugger in his study of Atiyah's Real $K$-theory. This was generalized by Hopkins, Ravenel, and the first author to arbitrary finite groups in the solution to the Kervaire Invariant One problem \cite{HHR09}. We will recall some of the basic properties here. A more detailed treatment can be found in \cite{HHR09} or \cite{SlicePrimer}.

\begin{proposition}[{\cite[Proposition 4.20 \& Lemma 4.23]{HHR09}}]\label{prop:OddSlices}
For any $C_{2}$-equivariant spectrum $E$, the odd slices are determined by the formula
\[
P_{2n-1}^{2n-1}(E)=\Sigma^{n\rho-1} H \m{\pi}_{n\rho-1} E.
\]
\end{proposition}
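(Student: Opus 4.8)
The plan is to prove the formula in three moves. \emph{Move 1:} show that $\Sigma^{n\rho-1}H\underline{M}$ is a $(2n-1)$-slice for every Mackey functor $\underline{M}$. \emph{Move 2:} show conversely that every $(2n-1)$-slice $Z$ is equivalent to $\Sigma^{n\rho-1}H\underline{\pi}_{n\rho-1}Z$. \emph{Move 3:} apply Move~2 to $Z=P^{2n-1}_{2n-1}(E)$ and identify $\underline{\pi}_{n\rho-1}\big(P^{2n-1}_{2n-1}(E)\big)$ with $\underline{\pi}_{n\rho-1}(E)$ using the bottom of the slice tower. Throughout I will use the standard properties of the $C_2$-slice filtration from \cite{HHR09}: the subcategory $\tau_{\geq d}$ of slice-$\geq d$ spectra is closed under homotopy colimits, extensions and positive suspensions; a slice-$\geq d$ spectrum has $(d-1)$-connected underlying spectrum, and (checking on slice cells and using that geometric fixed points are exact and preserve colimits) $(\lfloor d/2\rfloor-1)$-connected geometric fixed points $\Phi^{C_2}$; if $W$ is slice $\geq a$ and $V$ is slice $\leq a-1$ then $[W,V]^{C_2}=0$; and the $C_2$-slice cells of dimension $>2n-1$ are $S^{m\rho}$ ($m\geq n$), $S^{m\rho-1}$ ($m\geq n+1$) and $(C_2)_+\wedge S^j$ ($j\geq 2n$). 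I will also use the isotropy separation sequence $(i_e^*X)_{hC_2}\to X^{C_2}\to\Phi^{C_2}X$.

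For Move~1, that $\Sigma^{n\rho-1}H\underline M$ is slice $\leq 2n-1$ amounts to the vanishing of $[\hat S,\Sigma^{n\rho-1}H\underline M]^{C_2}$ for each slice cell $\hat S$ of dimension $>2n-1$. For $\hat S=(C_2)_+\wedge S^j$ ($j\geq 2n$) this group is $\pi^e_j$ of a suspension of an Eilenberg--MacLane spectrum and vanishes by dimension. For $\hat S=S^{m\rho}$ or $S^{m\rho-1}$ it rewrites, after cancelling $S^{n\rho-1}$, as $\pi^{C_2}_{k\rho+1}(H\underline M)$ (with $k\geq 0$) respectively $\pi^{C_2}_{k\rho}(H\underline M)$ (with $k\geq 1$), and these vanish because $S^{k\rho}$ admits a $C_2$-CW structure with all cells in dimensions $\geq k$ while $H\underline M$ has homotopy concentrated in degree $0$. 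That $\Sigma^{n\rho-1}H\underline M$ is slice $\geq 2n-1$: write the connective spectrum $H\underline M$ as built from the cells $(C_2/H)_+\wedge S^j$, $j\geq 0$, under homotopy colimits and extensions; applying $\Sigma^{n\rho-1}$ turns these into $\Sigma^jS^{n\rho-1}$ and $(C_2)_+\wedge S^{2n-1+j}$ for $j\geq 0$, which are a suspension of the slice $(2n-1)$-cell $S^{n\rho-1}$ and a slice cell of dimension $2n-1+j$ respectively, hence both slice $\geq 2n-1$, and $\tau_{\geq 2n-1}$ is closed under the operations used.

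For Move~2, let $Z$ be a $(2n-1)$-slice and put $\underline M=\underline{\pi}_{n\rho-1}Z$; I will show $\Sigma^{1-n\rho}Z\simeq H\underline M$. Its underlying spectrum $\Sigma^{1-2n}i_e^*Z$ is $(-1)$-connected ($Z$ slice $\geq 2n-1$) and has $\pi^e_k=0$ for $k>0$ ($Z$ slice $\leq 2n-1$, so no maps in from $(C_2)_+\wedge S^j$ with $j\geq 2n$), so underlyingly it is Eilenberg--MacLane in degree $0$. It then suffices to show $\pi^{C_2}_k\big(\Sigma^{1-n\rho}Z\big)=0$ for $k\neq 0$, i.e.\ $[S^{j+n\rho},Z]^{C_2}=0$ for $j\neq -1$. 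For $j\geq 0$, $S^{j+n\rho}=\Sigma^jS^{n\rho}$ is slice $\geq 2n$ (a suspension of the slice $2n$-cell $S^{n\rho}$), so this group vanishes because $Z$ is slice $\leq 2n-1$. For $j\leq -2$, apply isotropy separation to $W:=S^{-n\sigma}\wedge Z$: underlyingly $W$ is a shift by $n-1$ of an Eilenberg--MacLane spectrum, so $(i_e^*W)_{hC_2}$ is $(n-2)$-connected, and $\Phi^{C_2}W=\Phi^{C_2}Z$ is $(n-2)$-connected since $Z$ is slice $\geq 2n-1$; hence $W^{C_2}$ is $(n-2)$-connected and $[S^{j+n\rho},Z]^{C_2}=\pi^{C_2}_{j+n}(W)=0$ once $j+n\leq n-2$. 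Thus $\Sigma^{1-n\rho}Z$ has homotopy Mackey functors concentrated in degree $0$, so it is $H\underline{\pi}_0(\Sigma^{1-n\rho}Z)=H\underline M$. The hard point here, and the only place where the geometry of the slice cells enters beyond connectivity bookkeeping, is the case $j\geq 0$ above: that a $(2n-1)$-slice carries no $C_2$-equivariant $RO(C_2)$-graded homotopy above the Eilenberg--MacLane line, which rests on $S^{n\rho}$ remaining slice $\geq 2n$ after positive suspension.

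For Move~3, $P^{2n-1}_{2n-1}(E)$ is a $(2n-1)$-slice by construction, so Move~2 gives $P^{2n-1}_{2n-1}(E)\simeq\Sigma^{n\rho-1}H\underline{\pi}_{n\rho-1}\big(P^{2n-1}_{2n-1}(E)\big)$, and it remains to replace the coefficient by $\underline{\pi}_{n\rho-1}(E)$. Consider the two fibre sequences $P_{2n-1}E\to E\to P^{2n-2}E$ and $P_{2n}E\to P_{2n-1}E\to P^{2n-1}_{2n-1}E$ and their long exact sequences of homotopy Mackey functors. On $P^{2n-2}E$, which is slice $\leq 2n-2$, the functors $\underline{\pi}_{n\rho-1}$ and $\underline{\pi}_{n\rho}$ vanish, since $S^{n\rho-1},S^{n\rho},(C_2)_+\wedge S^{2n-1},(C_2)_+\wedge S^{2n}$ are slice cells of dimension $>2n-2$; the first long exact sequence then gives $\underline{\pi}_{n\rho-1}E\cong\underline{\pi}_{n\rho-1}P_{2n-1}E$. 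On $P_{2n}E\in\tau_{\geq 2n}$, the functors $\underline{\pi}_{n\rho-1}$ and $\underline{\pi}_{n\rho-2}$ vanish, by the isotropy separation argument of Move~2 (now $i_e^*P_{2n}E$ is $(2n-1)$-connected and $\Phi^{C_2}P_{2n}E$ is $(n-1)$-connected, so $S^{-n\sigma}\wedge P_{2n}E$ has $(n-1)$-connected fixed points); the second long exact sequence gives $\underline{\pi}_{n\rho-1}P_{2n-1}E\cong\underline{\pi}_{n\rho-1}P^{2n-1}_{2n-1}E$. Combining the three identifications yields $P^{2n-1}_{2n-1}(E)\simeq\Sigma^{n\rho-1}H\underline{\pi}_{n\rho-1}(E)$.
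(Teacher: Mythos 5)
Your proof is correct, but note that the paper itself does not prove this statement at all: it is quoted verbatim from Hill--Hopkins--Ravenel (Proposition 4.20 and Lemma 4.23 of \cite{HHR09}), so there is no in-paper argument to compare against. What you have written is a sound self-contained reconstruction, and in fact it follows the same general strategy as the HHR original: identify the slice cells in the relevant range, use closure of $\tau_{\geq d}$ under homotopy colimits, extensions and suspension to get connectivity of underlying spectra and of geometric fixed points, and then use isotropy separation to show that a $(2n-1)$-slice has $RO(C_2)$-graded homotopy concentrated so that $\Sigma^{1-n\rho}Z$ is Eilenberg--MacLane; your Move~3 bookkeeping with the two fibre sequences of the slice tower is the standard way to replace $\m{\pi}_{n\rho-1}P^{2n-1}_{2n-1}E$ by $\m{\pi}_{n\rho-1}E$, and all the vanishing claims you invoke there check out. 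Two small remarks. First, Move~1 (that $\Sigma^{n\rho-1}H\m{M}$ is a $(2n-1)$-slice for \emph{every} Mackey functor $\m{M}$, which is special to $C_2$) is not actually needed for the stated formula; Moves~2 and~3 suffice. Second, being slice $\leq 2n-1$ means the mapping \emph{spaces} from slice cells of dimension $\geq 2n$ are contractible, i.e.\ $[\Sigma^k\hat S, -]^{C_2}=0$ for all $k\geq 0$, not only $k=0$ as you phrase it; this costs nothing, since $\Sigma S^{m\rho-1}=S^{m\rho}$ and your computation in fact shows $\pi^{C_2}_{k\rho+j}H\m{M}=0$ for all $k\geq 0$, $j\geq 0$ with $k+j\geq 1$, which covers every suspension that occurs. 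With that phrasing tightened, the argument is complete.
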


\begin{corollary}
If $R$ is a $C_2$-spectrum such that $\m{\pi}_{n\rho-1}R=0$, then all odd slices of $R$ vanish.
\end{corollary}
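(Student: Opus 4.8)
The plan is to read off the conclusion directly from Proposition~\ref{prop:OddSlices}. That proposition identifies, for every $n \in \Z$, the odd slice $P_{2n-1}^{2n-1}(R)$ with $\Sigma^{n\rho-1} H\m{\pi}_{n\rho-1}R$. So the first (and only) step is to observe that the hypothesis $\m{\pi}_{n\rho-1}R = 0$, understood as holding for all integers $n$, forces each Eilenberg--MacLane $C_2$-spectrum $H\m{\pi}_{n\rho-1}R$ to be contractible, hence so is its suspension $\Sigma^{n\rho-1}H\m{\pi}_{n\rho-1}R$. Since the classes $P_{2n-1}^{2n-1}(R)$ with $n$ ranging over $\Z$ exhaust the odd slices of $R$, they all vanish.

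There is essentially no obstacle here; the entire content sits in the cited proposition, whose proof is the genuine input. The only point to be careful about is the range of $n$: the hypothesis must be taken to hold for all $n\in\Z$, positive and negative, so that no odd slice escapes the argument. This is how the corollary will actually be applied, with $R = Tmf_1(3)$, where the vanishing of $\m{\pi}_{n\rho-1}R$ is checked degree by degree from the $RO(C_2)$-graded homotopy groups computed in Section~\ref{sec:TMFandFriends}. Note also that nothing is being asserted about the \emph{even} slices, which require a separate analysis.
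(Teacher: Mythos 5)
Your argument is correct and is exactly the (implicit) proof in the paper: the corollary is stated there without proof precisely because it follows immediately from Proposition~\ref{prop:OddSlices}, with the hypothesis understood for all $n\in\Z$ so that every odd slice $\Sigma^{n\rho-1}H\m{\pi}_{n\rho-1}R$ is contractible.
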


For the even slices, there is a similar formula involving homotopy Mackey functors of $E$.

\begin{definition}
If $\mM$ is a $C_{2}$ Mackey functor, let $P^{0}\mM$ denote the maximal quotient of $\mM$ in which the restriction map $\mM(C_{2}/C_{2})\to \mM(C_2/e)$ is injective.
\end{definition}

There are several equivalent formulations. One of which is to notice that we can build a Mackey functor out of the kernel of the restriction by declaring that the value at $C_{2}/C_{2}$ is the kernel of the restriction map and that the value at $C_{2}/\{e\}$ is trivial. The functor $P^{0}\mM$ is then the quotient of $\mM$ by this subMackey functor.

The second reformulation requires an endofunctor on Mackey functors.
\begin{definition}
If $T$ is a finite $C_{2}$-set and $\mM$ is a Mackey functor, then let $\mM_{T}$ be the Mackey functor defined by
\[
S\mapsto \mM(T\times S).
\]
\end{definition}
The restriction map defines a map of Mackey functors
\[
\mM\to \mM_{C_{2}},
\]
and $P^{0}\mM$ is simply the image of this map.

\begin{proposition}[{\cite[Cor 2.16]{SlicePrimer}}]\label{prop:EvenSlices}
For any $C_{2}$-equivariant spectrum $E$, the even slices are determined by the formula
\[
P_{2n}^{2n}(E)=\Sigma^{n\rho} H P^{0}\m{\pi}_{n\rho}(E).
\]
In particular, if $\m{\pi}_{n\rho}(E)$ is constant, we have
\[
P_{2n}^{2n}(E)=\Sigma^{n\rho} H\m{\pi_{2n}(E)}.
\]
\end{proposition}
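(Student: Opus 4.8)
The plan is to reduce, using $\rho$-periodicity of the slice filtration, to the computation of the $0$-slice of an Eilenberg--MacLane spectrum, and then to pin down that slice via the cofibre sequence associated to the presentation of $P^0\mM$ as a quotient of $\mM$.

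I would begin with two reductions. First, smashing with the invertible $C_2$-spectrum $S^{\rho}$ shifts the slice filtration by two, i.e.\ $S^{\rho}\sm\tau_{\geq k}=\tau_{\geq k+2}$ and $S^{\rho}\sm\tau_{\leq k}=\tau_{\leq k+2}$ (\cite{HHR09}); hence $S^{n\rho}\sm(-)$ carries the slice tower of $S^{-n\rho}\sm E$ to that of $E$, shifted by $2n$, so that $P_{2n}^{2n}(E)\simeq\Sigma^{n\rho}P_0^0(S^{-n\rho}\sm E)$. As $\m{\pi}_0(S^{-n\rho}\sm E)=\m{\pi}_{n\rho}(E)$, it is enough to prove $P_0^0(E)\simeq HP^0\m{\pi}_0(E)$ for an arbitrary $C_2$-spectrum $E$. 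Second, since $\tau_{\geq 0}$ is exactly the subcategory of connective $C_2$-spectra (\cite{HHR09}), the slice coreflection $P_0 E$ coincides with the ordinary connective cover of $E$; in particular $\m{\pi}_0(P_0 E)=\m{\pi}_0(E)=:\mM$. The Postnikov truncation $P_0 E\to H\mM$ then has $0$-connected fibre, and a $0$-connected $C_2$-spectrum lies in $\tau_{\geq 1}$ (it is built from cells $C_2/H_+\sm S^k$ with $k\geq 1$, each of which lies in $\tau_{\geq 1}$), so this fibre is killed by $P^0$; applying $P^0$ to the cofibre sequence gives $P_0^0(E)=P^0(P_0 E)\simeq P^0(H\mM)$, and $P^0(H\mM)=P_0^0(H\mM)$ since $H\mM$ is connective.

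The core step is to show $P_0^0(H\mM)\simeq HP^0\mM$. Applying the exact functor $H(-)$ to the short exact sequence $0\to\m{K}\to\mM\to P^0\mM\to 0$ of Mackey functors — where, as in the definitions above, $\m{K}(C_2/C_2)=\ker(\res_{\mM})$ and $\m{K}(C_2/e)=0$ — yields a cofibre sequence $H\m{K}\to H\mM\to HP^0\mM$, and by uniqueness of the slice decomposition it suffices to check that $HP^0\mM\in\tau_{\leq 0}$ and $H\m{K}\in\tau_{\geq 1}$. For the first I would use that, for \emph{any} Mackey functor $\m{N}$, one has $H\m{N}\in\tau_{\leq 0}$ if and only if $\res_{\m{N}}$ is injective: testing against the slice cells of dimension $\geq 1$ one finds $[C_{2+}\sm S^k,H\m{N}]^{C_2}=\m{\pi}_k^e(H\m{N})=0$ for $k\geq 1$, $[S^{k\rho},H\m{N}]^{C_2}=\widetilde{H}^{-k}_{C_2}(S^{k\sigma};\m{N})=0$ for $k\geq 1$, and $[S^{k\rho-1},H\m{N}]^{C_2}=\widetilde{H}^{1-k}_{C_2}(S^{k\sigma};\m{N})=0$ for $k\geq 2$, all for degree reasons, since $S^{k\sigma}$ admits a $C_2$-CW structure with cells only in dimensions $0,\dots,k$; the one surviving group, $[S^{\rho-1},H\m{N}]^{C_2}=\m{\pi}_{\sigma}^{C_2}(H\m{N})=\widetilde{H}^{0}_{C_2}(S^{\sigma};\m{N})$, is identified with $\ker(\res_{\m{N}})$ by the long exact sequence of the cofibre sequence $C_{2+}\to S^0\xrightarrow{a_\sigma}S^{\sigma}$. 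As $P^0\mM$ has injective restriction by construction, $HP^0\mM\in\tau_{\leq 0}$. For the second, $H\m{K}$ has contractible underlying spectrum and geometric fixed points $H(\ker\res_{\mM})$, so $H\m{K}\simeq\widetilde{E}C_2\sm Y$, where $Y$ denotes the inflation of $H(\ker\res_{\mM})$; using $\widetilde{E}C_2\sm S^{\sigma}\simeq\widetilde{E}C_2$ this becomes $\widetilde{E}C_2\sm(S^{\sigma}\sm Y)$, a smash product of the connective spectrum $\widetilde{E}C_2$ with the spectrum $S^{\sigma}\sm Y\in\tau_{\geq 1}$, hence it lies in $\tau_{\geq 1}$ by multiplicativity of the slice filtration (\cite{HHR09}). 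Re-suspending by $\Sigma^{n\rho}$ gives the general formula, and the ``in particular'' statement follows because a constant Mackey functor has invertible restriction, so $P^0$ acts as the identity on it.

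The genuinely equivariant content, and the main obstacle, is the first half of the core step: one must be sure that, among \emph{all} slice cells of positive dimension, only $S^{\rho-1}=S^{\sigma}$ contributes a nonzero group of maps into $H\m{N}$, so that slice-coconnectivity of an Eilenberg--MacLane spectrum collapses to the injectivity of a single restriction map. Everything else is a formal manipulation of the slice tower, together with the standard structural properties of the $C_2$-equivariant slice filtration from \cite{HHR09}.
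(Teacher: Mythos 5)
Your proof is correct; the paper does not actually prove this proposition but quotes it from \cite{SlicePrimer}, and your argument---using $\rho$-periodicity to reduce to the zero slice and then sandwiching $H\mM$ between $H\m{K}\in\tau_{\geq 1}$ and $HP^0\mM\in\tau_{\leq 0}$, with uniqueness of the slice decomposition---is essentially the standard proof of the cited result. The one point to state carefully is the appeal to ``multiplicativity'': full additivity of slice connectivity fails for $C_2$ (for instance $S^{\sigma}\wedge S^{\sigma}$ is not in $\tau_{\geq 2}$), but the instance you actually use---a slice $\geq 0$ spectrum smashed with a slice $\geq 1$ spectrum lies in $\tau_{\geq 1}$---is true and is readily verified on slice cells (the only nontrivial case, smashing with $S^{\sigma}=S^{\rho-1}$, follows from the cofibre sequence $C_{2+}\wedge Z\to Z\to S^{\sigma}\wedge Z$ and closure of $\tau_{\geq 1}$ under cofibres).
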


Knowledge of the slices is important because of the slice spectral sequence
\[E_2^{s,t} = \m{\pi}_{t-s}P^t_tX \Rightarrow \m{\pi}_{t-s}X,\]
which we will always depict in Adams notation where $E_2^{s,t}$ is in the spot $(t-s, s)$. 

We need several Mackey functors. We will define them via a Lewis diagram, stacking the value of the Mackey functor at $C_{2}/C_{2}$ over that of $C_{2}/\{e\}$ and then drawing in the restriction map, the transfer map, and the action of the non-trivial element of the Weyl group.

\begin{definition}\label{def:Mackey}
Let $\m{G}$, $\mZ_{-}$ and $\mZ^*$ be the Mackey functors defined by
\[
\xymatrix{
{\mM(C_2/C_2):}\ar@(l,l)[d]_{res}& & {\Z/2}\ar@(l,l)[d] & &  {0}\ar@(l,l)[d] & & \Z \ar@(l,l)[d]^2 \\
{\mM(C_2/e):}\ar@(r,r)[u]_{tr} \ar@(dl,dr)[]_{\gamma}& & {0}\ar@(r,r)[u] \ar@(dl,dr)[] & & {\Z}\ar@(r,r)[u]\ar@(dl,dr)[]_{-} & & {\Z}\ar@(r,r)[u]^{1}\ar@(dl,dr)[]_{1}\\
{\mM\colon} & & {\m{G}} & & {\mZ_{-}} & & {\mZ^*}
}
\]
\end{definition}

\begin{lemma}\label{lem:HomotopyGroups}
If $X$ is a $C_{2}$-spectrum such that
\begin{enumerate}
\item $\m{\pi}_{n\rho-1} X=0$ for all $n$ and
\item $\m{\pi}_{n\rho} X=\mZ\otimes\pi_{2n} X$, where $\pi_{2n} X$ has no $2$-torsion,
\end{enumerate}
then we have
\begin{align*}
 \m{\pi}_{k\rho+1} X & = \m{G}\tensor_{\Z} \pi_{2k+2}X \\
 \m{\pi}_{k\rho} X&=\mZ\tensor_{\Z} \pi_{2k}X, \\
 \m{\pi}_{k\rho-1} X&=0, \text{ and } \\
 \m{\pi}_{k\rho-2} X&=\mZ_{-}\tensor_{\Z} \pi_{2k-2}X.
\end{align*}
\end{lemma}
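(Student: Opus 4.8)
The plan is to compute the four Mackey functors $\m{\pi}_{k\rho+j}X$ for $j\in\{-2,-1,0,1\}$ by running the slice spectral sequence $E_2^{s,t}=\m{\pi}_{t-s}P^t_tX\Rightarrow\m{\pi}_{t-s}X$ in a narrow band of stems, using Propositions \ref{prop:OddSlices} and \ref{prop:EvenSlices} to identify the slices. First I would observe that hypothesis (1) together with the Corollary after Proposition \ref{prop:OddSlices} kills all odd slices of $X$, so only even slices contribute. By Proposition \ref{prop:EvenSlices}, $P^{2n}_{2n}X=\Sigma^{n\rho}HP^0\m{\pi}_{n\rho}X$, and hypothesis (2) says $\m{\pi}_{n\rho}X=\mZ\tensor_{\Z}\pi_{2n}X$ is a constant Mackey functor with restriction an isomorphism onto a $2$-torsion-free group, so $P^0$ acts as the identity and $P^{2n}_{2n}X=\Sigma^{n\rho}H(\mZ\tensor\pi_{2n}X)$. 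Thus the input to the spectral sequence is completely explicit.

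Next I would compute the relevant $E_2$-entries. The contribution of the $2n$-slice $\Sigma^{n\rho}HM$ to $\m{\pi}_{v}X$ (for $v$ a virtual representation near $k\rho+j$) is $\m{\pi}_{v-n\rho}HM$, and for $C_2$ and a constant $2$-torsion-free coefficient Mackey functor $M=\mZ\tensor A$ the $RO(C_2)$-graded homotopy of $HM$ in the range $v-n\rho\in\{-2+(k-n)\rho',\dots\}$ is standard: $\m{\pi}_{m}H\mZ$ in degrees $m$ of the form $a+b\sigma$ is concentrated as $\mZ$ in degree $0$, as $\m{G}$ in degrees $2-2\sigma,4-4\sigma,\dots$ (equivalently $j\rho-(j-1)\cdot 2$ shifted appropriately; I will pin down the exact indexing), and as $\mZ_{-}$ in degrees $-2,-4,\dots$ along the negative $\sigma$-line, with everything else zero. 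So in the four stems $k\rho+j$, $j=-2,-1,0,1$, only a handful of slices $2n$ can contribute, and for each I would tabulate which group appears. The degree $k\rho$ gets a single contribution $\mZ\tensor\pi_{2k}X$ from the $2k$-slice; the degree $k\rho-1$ gets nothing (the potential contributions sit in positions where $\m{\pi}_\bigstar H\mZ$ vanishes, recovering hypothesis (1)); the degree $k\rho-2$ gets $\mZ_{-}\tensor\pi_{2k-2}X$ from the $(2k-2)$-slice; and the degree $k\rho+1$ gets $\m{G}\tensor\pi_{2k+2}X$ from the $(2k+2)$-slice via the $\m{G}$-class of $H\mZ$ at $2-2\sigma$.

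Finally I would argue there are no differentials or extension problems affecting these four groups. For the differentials: a slice differential $d_r$ out of or into one of these positions would land in a stem of the form $k\rho+j'$ with $j'$ just outside our band, but by the same $H\mZ$-computation those neighboring positions that could receive differentials are either zero or are themselves forced (one can also invoke hypothesis (1) again, since $\m{\pi}_{n\rho-1}X=0$ constrains the $k\rho-1$ line to be permanently zero, sandwiching the others). For extensions: in each of the four target degrees I expect only one nonzero slice contributes, so the associated graded is the answer on the nose, with no filtration quotients to reassemble. The main obstacle is bookkeeping the $RO(C_2)$-graded homotopy of $HM$ precisely enough — getting the $\sigma$-degrees of the $\m{G}$- and $\mZ_{-}$-classes right and checking that the "off by one" positions really do vanish — since a single indexing slip would misplace a Mackey functor; once that table is correct, the sparsity of the slices makes the convergence and extension arguments immediate.
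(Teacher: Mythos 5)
Your proposal is essentially the paper's own proof: both run the slice spectral sequence, use hypothesis (1) to kill the odd slices and hypothesis (2) to identify $P^{2n}_{2n}X\simeq S^{n\rho}\wedge H(\mZ\otimes\pi_{2n}X)$, compute the few relevant entries as Bredon (co)homology of $\sigma$-spheres, and conclude by sparsity that there is no room for differentials or extensions (the paper merely normalizes to $k=0$ by smashing with $S^{-k\rho}$ and invokes the Gap-theorem vanishing $\m{H}_{-2}(S^{-2\rho};\mZ\otimes A)=0$ to dispose of the one extra slice that could reach the band). The only thing to fix is your stated table of $\m{\pi}_\bigstar H\mZ$: the $\m{G}$ feeding $\m{\pi}_{k\rho+1}$ sits in degree $-\sigma=1-\rho$ (the class $a_\sigma$), not $2-2\sigma$, where $u_{2\sigma}$ instead generates a copy of $\Z$, and the $\mZ_{-}$ feeding $\m{\pi}_{k\rho-2}$ sits in degree $\sigma-1=\rho-2$; with that indexing corrected your tabulation agrees with the paper's Figure~\ref{fig:E2}.
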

\begin{proof}
To simplify notation, let $A_{k}={\pi}_{2k}X$, let $\m{A}_{k}=\m{\Z}\otimes A_{k}$, let $\m{A}_{k}^{-}=\m{\Z}_{-}\otimes A_{k}$, and let $B_{k}=\m{G}\otimes A_{k}$. By assumption,
we have $P_{2k-1}^{2k-1}X\simeq \ast$ and
\[
P_{2k}^{2k}X\simeq S^{k\rho}\wedge H(\m{A}_{k}).
\]
Smashing the slice tower for $X$ with $S^{-k\rho}$ gives the slice tower for $\Sigma^{-k\rho}\wedge X$, and this again has the property that the odd slices vanish and the even ones are of the above form. It therefore suffices to prove this for $k=0$. The homotopy Mackey functors in question are all especially simple, as they are in the region where the can be no differentials in the slice spectral sequence, as we will see. 

\begin{figure}\label{fig:E2}
\centering
\begin{sseq}[entrysize=.33in]{-3...2}{-2...2}
\ssdrop{\m{A}_{k}} \ssmove{1}{1} \ssdrop{\m{B}_{k+1}} \ssmove{1}{-1} \ssdrop{\m{A}_{k+1}^{-}} \ssmove{0}{2} \ssdrop{\m{B}_{k+2}} \ssmove{-2}{-2} \ssmove{-2}{0} \ssdrop{\m{A}_{k-1}^{-}}
\end{sseq}
\caption{The slice $E_2$ term for $S^{-k\rho}\wedge X$. This is Adams graded, with a group in position $(t-s,s)$ recording $\m{\pi}_{t-s} (P^{t}_{t} S^{-k\rho}\wedge X)$.}
\end{figure}
By the connectivity of the regular representation spheres, the $2m$th slice does not contribute to $\m{\pi}_i X$ for $i = -2,-1,0,1$ and $m<-2$ or $m>1$. Similarly, $\m{H}_{-2}(S^{-2\rho};\m{\Z}\otimes A_{k})=0$ for any abelian group $A_{k}$ (this is the essential part of the Gap Theorem in \cite{HHR09}), so the $-4$th slice does not contribute to these homotopy Mackey functors either. The cell-structures for representation spheres then show that the slice $E_{2}$ term has the form depicted in Figure~\ref{fig:E2}.

In particular, there is no room for differentials or extensions in the range considered, and the result follows.
\end{proof}

\section{Real orientations and Real Landweber exactness}\label{sec:Real}
In this section, we will first treat some basics about Real orientations. Then we will prove a Real version of the Landweber exact functor theorem, both in classical and in stack language. In the last subsection, we define what we mean by forms of $BP\R\langle n\rangle$ and $E\R(n)$ and apply the Real Landweber exact functor theorem to the latter.

\subsection{Basics}\label{sec:BasicsC2}
Given a $C_2$-spectrum $E\R$, we denote by $E\R_\bigstar(X)$ the value of the associated $RO(C_2)$-graded homology theory on a $C_2$-spectrum $X$ and we set $E\R_\bigstar = E\R_\bigstar(\pt)$. This is the value at $C_{2}/C_{2}$ of the associated Mackey functor valued homology.

\begin{defi}
A $C_2$-spectrum $E\R$ is \emph{even} if $\m{\pi}_{k\rho -1}E\R=0$ for all $k\in\Z$. It is called \emph{strongly even} if additionally $\m{\pi}_{k\rho}E\R$ is a constant Mackey functor for all $k\in\Z$, i.e.\ if the restriction 
$$\pi^{C_2}_{k\rho}E\R \to \pi^e_{k\rho}E\R\cong \pi^e_{2k}E\R$$
is an isomorphism.
\end{defi}
For example, by \cite[Theorem 4.11]{H-K01}, the Real bordism spectrum $M\R$ and $BP\R$ are strongly even (see also Appendix A of \cite{G-M16} for an alternative exposition). These $C_2$-spectra were introduced by Landweber \cite{Lan68} and Araki \cite{Ara79} and modern treatments can be found in \cite[Section 2]{H-K01} and \cite[Example 2.14]{SchEquiv}. 

Recall the following definition:
\begin{defi}
Let $X$ be a $C_2$-spectrum. A \emph{Real orientation} for $E\R$ is a class
\[
x \in E\R^\rho(\cp^\infty) =[\cp^\infty, S^\rho\wedge E\R]^{C_2},
\]
restricting to the class in $E\R^\rho(\cp^1) \cong [\cp^1, S^\rho\wedge E\R]^{C_2}$ corresponding to
\[1\in [S^{0},E\R]^{C_2}\cong [S^\rho, S^\rho\wedge E\R]^{C_2}\]
 under the (chosen) isomorphism $S^\rho = \cp^1$. Here, we view $\cp^n$ as a $C_2$-space via complex conjugation.
\end{defi}

By \cite[Theorem 2.25]{H-K01}, Real orientations of commutative $C_2$-ring spectra are in one-to-one correspondence with homotopy classes of maps $M\R \to E\R$ of $C_2$-ring spectra, where ring spectra are understood to be up to homotopy. Another point of view uses the notion of a Real vector bundle, i.e.\ a complex vector bundle $p\colon V\to X$ on a $C_2$-space together with an antilinear involution such that $p$ is $C_2$-equivariant. If $E$ is Real oriented, then every Real vector bundle carries a canonical $E$-orientation.  

\begin{lemma}\label{lem:RealOrientable}
Every even $C_2$-spectrum $E\R$ is Real orientable.
\end{lemma}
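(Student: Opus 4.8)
The plan is to show that an even $C_2$-spectrum $E\R$ admits a map $\cp^\infty \to S^\rho \wedge E\R$ restricting correctly on $\cp^1$, by building it up the skeletal filtration of $\cp^\infty$ and using the even hypothesis to kill all obstructions. Recall that $\cp^\infty$, viewed as a $C_2$-space via complex conjugation, has a cell structure with one cell in each dimension $k\rho$ (this is Real projective space in the sense of Dugger/Hu--Kriz): the cofiber sequences are $S^{(k-1)\rho + \rho - 1} = S^{k\rho - 1} \to \cp^{k-1} \to \cp^k$, i.e.\ $\cp^k$ is obtained from $\cp^{k-1}$ by attaching a cell along a map $S^{k\rho - 1} \to \cp^{k-1}$. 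Equivalently there is a cofiber sequence $\cp^{k-1} \to \cp^k \to S^{k\rho}$.

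First I would set up the obstruction theory. We want to lift the canonical class on $\cp^1$ to $\cp^\infty$. Having constructed a map $x_{k} \co \cp^{k} \to S^\rho \wedge E\R$ (restricting correctly on $\cp^1$), extending over $\cp^{k+1}$ amounts to checking that the composite $S^{(k+1)\rho - 1} \to \cp^{k} \xrightarrow{x_k} S^\rho \wedge E\R$ is null, which is an element of $[S^{(k+1)\rho - 1}, S^\rho \wedge E\R]^{C_2} = \pi^{C_2}_{(k+1)\rho - 1 - \rho}(E\R) = \pi^{C_2}_{k\rho - 1}(E\R)$. By the evenness hypothesis $\m{\pi}_{k\rho - 1}E\R = 0$, so in particular its value at $C_2/C_2$ vanishes; hence the obstruction is zero and the extension exists. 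Then I would also need that the extensions can be chosen compatibly so as to assemble to a map on the colimit $\cp^\infty = \colim_k \cp^k$; since $[S^V, S^\rho \wedge E\R]^{C_2}$ in the relevant negative-cone degrees may require a $\lim^1$ argument, I would invoke the Milnor sequence $0 \to \lim^1 [\Sigma \cp^k, S^\rho\wedge E\R]^{C_2} \to [\cp^\infty, S^\rho\wedge E\R]^{C_2} \to \lim [\cp^k, S^\rho\wedge E\R]^{C_2} \to 0$ and check the $\lim^1$ term vanishes — the relevant groups are $[\Sigma\cp^k, S^\rho\wedge E\R]^{C_2}$, and the obstruction to surjectivity of the tower maps again lies in groups of the form $\pi^{C_2}_{j\rho}(E\R)$ versus $\pi^{C_2}_{j\rho-1}(E\R) = 0$, so the tower satisfies Mittag--Leffler.

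The cleanest way to package the above is to directly analyze the Atiyah--Hirzebruch-type (or rather, cellular) spectral sequence computing $E\R^\bigstar(\cp^\infty)$: because $\cp^\infty$ is built from cells in degrees $k\rho$, the relevant $E_1$-page in the degrees we care about involves $\m{\pi}_{k\rho}E\R$ and $\m{\pi}_{k\rho-1}E\R$, and evenness forces the latter to vanish, so there is no room for differentials out of the class detecting our desired orientation and no extension problems obstructing its existence. I would phrase the argument as: the map $E\R^\rho(\cp^\infty) \to E\R^\rho(\cp^1)$ induced by the inclusion is surjective, because the cofiber $\cp^\infty/\cp^1$ has cells only in dimensions $k\rho$ for $k \geq 2$, so the obstruction group $E\R^{\rho+1}(\cp^\infty/\cp^1)$ — or rather the relevant relative group — is built from $\m{\pi}_{k\rho - \text{(something)}}E\R$ terms that vanish by the even hypothesis. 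Any preimage of the canonical class is then a Real orientation by definition.

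The main obstacle is being careful about the exact $RO(C_2)$-graded degrees and making sure all the obstruction and $\lim^1$ groups that appear are genuinely of the form $\m{\pi}_{k\rho - 1}E\R$ (hence zero) rather than something in a nearby degree that evenness does not control; the cell structure of $\cp^\infty$ with exactly one cell in each degree $k\rho$, together with the cofiber sequences $S^{k\rho-1} \to \cp^{k-1} \to \cp^k$, is precisely what makes this bookkeeping work out. I expect the entire proof to be only a few lines once this cell structure is invoked.
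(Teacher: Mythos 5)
Your argument is correct and is essentially the paper's own proof: the same cofiber sequences $S^{(k+1)\rho-1}\to\cp^{k}\to\cp^{k+1}$, the same identification of the obstruction group with $\pi^{C_2}_{k\rho-1}E\R=0$ coming from evenness, and the Milnor sequence to pass from the $\cp^k$ to $\cp^\infty$. One small quibble: your Mittag--Leffler justification for the vanishing of $\lim^1$ is off by a degree (the cokernels of $E\R^{\rho-1}(\cp^{k+1})\to E\R^{\rho-1}(\cp^{k})$ are controlled by $\pi^{C_2}_{k\rho}E\R$, which evenness does not kill), but this is harmless because the Milnor sequence already gives surjectivity of $E\R^{\rho}(\cp^\infty)\to\lim_k E\R^{\rho}(\cp^{k})$, which together with the surjectivity of the tower maps is all you need.
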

\begin{proof}
We have cofiber sequences
\[
S^{(n+1)\rho-1}\to \cp^n \to \cp^{n+1}.
\]
The long exact sequence in cohomology then shows that the map
\[
E\R^{\rho}(\cp^{n+1})\to E\R^{\rho}(\cp^{n})
\]
is surjective. The Milnor sequence gives the result.
\end{proof}

It is part of our philosophy that the Mackey functor $\mpi_{k\rho}$ behaves often much better than the integral Mackey functor $\mpi_{2k}$. The following is a weak version of a Whitehead theorem using $\mpi_{k\rho}$. We will formulate it in the language of equivariant homology theories as this will be convenient for our use in the Real Landweber exact functor theorem.

\begin{lemma}\label{lem:regrep}
Let $f\co E\R \to F\R$ be a natural transformation of $C_2$-equivariant homology theories. Denote the underlying homology theories by $E$ and $F$. Assume that f induces isomorphisms
\[E\R_{k\rho} \to F\R_{k\rho} \quad \text{and} \quad E_k \to F_k\]
for all $k\in\Z$. Assume furthermore that $E\R_{k\rho-1} \to F\R_{k\rho-1}$ is mono for all $k\in\Z$ (this is the case, for example, if $E\R_{k\rho-1} = 0$). Then $f$ is a natural isomorphism.
\end{lemma}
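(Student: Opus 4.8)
The plan is to use a cellular induction / comparison-of-exact-sequences argument exploiting the fact that every even $C_2$-spectrum is built (in a suitable sense) from cells of the form $C_2/C_2{}_+ \wedge S^{k\rho}$, $C_2/C_2{}_+ \wedge S^{k\rho-1}$, and $C_2/e{}_+ \wedge S^n$, together with the observation that the hypotheses control exactly the values of $E\R$ and $F\R$ on all such cells. More precisely, I would first record that since $f$ induces an isomorphism $E_k \to F_k$ on underlying theories for all $k$, it induces an isomorphism on $E\R_\bigstar(C_2/e{}_+ \wedge X)$ for all $X$ (by the Wirthm\"uller/induction isomorphism $E\R_\bigstar(C_2/e{}_+ \wedge X) \cong E_*(X)$ in the appropriate degree). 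So the content is entirely about cells of orbit type $C_2/C_2$.

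Next I would set up the comparison on the two relevant families of ``$C_2$-cells.'' The hypothesis $E\R_{k\rho} \xrightarrow{\sim} F\R_{k\rho}$ handles the even cells $S^{k\rho}$ directly. For the odd cells $S^{k\rho - 1}$, I would use the cofiber sequence $C_2/e{}_+ \wedge S^{k\rho-1} \to S^{(k+1)\cdot? }$... more cleanly: use the cofiber sequence relating $S^{k\rho-1}$, $S^{k\rho}$ and an induced cell, namely $C_2{}_+ \wedge S^{k\rho-1} \to S^{k\rho - 1} \xrightarrow{a_\sigma} S^{k\rho-1+\sigma}$ and iterate, or equivalently the isotropy separation cofiber sequence. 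Comparing the long exact sequences of such a cofiber sequence under $f$, the five lemma gives that $f$ is an isomorphism on $E\R_\bigstar$ of the odd cells \emph{provided} we also know surjectivity/injectivity at the adjacent spot — and this is exactly where the extra hypothesis ``$E\R_{k\rho-1} \to F\R_{k\rho-1}$ is mono'' is consumed: it upgrades a four-term exact comparison into an isomorphism.

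From the cell computations I would then run the main induction: any $C_2$-spectrum $X$ has a (possibly transfinite) cellular filtration with cells of the three types above; since both $E\R_\bigstar$ and $F\R_\bigstar$ are homology theories (send coproducts to sums, have long exact sequences for cofiber sequences, and here I should pass to the $RO(C_2)$-graded Mackey-functor-valued versions so that ``isomorphism in all degrees'' is available), a standard comparison argument — isomorphism on cells, plus five lemma on the cofiber sequences building the skeleta, plus a $\lim^1$/colimit argument for the transfinite steps — shows $f$ is an isomorphism on all of $X$. The main obstacle, and the step deserving the most care, is the odd-cell comparison: one must pin down the precise cofiber sequence expressing $S^{k\rho-1}$ in terms of $S^{k\rho}$ and an $C_2$-free cell, track the Mackey-functor gradings through it, and verify that the three given hypotheses (iso on $E\R_{k\rho}$, iso on underlying $E_k$, mono on $E\R_{k\rho-1}$) are exactly what the five-lemma needs there; everything else is the routine cellular-induction machinery for homology theories.
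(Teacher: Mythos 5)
There is a genuine gap, and it sits exactly where the real content of the lemma lies. Your cellular induction needs the comparison map to be an isomorphism on \emph{all integer-graded} values of the theories on each cell. For an even cell $S^{k\rho}$ this means knowing that $E\mathbb{R}_{n-k\rho}\to F\mathbb{R}_{n-k\rho}$ is an isomorphism for every integer $n$, i.e.\ in arbitrary mixed degrees $a+b\sigma$ — but the hypothesis only gives isomorphisms on the diagonal degrees $k\rho$ (plus the underlying theory, plus a mono in degrees $k\rho-1$). So your assertion that ``the hypotheses control exactly the values of $E\mathbb{R}$ and $F\mathbb{R}$ on all such cells'' and that the even cells are ``handled directly'' is not correct: the general mixed-degree groups are precisely what has to be proved, and assuming them makes the argument circular. (The same issue appears if you use the standard cells $C_2/C_{2+}\wedge S^n$: there you would need the integer-graded genuine coefficients $E\mathbb{R}_k\to F\mathbb{R}_k$ to be isomorphisms, which is not a hypothesis.) The secondary claim that every $C_2$-spectrum is built from cells $S^{k\rho}$, $S^{k\rho-1}$ and free cells also needs an argument, though that part can be repaired.

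The missing idea is the systematic propagation across the whole $RO(C_2)$-graded coefficient plane. The paper reduces (by the standard ``well-known'' cell argument you gesture at) to showing $f_{a+b\sigma}\colon E\mathbb{R}_{a+b\sigma}\to F\mathbb{R}_{a+b\sigma}$ is an isomorphism for all $a,b$, and then compares the long exact sequences coming from $(C_2)_+\to S^0\to S^\sigma$ smashed with $S^{a+b\sigma}$. Weak four/five-lemmas yield four implications (mono/epi statements relating $f_{a+b\sigma}$ to $f_{a+(b+1)\sigma}$ and its neighbours), and a two-stage induction — first spreading ``epi'' upward from the diagonal $f_{a\rho}$ and ``mono'' downward from $f_{a\rho-1}$, then using the overlap of the two regions to fill in the entire plane — gives the result; this is exactly where the mono hypothesis in degree $k\rho-1$ is consumed. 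Your single five-lemma step at the odd cells does produce an isomorphism in degrees $k\rho-1$ (epi from the four-lemma, mono by hypothesis), but it does not supply the isomorphisms in general degrees $a+b\sigma$ that your induction over the skeleta of an arbitrary $X$ would then require; without the full epi/mono bookkeeping your proof does not close up.
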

\begin{proof}
It is well known that it is enough to show that $E_k \to F_k$ and $E\R_k \to E\R_k$ are isomorphisms for all $k\in\Z$. As the former is true by assumption, it is in particular enough to show that $f_{a+b\sigma}\co E\R_{a+b\sigma} \to F\R_{a+b\sigma}$ is an isomorphism for all $a,b\in\Z$. This is true for $a=b$ again by assumption.

Smashing the cofiber sequence
\[
(C_2)_+ \to S^0 \to S^{\sigma}
\]
with $S^{a+b\sigma}$ gives the cofiber sequence
\[
(C_2)_+\sm S^{a+b\sigma} \to S^{a+b\sigma} \to S^{a+(b+1)\sigma}.
\]
We have a map between the associated long exact sequences:
\[
\xymatrix{
E_{a+b+1} \ar[r]\ar[d]^{\cong} & E\R_{a+(b+1)\sigma} \ar[r]\ar[d]^{f_{a+(b+1)\sigma}} & E\R_{a+b\sigma} \ar[r] \ar[d]^{f_{a+b\sigma}}& E_{a+b} \ar[r]\ar[d]^\cong & E\R_{(a-1)+(b+1)\sigma}\ar[d]^{f_{(a-1)+(b+1)\sigma}} \\
F_{a+b+1} \ar[r] & F\R_{a+(b+1)\sigma} \ar[r] & F\R_{a+b\sigma} \ar[r] & F_{a+b} \ar[r] & F\R_{(a-1)+(b+1)\sigma}
}
\]
The weak $5$-lemmas imply the following statements:
\begin{itemize}
\item[(M1)] If $f_{a+(b+1)\sigma}$ is mono, then $f_{a+b\sigma}$ is mono.
\item[(M2)] If $f_{(a+1)+b\sigma}$ is epi and $f_{a+b\sigma}$ is mono, then $f_{a+(b+1)\sigma}$ is mono.
\item[(E1)] If $f_{a+b\sigma}$ is epi, then $f_{a+(b+1)\sigma}$ is epi.
\item[(E2)] If $f_{(a-1)+(b+1)\sigma}$ is mono and $f_{a+(b+1)\sigma}$ is epi, then $f_{a+b\sigma}$ is epi.
\end{itemize}

\noindent This implies the following four statements in turn,
\begin{enumerate}
\item By hypothesis $f_{a+a\sigma} = f_{a\rho}$ is epi for all $a$, and hence repeated application of $E1$ shows that $f_{a+b\sigma}$ is epi for $b\geq a$.
\item By hypothesis $f_{(a-1)+a\sigma} = f_{a\rho-1}$ is mono for all $a$, and hence $f_{a+b\sigma}$ is mono for $b \leq a+1$ by repeated application of M1.
\end{enumerate}
Note that the regions in which $f_{a+b\sigma}$ is epi and mono overlap in two diagonals, allowing us to proceed.
\begin{enumerate}
 \setcounter{enumi}{2}
\item By repeated application of E2 we conclude that $f_{a+b\sigma}$ is epi for all $a,b$.
\item By repeated application of M2 we conclude that $f_{a+b\sigma}$ is mono for all $a,b$. 
\end{enumerate}
Accordingly $f_{a+b\sigma}$ is both epi and mono for all $a,b$ and the proof is complete. 
\end{proof}

\subsection{Real Landweber Exactness}\label{sec:Stack}
In this section, we want to prove a version of the Landweber exact functor theorem using the Real bordism spectrum $M\R$. 

The restriction maps $M\R_{k\rho} \to MU_{2k}$ are isomorphisms by \cite[Theorem 2.28]{H-K01}. This defines a graded ring morphism $MU_{2*} \to M\R_\bigstar$ along the morphism
\[2\Z \to RO(C_2),\qquad 2k \mapsto k\rho\]
of the monoids indexing the grading. In particular, $M\R_\bigstar$ becomes a graded $MU_{2*}$-module in a suitable sense.

\begin{defi}
 Let $E\R$ be a strongly even $C_2$-spectrum with underlying spectrum $E$. Then $E\R$ is called \emph{Real Landweber exact} if for every Real orientation $M\R \to E\R$ the induced map 
 \[
M\R_\bigstar(X) \tensor_{MU_{2*}} E_{2*} \to E\R_\bigstar(X)
\]
is an isomorphism for every $C_2$-spectrum $X$.

Here, the gradings can be parsed in the following way: For every $k\in \Z$, we have a $2\Z$-graded $MU_{2*}$-module $M\R_{k+*\rho}(X)$ in the way described above so that the expression $M\R_{k+*\rho}(X)\tensor_{MU_{2*}}E_{2*}$ makes sense in the world of $2\Z=\rho\Z$-graded $MU_{2*}$-modules. Now observe that $RO(C_2)$ is a free abelian group generated by $\rho$ and $1$; thus an $RO(C_2)$-graded abelian group is an equivalent datum to a $\Z$-graded $\Z\rho$-graded abelian group and this expresses what $M\R_\bigstar(X) \tensor_{MU_{2*}} E_{2*}$ means.
\end{defi}

\begin{thm}[Real Landweber exact functor theorem]\label{thm:RealLandweber}\mbox{}
\begin{itemize}
\item[(a)]Let $E_{2*}$ be a graded Landweber exact $MU_{2*}$-algebra, concentrated in even degrees. Then
\[
X\mapsto M\R_\bigstar(X) \tensor_{MU_{2*}} E_{2*}
\]
is a $C_2$-equivariant homology theory.
\item[(b)]Let $E\R$ be a strongly even $C_2$-spectrum whose underlying spectrum $E$ is Landweber exact. Then $E\R$ is Real Landweber exact. 
\end{itemize}
\end{thm}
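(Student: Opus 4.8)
The plan is to prove part (a) first and then deduce part (b) from it using the weak Whitehead-type result of Lemma~\ref{lem:regrep}. For part (a), I would check directly that $X \mapsto M\R_\bigstar(X) \tensor_{MU_{2*}} E_{2*}$ satisfies the axioms of a $C_2$-equivariant homology theory: exactness on cofiber sequences and compatibility with arbitrary coproducts (wedges). The wedge axiom is immediate since $M\R_\bigstar(-)$ takes coproducts to direct sums and tensoring over $MU_{2*}$ commutes with direct sums. For exactness, the point is Landweber exactness of $E_{2*}$ over $MU_{2*}$: given a cofiber sequence of $C_2$-spectra, the long exact sequence in $M\R_\bigstar$ is a long exact sequence of $MU_{2*}$-modules, and tensoring with a Landweber exact module preserves exactness. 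The only subtlety is bookkeeping with the grading: $M\R_\bigstar(X)$ is $RO(C_2)$-graded, but as explained in the definition, for each fixed $k\in\Z$ the slice $M\R_{k+*\rho}(X)$ is a $2\Z$-graded $MU_{2*}$-module via the isomorphisms $M\R_{k\rho}\cong MU_{2k}$ of \cite[Theorem 2.28]{H-K01}, so the tensor product is formed slice-by-slice in the $\rho$-direction and the Landweber criterion applies to each. I would remark that the hypothesis that $E_{2*}$ is concentrated in even degrees is what makes $E_{2*}$ a genuine $MU_{2*}$-module (rather than an $MU_*$-module) so that the construction makes sense.

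For part (b), set $F\R_\bigstar(X) = M\R_\bigstar(X) \tensor_{MU_{2*}} E_{2*}$, which is a $C_2$-equivariant homology theory by part (a) since the underlying spectrum $E$ of a strongly even $E\R$ is Landweber exact by hypothesis, and its homotopy ring $E_{2*}$ is concentrated in even degrees and is a Landweber exact $MU_{2*}$-algebra. A choice of Real orientation $M\R \to E\R$ induces, after tensoring the canonical map $M\R_\bigstar(X) \to E\R_\bigstar(X)$ down along $MU_{2*} \to E_{2*}$, a natural transformation $f\co F\R_\bigstar \to E\R_\bigstar$ of $C_2$-equivariant homology theories. I would then verify the hypotheses of Lemma~\ref{lem:regrep}. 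On underlying spectra, $f$ restricts to the classical Landweber exact comparison map $MU_*(X)\tensor_{MU_*}E_* \to E_*(X)$, which is an isomorphism by the ordinary Landweber exact functor theorem; this gives that $E_k \to F_k$ is an isomorphism for all $k$. In the $\rho$-graded part, since $E\R$ is strongly even, $\m\pi_{k\rho-1}E\R = 0$, so the hypothesis $E\R_{k\rho-1}\to F\R_{k\rho-1}$ mono is automatic (indeed both are zero), and the isomorphism $F\R_{k\rho} \to E\R_{k\rho}$ follows from the classical statement applied in the $\rho$-grading: $M\R_{k+*\rho} \cong MU_{2k+2*}$ as $MU_{2*}$-modules and $\m\pi_{k\rho}E\R \cong \pi_{2k}E$ because $E\R$ is strongly even, so on coefficients $f$ is again the classical Landweber map, hence an isomorphism. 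Lemma~\ref{lem:regrep} then gives that $f$ is a natural isomorphism, which is exactly the assertion that $E\R$ is Real Landweber exact.

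The main obstacle I anticipate is not any deep input but rather making the grading conventions fully precise — in particular, being careful that the identification $M\R_\bigstar \cong MU_{2*}\text{-module structure}$ via $2k \mapsto k\rho$ is compatible with the multiplicative structure so that $M\R_\bigstar(X)\tensor_{MU_{2*}}E_{2*}$ is well defined as an $RO(C_2)$-graded object, and that the natural transformation $f$ really is a map of $RO(C_2)$-graded homology theories (not just of the $\Z$-graded and $\rho\Z$-graded pieces separately). Once that is set up, (b) is a formal consequence of (a) plus Lemma~\ref{lem:regrep} plus the classical Landweber exact functor theorem. A secondary point worth spelling out is why the underlying spectrum $E$ of a strongly even $C_2$-spectrum has $E_*$ even and Landweber exact under the hypotheses — but this is assumed outright in the statement of (b), so it requires no argument here.
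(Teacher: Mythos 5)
Part (b) of your proposal is essentially the paper's own argument: choose a Real orientation (Lemma~\ref{lem:RealOrientable}), compare the algebraically defined theory from part (a) with $E\R_\bigstar$ via Lemma~\ref{lem:regrep}, and verify the hypotheses on coefficients using strong evenness of $M\R$ and $E\R$ together with the classical Landweber isomorphism. That reduction is correct and matches the paper.

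The genuine gap is in part (a), in the sentence ``tensoring with a Landweber exact module preserves exactness.'' Landweber exactness does not say that: $E_{2*}$ is essentially never flat as an $MU_{2*}$-module, so $-\tensor_{MU_{2*}}E_{2*}$ does not preserve exactness of arbitrary long exact sequences of $MU_{2*}$-modules. It preserves exactness only on sequences of $(MU_{2*},MU_{2*}MU)$-\emph{comodules} (equivalently, one pulls back quasi-coherent sheaves on $\MM_{FG}$ along the flat map $f\colon \Spec E_{2*}/\G_m \to \MM_{FG}$, and $f^*$ is exact). So to make your argument work you must know that, for each $i$, the functor $X\mapsto M\R_{*\rho+i}(X)$ takes values naturally in $MU_{2*}MU$-comodules, and this is precisely the nontrivial equivariant input your sketch omits: one needs that restriction induces an isomorphism of Hopf algebroids $(M\R_{*\rho}, M\R_{*\rho}M\R)\cong (MU_{2*}, MU_{2*}MU)$, in particular that $M\R_{*\rho}M\R$ restricts isomorphically to $MU_{2*}MU$ (so it is flat over $M\R_{*\rho}$ and the comodule structure transports). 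The paper proves this by a cell argument: $M\R$ is a homotopy colimit of $\rho$-desuspended Thom spectra over Real Grassmannians built from cells of dimension $k\rho$, and one propagates the isomorphism $M\R_{*\rho}(X)\to MU_{2*}(X)$ together with a monomorphism in degrees $*\rho-1$ from spheres to $M\R$ itself. With that in hand, part (a) runs along the stacky lines you allude to only implicitly: $\FF_i^X$ is a homology theory valued in quasi-coherent sheaves on $\MM_{FG}$, $f^*$ is exact, and Lemma~\ref{lem:SheafIdentities} identifies graded global sections of $f^*\FF_i^X$ with $M\R_{*\rho+i}(X)\tensor_{MU_{2*}}E_{2*}$. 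Without the comodule structure (or some substitute), the exactness step in your part (a) fails, and hence part (b), which you correctly reduce to (a) plus Lemma~\ref{lem:regrep}, is not yet established.
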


Let us shortly recall how Landweber exactness is treated non-equivariantly from the stacky point of view. Good sources are, for example, \cite{Goe09}, Chapter 4 of \cite{TMF} or Lectures 11 and 15 of \cite{Lur10}. 

The stack associated to the graded Hopf algebroid $(MU_{2*}, MU_{2*}MU)$ is $\MM_{FG}$, the moduli stack of formal groups. This implies that the category of quasi-coherent sheaves on $\MM_{FG}$ is equivalent to that of evenly graded $(MU_{2*}, MU_{2*}MU)$-comodules (see for example \cite[Remark 34]{Nau07}). The graded comodule $MU_{*+2}$ corresponds to a line bundle $\omega$ on $\MM_{FG}$. This allows us to define the \emph{graded global sections} $\Gamma_{2*}(\FF)$ of a quasi-coherent sheaf $\FF$ on $\MM_{FG}$ as $\Gamma(\FF \tensor \omega^{\tensor *})$. Likewise, the category of quasi-coherent sheaves on $(\Spec\, E_{2*})/\G_m$ is equivalent to that of evenly graded modules over $E_{2*}$; more precisely, a quasi-coherent sheaf $\FF$ on $(\Spec\, E_{2*})/\G_m$ corresponds to the graded module $\Gamma_{2*}(\FF) = \Gamma(\FF\tensor \omega_E^{\tensor *})$, where $\omega_E$ corresponds to the graded module $E_{2*+2}$. We remind the reader here that $(\Spec\, E_{2*})/\G_m$ denotes (as always) the stack quotient. 

An $MU_{2*}$-algebra $E_{2*}$ is Landweber exact iff the composite 
$$f\colon \Spec E_{2*} / \G_m \to \Spec MU_{2*}/\G_m \to \MM_{FG}$$
is flat (if the Landweber exactness criterion is phrased classically using the $v_i$, this is the non-formal part of the proof). Given a spectrum $X$, define quasi-coherent sheaves $\FF^X_i$ for $i=0,1$ on $\MM_{FG}$ corresponding to the graded $(MU_{2*}, MU_{2*}MU)$-comodules $MU_{2*+i}X$. These are functors in $X$ and define ungraded homology theories on spectra with values in quasi-coherent sheaves on $\MM_{FG}$. Because $f$ is flat and thus $f^*$ is exact, the functors
$$X \mapsto f^*\FF_i^X$$
define homology theories with values in quasi-coherent sheaves on $(\Spec\, E_{2*})/\G_m$. We want to identify $\Gamma_{2*}(f^*\FF_i^X)$ with $MU_{2*+ i}(X)\tensor_{MU_{2*}} E_{2*}$. The following lemma provides this identification and thus completes the proof of non-equivariant Landweber exactness. 

\begin{lemma}\label{lem:SheafIdentities}
Let $\FF$ be a quasi-coherent sheaf on $\MM_{FG}$. Then
\begin{align*}
\FF(\Spec MU_{2*}) \tensor_{MU_{2*}} E_{2*} &\cong \Gamma_{2*}((Spec\, E_{2*})/\G_m; f^*\FF)
\end{align*}
where we view $\FF(\Spec MU_{2*})$ as an evenly graded $MU_{2*}$-module. These isomorphisms are natural in $\FF$.
\end{lemma}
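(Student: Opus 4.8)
The plan is to deduce the lemma essentially formally from the two equivalences of categories recalled just above, together with the elementary fact that pullback of quasi-coherent sheaves along a morphism of $\G_m$-quotient stacks induced by a graded ring map corresponds, on the level of evenly graded modules, to extension of scalars.

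First I would factor $f$ as the composite $(\Spec E_{2*})/\G_m \xrightarrow{\pi} (\Spec MU_{2*})/\G_m \xrightarrow{\bar f} \MM_{FG}$, where $\pi$ is induced by the graded ring homomorphism $MU_{2*}\to E_{2*}$ and $\bar f$ is the canonical map, so that $f^*\FF = \pi^*\bar f^*\FF$. Under the equivalence $\QCoh(\MM_{FG})\simeq\{\text{evenly graded }(MU_{2*},MU_{2*}MU)\text{-comodules}\}$, the sheaf $\FF$ is sent to the comodule whose underlying evenly graded $MU_{2*}$-module is $\FF(\Spec MU_{2*})$; composing this equivalence with the forgetful functor to evenly graded $MU_{2*}$-modules $\simeq\QCoh((\Spec MU_{2*})/\G_m)$ is precisely $\bar f^*$, so $\bar f^*\FF$ corresponds to $\FF(\Spec MU_{2*})$. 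Next, under the equivalences between $\QCoh((\Spec MU_{2*})/\G_m)$ resp. $\QCoh((\Spec E_{2*})/\G_m)$ and the categories of evenly graded modules over $MU_{2*}$ resp. $E_{2*}$, the pullback $\pi^*$ becomes $-\tensor_{MU_{2*}}E_{2*}$; hence $\pi^*\bar f^*\FF$ corresponds to the evenly graded $E_{2*}$-module $\FF(\Spec MU_{2*})\tensor_{MU_{2*}}E_{2*}$. Finally, by its definition $\Gamma_{2*}(\GG)=\Gamma(\GG\tensor\omega_E^{\tensor *})$ is, via the identification $\omega_E\leftrightarrow E_{2*+2}$, inverse to the equivalence $\{\text{evenly graded }E_{2*}\text{-modules}\}\simeq\QCoh((\Spec E_{2*})/\G_m)$; since $\FF(\Spec MU_{2*})\tensor_{MU_{2*}}E_{2*}$ is already concentrated in even degrees (both factors being so), applying $\Gamma_{2*}$ returns this module on the nose. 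Stringing these identifications together yields the claimed isomorphism, and naturality in $\FF$ is immediate because each step is functorial in $\FF$.

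I expect the one genuinely delicate point to be the grading bookkeeping: one must check that the twist by $\omega_E$ built into $\Gamma_{2*}$ cancels exactly against the reindexing implicit in passing between graded modules and quasi-coherent sheaves on the $\G_m$-quotient, so that $\Gamma_{2*}$ recovers $\FF(\Spec MU_{2*})\tensor_{MU_{2*}}E_{2*}$ rather than a suspension of it. This causes no trouble here precisely because $E_{2*}$, and hence the tensor product, is concentrated in even degrees and because the identification $\omega_E\leftrightarrow E_{2*+2}$ is the very one used to define $\Gamma_{2*}$; every other ingredient is a formal property of quasi-coherent pullback over quotient stacks and involves no computation.
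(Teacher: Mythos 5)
Your proof is correct and follows essentially the same route as the paper: factor $f$ through $(\Spec MU_{2*})/\G_m$, note that pulling back along the canonical map to $\MM_{FG}$ recovers $\FF(\Spec MU_{2*})$ as an evenly graded $MU_{2*}$-module, and identify the remaining pullback with extension of scalars $-\tensor_{MU_{2*}}E_{2*}$ under the graded-module descriptions of quasi-coherent sheaves on the quotient stacks. Your extra remarks on the grading conventions for $\Gamma_{2*}$ only make explicit what the paper leaves implicit in its notion of ``corresponds to.''
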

\begin{proof}
We have a commutative diagram
\[
\xymatrix{& \Spec MU_{2*}/\G_m \ar[d]^q\\
\Spec E_{2*}/\G_m \ar[ur]^{g} \ar[r]^-f & \MM_{FG}
}
\]

By definition, $q^*\FF$ corresponds to the evenly graded $MU_{2*}$-module $\FF(\Spec MU_{2*})$. Thus, $f^*\FF \cong g^*q^*\FF$ corresponds to the evenly graded $E_{2*}$-module $\FF(\Spec MU_{2*})\tensor_{MU_{2*}} E_{2*}$, proving the lemma.
\end{proof}

Now we turn to the proof of Theorem \ref{thm:RealLandweber}. Part (a) of it can be proven analogously to Landweber exactness in the motivic setting as in \cite{NSO09}, though we follow their approach only loosely. 
The crucial fact about $M\R$ is the following lemma, which was already implicitly treated in \cite{H-K01} and can also be found in \cite{HHR09}.
\begin{lemma}
The restriction
$$(M\R_{*\rho}, M\R_{*\rho}M\R) \to (MU_{2*}, MU_{2*}MU)$$
defines an isomorphism of Hopf algebroids. 
\end{lemma}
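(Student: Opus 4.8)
The plan is to deduce the isomorphism of Hopf algebroids from known facts about the underlying ring spectra together with the strong evenness of $M\R$ and $M\R \wedge M\R$. First I would recall that $M\R$ is strongly even by \cite[Theorem 4.11]{H-K01}, so the restriction $M\R_{k\rho} \to M\R^e_{k\rho} = MU_{2k}$ is an isomorphism for every $k\in\Z$; this handles the object ring and gives $M\R_{*\rho}\cong MU_{2*}$ as graded rings. The content of the lemma is the analogous statement for the morphism object, i.e.\ that $M\R_{*\rho}M\R \to MU_{2*}MU$ is an isomorphism, and that the two structure maps (left and right unit, counit, conjugation, comultiplication) match up under these identifications.

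The key step is to show that $M\R \wedge M\R$ is again strongly even, or at least even enough that $\m{\pi}_{k\rho}(M\R\wedge M\R) \to \pi_{2k}(MU\wedge MU)$ is an isomorphism. The cleanest route is to use the standard fact that $M\R\wedge M\R$ is, as a $C_2$-spectrum, a wedge of (equivariant) suspensions of $M\R$ by representation spheres $S^{k\rho}$ — this is the Real refinement of the classical splitting $MU\wedge MU \simeq \bigvee \Sigma^{2k} MU$, and it follows from the Real orientation of $M\R$ together with the computation of $M\R_{*\rho}M\R$ as a polynomial algebra, exactly as in \cite{H-K01} and \cite{HHR09}. Since each $S^{k\rho}\wedge M\R$ is strongly even (a suspension by $k\rho$ of a strongly even spectrum preserves the vanishing of $\m{\pi}_{n\rho-1}$ and the constancy of $\m{\pi}_{n\rho}$), and strong evenness is closed under wedges, $M\R\wedge M\R$ is strongly even. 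Then the restriction $M\R_{*\rho}M\R \to MU_{2*}MU$ is an isomorphism of graded rings.

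With both rings identified, it remains to check compatibility of the structure maps. The two unit maps $M\R \to M\R\wedge M\R$ and the multiplication/conjugation are all maps of $C_2$-ring spectra, and on $\m{\pi}_{*\rho}$ they restrict on underlying spectra to the corresponding structure maps of $(MU_{2*}, MU_{2*}MU)$; since the restriction functor to underlying spectra is faithful enough on the relevant homotopy groups (the maps $M\R_{*\rho}(-) \to MU_{2*}(-)$ are isomorphisms by strong evenness), the Hopf algebroid structures agree. The main obstacle is establishing the strong evenness of $M\R \wedge M\R$ cleanly; once that is in hand, everything else is bookkeeping with the isomorphisms already produced. One should be slightly careful that the grading map $2\Z \to RO(C_2)$, $2k\mapsto k\rho$, is compatible with all the structure maps — in particular that the comultiplication respects the internal grading — but this is immediate from the fact that on underlying spectra these are the usual graded maps and the $\rho$-grading is detected there.
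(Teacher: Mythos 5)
Your overall reduction is fine (the object ring is handled by strong evenness of $M\R$, and the Hopf-algebroid bookkeeping is indeed routine), but the central step is not actually established: you justify the splitting of $M\R\wedge M\R$ into $\rho$-suspensions of $M\R$ by appealing to ``the computation of $M\R_{*\rho}M\R$ as a polynomial algebra'' --- and that computation \emph{is} the statement being proven. If you already knew $M\R_{*\rho}M\R\cong MU_{2*}MU$ (polynomial over $MU_{2*}$ on the $\bar b_i$ in degrees $i\rho$), the lemma would be immediate and no splitting would be needed; using it to produce the splitting, and then the splitting to prove the lemma, is circular. The Real orientation alone gives you a Thom equivalence $M\R\wedge M\R\simeq M\R\wedge (BU_{\R})_+$, but to go from there to a wedge of $S^{k\rho}\wedge M\R$'s (equivalently, to strong evenness of $M\R\wedge M\R$) you must still run an induction over the cells of $BU_{\R}$ and use the vanishing of $\m{\pi}_{k\rho-1}M\R$ to kill the obstructions; that induction is the real content and is missing from your write-up.

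The paper supplies exactly this missing induction, working in homology rather than at the spectrum level: it considers the class $\CC^{st}$ of $C_2$-spectra $X$ for which $M\R_{*\rho}(X)\to MU_{2*}(X)$ is an isomorphism and $M\R_{*\rho-1}(X)\to MU_{2*-1}(X)$ is a monomorphism, checks via the five lemma and strong evenness of $M\R$ that $\CC^{st}$ is closed under cofibers of maps out of $S^{k\rho-1}$, under $\rho$-suspensions, filtered homotopy colimits, and (via the Real Thom isomorphism) Thom spaces of Real bundles, and then feeds in $M\R\simeq\hocolim_n\Sigma^{-n\rho}MU(n)$ with the Grassmannians built from Schubert cells of dimension $k\rho$. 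If you replace your appeal to the ``standard fact'' by this cell-by-cell argument (or by an equivalent spectrum-level induction establishing the splitting), your proof goes through --- but at that point it coincides with the paper's; note also that the paper's route never needs the spectrum-level splitting of $M\R\wedge M\R$ at all, only the homological statement for $X=M\R$.
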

\begin{proof}
It is clear that restriction defines a morphism of Hopf algebroids. It is left to show that $M\R_{*\rho}M\R \to MU_{2*}MU$ is an isomorphism. 

Let $\CC$ be the class of all pointed $C_2$-spaces and $\CC^{st}$ the class of all (genuine) $C_2$-spectra $X$ such that
$$M\R_{*\rho}(X) \to MU_{2*}(X)$$
is an isomorphism and
$$M\R_{*\rho-1}(X) \to MU_{2*-1}(X)$$
is a monomorphism, where homology is understood to be reduced in the unstable case. Observe first that $S^0\in\CC$ and $X\in \CC$ if and only if $\Sigma^\infty X \in \CC^{st}$. We have furthermore the following closure properties.
\begin{itemize}
 \item Both $\CC$ and $\CC^{st}$ are closed under weak equivalences and filtered homotopy colimits.
 \item If $X \in\CC^{st}$ and $S^{k\rho-1} \to X$ is a map, then also its cofiber is in $\CC^{st}$ as follows by the five lemma and from $M\R$ being strongly even. 
 \item If $X \in \CC$ and $V\to X$ is a Real vector bundle, then also the Thom space $X^V$ is in $\CC$ as $M\R$ is Real-oriented.
 \item If $X \in \CC^{st}$, then also $\Sigma^{k\rho}X \in \CC^{st}$ for every $k\in\Z$.
\end{itemize}
 We will demonstrate that these properties imply that $M\R \in \CC^{st}$. 

Depending on the model of $M\R$ of choice it is either easy to see or a theorem (\cite[B.252]{HHR09}) that we can write $M\R$ as a directed homotopy colimit over $\Sigma^{-n\rho} MU(n)$, where $MU(n)$ is the suspension spectrum of the Thom space $BU(n)^{\gamma_n}_+$ with the $C_2$-action by complex conjugation (which gives the universal bundle $\gamma_n$ the structure of a Real bundle). The Grassmannian $BU(n)$ is a directed homotopy colimit of finite dimensional Grassmannians, which are built of cells of dimension $k\rho = k\C$ by the theory of Schubert cells. Thus, $M\R$ is in $\CC^{st}$. 
\end{proof}
\begin{proof}[Proof of Theorem \ref{thm:RealLandweber}:] Given a $C_2$-spectrum $X$, define quasi-coherent sheaves $\FF^X_i$ for $i\in\Z$ on $\MM_{FG}$ corresponding to the graded $(MU_{2*}, MU_{2*}MU)\cong (M\R_{*\rho}, M\R_{*\rho}M\R)$-comodules $M\R_{*\rho+i}X$. As above, the $\FF^X_i$ are $C_2$-equivariant homology theories with values in quasi-coherent sheaves on $\MM_{FG}$. Thus, the pullbacks $f^*\FF_i^X$ are homology theories with values in quasi-coherent sheaves on $\Spec E_{2*}/\G_m$. By Lemma \ref{lem:SheafIdentities}, the associated graded module is $M\R_{*\rho + i}(X)\tensor_{MU_{2*}} E_{2*}$, which is thus a homology theory as well; this proves part (a) of Real Landweber exactness. Note that as $M\R_{*\rho + i}(X)\tensor_{MU_{2*}} E_{2*}$ has suspension isomorphisms for arbitrary (virtual) representations, it is isomorphic to the $RO(C_2)$-graded theory associated to its degree-$0$ part.      

For the proof of (b), choose a Real orientation $M\R \to E\R$, which exists by Lemma \ref{lem:RealOrientable}. By Lemma \ref{lem:regrep} it is now enough to show that the induced maps
\[
M\R_{*\rho} \tensor_{MU_{2*}} E_{2*} \to E\R_{*\rho}
\]
and
\[
MU_{2*} \tensor_{MU_{2*}} E_{2*} \to E_{2*}
\]
are isomorphisms (as the odd groups are zero anyhow). The latter is clear and the former is true since both $\mpi_{*\rho}M\R$ and $\mpi_{*\rho}E\R$ are constant.
\end{proof}

By the following proposition, the Real Landweber exact functor theorem can actually be used to produce $C_2$-spectra. 
\begin{thm}\label{thm:Brown}
 Any (ungraded) $G$-equivariant homology theory can be represented by a $G$-spectrum, i.e.\ for every $G$-equivariant homology theory $h_0$, there is a $G$-spectrum $E$ such that there are isomorphisms $\pi_0^G(X \sm E) \cong h_0(X)$, natural in a $G$-spectra $X$. Note that this implies natural isomorphisms $\pi^G_{\bigstar}(X\sm E) \cong h_{\bigstar}(X)$ as well. 
 
 Moreover, any transformation of $G$-equivariant homology theories can be represented by a map of $G$-spectra. 
\end{thm}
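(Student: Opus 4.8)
The plan is to reduce the statement to Brown representability on \emph{finite} $G$-spectra via Spanier--Whitehead duality, and then to propagate the resulting representation to all $G$-spectra along filtered homotopy colimits. The first point to record is that any homology theory $h_0$ automatically commutes with filtered homotopy colimits: presenting $\hocolim_\alpha X_\alpha$ as the cofiber of the self-map $1-\mathrm{shift}$ of $\bigvee_\alpha X_\alpha$ (the mapping telescope), exactness and coproduct-preservation of $h_0$ identify $h_0(\hocolim_\alpha X_\alpha)$ with $\colim_\alpha h_0(X_\alpha)$. The candidate functor $X\mapsto \pi_0^G(X\sm E)$ enjoys the same property for any $G$-spectrum $E$, since $-\sm E$ preserves homotopy colimits and $\pi_0^G$ preserves filtered ones. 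As every $G$-spectrum is the filtered homotopy colimit of its finite $G$-CW subspectra, it suffices to build $E$ together with an isomorphism $h_0(X)\cong \pi_0^G(X\sm E)$ natural in \emph{finite} $G$-spectra $X$; a standard (if slightly tedious) cofinality argument, using that any two exhausting filtrations admit a common refinement, then shows that the induced comparison on a general $G$-spectrum is well defined, an isomorphism, and natural.

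On finite $G$-spectra I would invoke Spanier--Whitehead duality $X\sm E\simeq F(DX,E)$ with $DX=F(X,S^0)$, which gives a natural identification $\pi_0^G(X\sm E)\cong [DX,E]^G$. Since $D$ is a contravariant self-equivalence of the category of finite $G$-spectra, $k^0(Y):=h_0(DY)$ defines a contravariant homological functor on finite $G$-spectra, i.e.\ a cohomology theory there. Now apply the $G$-equivariant Brown representability theorem: because $\Ho(\Sp^G)$ is a compactly generated triangulated category with compact generators the orbit suspension spectra $\Sigma^\infty_+ G/H$, $H\subseteq G$, every cohomology theory on finite $G$-spectra is represented by a $G$-spectrum, so there is $E$ with $k^0(Y)\cong [Y,E]^G$ naturally in finite $Y$. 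Setting $Y=DX$ and using the duality identification yields $h_0(X)\cong [DX,E]^G\cong \pi_0^G(X\sm E)$ naturally in finite $X$, and by the previous paragraph this extends to all $G$-spectra, proving the first assertion. The graded statement $\pi^G_\bigstar(X\sm E)\cong h_\bigstar(X)$ is then formal, obtained by replacing $X$ with $S^{-V}\sm X$.

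For the final assertion, a natural transformation $\varphi\colon h_0\to h'_0$ of homology theories dualizes to a transformation $k^0\to (k')^0$ of cohomology theories on finite $G$-spectra, hence to a transformation $[-,E]^G\to [-,E']^G$ over the full subcategory of finite $G$-spectra; the uniqueness half of Brown representability realizes this by a map $E\to E'$ of $G$-spectra (unique only up to phantom maps). Since both sides commute with filtered homotopy colimits, this map of $G$-spectra represents $\varphi$ on all of $\Sp^G$.

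The main obstacle I expect is the equivariant Brown representability input itself: one must run the Adams-style inductive construction of $E$, successively attaching orbit cells $G/H_+\sm S^n$ over all subgroups $H\subseteq G$ to kill in turn the cokernel and then the kernel of the comparison natural transformation, and verify convergence (using the compact generation of $\Ho(\Sp^G)$ by the orbits) together with the usual non-canonical uniqueness. The other delicate point is purely bookkeeping: making the passage from finite to arbitrary $G$-spectra genuinely natural, i.e.\ checking that the isomorphism produced on finite spectra is compatible with inclusions of finite subspectra so that the colimit comparison map is well defined and functorial. Everything else in the argument is formal.
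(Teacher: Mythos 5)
The paper does not argue this by hand at all: it simply quotes \cite[Corollary 9.4.4]{HPS97}, which says that the genuine $G$-equivariant stable homotopy category is a \emph{Brown category}, i.e.\ homology functors and natural transformations between them are representable. Your proposal in effect tries to reprove that cited result (Spanier--Whitehead duality to pass to a cohomological functor on finite $G$-spectra, Adams-style representability there, then extension along filtered colimits), which is a legitimate strategy and indeed the shape of the classical Brown--Adams argument.

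However, there is a genuine gap at the central step. You justify ``every cohomology theory on finite $G$-spectra is represented by a $G$-spectrum'' by appealing to compact generation of $\Ho(\Sp^G)$ by the orbits $\Sigma^\infty_+G/H$. Compact generation gives Brown representability for functors defined on the \emph{whole} category (sending coproducts to products); it does \emph{not} give Adams representability for functors defined only on the subcategory of finite (compact) objects. That statement is known to fail in general compactly generated triangulated categories (Christensen--Keller--Neeman), and the failure manifests exactly where your sketch is vaguest: the ``attach orbit cells to kill cokernel and kernel'' induction need not converge, because killing the kernel at one stage can create new kernel elements and there is no wedge axiom available to control the limit when the functor is only defined on finite objects. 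What rescues the argument is a countability hypothesis --- the graded groups $[X,Y]^G_*$ for $X,Y$ finite $G$-spectra (equivalently, between the orbit generators) are countable for finite $G$ --- and this is precisely the hypothesis under which Hovey--Palmieri--Strickland (following Adams and Neeman) prove the Brown category property that the paper invokes. Your proof needs this input stated and used; as written, the representability step is unsupported. A smaller point in the same spirit: your telescope argument identifies $h_0$ of \emph{sequential} homotopy colimits, but a general $G$-spectrum is only a filtered (possibly uncountable) colimit of its finite subspectra, so the passage from finite to arbitrary $X$ requires the weak-colimit formalism (or a cardinality induction) rather than the mapping telescope alone; this too is handled in the cited framework.
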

\begin{proof}
 By \cite[Corollary 9.4.4]{HPS97}, the homotopy category of genuine $G$-spectra is a Brown category, which means exactly the statement of our proposition. 
\end{proof}
                                                                                                                                                                                                                                                                                                                                                                                                                                                                                                                                                                                                                                                              
In the rest of the section, we will give some reformulations of the stacky point of view on Landweber exactness to show that two Real Landweber exact spectra are equivalent iff their underlying spectra are equivalent. The following easy lemma will be useful. 

\begin{lemma}\label{lem:projectionformula}
 Let $f\colon \XX \to \YY$ be an affine morphism of algebraic stacks (in the sense of \cite[Definition 6]{Nau07}) and $\FF$ be a quasi-coherent sheaf on $\XX$ and $\GG$ be a quasi-coherent sheaf on $\YY$. Then the canonical homomorphism
 \[ 
  f_*\FF \tensor_{\OO_{\YY}} \GG \to  f_*(\FF\tensor_{\OO_{\XX}} f^*\GG)
 \]
 is an isomorphism.
\end{lemma}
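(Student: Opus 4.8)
The statement to prove is the projection formula for an affine morphism of algebraic stacks:
\[
f_*\FF \tensor_{\OO_{\YY}} \GG \to f_*(\FF\tensor_{\OO_{\XX}} f^*\GG)
\]
is an isomorphism, where $f$ is affine, $\FF$ is quasi-coherent on $\XX$, $\GG$ is quasi-coherent on $\YY$.

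The plan is to reduce to the affine scheme case by faithfully flat descent. First I would choose a faithfully flat presentation $p\colon \Spec A \to \YY$ (which exists since $\YY$ is an algebraic stack); since $f$ is affine, the pullback $\XX\times_\YY \Spec A$ is an affine scheme $\Spec B$, and we get a morphism of affine schemes $\tilde f\colon \Spec B \to \Spec A$. Because checking whether a map of quasi-coherent sheaves on $\YY$ is an isomorphism can be done after pulling back along the faithfully flat cover $p$, and because formation of $f_*$ for an affine morphism, of tensor products, and of $f^*$ all commute with flat (indeed arbitrary) base change, it suffices to verify the projection formula for $\tilde f\colon \Spec B \to \Spec A$ with the pulled-back sheaves. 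Here one must be a little careful: the compatibility of $f_*$ with base change along $p$ uses exactly that $f$ is affine (so $f_*$ is exact on quasi-coherent sheaves and "is" the direct image of modules), and the base-change square is the defining fiber square. I would state this compatibility as the key input, citing the affineness hypothesis and the corresponding statement for schemes.

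Next, the affine case is classical commutative algebra: if $\tilde f$ corresponds to a ring map $A\to B$, then $f_*$ corresponds to restriction of scalars, $f^*$ to extension of scalars $-\tensor_A B$, and the claim becomes that for a $B$-module $M$ and an $A$-module $N$ the natural map
\[
M\tensor_A N \to M\tensor_B (B\tensor_A N)
\]
is an isomorphism of $A$-modules, which is immediate from associativity of tensor product. So once the descent reduction is in place, the proof concludes in one line.

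The main obstacle — really the only substantive point — is setting up the base-change compatibility of $f_*$ correctly in the stacky setting: one needs that for the fiber square with $p$ faithfully flat and $f$ affine, the natural map $p^* f_* \FF \to \tilde f_* (\text{pullback of }\FF)$ is an isomorphism. For affine morphisms this holds for arbitrary base change (not just flat), and it is standard; I would invoke it via the description of quasi-coherent sheaves on an algebraic stack by descent data together with the scheme-level statement, or simply cite \cite{Nau07} where affine morphisms of algebraic stacks are set up. The remaining verifications (that $f^*$ and $\tensor$ commute with $p^*$, and faithful flatness detecting isomorphisms) are formal. I do not expect any of this to require more than a paragraph of actual writing.
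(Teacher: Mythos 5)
Your proposal is correct and follows essentially the same route as the paper: the paper's proof simply says one may assume $\YY$ (and hence $\XX$) is affine, where the statement is the elementary tensor-product identity you write down, with the flat-local reduction and the base-change compatibility of $f_*$ for affine morphisms left implicit. You have merely spelled out the descent bookkeeping that the paper treats as standard.
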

\begin{proof}
 We can assume that $\YY$ is affine and hence also $\XX$. Then it is clear. 
\end{proof}

Recall the notation $\FF_i^X$ from the proof of (Real) Landweber exactness above. 

\begin{prop}\label{prop:Landstack}\mbox{}
\begin{itemize}
\item[(a)]Let $E$ be an even Landweber exact spectrum. The associated graded formal group on $E_{2*}$ defines a map 
$$f\colon (\Spec E_{2*})/\G_m \to (\Spec MU_{2*})/\G_m \to \MM_{FG}.$$ 
Then given a spectrum $X$, we have
\[E_*(X) \cong \Gamma_{2*}(\MM_{FG};\FF_*^X\tensor_{\OO_{\MM_{FG}}}f_*\OO_{(\Spec E_{2*})/\G_m}).\]
\item[(b)]Let $E\R$ be an even Real Landweber exact spectrum. The associated graded formal group on $E\R_{*\rho} \cong E_{2*}$ (for $E$ the underlying spectrum) defines a map $$f\colon (\Spec E_{2*}) /\G_m \to \MM_{FG}$$
as above. Then given a $C_2$-spectrum $X$, we have
\[E\R_\bigstar(X) \cong \Gamma_{2*}(\MM_{FG};\FF_*^X\tensor_{\OO_{\MM_{FG}}}f_*\OO_{(\Spec E_{2*})/\G_m}),\]
 where $\Gamma_{2n}(\MM_{FG}; \FF_i^X \tensor \cdots)$ is in degree $n\rho+i$. 
\end{itemize}
\end{prop}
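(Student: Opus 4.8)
The plan is to reduce both statements to the identification of $MU_{2*+i}(X) \otimes_{MU_{2*}} E_{2*}$ (respectively $M\R_{*\rho+i}(X)\otimes_{MU_{2*}} E_{2*}$) with a global-sections expression on $\MM_{FG}$, using the projection formula of Lemma~\ref{lem:projectionformula} together with the sheaf identification of Lemma~\ref{lem:SheafIdentities}. First I would treat part (a). Recall from the discussion preceding the proposition that $E_*(X) \cong \Gamma_{2*}((\Spec E_{2*})/\G_m; f^*\FF_*^X)$, where $\FF_i^X$ is the quasi-coherent sheaf on $\MM_{FG}$ corresponding to the comodule $MU_{2*+i}X$; this is exactly how the stacky proof of Landweber exactness concludes. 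Now $f\colon (\Spec E_{2*})/\G_m \to \MM_{FG}$ is an affine morphism (it is a composite of the affine $\G_m$-quotient map with the flat affine $(\Spec MU_{2*})/\G_m \to \MM_{FG}$, or one notes directly that $E_{2*}$ is an $MU_{2*}$-algebra), so Lemma~\ref{lem:projectionformula} applies with $\GG = \FF_*^X$ and $\FF = \OO_{(\Spec E_{2*})/\G_m}$, giving
\[
f_*f^*\FF_*^X \cong \FF_*^X \tensor_{\OO_{\MM_{FG}}} f_*\OO_{(\Spec E_{2*})/\G_m}.
\]
Taking $\Gamma_{2*}(\MM_{FG};-)$ of both sides and using that $\Gamma_{2*}(\MM_{FG}; f_*\GG) \cong \Gamma_{2*}((\Spec E_{2*})/\G_m; \GG)$ (which follows since pushforward along $f$ does not change global sections, and $f^*\omega \cong \omega_E$ so the twists match) yields the claimed formula. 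One small point to check is the grading: $\FF_*^X$ already carries the $2\Z$-grading coming from the $i$-variable implicit in $\FF_i^X$, and $\Gamma_{2*}$ supplies the $\omega^{\otimes *}$-twist; one verifies these two gradings assemble correctly.

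Part (b) is formally identical once the inputs are in place. By the Real Landweber exact functor theorem (Theorem~\ref{thm:RealLandweber}) and the proof given for it, $E\R_\bigstar(X) \cong M\R_{*\rho+i}(X)\tensor_{MU_{2*}} E_{2*}$, and the latter was identified in that proof, via Lemma~\ref{lem:SheafIdentities} and the Hopf algebroid isomorphism $(M\R_{*\rho}, M\R_{*\rho}M\R) \cong (MU_{2*}, MU_{2*}MU)$, with $\Gamma_{2*}((\Spec E_{2*})/\G_m; f^*\FF_i^X)$ where now $\FF_i^X$ corresponds to the comodule $M\R_{*\rho+i}X$. Applying the same projection-formula manipulation as in part (a) gives the displayed formula, with the bookkeeping that $\Gamma_{2n}(\MM_{FG};\FF_i^X\tensor\cdots)$ lands in equivariant degree $n\rho + i$, exactly matching the grading convention recorded in the statement. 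It is worth noting explicitly that $E\R$ being Real Landweber exact is used only to invoke the isomorphism with $M\R_{*\rho+i}(X)\tensor E_{2*}$; no further equivariant input is needed beyond that point.

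The main obstacle I expect is bookkeeping rather than conceptual: making sure the $\G_m$-weight grading (the $\omega$-twists packaged in $\Gamma_{2*}$), the internal grading of $\FF_*^X$ carried by the comodule index $i$, and the $RO(C_2)$-grading $n\rho+i$ on the topological side all line up coherently under the projection-formula isomorphism, and that $f$ is genuinely affine in the sense of \cite[Definition 6]{Nau07} so that Lemma~\ref{lem:projectionformula} is legitimately applicable. Both of these are routine — affineness of $f$ is immediate from $E_{2*}$ being an $MU_{2*}$-algebra and $(\Spec MU_{2*})/\G_m \to \MM_{FG}$ being affine — but they are the only places where care is required, so I would state them cleanly and otherwise keep the argument short.
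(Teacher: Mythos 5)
Your proposal is correct and follows essentially the same route as the paper: identify the left-hand side with $\Gamma_{2*}(f^*\FF_i^X)\cong\Gamma_{2*}(f_*f^*\FF_i^X)$ via the (Real) Landweber exactness discussion, then apply Lemma~\ref{lem:projectionformula} with $\FF = \OO_{(\Spec E_{2*})/\G_m}$ and $\GG = \FF_i^X$, with part (b) being only a change of notation. Your extra remarks on the affineness of $f$ and the matching of twists/gradings are points the paper leaves implicit, and they are handled correctly.
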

\begin{proof}
 We will prove only part (a); the proof of part (b) only needs change in notation. As in the proof of Landweber exactness, the left hand side decomposes into two pieces of the form $\Gamma_{2*}(f^*\FF_i^X) \cong \Gamma_{2*}(f_*f^*\FF_i^X).$
 Thus, we only have to show that 
 $$f_*f^*\FF_i^X \cong \FF_i^X \tensor_{\OO_{\MM_{FG}}} f_*\OO_{(\Spec E_{2*})/\G_m},$$
 which follows directly from Lemma \ref{lem:projectionformula} with $\FF = \OO_{(\Spec E_{2*})/\G_m}$ and $\GG = \FF_i^X$. 
\end{proof}

In particular, we see that the values of a (Real) Landweber exact theory do not depend on the $MU_{2*}$-module structure of $E_{2*}$, but only on the graded quasi-coherent sheaf $f_*\OO_{(\Spec E_{2*})/\G_m}$ on $\MM_{FG}$ defined by $E_{2*}$. This sheaf has an alternative description:

\begin{lemma}\label{lem:SheafIdentification}
Let $E$ be an even Landweber exact spectrum and $f\colon \Spec E_{2*}/\G_m \to \MM_{FG}$ as above. Then we have an isomorphism $f_*\OO_{(\Spec E_{2*})/\G_m} \cong \FF^E_0$.
\end{lemma}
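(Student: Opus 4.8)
The plan is to compute the $(MU_{2*}, MU_{2*}MU)$-comodule corresponding to the quasi-coherent sheaf $f_*\OO_{(\Spec E_{2*})/\G_m}$ and recognize it as $MU_{2*}(E)$, which by definition is the comodule corresponding to $\FF^E_0$.

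First I would factor $f$ as the composite
\[
\Spec E_{2*}/\G_m \xrightarrow{\ g\ } \Spec MU_{2*}/\G_m \xrightarrow{\ q\ } \MM_{FG},
\]
exactly as in the commutative diagram in the proof of Lemma~\ref{lem:SheafIdentities}: here $g$ is induced by the ring map $MU_{2*}\to E_{2*}$ classifying the graded formal group of $E$, and $q$ is the standard presentation. Since $g$ is affine, $g_*$ is restriction of scalars on module-sheaves, so $g_*\OO_{(\Spec E_{2*})/\G_m}$ is the quasi-coherent sheaf on $\Spec MU_{2*}/\G_m$ associated to $E_{2*}$ viewed as a graded $MU_{2*}$-module (the adjoint of the identification $g^*(N)\cong N\tensor_{MU_{2*}}E_{2*}$ used in that same proof). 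Thus $f_*\OO_{(\Spec E_{2*})/\G_m}\cong q_*$ of this module-sheaf.

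Next I would use that $q$ is an affine, faithfully flat presentation of $\MM_{FG}$ (flatness of the Hopf algebroid $(MU_{2*},MU_{2*}MU)$). Under the equivalence between quasi-coherent sheaves on $\MM_{FG}$ and $(MU_{2*},MU_{2*}MU)$-comodules, $q_*$ sends the sheaf associated to an $MU_{2*}$-module $M$ to the cofree comodule $MU_{2*}MU\tensor_{MU_{2*}}M$ (tensored along the left unit, with coaction from the comultiplication on $MU_{2*}MU$). Taking $M=E_{2*}$, we get that $f_*\OO_{(\Spec E_{2*})/\G_m}$ corresponds to the comodule $MU_{2*}MU\tensor_{MU_{2*}}E_{2*}$. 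Equivalently, one obtains this by flat base change of $f_*$ along $q$, using that $\Spec MU_{2*}/\G_m\times_{\MM_{FG}}\Spec E_{2*}/\G_m\cong \Spec\!\big(MU_{2*}MU\tensor_{MU_{2*}}E_{2*}\big)/\G_m$. Finally, Landweber exactness of $E$ gives a natural isomorphism $E_*(X)\cong MU_*(X)\tensor_{MU_*}E_*$; applied to $X=MU$ and combined with $MU\sm E\simeq E\sm MU$, this identifies $MU_{2*}(E)=\pi_{2*}(MU\sm E)$ with $MU_{2*}MU\tensor_{MU_{2*}}E_{2*}$. Hence $f_*\OO_{(\Spec E_{2*})/\G_m}$ and $\FF^E_0$ correspond to the same comodule and are therefore isomorphic.

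The main obstacle is purely bookkeeping with the comodule/descent data, not any deep input: one must verify that the left $MU_{2*}MU$-coaction on $MU_{2*}MU\tensor_{MU_{2*}}E_{2*}$ produced by $q_*$ is the standard one, and that under the swap $MU\sm E\simeq E\sm MU$ it matches the coaction on $MU_{2*}(E)$ defining $\FF^E_0$ — both are the canonical left-factor coactions, so this is routine but should be spelled out. The grading via $\G_m$ and the evenness of $E$ enter only to ensure that all modules and comodules above are well-defined in the graded setting.
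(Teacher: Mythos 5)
Your proposal is correct and takes essentially the same route as the paper: the paper's proof is just a citation of \cite[Proposition 2.4]{MM15} (proved there in the even-periodic setting, with the remark that the general case is similar), and that cited argument runs exactly along your lines --- factor $f$ through $\Spec MU_{2*}/\G_m$, identify $f_*\OO_{(\Spec E_{2*})/\G_m}$ with the extended comodule $MU_{2*}MU\tensor_{MU_{2*}}E_{2*}$ via affineness and flat base change along the presentation $q$, and use Landweber exactness at $X=MU$ (plus the swap) to identify this comodule with $MU_{2*}(E)$, which is $\FF^E_0$ by definition. The left/right-unit and coaction bookkeeping you flag is indeed the only point needing care, and it is routine, so your write-up amounts to spelling out the graded analogue of the proof the paper delegates to the reference.
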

\begin{proof}
This was proven in the even-periodic context in the proof of \cite[Proposition 2.4]{MM15}. The general case is similar.
\end{proof}

\begin{prop}\label{prop:RealUnderlying}
 Let $E\R$ and $F\R$ be two (strongly even) Real Landweber exact $C_2$-spectra, whose underlying spectra $E$ and $F$ are equivalent. Then $E\R$ and $F\R$ are equivalent. 
\end{prop}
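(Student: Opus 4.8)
The plan is to reduce the equivalence of $E\R$ and $F\R$ to the equivalence of their underlying spectra by exploiting the stacky description of Real Landweber exact homology theories from Proposition \ref{prop:Landstack}(b) together with the representability statement of Theorem \ref{thm:Brown}. First I would observe that by Proposition \ref{prop:Landstack}(b), the $RO(C_2)$-graded homology theory $E\R_\bigstar(-)$ depends on $E$ only through the graded quasi-coherent sheaf $f_*\OO_{(\Spec E_{2*})/\G_m}$ on $\MM_{FG}$, and by Lemma \ref{lem:SheafIdentification} this sheaf is exactly $\FF^E_0$, i.e.\ it is determined by the underlying homology theory $E_*(-)$ as a functor to quasi-coherent sheaves on $\MM_{FG}$ — not by the chosen $MU_{2*}$-algebra structure.

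Next I would use the hypothesis: an equivalence $\varphi\colon E \simeq F$ of underlying spectra induces a natural isomorphism $E_*(X) \cong F_*(X)$, hence a natural isomorphism $\FF^E_0 \cong \FF^F_0$ of quasi-coherent sheaves on $\MM_{FG}$ (one should check this really is an isomorphism of sheaves on $\MM_{FG}$, i.e.\ compatible with the comodule structure — this follows because $\varphi$ is a map of spectra so it commutes with the $MU$-comodule structure maps). Feeding this through the formula of Proposition \ref{prop:Landstack}(b) gives, for every $C_2$-spectrum $X$, a natural isomorphism
\[
E\R_\bigstar(X) \;\cong\; \Gamma_{2*}\big(\MM_{FG};\FF_*^X\tensor_{\OO_{\MM_{FG}}} \FF^E_0\big) \;\cong\; \Gamma_{2*}\big(\MM_{FG};\FF_*^X\tensor_{\OO_{\MM_{FG}}} \FF^F_0\big) \;\cong\; F\R_\bigstar(X),
\]
i.e.\ an isomorphism of $C_2$-equivariant homology theories (naturality in $X$ is built into the construction of the sheaves $\FF^X_*$).

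Finally, by the representability part of Theorem \ref{thm:Brown}, a natural isomorphism of the ungraded $C_2$-equivariant homology theories $X \mapsto \pi_0^{C_2}(X\sm E\R)$ and $X\mapsto \pi_0^{C_2}(X \sm F\R)$ is represented by a map of $C_2$-spectra $E\R \to F\R$ inducing an isomorphism $\pi_\bigstar^{C_2}(X\sm E\R) \to \pi_\bigstar^{C_2}(X\sm F\R)$ for all $X$; taking $X$ to run over $(C_2/H)_+$ for $H\subseteq C_2$ and over spheres shows this map induces an isomorphism on all homotopy Mackey functors, hence is a weak equivalence. The main obstacle I anticipate is bookkeeping rather than conceptual: one must make sure that the isomorphism $\FF^E_0\cong \FF^F_0$ induced by $\varphi$ is genuinely an isomorphism of quasi-coherent sheaves on $\MM_{FG}$ (i.e.\ of graded $MU_{2*}MU$-comodules, not merely of graded $MU_{2*}$-modules), and that the chain of identifications in Proposition \ref{prop:Landstack}(b) is natural enough to transport this to a natural isomorphism of the full $RO(C_2)$-graded theories; once that is in hand, the appeal to Brown representability is immediate.
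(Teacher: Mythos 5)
Your proposal is correct and follows essentially the same route as the paper: express $E\R_\bigstar(X)$ via Proposition \ref{prop:Landstack}(b) and Lemma \ref{lem:SheafIdentification} in terms of $\FF^E_0$, transport the isomorphism $\FF^E_0\cong\FF^F_0$ induced by $E\simeq F$ through that formula to get a natural isomorphism of $C_2$-equivariant homology theories, and conclude by Brown representability (Theorem \ref{thm:Brown}) that the representing $C_2$-spectra are equivalent. The only cosmetic difference is that the paper phrases the comparison for the ungraded (degree-$0$) theories and lets Theorem \ref{thm:Brown} handle the passage to $RO(C_2)$-graded theories, whereas you carry the $\bigstar$-grading along explicitly; this changes nothing of substance.
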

\begin{proof}
 Assume $E\simeq F$. Then they define isomorphic graded quasi-coherent sheaves $\FF^E_0$ and $\FF^F_0$ on $\MM_{FG}$. As $E\R$ and $F\R$ are Real Landweber exact, Proposition \ref{prop:Landstack} and Lemma \ref{lem:SheafIdentification} imply the following chain of isomorphisms, natural in a $C_2$-spectrum $X$:
\begin{align*} E\R_0(X) &\cong \Gamma(\MM_{FG}; \FF^X_0 \tensor_{\OO_{\MM_{FG}}} \FF^E_0) \\
 &\cong \Gamma(\MM_{FG}; \FF^X_0 \tensor_{\OO_{\MM_{FG}}} \FF^F_0) \\
 &\cong F\R_0(X)
\end{align*}
Thus, the (ungraded) $C_2$-equivariant homology theories defined by $E\R$ and by $F\R$ are isomorphic. By Proposition \ref{thm:Brown}, a natural isomorphism of $C_2$-equivariant homology theories induces an equivalence of the representing $C_2$-spectra.
\end{proof}

\subsection{Forms of \texorpdfstring{$BP\R\langle n\rangle$}{BPRn} and \texorpdfstring{$E\R(n)$}{ERn}}
Fix a prime $p$.
\begin{defi}
Let $E$ be a complex oriented $p$-local commutative and associative ring spectrum (up to homotopy). The $p$-typification of its formal group law defines a ring morphism $BP_*\to E_*$.
\begin{itemize}
\item[(a)]We call $E$ a \emph{form of $BP\langle n\rangle$} if the map
\[\Z_{(p)}[v_1,\dots, v_n] \subset BP_* \to E_*\]
is an isomorphism. This does not depend on the choice of $v_i$.
\item[(b)]We call $E$ a \emph{form of $E(n)$} if there is a choice of indecomposables $v_1,\dots, v_n \in BP_*$ with $|v_i| = 2(p^i-1)$ such that the image of $v_n$ under the homomorphism
\[\mathbb{Z}_{(p)}[v_1,\dots, v_n] \subset BP_* \to E_*\]
is invertible and the induced morphism $\mathbb{Z}_{(p)}[v_1,\dots, v_n, v_n^{-1}] \to E_*$ is an isomorphism. 
\end{itemize}
\end{defi}

Spectra like in (a) are also sometimes called \emph{generalized} $BP\langle n\rangle$ (see \cite[Def 4.1]{L-N14}). There is a Real analogue, where we specialize to $p=2$:

\begin{defi}
Let $E\R$ be an even Real oriented $2$-local commutative and associative $C_2$-ring spectrum (up to homotopy). This induces a formal group law on $E\R_{
\rho}$ \cite[Thm 2.10]{H-K01}; its $2$-typification defines a map $BP_{2*} \cong BP\R_{*\rho} \to E\R_{*\rho}$.
\begin{itemize}
\item[(a)]We call $E\R$ a \emph{form of $BP\R\langle n\rangle$} if the map
\[
\m{\Z}_{(2)}[\vb_1,\dots, \vb_n] \subset \mpi_{*\rho}BP\R \to \mpi_{*\rho} E\R
\]
is an isomorphism of constant Mackey functors. This does not depend on the choice of $\vb_i$.
\item[(b)]We call $E\R$ a \emph{form of $E\R(n)$} if there is a choice of indecomposables $\vb_1,\dots, \vb_n \in BP\R_{*\rho}$ with $|\vb_i| = (2^i-1)\rho$ such that the image of $\vb_n$ under the homomorphism
\[\mathbb{Z}_{(2)}[\vb_1,\dots, \vb_n] \subset BP\R_{*\rho} \to E\R_{*\rho}\]
is invertible and the induced morphism $\m{\mathbb{Z}}_{(2)}[\vb_1,\dots, \vb_n, \vb_n^{-1}] \to \mpi_{*\rho}E\R$ is an isomorphism of constant Mackey functors.
\end{itemize}
\end{defi}

Note that a form $E\R(n)$ is always Real Landweber exact by Theorem \ref{thm:RealLandweber} as it is strongly even and its underlying spectrum is Landweber exact.

\begin{prop}\label{prop:JWUniqueness}
If for two forms of $E\R(n)$ their underlying spectra are equivalent, then they are equivalent as $C_2$-spectra.
\end{prop}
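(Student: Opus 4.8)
The plan is to deduce this immediately from Proposition~\ref{prop:RealUnderlying}, which already establishes that a strongly even Real Landweber exact $C_2$-spectrum is determined, up to equivalence, by its underlying spectrum. So the only content is checking that a form of $E\R(n)$ meets the hypotheses of that proposition.

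First I would verify that a form $E\R$ of $E\R(n)$ is strongly even. By definition $E\R$ is even, so $\mpi_{k\rho-1}E\R=0$ for all $k$; moreover the defining isomorphism $\m{\Z}_{(2)}[\vb_1,\dots,\vb_n,\vb_n^{-1}]\xrightarrow{\ \cong\ }\mpi_{*\rho}E\R$ is an isomorphism of \emph{constant} Mackey functors, so each $\mpi_{k\rho}E\R$ is constant. Hence $E\R$ is strongly even in the sense of Section~\ref{sec:BasicsC2}.

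Next I would record, as already noted just before the statement, that $E\R$ is then Real Landweber exact by Theorem~\ref{thm:RealLandweber}(b): it remains only to see that the underlying spectrum $E$ is Landweber exact. But $E_*\cong\Z_{(2)}[v_1,\dots,v_n,v_n^{-1}]$ with the $v_i$ images of indecomposables of $BP_*$, and the Landweber exactness criterion is satisfied here (it is insensitive to the choice of indecomposable generators): multiplication by $v_i$ on $E_*/(2,v_1,\dots,v_{i-1})$ is injective for all $i$, being a polynomial variable for $i<n$, an isomorphism for $i=n$ since $v_n$ is invertible, and acting on $E_*/(2,v_1,\dots,v_n)=0$ vacuously for $i>n$. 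So $E$ is Landweber exact and $E\R$ is Real Landweber exact.

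Finally, given two forms $E\R$ and $F\R$ of $E\R(n)$ with equivalent underlying spectra, both are strongly even and Real Landweber exact by the above, so Proposition~\ref{prop:RealUnderlying} applies verbatim and yields $E\R\simeq F\R$. There is no genuine obstacle in the argument — all the real work sits in Proposition~\ref{prop:RealUnderlying} and its use of Brown representability — and the one step deserving an explicit word is the Landweber exactness of the underlying spectrum, which is the standard computation with the Landweber criterion indicated above.
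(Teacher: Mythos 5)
Your proposal is correct and follows the paper's own argument: the paper notes just before the proposition that every form of $E\R(n)$ is Real Landweber exact (being strongly even with Landweber exact underlying spectrum, by Theorem \ref{thm:RealLandweber}), and then deduces the statement directly from Proposition \ref{prop:RealUnderlying}. Your spelled-out verification of the Landweber criterion for $E_*\cong\Z_{(2)}[v_1,\dots,v_n,v_n^{-1}]$ is just a harmless elaboration of what the paper leaves implicit.
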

\begin{proof}
As every form of $E\R(n)$ is Real Landweber exact, this follows directly from Proposition \ref{prop:RealUnderlying}.
\end{proof}

\section{\texorpdfstring{$TMF_1(3)$}{TMF13} and friends}\label{sec:TMFandFriends}
In this section, we will first define the versions of $TMF$ we are after and compute $\pi_*Tmf_1(3)$. In Subsection \ref{sec:specseq}, we will run the homotopy fixed point spectral sequence for $tmf_1(3)^{hC_2}$ and apply this to see that $tmf_1(3)$ is a form of $BP\R\langle 2\rangle$. In subsection \ref{sec:relationship}, we will discuss the relationship between the $C_2$-spectra $tmf_1(3)$, $Tmf_1(3)$ and $TMF_1(3)$. In particular, we will show that $TMF_1(3) \simeq tmf_1(3)[\overline{\Delta}^{-1}]$ and how this implies the Real Landweber exactness of $TMF_1(3)$. 
\subsection{Basics}\label{sec:Basics}
Denote by $\MM_{ell}$ the moduli stack of elliptic curves and by $\MMb_{ell}$ its compactification.
Mapping an elliptic curve to its formal group defines a flat map $\MMb_{ell} \to \MM_{FG}$ to the moduli stack of formal groups. By \cite{HL13} (extending earlier work by Goerss, Hopkins and Miller), the induced presheaf of even-periodic Landweber exact homology theories refines to a sheaf of $E_\infty$-ring spectra $\OO^{top}$ on the log-\'etale site of $\MMb_{ell}$.

Denote by $\MM_1(n)$ the moduli stack of elliptic curves with one chosen point of exact order $n$ and by $\MMb_1(n)$ its compactification, whose definition we will now review. In the compactification we have to allow not only (smooth) elliptic curves, but \emph{generalized elliptic curves}, which can have as fibers also N{\'e}ron $m$-gons for $m|n$. These are obtained by gluing $m$ copies of $\mathbb{P}^1$, where
$0$ in the $i$th $\mathbb{P}^1$ (for $i \in \mathbb{Z}/m\mathbb{Z}$) is
attached to $\infty$ in the $(i+1)$st. For precise definitions see \cite[Section II.1]{D-R73}.
\begin{definition}[\cite{D-R73}, IV.4.11-4.15, \cite{Con07}]
We define the stack $\MMb_1(n)$ to classify generalized elliptic curves $p\co \EE \to S$ over
a base $S$ with $n$ invertible, together with an injection of group schemes $\mathbb{Z}/n \mathbb{Z} \to
\EE^{\circ}$ from the constant group scheme $\Z/n\Z$ over $S$ into the smooth locus of $\EE$ such that
\begin{enumerate}
 \item each  geometric fiber $\Spec \overline{k}\times_S \EE$ of $p$ is either smooth or a N{\'e}ron $m$-gon for some $m|n$
 \item the image of $\mathbb{Z}/n\mathbb{Z}$ intersects each irreducible component in every geometric fiber of $\EE$ nontrivially
\end{enumerate}
\end{definition}

We define
\begin{align*}
TMF_1(n) &= \OO^{top}(\MM_1(n))\\
Tmf_1(n) &= \OO^{top}(\MMb_1(n)) \\
tmf_1(n) &= \tau_{\geq 0}Tmf_1(n)
\end{align*}

We remark that the last definition should only be considered appropriate for $n\geq 2$ if $tmf_1(n)$ is even and $\pi_{2n}tmf_1(n)$ is isomorphic to the ring of integral holomorphic modular forms $H^0(\MMb_1(n);\omega^{\tensor *}).$
The second assumption is always fulfilled, but in general there can be a non-trivial $\pi_1tmf_1(n)$, which is isomorphic to $H^1(\MMb_1(n);\omega)$ (as already remarked in \cite[Remark 6.4]{HL13}). Luckily, there are no such problems for $tmf_1(3)$ as we will see at the end of this subsection. 

The following lemma is well-known:
\begin{lemma}\label{lem:Landweber}
 The spectrum $TMF_1(n)$ is Landweber exact for $n\geq 2$.
\end{lemma}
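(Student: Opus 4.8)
The plan is to reduce the statement to the stacky criterion for Landweber exactness recalled in Section~\ref{sec:Stack}, using the construction of $TMF_1(n)$ from \cite{HL13}. Recall that $TMF_1(n) = \OO^{top}(\MM_1(n))$, and that on log-étale affines over $\MM_{ell}$ the sheaf $\OO^{top}$ takes the values given by the even-periodic, Landweber exact homology theories determined by the formal groups of the universal elliptic curves over them. So I would check two things: (i) for $n\geq 2$ the stack $\MM_1(n)$ is cohomologically affine, so that $\pi_* TMF_1(n) = H^0(\MM_1(n);\omega^{\otimes *})$ is concentrated in even degrees and (weakly) $2$-periodic; and (ii) the map $\MM_1(n)\to\MM_{FG}$ classifying the formal group is flat.

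The only non-formal input is (ii), which I would obtain by factoring the map as $\MM_1(n)\to\MM_{ell}\to\MM_{FG}$. The second map is flat, as already recorded in Section~\ref{sec:Basics} --- this is the classical computation of Landweber. The first map, forgetting the chosen point of exact order $n$, is finite étale: its fibre over an elliptic curve $E$ is the set of points of $E$ of exact order $n$, an open and closed subscheme of the finite étale group scheme $E[n]$ over $\Z[1/n]$ (see \cite{D-R73}). Hence the composite is flat. For (i), when $n\geq 4$ the moduli problem $[\Gamma_1(n)]$ is rigid and $\MM_1(n)$ is the affine modular curve $Y_1(n)$ over $\Z[1/n]$, which has no higher quasi-coherent cohomology. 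For $n = 2, 3$ I would instead use the Tate normal forms
\[
y^2 = x^3 + a_2 x^2 + a_4 x \quad (\text{point }(0,0)\text{ of order }2), \qquad y^2 + a_1 x y + a_3 y = x^3 \quad (\text{point }(0,0)\text{ of order }3),
\]
which present $\MM_1(n)\cong(\Spec R)/\G_m$ with $R$ a graded polynomial ring over $\Z[1/n]$ inverted at the discriminant; since $\G_m$ and all finite inertia groups occurring (of order dividing $n$) are linearly reductive over $\Z[1/n]$, such a quotient stack has vanishing higher quasi-coherent cohomology, so $\pi_* TMF_1(n) = R$ is even. For $n=3$ this in fact recovers $\pi_* TMF_1(3)=\Z[1/3][a_1,a_3,\Delta^{-1}]$ with $|a_1|=2$, $|a_3|=6$, which is used later in any case.

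Granting (i) and (ii), $\pi_* TMF_1(n)$ is an even, Landweber exact $MU_{2*}$-algebra, and $TMF_1(n)$ is precisely the spectrum that the Landweber exact functor theorem (in the form discussed in Section~\ref{sec:Stack}) attaches to the flat map $(\Spec\pi_* TMF_1(n))/\G_m\to\MM_{FG}$; hence $TMF_1(n)_*(X)\cong MU_*(X)\otimes_{MU_*}\pi_* TMF_1(n)$ for all spectra $X$, which is the assertion. I expect the only real point to watch is (i) for $n = 2, 3$, where $[\Gamma_1(n)]$ is not rigid --- some elliptic curves acquire extra automorphisms --- so that $\MM_1(n)$ is a genuine Deligne--Mumford stack rather than a scheme, and one must check that the Tate normal form still presents it globally as a $\G_m$-quotient before invoking linear reductivity. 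Alternatively, since the lemma serves only as a known input, one may simply cite \cite{HL13}, where $\OO^{top}$ is set up so that $TMF_1(n)$ comes out Landweber exact whenever $\MM_1(n)$ is cohomologically affine.
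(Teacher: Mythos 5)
Your argument is essentially correct, but it reaches the isomorphism by a different mechanism than the paper. The shared input is cohomological affineness of $\MM_1(n)$, which you establish exactly as the paper does ($\MM_1(n)$ affine for $n\geq 4$ by Katz--Mazur, and a $\G_m$-quotient of an affine scheme via the Tate normal forms for $n=2,3$); this collapses the descent spectral sequence and gives evenness of $\pi_*TMF_1(n)$. (One small correction: for $n=2,3$ the spectrum is even but not weakly periodic, since $\omega$ is not trivial on $\MM_1(n)$; fortunately only evenness, hence complex orientability, is needed.) From there you argue classically: the forgetful map $\MM_1(n)\to\MM_{ell}[1/n]$ is finite \'etale because $n$ is invertible, so the composite to $\MM_{FG}$ is flat, $\pi_*TMF_1(n)$ is a Landweber exact $MU_{2*}$-algebra, and the comparison map $MU_*(X)\otimes_{MU_*}TMF_1(n)_*\to TMF_1(n)_*(X)$ induced by a complex orientation is a natural transformation of homology theories that is an isomorphism on the sphere, hence an isomorphism. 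That last step should be phrased this way rather than as ``$TMF_1(n)$ is precisely the spectrum the exact functor theorem attaches to the flat map'' (which, read literally, presupposes the conclusion), and it uses that, by the construction of $\OO^{top}$, the orientation's formal group agrees with the elliptic formal group, so that $(\Spec \pi_{2*}TMF_1(n))/\G_m\to\MM_{FG}$ really is the flat map you produced; both points are standard, and the paper runs this same comparison argument at the affine level. The paper's own proof, by contrast, never invokes flatness of $\MM_1(n)\to\MM_{FG}$ or the \'etaleness of the level-forgetting map: it identifies the sheaf $\pi_{2k-i}(\OO^{top}\wedge X)$ with the pullback of $\FF^X_i\otimes\omega^{\otimes k}$ on \'etale affines over $\MM_{ell}$, lets the collapsed descent spectral sequence compute $TMF_1(n)_{2k-i}(X)\cong H^0\big(\MM_1(n);h^*(\FF^X_i\otimes\omega^{\otimes k})\big)$ naturally in $X$, and concludes by the base-change Lemma~\ref{lem:SheafIdentities}. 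Your route is the more classical one and its extra geometric input is cheap; the paper's route buys, in addition, the sheaf-theoretic description of $TMF_1(n)$-homology as global sections over $\MM_1(n)$, which is the form in which the computation is set up for reuse.
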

\begin{proof}
Throughout the proof, we will use the notations $\omega$ and $\FF_i^X$ (for $i = 0,1$ and $X$ a spectrum) from Section \ref{sec:Stack}.

First, we prove that for $\Spec A \to \MM_{ell}$ \'etale such that the pullback of $\omega$ to $\Spec A$ is trivial, $E = \OO^{top}(\Spec A)$ is Landweber exact: By the descent spectral sequence, $E$ is even periodic. Thus, we can choose a complex orientation $MU \to E$. This defines a formal group law on $A = \pi_0E$. By construction (see Behrens's article in \cite{TMF}), the composite morphism $g\colon\Spec A \to \MM_{ell} \to \MM_{FG}$ classifies the underlying formal group. As $g$ is flat, a version of the Landweber exact functor theorem (see Lemma \ref{lem:SheafIdentities} or \cite[Lecture 15]{Lur10}) implies that the source of
\[(X\mapsto MU_*(X)\tensor_{MU_*}E_*) \to (X\mapsto E_*(X))\]
is a homology theory and thus the depicted morphism is a natural isomorphism. This proves that $E$ is Landweber exact. Furthermore, it provides a natural isomorphism between $\pi_{2k-i}(\OO^{top}\sm X)$ and the pullback of $\FF_i^X\tensor \omega^{\tensor k}$ to $\MM_{ell}$ for $i=0,1$.

 For $n\geq 4$, the stack $\MM_1(n)$ is represented by an affine scheme (\cite[2.7.3 and 4.7.0]{K-M85}). For $n=2,3$ we have the slightly weaker statement that only $\MM_1^1(n)$ is of the form $\Spec A$, where $\MM_1^1(n)$ classifies elliptic curves where we choose not only a point of order $n$, but also a nowhere vanishing invariant differential; we recover $\MM_1(n)$ as $\Spec A/\G_m$. This can either be shown along the same lines as the previous statement or deduced from concrete presentations (see e.g.\ \cite[Proposition 3.2]{M-R09} and \cite[Section 1.3]{Beh06}). In particular, the global sections functor
 \[\Gamma\colon \QCoh(\MM_1(n)) \to \mathrm{Abelian Groups}\]
 on quasi-coherent sheaves is exact. Indeed, $\QCoh(\Spec A/\G_m)$ is by Galois descent equivalent to the category of graded $A$-modules (where the grading comes from the $\G_m$-action). The global sections functor corresponds to $M_* \mapsto M_0$, which is clearly exact.

 In particular, we see that the descent spectral sequence
 \[H^s(\MM_1(n); h^*\omega^{\tensor t}) \Rightarrow \pi_{2t-s}TMF_1(n)\]
 is concentrated in the $0$-line, where $h\colon \MM_1(n) \to \MM_{FG}$ classifies the formal group. Thus, $\MM_1(n) \simeq (\Spec \pi_{2*}TMF_1(n))/\G_m$. By the same argument we get a natural isomorphism
 \[TMF_1(n)_{2k-i}(X) \cong H^0(\MM_1(n); h^*(\FF_i^X\tensor \omega^{\tensor k}))\] 
 for spectra $X$.
 Now Lemma \ref{lem:SheafIdentities} implies that we have isomorphisms
 \[MU_*(X) \tensor_{MU_*} TMF_1(n) \cong TMF_1(n)_*(X),\]
 again natural in $X$.
\end{proof}

Sending the point $x$ of order $n$ to $[k]x$ for $k\in (\Z/n)^\times$, defines a $(\Z/n)^\times$-action on $\MMb_1(n)$. In particular, this induces $(\Z/3)^\times = C_2$-actions on $TMF_1(3)$ and $Tmf_1(3)$. Thus we will view these spectra as cofree $C_2$-spectra as in Section \ref{sec:naive}. We define the $C_2$-spectrum $tmf_1(3)$ as the $C_2$-equivariant connective cover of $Tmf_1(3)$ so that 
$$tmf_1(3)^{C_2} = \tau_{\geq 0} \left(Tmf_1(3)^{hC_2}\right).$$ 
Note that this spectrum is not cofree as
\[\tau_{\geq 0} \left(Tmf_1(3)^{hC_2}\right)\simeq \tau_{\geq 0} \left(tmf_1(3)^{hC_2}\right),\]
which follows formally from the $C_2$-homotopy fixed point spectral sequence, and $tmf_1(3)^{hC_2}$ has negative homotopy groups as we will see in the next subsection. 

Denote by $\MM_0(n)$ the moduli stack of elliptic curves with a chosen subgroup of order $n$ and by $\MMb_0(n)$ its compactification, defined as follows:
\begin{defi}We define $\MMb_0(n)$ for $n$ squarefree\footnote{For the subtleties for non-squarefree $n$ see \cite{Ces15}.} to classify generalized elliptic curves $p\co \EE \to S$ over a base
$S$ with $n$ invertible, together with a subgroup $G \subset \EE^{\circ}[n]$ such that
\begin{enumerate}
 \item each  geometric fiber of $p$ is either smooth or a N{\'e}ron $m$-gon for some $m|n$
 \item $G$ is \'etale locally isomorphic to $\Z/n\Z$
 \item $G$ intersects each irreducible component in every geometric fiber of $\EE$ nontrivially
\end{enumerate}
\end{defi}
We define
\begin{align*}
TMF_0(3) &= \OO^{top}(\MM_0(3))\\
Tmf_0(3) &= \OO^{top}(\MMb_0(3))
\end{align*}
The forgetful maps
\[\MM_1(n) \to \MM_0(n) \quad \text{and} \quad \MMb_1(n) \to \MMb_0(n)\]
are $(\Z/n)^\times$-Galois coverings for $n$ squarefree, as checked in \cite[Theorem 7.12]{MM15}. In particular, this implies that $Tmf_0(3) \simeq Tmf_1(3)^{hC_2}$ and $TMF_0(3) \simeq Tmf_1(3)^{hC_2}$. If we define $tmf_0(3) = \tau_{\geq 0}Tmf_0(3)$, then it follows that $tmf_0(3) \simeq tmf_1(3)^{C_2}$.\\

Next, we will study $\MMb_1(3)$ in more detail. The following lemma essentially says that there can be only one reasonable compactification of our moduli stacks.
\begin{lemma}\label{lem:Normalization}
Let $f\co Y\to X$ be a map of Deligne--Mumford stacks (over some base scheme $S$) and assume that $X$ is locally noetherian. Let $\overline{f}_1, \overline{f}_2\co \overline{Y}_1, \overline{Y}_2 \to X$ be finite morphisms from normal Deligne--Mumford stacks (over $S$) such that $Y$ sits inside $\overline{Y}_1$ and  $\overline{Y}_2$ as a dense open substack and $\overline{f}_i|_Y = f$ for $i=1,2$. Then $\overline{Y}_1 \simeq \overline{Y}_2$ as stacks over $X$.
\end{lemma}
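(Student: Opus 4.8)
The plan is to recognize that both $\overline{Y}_1$ and $\overline{Y}_2$ are canonically the \emph{normalization of $X$ in $Y$}, a construction that is manifestly unique; the isomorphism between them will then be the resulting canonical one, and it will automatically restrict to the identity on $Y$. I would organize the argument in three stages: reduce from Deligne--Mumford stacks to schemes, settle the scheme case by identifying the relevant quasi-coherent algebra with an integral closure, and then descend the canonical isomorphism back up to the stack.

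\emph{Reduction to schemes.} Since $X$ is a locally noetherian Deligne--Mumford stack, it has a scheme atlas $p \co U \to X$. I would pull $f$, the $\overline{f}_i$, and the dense open immersions $Y \hookrightarrow \overline{Y}_i$ back along $p$, and along $U \times_X U \rightrightarrows U$. All hypotheses survive: $\overline{Y}_i \times_X U$ is a scheme because $\overline{f}_i$ is affine, it is normal because normality is étale-local, $Y \times_X U$ is a dense open substack of it because open immersions are preserved by base change and density is preserved by the flat map $p$, and $U$ is again locally noetherian with $\overline{f}_{i,U}|_{Y \times_X U} = f_U$. If the scheme case produces a \emph{canonical} isomorphism $\overline{Y}_1 \times_X U \xrightarrow{\sim} \overline{Y}_2 \times_X U$ over $U$ — one natural for further étale base change and restricting to $\id$ on $Y \times_X U$ — then it satisfies the cocycle condition over $U \times_X U$ and descends to the desired isomorphism $\overline{Y}_1 \simeq \overline{Y}_2$ over $X$. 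Naturality here relies on the fact that the integral closure of a quasi-coherent algebra commutes with étale base change over a locally noetherian base.

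\emph{The scheme case.} With $X$ a locally noetherian scheme, $f \co Y \to X$ is quasi-compact and quasi-separated (locally it is an open immersion followed by a finite map), so $f_* \OO_Y$ is quasi-coherent and I can form its integral closure $\mathcal{A} \subseteq f_* \OO_Y$ over $\OO_X$; then $\underline{\Spec}_X(\mathcal{A}) \to X$ is by definition the normalization of $X$ in $Y$, with a canonical factorization of $f$ through it. Since $\overline{f}_i$ is finite, hence affine, we have $\overline{Y}_i = \underline{\Spec}_X(\mathcal{B}_i)$ with $\mathcal{B}_i = \overline{f}_{i*}\OO_{\overline{Y}_i}$, so everything reduces to the equality $\mathcal{B}_i = \mathcal{A}$ of subsheaves of $f_* \OO_Y$. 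The inclusion $\mathcal{B}_i \hookrightarrow f_* \OO_Y$ is injective because $\overline{Y}_i$ is reduced (being normal) and $Y$ is dense in it, hence schematically dense; $\mathcal{B}_i \subseteq \mathcal{A}$ because finiteness of $\overline{f}_i$ forces $\mathcal{B}_i$ to be integral over $\OO_X$; and $\mathcal{A} \subseteq \mathcal{B}_i$ because over an affine $\Spec A \subseteq X$ with finite normal preimage $\Spec B_i$, a function on the dense open $Y \cap \Spec B_i$ that is integral over $A$ lies in each normal-domain factor of $B_i$, hence in $B_i$. This gives $\mathcal{B}_i = \mathcal{A}$ independently of $i$, and unwinding the definitions shows the identification restricts to the given inclusions of $Y$. (Alternatively, one can take the scheme-theoretic closure $Z$ of $Y$ in $\overline{Y}_1 \times_X \overline{Y}_2$; the two projections $Z \to \overline{Y}_i$ are finite and restrict to isomorphisms over the dense open $Y$, hence are isomorphisms, the targets being normal noetherian.)

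\emph{Main obstacle.} The mathematical content is classical — it is just the uniqueness of the normalization — so the real work is bookkeeping: keeping straight what ``normal'', ``dense open substack'', and the relative-$\Spec$/integral-closure constructions mean for Deligne--Mumford stacks, and checking that the integral closure in the scheme case is étale-local so that descent applies. The one genuinely delicate point is passing from topological to schematic density of $Y$ in $\overline{Y}_i$ (used for the injectivity $\mathcal{B}_i \hookrightarrow f_* \OO_Y$), which is precisely where normality — hence reducedness — of $\overline{Y}_i$ is needed; the locally noetherian hypothesis on $X$ enters to keep all the algebras finite and coherent.
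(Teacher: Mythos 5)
Your proposal is correct and takes essentially the same approach as the paper: both arguments identify each compactification $\overline{Y}_i$ with the normalization of $X$ in $Y$, i.e., show that the pushforward algebra $(\overline{f}_i)_*\mathcal{O}_{\overline{Y}_i}$ is exactly the integral closure of $\mathcal{O}_X$ in $f_*\mathcal{O}_Y$, using finiteness, normality (via the decomposition into integral components), and density of $Y$ just as you do. The only difference is bookkeeping: the paper reduces directly to affine $X$ using affineness of the $\overline{f}_i$ and argues with fraction fields of the component domains, whereas you reduce to schemes via an étale atlas and descent of the integral-closure sheaf.
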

\begin{proof}
The same argument as in \cite[Cor 12.2]{G-W10} shows that it is enough to show that $(\overline{f}_1)_*\OO_{\overline{Y}_1} \cong (\overline{f}_2)_*\OO_{\overline{Y}_2}$ as both $\overline{f}_1$ and $\overline{f}_2$ are affine. In particular, it is enough to show the existence of a natural isomorphism between $\overline{Y}_1$ and $\overline{Y}_2$ if $X = \Spec A$ is affine. Then also $\overline{Y}_1 = \Spec B_1$ and  $\overline{Y}_2 = \Spec B_2$ are affine. The inclusions $Y\subset \Spec B_1$ and $Y\subset \Spec B_2$ induce bijections of the sets of connected components. As all these schemes are normal and locally noetherian, all connected components are irreducible \cite[Rem 6.3.7]{G-W10}. Thus, we can assume that $\overline{Y}_1$ and $\overline{Y}_2$ are irreducible and hence $B_1$ and $B_2$ are normal integral domains. Write $C = \Gamma(\OO_Y)$. As $Y$ is open in $\overline{Y}_i$, the map from $B_i$ into its fraction field factors over $C$. In particular, $B_i$ injects into $C$ and is integrally closed in 
it. As it is also finite and thus integral over $A$, it consists exactly of those elements
in $C$ that are integral over
$A$. In particular, we have a canonical isomorphism $B_1\cong B_2$ of $A$-algebras.
\end{proof}

Note that $\MMb_1(n) \to \MMb_{ell}[\frac1n]$ and $\MMb_0(n)\to \MMb_{ell}[\frac1n]$ (if $n$ is squarefree) are finite morphisms from normal (even regular) Deligne--Mumford stacks and $\MM_1(n)\subset \MMb_1(n)$ and $\MM_0(n) \subset \MMb_0(n)$ are open dense inclusions (see \cite[IV.3.4]{D-R73} or \cite[4.1.1]{Con07}; note that the complement of an effective Cartier divisor is open and dense). Thus, we can apply the previous lemma to approach the following well-known result (see e.g.\ \cite{L-N14}) that has to the knowledge of the authors not appeared with full proof in print.

\begin{prop}\label{prop:StackIdentification}
We have equivalences
\begin{align*}
\MM_1(3) &\simeq \Spec \big(\Z\big[\tfrac13\big][a_1,a_3][\Delta^{-1}]\big)/\G_m \\
\MMb_1(3) &\simeq \Big(\Spec \big(\Z\big[\tfrac13\big][a_1,a_3]\big)\setminus \{0\}\Big)/\G_m =: \PP_{\Z\big[\tfrac13\big]}(1,3)
\end{align*}
Here,
\begin{itemize}
\item the $\G_m$-action on $\Spec \big(\Z\big[\tfrac13\big][a_1,a_3]\big)$ is induced by the grading with $|a_1| = 1$ and $|a_3| = 3$,
\item $\Delta = a_3^3(a_1^3-27a_3)$,
\item $\{0\}$ denotes the common vanishing locus of $a_1$ and $a_3$
\item $\PP_{\Z\big[\tfrac13\big]}(1,3)$ is often called the weighted (stacky) projective line with weights $1$ and $3$.
\end{itemize}
\end{prop}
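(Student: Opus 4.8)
The plan is to identify both moduli stacks by exhibiting an explicit Weierstrass presentation and then appeal to the normalization lemma (Lemma \ref{lem:Normalization}) to pin down the compactification. First I would recall the classical fact (see \cite[Section 1.3]{Beh06} or \cite[Proposition 3.2]{M-R09}) that over $\Z[\tfrac13]$ an elliptic curve equipped with a point $P$ of exact order $3$ can, after a suitable coordinate change, be written in the form
\[
y^2 + a_1 xy + a_3 y = x^3
\]
with $P = (0,0)$, and that this presentation is unique up to the substitutions $x \mapsto u^2 x$, $y \mapsto u^3 y$ (which act on $(a_1,a_3)$ with weights $1$ and $3$). This gives a map from $\Spec \Z[\tfrac13][a_1,a_3]$ to the stack $\MM_1^1(3)$ classifying such curves together with a chosen invariant differential; one checks it is an isomorphism onto the open locus where the discriminant $\Delta = a_3^3(a_1^3 - 27a_3)$ is invertible, the factor $a_3^3$ reflecting the condition that $P$ actually has \emph{exact} order $3$ (rather than being the identity) on a N\'eron $1$-gon, and $a_1^3 - 27a_3$ being the usual discriminant of the Weierstrass equation. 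Passing to the $\G_m$-quotient and inverting $\Delta$ yields the first equivalence $\MM_1(3) \simeq \Spec(\Z[\tfrac13][a_1,a_3][\Delta^{-1}])/\G_m$.

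For the second equivalence, I would argue that $\PP_{\Z[\tfrac13]}(1,3) = (\Spec \Z[\tfrac13][a_1,a_3]\setminus\{0\})/\G_m$ is a normal (indeed regular) Deligne--Mumford stack, finite over $\MMb_{ell}[\tfrac13]$ via the coarse Weierstrass construction, and that $\MM_1(3)$ sits inside it as a dense open substack restricting to the tautological family. Concretely, the Weierstrass equation above defines a generalized elliptic curve over all of $\Spec \Z[\tfrac13][a_1,a_3]\setminus\{0\}$: over the locus $a_1^3 - 27 a_3 = 0$ (but $a_3 \neq 0$) one gets a N\'eron $1$-gon, and over the locus $a_3 = 0$ (so $a_1 \neq 0$) one needs to check that the total space, suitably normalized or blown up, produces a N\'eron $3$-gon with the point of order $3$ meeting every component — this is the content of the boundary analysis and is exactly where the weight-$3$ parameter $a_3$ plays the role of the cube of a local coordinate on the cusp. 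Granting that this family satisfies the moduli conditions defining $\MMb_1(3)$, it induces a map $\PP_{\Z[\tfrac13]}(1,3) \to \MMb_1(3)$ over $\MMb_{ell}[\tfrac13]$ which is the identity on the common dense open $\MM_1(3)$, and since $\MMb_1(3) \to \MMb_{ell}[\tfrac13]$ is also finite from a normal stack (as recalled before the proposition), Lemma \ref{lem:Normalization} forces it to be an isomorphism.

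I expect the main obstacle to be the boundary behavior at the cusp $a_3 = 0$: verifying that the naive Weierstrass family extends (after normalization in the weighted-projective total space) to an honest generalized elliptic curve with a N\'eron $3$-gon fiber whose order-$3$ point hits all three components, and that the resulting stack map lands in $\MMb_1(3)$ rather than some other compactification. This requires either a local computation in a formal neighborhood of $a_3 = 0$ matching the Tate-curve picture, or a clean appeal to the deformation-theoretic characterization of $\MMb_1(3)$ in \cite[IV.4.11--4.15]{D-R73} and \cite{Con07}. Once the family is in hand, the remaining steps — normality and regularity of $\PP_{\Z[\tfrac13]}(1,3)$, finiteness over $\MMb_{ell}[\tfrac13]$, density of $\MM_1(3)$, and the invocation of Lemma \ref{lem:Normalization} — are routine. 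A mild secondary point is bookkeeping the $\G_m$-weights so that the graded ring $\Z[\tfrac13][a_1,a_3]$ with $|a_1| = 1$, $|a_3| = 3$ matches the line bundle $\omega$, but this is exactly parallel to the affine statement and poses no real difficulty.
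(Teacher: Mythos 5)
Your overall skeleton (Weierstrass presentation for the open part, Lemma \ref{lem:Normalization} for the compactification) matches the paper, but your route through the boundary contains the one genuinely hard step, and you leave it unproven. You propose to build a map $\PP_{\Z[\tfrac13]}(1,3)\to\MMb_1(3)$ by extending the Weierstrass family to a generalized elliptic curve \emph{with its $\Gamma_1(3)$-structure} over the cusp $a_3=0$, producing a N\'eron $3$-gon after blow-up/normalization with the order-$3$ section meeting every component. This is exactly where the naive family breaks down: on $y^2+a_1xy=x^3$ the marked point $(0,0)$ is the node, so the level structure degenerates and a genuine modification of the family (Tate-curve/Deligne--Rapoport style) is required. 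You flag this as the ``main obstacle'' and only proceed ``granting'' it, so as written the central step of your argument is a gap rather than a proof.

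The paper's point is that this step is unnecessary. Lemma \ref{lem:Normalization} compares two finite normal compactifications of $Y=\MM_1(3)$ over $X=\MMb_{ell}\left[\tfrac13\right]$ \emph{abstractly}: you never need a morphism $\PP_{\Z[\tfrac13]}(1,3)\to\MMb_1(3)$, only a finite morphism $\PP_{\Z[\tfrac13]}(1,3)\to\MMb_{ell}\left[\tfrac13\right]$ extending the classifying map of the open part. For that, the unmodified Weierstrass cubic $y^2+a_1xy+a_3y=x^3$ suffices, since $\MMb_{ell}$ permits N\'eron $1$-gon fibers; the only check is that no geometric fiber over $\Spec A\setminus\{0\}$ is cuspidal, i.e.\ that $c_4=\Delta=0$ forces $a_1=a_3=0$, a two-line computation. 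The other substantive input, which you dismiss as routine, is finiteness of this map: in the paper it is obtained from Bauer's computation \cite{Bau08} that $\Spec A/\G_m\times_{\MM_{cub}}\Spec A/\G_m$ is (the quotient of) a finite flat $A$-algebra, together with the fact that finiteness descends along fpqc base change, and then restricting over $\MMb_{ell}\left[\tfrac13\right]\subset\MM_{cub}\left[\tfrac13\right]$. If you either carry out your boundary analysis honestly or, better, redirect your own setup (which already records that both stacks are finite and normal over $\MMb_{ell}\left[\tfrac13\right]$ with $\MM_1(3)$ dense open in each) straight into Lemma \ref{lem:Normalization}, the argument closes; as it stands, the proposal proves the easy half and defers the hard half.
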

\begin{proof}
The first equivalence follows from \cite[Proposition 3.2]{M-R09}.

Set $A = \Z\big[\tfrac13\big][a_1,a_3]$. The equality $\PP_{\Z\big[\tfrac13\big]}(1,3) = (\Spec A\setminus \{0\})/\G_m$ is just the definition of the weighted projective line. This is a proper and smooth Deligne--Mumford stack over $\Spec \Z\big[\tfrac13\big]$ by \cite[Proposition 2.1, Remark 2.2]{Mei15}. Note furthermore that $\MM_1(3) \subset \PP_{\Z\big[\tfrac13\big]}(1,3)$ is a dense open substack.

To apply Lemma \ref{lem:Normalization}, we need to construct a finite morphism
\[\PP_{\Z\big[\tfrac13\big]}(1,3) \to \MMb\left[\tfrac13\right]\]
 that extends the morphism
 \[\MM_1(3) \to \MM_{ell}\left[\tfrac13\right] \subset \MMb_{ell}\left[\tfrac13\right].\]

The equation $y^2+a_1xy+a_3y = x^3$ defines a cubic curve over $\Spec A/\G_m$. We want to show that this equation actually defines a generalized elliptic curve $E$ over $\PP_{\Z\big[\tfrac13\big]}(1,3)$. For this, we have to check that for no map $f\co \Spec k \to \PP_{\Z\big[\tfrac13\big]}(1,3)$ for $k$ a field (of characteristic $\neq 3$), the pullback $f^*E$ has a cusp. Equivalently, we have to show that for any values $a_1,a_3\in k$ for which $c_4 = a_1^4-24a_1a_3$ and $\Delta = a_3^3(a_1^3-27a_3)$ vanish, also $a_1$ and $a_3$ vanish. First observe that if $c_4 = \Delta = 0$, then $a_1 = 0$ implies $a_3 = 0$ and vice versa. If $\Delta = 0$, either $a_3= 0$ or $a_1^3 = 27a_3$. In the second case, $27a_1a_3 = a_1^4 = 24a_1a_3$ and thus $a_1= 0$ or $a_3=0$.

Thus, we obtain a map $p\colon \Spec A/\G_m \to \MM_{cub}\left[\tfrac13\right]$ to the moduli stack of cubic curves that restricts to a map $\PP_{\Z\big[\tfrac13\big]}(1,3) \to \MMb_{ell}\left[\tfrac13\right]$, which in turn extends the map $\MM_1(3) \to \MM_{ell}\left[\tfrac13\right]\subset \MMb_{ell}\left[\tfrac13\right]$.

As computed in the beginning of Section 7 of \cite{Bau08}, the map $p$ is surjective and we have $\Spec A/\G_m \times_{\MM_{cub}} \Spec A/\G_m \simeq (\Spec A[s,t]/(f,g))/\G_m$, where $f$ and $g$ are polynomials in $s$ and $t$ such that $A[s,t]/(f,g)$ is a finite flat $A$-module. As finiteness can be checked after fpqc-base change, the map $p$ is finite and hence also its restriction $\PP_{\Z\big[\tfrac13\big]}(1,3) \to \MMb_{ell}\left[\tfrac13\right]$, which is the base change $p\times_{\MM_{cub}\left[\tfrac13\right]}\MMb_{ell}\left[\tfrac13\right]$. Thus, the result follows by Lemma \ref{lem:Normalization}.
\end{proof}

By checking the gradings, we see that $p^*\omega\cong \OO(1)$ for $p\co \PP_{\Z\big[\tfrac13\big]}(1,3) \to \MMb_{ell}\left[\tfrac13\right]$ the restriction of the morphism constructed in the proof above. (Here, $\omega$ denotes the line bundle $\pi_2\OO^{top}$ on $\MMb_{ell}$, which is also the pullback of the line bundle on $\MM_{FG}$ we have denoted before by $\omega$.) Thus, we have
\[H^s(\MMb_1(3); \omega^{\tensor \ast}) \cong
\begin{cases} \Z\big[\tfrac13\big][a_1, a_3] & \text{ for } s=0 \\
\Z\big[\tfrac13\big][a_1, a_3]/(a_1^\infty, a_3^\infty) & \text { for } s=1\\
0 & \text { for } s \geq 2 \end{cases}\]
as shown, for example, in \cite[Proposition 2.5]{Mei15}. Here, $\Z\big[\tfrac13\big][a_1, a_3]/(a_1^\infty, a_3^\infty)$ denotes the $\Z\big[\tfrac13\big][a_1, a_3]$-torsion module with $\Z\big[\tfrac13\big]$-basis given by the monomials $\frac1{a_1^ia_3^j}$, $i,j\geq 1$. Thus, the descent spectral sequence for $Tmf_1(3)$ collapses. In particular, we see that $\pi_*tmf_1(3) = \Z\big[\tfrac13\big][a_1,a_3]$.

\subsection{\texorpdfstring{$RO(C_2)$}{ROC2}-graded homotopy of \texorpdfstring{$tmf_1(3)$}{tmf13}}\label{sec:specseq}
Our goal in this subsection is to understand the $C_2$-equivariant $RO(C_2)$-graded homotopy groups of $tmf_1(3)$. We will compute this via an $RO(C_{2})$-graded homotopy fixed point spectral sequence, as described for general $G$ in Section \ref{sec:RO(G)HFPSS}. When $G=C_{2}$ there are two important simplifications. The first allows us to identify the $E_{2}$ term more transparently:

\begin{lemma}
Let $E$ be a $C_2$-spectrum. Then
\[
\pi_*(E\sm S^{\sigma-1}) \cong \pi_*E \tensor \sgn
\]
as $C_2$-modules.
\end{lemma}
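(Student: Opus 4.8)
The plan is to show that smashing with $S^{\sigma-1}$ leaves the underlying non-equivariant spectrum unchanged while twisting the residual $C_2$-action by a sign. Here, as is implicit in the statement, $\pi_*$ denotes the homotopy groups of the underlying non-equivariant spectrum, regarded as a $C_2$-module via the residual (Weyl) action of $C_2$.

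First I would identify the underlying $C_2$-equivariant object of $S^{\sigma-1}$. The underlying space of $S^\sigma$ is $S^1$, and the residual action of the generator $\gamma\in C_2$ is induced by the linear involution $-1$ on the representation $\sigma=\R$; on $S^1=S^\sigma$ this is a reflection, hence a self-equivalence of degree $-1$. Desuspending once, the underlying spectrum of $S^{\sigma-1}$ is canonically $S^0$, equipped as its residual $C_2$-action with the stable degree $-1$ self-map, i.e.\ with multiplication by $-1\in\pi_0 S^0=\Z$.

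Then I would choose any non-equivariant equivalence $\phi\co S^0\xrightarrow{\ \sim\ }S^{\sigma-1}$ and smash it with $E$, obtaining a non-equivariant equivalence $E\simeq E\sm S^{\sigma-1}$ and hence an isomorphism $\pi_*E\cong\pi_*(E\sm S^{\sigma-1})$ of graded abelian groups, manifestly natural in $E$. Transporting the residual $C_2$-action through this isomorphism, $\gamma$ acts by $\gamma_E\sm\gamma_{S^{\sigma-1}}$; conjugating the second smash factor by $\phi$ produces a self-map of $S^0$ which must be $[-1]$, since every self-map of $S^0$ is an element of the commutative ring $\pi_0 S^0=\Z$ and so conjugation by a unit has no effect — in particular the identification does not depend on $\phi$. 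Since smashing with $[-1]\co S^0\to S^0$ induces multiplication by $-1$ on $\pi_*$, the action of $\gamma$ on $\pi_*(E\sm S^{\sigma-1})$ is $x\mapsto-\gamma_E(x)$, which is exactly the $C_2$-module $\pi_*E\tensor\sgn$. (Alternatively, one can avoid the auxiliary choice $\phi$ by smashing the underlying cofiber sequence of $(C_2)_+\to S^0\to S^\sigma$ with $E\sm S^{-1}$ and running the resulting long exact sequence.)

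The content here is slight; the only place to be careful — the \emph{hard part}, such as it is — is the sign bookkeeping: one must verify that the residual action on the underlying $S^\sigma$ is a degree $-1$ rather than a degree $+1$ map, and that the induced twist on homotopy groups is insensitive to the chosen trivialization $\phi$, which is what makes the isomorphism canonical and natural in $E$.
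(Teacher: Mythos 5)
Your proposal is correct and is essentially the paper's own argument: the proof there is the one-line observation that the $C_2$-action on $S^\sigma$ is a map of degree $-1$, which is exactly the sign bookkeeping you carry out (your extra care about independence of the trivialization $\phi$ is fine but not a different method).
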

\begin{proof}
This follows from the fact that the action map $t\colon S^\sigma \to S^\sigma$ has degree $-1$.
\end{proof}

\begin{corollary}\label{cor:ROLandweber}
If $E$ is a $C_{2}$-spectrum, then the $RO(C_{2})$-graded homotopy fixed point spectral sequence has the form
\[
H^{s}(C_{2};\pi_{t}(E)\otimes \sgn^{\tensor r})\Rightarrow \pi^{C_{2}}_{t-s+(\sigma-1)r} F\big(EC_{2+},E\big).
\]
The differential $d_i$ goes from degree $(r,s,t)$ to $(r, s+i, t+i-1)$. The tridegree $(r,s,t)$ corresponds to the bidegree $((t-r)+r\sigma, s)$ in representation grading. 

If $E$ is even with $\pi_{2n}$ flat over $\Z$ and the group $C_2$ acts on $\pi_{2n}E$ via $(-1)^n$, then the $E^2$-term is isomorphic to
\[\overline{\pi_{2*}E} \tensor \Z[u_{2\sigma}^{\pm 1}, a_\sigma]/2a_\sigma\]
with $|u_{2\sigma}| = (2-2\sigma,0)$ and $|a_\sigma| = (-\sigma, 1)$. 
Here, $\overline{\pi_{2n}E}$ is the group $\pi_{2n}E$, but not in degree $2n$, but in degree $n+n\sigma$. 
\end{corollary}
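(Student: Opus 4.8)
The plan is to deduce both halves of the statement from the general $RO(G)$-graded homotopy fixed point spectral sequence of Section~\ref{sec:RO(G)HFPSS}, specialized to $G=C_2$, followed by a direct group-cohomology computation under the stated hypotheses. First I would record the general shape (the first paragraph of the corollary). For $G=C_2$ there is a unique non-trivial irreducible real representation, $\sigma$, so the $RO(C_2)$-graded homotopy fixed point spectral sequence is $\bigoplus_{r\in\Z}\mathrm{HF}(r\sigma,E)$ with $\mathrm{HF}(r\sigma,E)=\mathrm{HF}(S^{-r\sigma}\sm E)$ and $E_2$-page $H^s\big(C_2;\pi_n(S^{-r\sigma}\sm E)\big)$. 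Writing $-r\sigma=-r+r(1-\sigma)$ and iterating the preceding lemma gives a natural isomorphism of $C_2$-modules $\pi_n(S^{-r\sigma}\sm E)\cong\pi_{n+r}(E)\tensor\sgn^{\tensor r}$. Reindexing by $t=n+r$, so that $t$ records the degree in $\pi_* E$ rather than in $\pi_*(S^{-r\sigma}\sm E)$, rewrites the $E_2$-page as $H^s(C_2;\pi_t(E)\tensor\sgn^{\tensor r})$; and applying the adjunction recalled at the start of Section~\ref{sec:RO(G)HFPSS} twice, the abutment $\pi^{C_2}_{n-s}F(EC_{2+},S^{-r\sigma}\sm E)$ becomes $\pi^{C_2}_{(n-s)+r\sigma}F(EC_{2+},E)=\pi^{C_2}_{t-s+(\sigma-1)r}F(EC_{2+},E)$. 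Since $r$ is constant along each summand, the usual degree of a $d_i$ in the ordinary homotopy fixed point spectral sequence becomes $(r,s,t)\mapsto(r,s+i,t+i-1)$ under this reindexing, and the translation of $(r,s,t)$ into the representation bidegree $((t-r)+r\sigma,s)$ is pure bookkeeping.

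Next I would feed in the hypotheses. Evenness of $E$ gives $\pi_tE=0$ for $t$ odd, so only $t=2n$ contribute; writing $A_n:=\pi_{2n}E$ as an abelian group, the module $\pi_{2n}(E)\tensor\sgn^{\tensor r}$ is $A_n$ with $\gamma$ acting by $(-1)^{n+r}$, hence $A_n$ with trivial action when $n+r$ is even and $A_n\tensor\sgn$ when $n+r$ is odd. From the standard $2$-periodic resolution $\cdots\to\Z[C_2]\xrightarrow{1-\gamma}\Z[C_2]\xrightarrow{1+\gamma}\Z[C_2]\to\Z$, together with torsion-freeness of $A_n$, one computes
\[
H^s(C_2;A_n)=\begin{cases}A_n & s=0,\\ A_n/2 & s>0 \text{ even},\\ 0 & s \text{ odd},\end{cases}
\qquad
H^s(C_2;A_n\tensor\sgn)=\begin{cases}0 & s \text{ even},\\ A_n/2 & s \text{ odd}.\end{cases}
\]
This is exactly where the flatness assumption enters: without it, $H^{\mathrm{odd}}(C_2;A_n)$ and $H^{\mathrm{even}>0}(C_2;A_n\tensor\sgn)$ would each acquire the $2$-torsion subgroup of $A_n$.

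Finally I would assemble. At fixed $t=2n$ the $s=0$ line is nonzero exactly for $r\equiv n\pmod 2$, where it equals $A_n$; these spots form a single orbit under multiplication by the invertible periodicity class $u_{2\sigma}$ (of $RO(C_2)$-degree $2-2\sigma$, filtration $0$, coming from the class of the same name in the homotopy fixed point spectral sequence of $S^0$ acting by multiplicativity), so the $s=0$ line is $\overline{\pi_{2*}E}\tensor\Z[u_{2\sigma}^{\pm1}]$, where $\overline{\pi_{2n}E}$ denotes $A_n$ placed in $RO(C_2)$-degree $n+n\sigma$, the distinguished representative of its orbit. For each $s=k>0$ exactly one of the two displayed groups is nonzero and equals $A_n/2$, and multiplication by the Euler class $a_\sigma\in H^1(C_2;\sgn)$ (of $RO(C_2)$-degree $-\sigma$, filtration $1$, with $2a_\sigma=0$) identifies filtration $k$ with $\overline{\pi_{2*}E}/2\tensor\Z[u_{2\sigma}^{\pm1}]\cdot a_\sigma^{k}$. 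Stacking the filtrations gives the asserted isomorphism $E_2\cong\overline{\pi_{2*}E}\tensor_\Z\Z[u_{2\sigma}^{\pm1},a_\sigma]/(2a_\sigma)$; since $E$ need not be a ring spectrum, this should be read as an isomorphism of modules over $\Z[u_{2\sigma}^{\pm1},a_\sigma]/(2a_\sigma)$ (and of rings when $E$ carries a homotopy multiplication). I do not expect a deep obstacle here: the only real care is needed in keeping the two regradings straight — the $\mathrm{HF}(r\sigma,E)$-reindexing $t=n+r$ and the Adams-style bidegree — so that the degrees of $u_{2\sigma}$, $a_\sigma$ and $\overline{\pi_{2n}E}$ match up; the group-cohomology computations themselves are routine once torsion-freeness is in hand.
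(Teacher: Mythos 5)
Your argument is correct and follows essentially the same route as the paper: the first part is the bookkeeping specialization of the general $RO(C_2)$-graded homotopy fixed point spectral sequence using the sign-twist lemma $\pi_*(E\sm S^{\sigma-1})\cong\pi_*E\tensor\sgn$, and the second part is the identification of the $E_2$-term using flatness of $\pi_{2*}E$. The only cosmetic difference is that you compute $H^s(C_2;A_n)$ and $H^s(C_2;A_n\tensor\sgn)$ termwise from the periodic resolution and then reassemble via $u_{2\sigma}$ and $a_\sigma$, whereas the paper pulls the flat, $C_2$-invariant factor $\overline{\pi_{2*}E}$ out of the cohomology and quotes $H^*(C_2;\bigoplus_r\sgn^{\tensor r})\cong\Z[u^{\pm1},a]/2a$ — the same computation packaged differently.
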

\begin{proof}
 The first part is clear. For the second, note that the $RO(C_2)$-graded $C_2$-representation $\pi_\bigstar E$ is isomorphic to $\overline{\pi_{2*}E} \tensor \bigoplus_{r\in \Z} \sgn^{\tensor r}$ with $\sgn^{\tensor r}$ in degree $r(1-\sigma)$. The first tensor factor is invariant under the $C_2$-action and can therefore be pulled out of the cohomology group. For the second one, we have $H^*(C_2; \bigoplus_{r\in \Z} \sgn^{\tensor r}) \cong \Z[u^{\pm 1}, a]/2a$ with $u\in H^0(C_2; \sgn^{\tensor 2})$ and $a\in H^1(C_2; \sgn)$. 
\end{proof}

The second $C_{2}$ simplification is a recasting of the $RO(C_{2})$-graded homotopy fixed points spectral sequence in a way that allows us to read off permanent cycles. Recall that there is a $C_{2}$-equivariant map
\[
a_{\sigma}\colon S^{0}\to S^{\sigma}
\]
which is essential but for which the restriction is null. The following is undoubtedly well-known to experts.
\begin{lemma}\label{lem:Bockstein}
The $RO(C_{2})$-graded homotopy fixed points spectral sequence for a $C_{2}$-spectrum $X$ coincides with the $a_{\sigma}$-Bockstein spectral sequence for $X$.
\end{lemma}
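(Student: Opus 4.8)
The claim is that the $RO(C_2)$-graded homotopy fixed point spectral sequence for a $C_2$-spectrum $X$ agrees with the $a_\sigma$-Bockstein spectral sequence, so the strategy is to identify both spectral sequences as the spectral sequence of a single tower and to match the towers. The natural candidate tower on the homotopy-fixed-point side is the one obtained from the skeletal filtration of $EC_{2+}$; its associated graded pieces are suspensions of $\Sigma^\infty C_{2+}$, and the resulting spectral sequence is, after identifying group cohomology in the usual way, the homotopy fixed point spectral sequence as recalled in Section~\ref{sec:RO(G)HFPSS}. The candidate tower on the Bockstein side comes from the cofiber sequences
\[
S^{-(n+1)\sigma} \xrightarrow{a_\sigma} S^{-n\sigma} \to C_{2+}\wedge S^{-n\sigma},
\]
obtained by smashing the fundamental cofiber sequence $(C_2)_+ \to S^0 \xrightarrow{a_\sigma} S^\sigma$ with $S^{-(n+1)\sigma}$; mapping $X$ into (or smashing $X$ with, depending on the variance one wants) these produces a tower whose associated graded is again built from $C_2$-free pieces, and the resulting spectral sequence is by definition the $a_\sigma$-Bockstein spectral sequence.

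First I would set up the $a_\sigma$-Bockstein tower precisely: the inverse system $\cdots \to S^{-2\sigma} \xrightarrow{a_\sigma} S^{-\sigma} \xrightarrow{a_\sigma} S^0$ has homotopy limit $F(S^{\infty\sigma}, S^0)$, and since $S^{\infty\sigma} = \hocolim S^{n\sigma}$ is a model for $\widetilde{E}\PP$ in the sense of Lemma~\ref{lem:Geometric} — hence $EC_{2+}$ sits in a cofiber sequence $EC_{2+} \to S^0 \to S^{\infty\sigma}$ — the homotopy limit of the tower is $F(EC_{2+}, X)$ after smashing with $X$ and taking function spectra appropriately. Next I would observe that the $n$-th layer $C_{2+}\wedge S^{-n\sigma}\wedge X$ contributes exactly $H^n(C_2; \pi_\ast X \otimes \sgn^{\otimes n})$ to the $E_1$-page (using the degree $-1$ identification of Corollary~\ref{cor:ROLandweber}, i.e.\ the twist by $\sgn$ coming from $S^\sigma$), which is precisely the $E_1$-page of the homotopy fixed point spectral sequence in the indexing of Corollary~\ref{cor:ROLandweber}. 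Then I would compare the two towers directly: the skeletal filtration $\ast = (EC_2)^{(-1)} \subset (EC_2)^{(0)} \subset \cdots$ of $EC_{2+}$ and the $a_\sigma$-filtration both realize the same filtration of $EC_{2+}$ up to equivalence — concretely, the cofiber of $S^{-(n+1)\sigma}\to S^{-n\sigma}$ being $C_{2+}\wedge S^{-n\sigma}$ identifies the $n$-th filtration quotient with the $n$-th cell of $EC_{2+}$ — so the induced filtrations on $F(EC_{2+}, X)$ agree, and therefore so do the associated spectral sequences.

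The main obstacle I anticipate is bookkeeping rather than conceptual: one must check that the identifications of associated graded pieces are compatible with the respective $RO(C_2)$-gradings, i.e.\ that the $(r,s,t) \leftrightarrow ((t-r)+r\sigma, s)$ dictionary of Corollary~\ref{cor:ROLandweber} is exactly the one that makes the $a_\sigma$-Bockstein $d_i$ (which raises $a_\sigma$-power by $i$ and lands in the corresponding layer) agree with the homotopy fixed point $d_i$, and that the multiplicative structures (the Leibniz rule of the Proposition in Section~\ref{sec:RO(G)HFPSS} versus multiplication by $a_\sigma$) match. Since the paper only asserts that the spectral sequences ``coincide'' — which experts read as: the same filtered object, hence the same pages and differentials — I would keep the argument at the level of comparing the two filtrations of $EC_{2+}$ (equivalently of $S^{\infty\sigma}$) and invoke naturality, rather than chasing individual differentials. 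A clean way to phrase the comparison is: both filtrations are the Postnikov-type filtration of $S^{\infty\sigma}$ by its $C_2$-CW skeleta, which is unique up to the relevant equivalences, so the spectral sequences obtained by applying $F(-,X)$ are canonically isomorphic from the $E_1$-page on.
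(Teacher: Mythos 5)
Your proposal is correct and takes essentially the same route as the paper: both arguments identify the cofibers of the $a_\sigma$-power maps with the function spectra out of the spheres $S(n\sigma)_+$, which are exactly the skeleta of $EC_{2}$ in its standard model as the infinite sign sphere, so the $a_\sigma$-filtration and the skeletal filtration of $F(EC_{2+},X)$ coincide and hence so do the spectral sequences. The only small slip is your statement that the homotopy limit of the $a_\sigma$-tower is $F(EC_{2+},X)$: the limit of the tower $\{\Sigma^{-n\sigma}X\}$ itself is $F(S^{\infty\sigma},X)$, and it is the associated tower of cofibers $F(S(n\sigma)_+,X)$ whose limit is $F(EC_{2+},X)$; this does not affect the comparison of filtrations.
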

\begin{proof}
The map $a_{\sigma}^{n}$ fits in a cofiber sequence
\[
S(n\sigma)_{+}\to S^{0}\xrightarrow{a_{\sigma}^{n}} S^{n\sigma},
\]
where $S(n\sigma)$ is the unit sphere in the representation $n\sigma$. Applying $F(-,X)$, we deduce a cofiber sequence of spectra
\[
\Sigma^{-n\sigma} X\xrightarrow{a_{\sigma}^{n}} X\to F\big(S(n\sigma)_{+},X\big).
\]
The space $S(n\sigma)_{+}$ is also the $(n-1)$-skeleton of the standard model for $EC_{2+}$ as the infinite sign sphere, and the map on function spectra induced by the inclusion of the $(n-1)$-skeleton into the $n$-skeleton coincides with the obvious map of cofibers:
\[
\xymatrix{
{\Sigma^{-(n+1)\sigma} X}\ar[r]^-{a_{\sigma}^{n+1}}\ar[d]_{a_{\sigma}} & {X} \ar[r]\ar[d]_{1} & {F\big(S((n+1)\sigma)_{+},X\big)}\ar[d]  \\
{\Sigma^{-n\sigma} X}\ar[r]_-{a_{\sigma}^{n}} & {X}\ar[r] & {F\big(S(n\sigma)_{+},X\big).}
}
\]
Thus the filtration by powers of $a_{\sigma}$ and the filtration by the skeleton of $EC_{2+}$ coincide.
\end{proof}

Recall now from Section~\ref{sec:Basics} that non-equivariantly $$\pi_*tmf_1(3) \cong \Z\big[\tfrac13\big][a_1, a_3]$$
and $$\pi_*TMF_1(3) \cong \Z\big[\tfrac13\big][a_1, a_3, \Delta^{-1}]$$
with $|a_1| =2$ and $|a_3| = 6$. By \cite[Proposition 3.4]{M-R09}, the group $C_2$ acts by $-1$ on $a_1$ and $a_3$ in $\pi_*TMF_1(3)$ and hence also in $\pi_*tmf_1(3)$, as $\pi_*tmf_1(3)$ sits inside $\pi_*TMF_1(3)$. 

By Corollary~\ref{cor:ROLandweber}, the $RO(C_{2})$-graded homotopy fixed point spectral sequence $E_{2}$ term for $tmf_1(3)^{hC_2}$ can be written as
\begin{align}\label{eq:E2term}
E_{2}^{\ast,\ast}=\Z\big[\tfrac13\big][a_\sigma, u_{2\sigma}^{\pm 1}, \ab_1, \ab_3]/(2a_\sigma)
\end{align}
with degrees 
\begin{align*}
 |a_\sigma| &= (-\sigma, 1) = (1-\rho,1), \\
 |u_{2\sigma}| &= (2-2\sigma, 0) = (4-2\rho,0), \\
 |\ab_1| &= (1+\sigma, 0) = (\rho,0) ,\\
 |\overline{a}_3| &= (3+3\sigma, 0) = (3\rho,0).
\end{align*}

We start by identifying the permanent cycles corresponding to $\eta$ and $\nu$ in the Hurewicz image in $\pi_*tmf_1(3)^{hC_2}$. By \cite[Theorem 6.2]{HL13}, there is a $C_2$-equivariant map $$Tmf_1(3) \to KU$$
of $E_\infty$-ring spectra into K-theory, inducing a map between the homotopy fixed point spectral sequences for $Tmf_1(3)^{hC_2}$ and $KO\simeq KU^{hC_2}$. In the latter, $\eta$ is of filtration $1$, so it has to be of filtration $\leq 1$ in the former. As the homotopy fixed point spectral sequences of $Tmf_1(3)^{hC_2}$ and $tmf_1(3)^{hC_2}$ agree in nonnegative degrees, $\eta$ is also filtration $1$ in the homotopy fixed point spectral sequence for $tmf_1(3)^{hC_2}$ and is thus detected by $a_\sigma \ab_1$.

To identify $\nu$, we observe the following lemma:
\begin{lemma}The composite $Tmf\!\left[\tfrac13\right] \xrightarrow{\res} Tmf_0(3) \xrightarrow{\tr} Tmf\!\left[\tfrac13\right]$ is multiplication by $4$.
\end{lemma}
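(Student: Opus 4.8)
The composite $\tr\circ\res$ is a map of $Tmf\!\left[\tfrac13\right]$-module spectra from $Tmf\!\left[\tfrac13\right]$ to itself (the map $\res$ is a ring map, and $\tr$ is $Tmf\!\left[\tfrac13\right]$-linear by the projection formula), hence it is multiplication by a uniquely determined element $x\in\pi_0 Tmf\!\left[\tfrac13\right]=\Z\!\left[\tfrac13\right]$; the content of the lemma is that $x=4$. Both $\res$ and $\tr$ come from the morphism of stacks $q\co\MMb_0(3)\to\MMb_{ell}\!\left[\tfrac13\right]$: the map $\res$ is $\OO^{top}$ applied to $q$ on global sections, and $\tr$ is, by construction, the transfer attached to the finite flat morphism $q$ (fibrewise integration, i.e.\ the trace of the corresponding finite flat extension of sheaves of $E_\infty$-rings). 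So the first step is just to record that $\res$ and $\tr$ are the sections of the unit and the trace map of a finite flat extension.

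\textbf{Key geometric input.} The morphism $q$ is finite and locally free of constant degree $4$. Over the smooth locus, $\MM_0(3)\to\MM_{ell}\!\left[\tfrac13\right]$ is finite étale of degree equal to the number of order-$3$ subgroups of $(\Z/3)^2$, namely $(3^2-1)/(3-1)=4$; since $\MMb_{ell}\!\left[\tfrac13\right]$ is integral and $q$ is finite and dominant this degree persists over the cusps (one can also read it off directly from the N\'eron $m$-gons with $m\mid 3$). Now the identification $x=4$ is a chart-wise computation: $q$ is affine, so $q_*$ and the formation of $\res$ and $\tr$ commute with restriction along any flat chart $\Spec R\to\MMb_{ell}\!\left[\tfrac13\right]$; there $q$ pulls back to a finite locally free morphism of affines of rank $4$, classified by an $E_\infty$-$R$-algebra $S$ that is perfect of rank $4$ as an $R$-module. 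For any such $S$ the composite $R\to S\xrightarrow{\ \tr_{S/R}\ }R$ is multiplication by $\tr_{S/R}(1)=4$, exactly as in ordinary commutative algebra where the trace of the identity endomorphism of a free module of rank $4$ is $4$. Gluing over $\MMb_{ell}\!\left[\tfrac13\right]$ yields $x=4$.

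\textbf{Cross-check and main obstacle.} As a sanity check (and an alternative, once one knows $\tr$ is the geometric transfer), one can compute rationally: $\pi_* Tmf\!\left[\tfrac13\right]\otimes\Q$ is the ring of modular forms over $\Q$ and $\pi_* Tmf_0(3)\otimes\Q$ that for $\Gamma_0(3)$, and $\tr\circ\res$ becomes the classical restriction–corestriction, sending the weight-zero form $1$ to $[\mathrm{SL}_2(\Z):\Gamma_0(3)]\cdot 1=4$; since $\Z\!\left[\tfrac13\right]\hookrightarrow\Q$ this pins down $x$. The only genuine subtlety, in either approach, is the identification of the topologically defined transfer $\tr$ with the algebro-geometric trace of $q$ — this requires a little care about the (log-)\'etale site on which $\OO^{top}$ is defined and about the ramification of $q$ at the cusps — but it causes no real trouble, since $q$ is finite (hence affine) and everything reduces to the affine computation above.
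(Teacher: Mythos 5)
Your first reduction coincides with the paper's: $\tr\circ\res$ is $Tmf\!\left[\tfrac13\right]$-linear, hence multiplication by a single element $x\in\pi_0Tmf\!\left[\tfrac13\right]\cong\Z\!\left[\tfrac13\right]$, and the content is $x=4$. The step that carries all the weight in your route --- identifying $\tr$ with an ``algebro-geometric trace'' of the finite flat degree-$4$ map $q\co\MMb_0(3)\to\MMb_{ell}\!\left[\tfrac13\right]$ --- is, however, asserted rather than proved, and it is not how $\tr$ arises. In the paper, $\tr$ is the homotopy-fixed-point (Galois) transfer obtained by writing $Tmf\!\left[\tfrac13\right]\simeq Tmf(3)^{hGL_2(\Z/3)}$ and $Tmf_0(3)\simeq Tmf(3)^{hB}$ for the index-$4$ subgroup $B$ of upper-triangular matrices; this is also the transfer the lemma is used with afterwards, where compatibility with the homotopy fixed point spectral sequence is essential. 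On your side, a ``trace of a finite flat extension of sheaves of $E_\infty$-rings'' is not an off-the-shelf notion: it has to be constructed from dualizability of $q_*\OO^{top}_{\MMb_0(3)}$ as an $\OO^{top}$-module (and transported to $Tmf\!\left[\tfrac13\right]$-modules via $0$-affineness), and one must then \emph{prove} that the resulting $Tmf\!\left[\tfrac13\right]$-linear map agrees with the transfer --- for instance by faithful base change along $Tmf\!\left[\tfrac13\right]\to Tmf(3)$, where the extension splits as $\prod_{GL_2(\Z/3)/B}Tmf(3)$ and both maps become the sum of coordinates. Saying the comparison ``causes no real trouble'' because $q$ is finite is precisely the missing argument: finiteness gives the chart-wise rank-$4$ computation (trace of the identity of a rank-$4$ locally free module), but not the identification of the two maps. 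Your rational cross-check inherits the same dependence, since it presumes that $\tr$ is the classical corestriction on rings of modular forms.

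For contrast, the paper's proof avoids this entirely: on the $E_2$-pages of the homotopy fixed point spectral sequences the composite is $\mathrm{cor}\circ\mathrm{res}$, i.e.\ multiplication by the index $[GL_2(\Z/3):B]=4$, and since every class in $\pi_0Tmf\!\left[\tfrac13\right]$ has filtration $0$ (Konter's computation, or the Weierstrass Hopf algebroid argument showing $\pi_0 tmf\cong\Z$), there is no higher-filtration correction, whence $x=4$. If you either prove the transfer--trace comparison sketched above or replace that step by the spectral sequence observation, your argument closes; as written, it has a genuine gap at its central step.
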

\begin{proof}
This is true on the level of $E_2$-terms of homotopy fixed point spectral sequences, expressing $Tmf_0(3)$ and $Tmf\!\left[\tfrac13\right]$ as homotopy fixed points of $Tmf(3)$ (as the subgroup of matrices of the form $\begin{pmatrix}a&b\\0&d\end{pmatrix}$ in $GL_2(\Z/3)$ has index $4$). The $Tmf\!\left[\tfrac13\right]$-linear self-maps of $Tmf\!\left[\tfrac13\right]$ are in one-to-one correspondence to elements in $\pi_0Tmf\!\left[\tfrac13\right]$. These are all of filtration $0$ in the descent spectral sequence by \cite[Figure 26]{Kon12} and thus detected by their action on 
\[
\pi_0Tmf\!\left[\tfrac13\right] = H^0\big(GL_2(\Z/3); \pi_0Tmf(3)\big).
\]
(As the arguments in \cite{Kon12} are computationally involved, we also sketch another way to arrive at this last result. If there were contributions of positive filtration to $\pi_0Tmf\!\left[\tfrac13\right]$ in the descent spectral sequence, this group would contain torsion. As $\pi_0Tmf\!\left[\tfrac13\right] \cong \pi_0tmf\!\left[\tfrac13\right]$, it suffices to show that $\pi_0tmf \cong \Z$. It was known by Hopkins and Miller and is shown in \cite[Corollary 5.3]{Mattmf} that the Adams--Novikov spectral sequence for $tmf$ has as $E^2$-term the cohomology of the graded Weierstrass Hopf algebroid 
$$(A = \Z[a_1,a_2,a_3,a_4,a_6], \Gamma = A[r,s,t]).$$
Here, $|r| = 4, |s| = 2$ and $|t| =6$. It follows formally from the gradings in the cobar complex that $H^i(A,\Gamma) = 0$ in degrees smaller than $2i$ and that $H^0(A,\Gamma) \cong \Z$. The result follows.) 
\end{proof}

As $4\nu$ in $\pi_3Tmf\!\left[\tfrac13\right]$ is non-zero and of filtration $3$, we know that $\nu = \res(\nu) \in\pi_3 Tmf_0(3)$ is of filtration $\leq 3$ and non-zero. For degree reasons, it has to be detected by the image of $a_\sigma^3\ab_3$. As the homotopy fixed point spectral sequences for $tmf_1(3)^{hC_2}$ and $Tmf_1(3)^{hC_2}$ agree in this range, the same is true for $tmf_1(3)^{hC_2}$.

\begin{corollary}
The classes $\ab_{1}$ and $\ab_{3}$ are permanent cycles in the $RO(C_2)$-graded homotopy fixed point spectral sequence for $tmf_1(3)$. 
\end{corollary}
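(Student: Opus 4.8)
The plan is to deduce this from the Leibniz rule together with the permanent cycles we have already found. We know that $a_\sigma$ is a permanent cycle, being represented by the genuine map $a_\sigma\colon S^0\to S^\sigma$, and that $a_\sigma\ab_1$ and $a_\sigma^3\ab_3$ are permanent cycles, since they detect the homotopy classes $\eta$ and $\nu$. As $a_\sigma$ (hence also $a_\sigma^3$) has total degree a multiple of $\sigma$, whose integer part is $0$, the Leibniz rule for the $RO(C_2)$-graded homotopy fixed point spectral sequence gives, on every page $E_r$,
\[
0 = d_r(a_\sigma\ab_1) = a_\sigma\, d_r(\ab_1), \qquad 0 = d_r(a_\sigma^3\ab_3) = a_\sigma^3\, d_r(\ab_3).
\]
So $a_\sigma$ annihilates $d_r(\ab_1)$ and $a_\sigma^3$ annihilates $d_r(\ab_3)$ on every page, and it suffices to check that multiplication by $a_\sigma$, respectively by $a_\sigma^3$, is injective in the tridegrees occupied by $d_r(\ab_1)$, respectively $d_r(\ab_3)$.

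To see this, note first that $\ab_1$ and $\ab_3$ lie in filtration $0$, so $d_r(\ab_1)$ and $d_r(\ab_3)$ lie in filtration $r\ge 2$. From the description \eqref{eq:E2term} of the $E_2$-term, every class of positive filtration has the shape $a_\sigma^m\cdot(\text{monomial in }u_{2\sigma}^{\pm1},\ab_1,\ab_3)$, and since the relation $2a_\sigma=0$ only affects filtration $0$, multiplication by $a_\sigma$ and by $a_\sigma^3$ is injective on the positive-filtration part of $E_2$. A count of degrees shows moreover that $d_r(\ab_1)$ can be nonzero only for $r\equiv 3\bmod 4$, always landing in total degree $\sigma$, and similarly $d_r(\ab_3)$ only for $r\equiv 3\bmod 4$, always in total degree $2+3\sigma$; in particular the only candidate for a first nonzero differential is $d_3$. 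The case $r=3$ is then handled directly: the relevant source and target $E_2$-groups are small --- one-dimensional for $\ab_1$, two-dimensional for $\ab_3$ --- the map $a_\sigma$, respectively $a_\sigma^3$, is a bijection between them, and a quick inspection of the surrounding degrees shows that these groups survive unchanged to $E_3$; hence $d_3(\ab_1)=0$ and $d_3(\ab_3)=0$. For the higher differentials $d_7,d_{11},\dots$ one proceeds by induction on the page, using the multiplicative structure --- the permanent cycles $\eta=a_\sigma\ab_1$ and $\nu=a_\sigma^3\ab_3$, and the first differential off $u_{2\sigma}$, which one determines by mapping to the homotopy fixed point spectral sequence for $KO=KU^{hC_2}$ along $Tmf_1(3)\to KU$ --- to show that the target groups of the putative differentials have, after the preceding differentials, either already vanished or retained injective $a_\sigma$- (respectively $a_\sigma^3$-) multiplication; in either case the differential is zero.

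The step I expect to be the main obstacle is this last bookkeeping. The positive-filtration part of $E_2$ has no horizontal vanishing line, so a priori infinitely many differentials $d_{4k-1}(\ab_1)$ and $d_{4k-1}(\ab_3)$ must be excluded, and, unlike on $E_2$, multiplication by $a_\sigma$ does fail to be injective on parts of some later pages. What keeps the argument tractable is that all the differentials in question occupy only the two total degrees $\sigma$ and $2+3\sigma$, which are completely controlled once the $d_3$ off $u_{2\sigma}$ is known; so at each stage there is only a bounded, explicit computation to carry out. Doing so gives $d_r(\ab_1)=d_r(\ab_3)=0$ for all $r$, so $\ab_1$ and $\ab_3$ are permanent cycles.
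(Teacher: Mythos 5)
Your starting point coincides with the paper's (the classes $a_\sigma\ab_1$ and $a_\sigma^3\ab_3$ are permanent cycles because they detect $\eta$ and $\nu$), but from there the paper's proof is a one-liner that you should compare with your plan: by Lemma \ref{lem:Bockstein} the $RO(C_2)$-graded homotopy fixed point spectral sequence \emph{is} the $a_\sigma$-Bockstein spectral sequence, so the filtration on the abutment is divisibility by powers of $a_\sigma$, and for a Bockstein spectral sequence one has the structural fact that if an $a_\sigma$-multiple of a class is a permanent cycle, then the class itself is a permanent cycle. This completely bypasses any question about injectivity of $a_\sigma$-multiplication on $E_r$-pages --- which, as you correctly note, genuinely fails for $r>3$ (for instance $a_\sigma\cdot a_\sigma^2\ab_1=0$ on $E_4$ once $d_3(u_{2\sigma})=a_\sigma^3\ab_1$ has fired) --- and it is exactly this failure that your Leibniz-based route has to fight page by page.

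That fight is where your proposal has a genuine gap: the step you yourself flag as the main obstacle, namely verifying for every $r\equiv 3\pmod 4$ that the target group of $d_r(\ab_1)$, resp.\ $d_r(\ab_3)$, has either died by page $r$ or still admits injective multiplication by $a_\sigma$, resp.\ $a_\sigma^3$, is asserted rather than carried out, and the inputs you allow yourself (permanence of $\eta$, $\nu$, and $d_3(u_{2\sigma})=a_\sigma^3\ab_1$ via the map to $KU$) do not obviously suffice. Already for $d_{11}(\ab_1)$ none of the target monomials $a_\sigma^{11}u_{2\sigma}^{-3}\ab_1^i\ab_3^j$ is a $d_3$-boundary (because $d_3(u_{2\sigma}^{-2})=0$ mod $2$), so one must argue injectivity at $E_{11}$ directly; and at higher pages one meets classes such as $a_\sigma^{23}u_{2\sigma}^{-6}\ab_3^4$ whose fate, and the fate of whose $a_\sigma$-multiples, depends on $d_7$-differentials on negative powers of $u_{2\sigma}^{2}$, i.e.\ on $d_7(u_{2\sigma}^2)=a_\sigma^7\ab_3$. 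That differential is not visible in $KO$, and in the paper it is established only \emph{after}, and using, this corollary, so importing it would be circular; deriving it independently (say from $M\R$) is an additional argument you have not given. So as written this is a plausible strategy whose essential, unbounded verification is missing; the reformulation as an $a_\sigma$-Bockstein spectral sequence is precisely what lets the paper avoid it.
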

\begin{proof}
Since the homotopy fixed point spectral sequence and $a_{\sigma}$-Bockstein spectral sequences coincide, we learn that if an $a_{\sigma}$-multiple of a class is a permanent cycle, then the class is a permanent cycle. This in particular applies to $\eta=\ab_{1}a_{\sigma}$ and $\nu=\ab_{3}a_{\sigma}^{3}$.
\end{proof}

\begin{corollary}
The class $u_{2\sigma}$ is the only generator of the $E_{2}$ term for the $RO(C_{2})$-graded homotopy fixed point spectral sequences for $tmf_{1}(3)$ (as listed in Equation \ref{eq:E2term}) that is not a permanent cycle.
\end{corollary}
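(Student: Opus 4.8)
The plan is to combine what is already available --- that $\ab_1$ and $\ab_3$ are permanent cycles (the previous corollary), and that $a_\sigma$ is a permanent cycle --- with a contradiction argument excluding $u_{2\sigma}$. First I would record that $a_\sigma$ is a permanent cycle: it is the image under the $C_2$-equivariant unit map $S^0 \to tmf_1(3)$ of the Euler class $a_\sigma \in \pi_{-\sigma}^{C_2}S^0$, which is nonzero (no iterate of it is null) and is detected in filtration $1$ of the homotopy fixed point spectral sequence for the sphere; equivalently, since that spectral sequence agrees with the $a_\sigma$-Bockstein spectral sequence (Lemma \ref{lem:Bockstein}), $a_\sigma$ is realised by an honest self-map and hence supports no differential. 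As the $E_2$-term \eqref{eq:E2term} is generated as a $\Z\big[\tfrac13\big]$-algebra by $a_\sigma$, $u_{2\sigma}^{\pm 1}$, $\ab_1$ and $\ab_3$, all that remains is to show that $u_{2\sigma}$ is \emph{not} a permanent cycle.

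I would prove this by contradiction. If $u_{2\sigma}$ were a permanent cycle, then so would be $u_{2\sigma}^{-1}$ (apply the Leibniz rule to $u_{2\sigma}\cdot u_{2\sigma}^{-1} = 1$), so all $\Z\big[\tfrac13\big]$-algebra generators of the $E_2$-term would be permanent cycles; since the differentials are derivations, every class would then be a permanent cycle and the spectral sequence would collapse at $E_2$. I would then extract a contradiction in integer degree $0$. A monomial $u_{2\sigma}^{a}a_\sigma^{b}\ab_1^{c}\ab_3^{d}$ (with $b,c,d\ge 0$) sits in representation degree $(2a+c+3d)+(-2a-b+c+3d)\sigma$ and filtration $b$, hence in integer degree $0$ precisely when $a=-k\le 0$, $c+3d=2k$ and $b=4k$; for $k\ge 1$ the classes $u_{2\sigma}^{-k}a_\sigma^{4k}\ab_1^{2k}$ are nonzero, annihilated by $2$ (as $2a_\sigma=0$), and lie in the distinct filtrations $4k$. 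In the collapsed spectral sequence, which converges strongly because $tmf_1(3)$ is connective, they would therefore detect infinitely many $\F_2$-linearly independent elements of $\pi_0 tmf_1(3)^{hC_2}$, making that group infinitely generated. But $\pi_0 tmf_1(3)^{hC_2}\cong \pi_0 Tmf_0(3)\cong \Z\big[\tfrac13\big]$, which is the desired contradiction.

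I expect the genuinely non-formal step to be the last one, where one must know that $\pi_0 Tmf_0(3)$ is finitely generated --- this can be taken from the Mahowald--Rezk computation of $\pi_* TMF_0(3)$, or proved directly via the descent spectral sequence over $\MMb_0(3)$, where only $H^0(\MMb_0(3);\OO)=\Z\big[\tfrac13\big]$ contributes to $\pi_0$, the relevant $H^1$ and $H^2$ groups vanishing. A self-contained alternative to the finiteness input is to invoke naturality of the $RO(C_2)$-graded homotopy fixed point spectral sequence along the $C_2$-equivariant ring map $tmf_1(3)\to K\R$ of \cite[Theorem 6.2]{HL13}: under it $u_{2\sigma}$ maps to the analogous generator for $KO = K\R^{hC_2}$, which supports the classical nonzero $d_3$ (the one making complexification $\pi_4 KO \to \pi_4 KU$ multiplication by $2$), so $d_3(u_{2\sigma})\ne 0$ already for $tmf_1(3)$. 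With either route, the only delicate points are the degree bookkeeping in the collapse argument and the identification of the relevant $KO$-differential.
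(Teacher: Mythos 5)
Your treatment of the easy half matches the paper: $a_\sigma$ is a permanent cycle because the spectral sequence is the $a_\sigma$-Bockstein spectral sequence (Lemma \ref{lem:Bockstein}), and $\ab_1,\ab_3$ are permanent cycles by the preceding corollary. The problem is your primary argument that $u_{2\sigma}$ is \emph{not} a permanent cycle. The collapse-and-count logic is fine (conditional convergence plus collapse at $E_2$ does give strong convergence, and your degree bookkeeping in degree $0$ is correct), but the input you need --- that $\pi_0\, tmf_1(3)^{hC_2}\cong\pi_0 Tmf_0(3)$ is small --- is exactly what cannot be obtained the way you propose. The ``direct'' justification via the descent spectral sequence over $\MMb_0(3)$ is false as stated: the groups relevant to $\pi_0$ are $H^{2t}\big(\MMb_0(3);\omega^{\tensor t}\big)$ for \emph{all} $t\geq 0$, not just $H^1$ and $H^2$, and because $\MMb_0(3)$ has generic $C_2$-stabilizers these contain $\Z/2$'s for infinitely many $t$ (e.g.\ $H^4(\MMb_0(3);\omega^{\tensor 2})\cong\Z/2$, coming from $H^4(C_2;\Z[\tfrac13]\{a_1^2\})$). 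Indeed these classes are precisely the images of your $u_{2\sigma}^{-k}a_\sigma^{4k}\ab_1^{2k}$, so reading finiteness of $\pi_0$ off that $E_2$-page is circular: it already requires the differentials whose existence you are trying to prove. The remaining option, citing Mahowald--Rezk, is logically admissible but imports their full computation (their $\pi_0TMF_0(3)$ \emph{is} an output of the differentials), which the paper explicitly sets out to rederive independently; it also needs a small bridge from $TMF_0(3)$ to $\pi_0 tmf_1(3)^{hC_2}\cong\pi_0 Tmf_0(3)$, since Mahowald--Rezk compute the periodic, not the compactified, fixed points.

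Your fallback route, by contrast, is sound and should be the main argument: the $C_2$-equivariant $E_\infty$-map $Tmf_1(3)\to KU$ of \cite[Theorem 6.2]{HL13} (the same map the paper uses) sends $u_{2\sigma}$ to the corresponding generator for $K\R$, where the classical computation of the $KO$-homotopy fixed point spectral sequence gives $d_3(u_{2\sigma})\neq 0$; naturality then forces $d_3(u_{2\sigma})\neq 0$ for $tmf_1(3)$ (the spectral sequences for $tmf_1(3)$ and $Tmf_1(3)$ agree in the relevant range). The paper's own argument is a lighter variant of this: it uses the K-theory comparison only to place $\eta$ in filtration $1$, i.e.\ detected by $a_\sigma\ab_1$, and then deduces that $u_{2\sigma}$ cannot be a $d_3$-cycle because $(a_\sigma\ab_1)^4$ must die (as $\eta^4=0$ in the sphere) and the differentials are derivations vanishing on all other generators. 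So your second route trades the paper's internal nilpotence argument for the known $K\R$ differential; either works, but you should make that route primary and cite the classical $KO$ computation, rather than lean on the finiteness claim.
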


Furthermore, the transfer of any element in the underlying homotopy is a permanent cycle. In particular, we conclude immediately that the classes
\[
v_{0}(k):=2u_{2\sigma}^{k}
\]
for $k\in\mathbb Z$ are all permanent cycles which generate copies of $\mathbb Z$. These satisfy an obvious multiplicative relation
\[
v_{0}(k)v_{0}(j)=2 v_{0}(j+k).
\]

Next, we will determine the differentials. Note first that for degree reasons all $d_{2k}$ are $0$ for $k \geq 1$. While the other differentials could be deduced from \cite{M-R09}, we will derive them independently.

\begin{proposition}
We have the differential
\begin{align*}
d_3(u_{2\sigma}) &= a_{\sigma}^{3}\ab_{1}.
\end{align*}
\end{proposition}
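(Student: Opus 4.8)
The plan is to obtain this differential by naturality from its classical counterpart in the homotopy fixed point spectral sequence for $KO$. Recall the $C_2$-equivariant $E_\infty$-map $Tmf_1(3)\to KU$ of \cite[Theorem 6.2]{HL13}, where $KU$ carries the complex conjugation action so that $KU^{hC_2}\simeq KO$. It induces a map of $RO(C_2)$-graded homotopy fixed point spectral sequences. On $E_2$-terms, described via Corollary~\ref{cor:ROLandweber}, this map carries $a_\sigma$ and $u_{2\sigma}$ to the classes of the same name, each being built from the unit of $\pi_0$ and the canonical generator of $H^*(C_2;\sgn^{\otimes *})$. Moreover, our identification of $\eta$ above shows that $a_\sigma\ab_1$ maps to a unit multiple of the filtration-$1$ class detecting $\eta\in\pi_1 KO$; since $a_\sigma\mapsto a_\sigma$, it follows that $\ab_1$ maps to a unit multiple of the Bott class, which sits in filtration $0$.

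Next I would pin down the possible target of $d_3(u_{2\sigma})$. The even differentials vanish (as noted above), so $E_3=E_2$; a direct inspection of Equation~\ref{eq:E2term} shows that the only monomial in representation degree $1-2\sigma$ and filtration $3$ — the bidegree of $d_3(u_{2\sigma})$ — is $a_\sigma^3\ab_1$, and it generates a $\Z/2$ since $2a_\sigma=0$. Thus $d_3(u_{2\sigma})$ is either $0$ or $a_\sigma^3\ab_1$, and it suffices to exclude the first case. The grading alone does not achieve this: as $u_{2\sigma}$ is invertible it could in principle be a $d_3$-cycle and support a longer differential instead, so an input beyond the $E_2$-term is needed. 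It is supplied by the comparison: in the homotopy fixed point spectral sequence for $KO$, the class $u_{2\sigma}$ supports the well-known nonzero differential $d_3(u_{2\sigma})\neq 0$ — the one forcing $\pi_3 KO=0$; see \cite[Section 6]{Dug03}. If $d_3(u_{2\sigma})$ vanished in the spectral sequence for $tmf_1(3)^{hC_2}$, then by naturality so would its image; but that image is a unit multiple of $a_\sigma^3$ times the Bott class, i.e.\ precisely the nonzero $d_3(u_{2\sigma})$ for $KO$. This contradiction forces $d_3(u_{2\sigma})=a_\sigma^3\ab_1$.

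The only delicate point is the bookkeeping for the comparison map — that it acts on the named $E_2$-generators as claimed, and that the classical $KO$-differential is correctly transported through the $RO(C_2)$-graded reindexing of Corollary~\ref{cor:ROLandweber}; the degree count is then routine. Once $d_3(u_{2\sigma})$ is known, the Leibniz rule together with the permanence of $a_\sigma$, $\ab_1$ and $\ab_3$ determines $d_3$ on all of $E_3=E_2$.
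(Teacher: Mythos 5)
Your argument is correct, but it takes a genuinely different route from the paper's. The paper argues internally: since $a_\sigma$, $\ab_1$, $\ab_3$ are permanent cycles, $d_3(u_{2\sigma})=0$ would force $E_2=E_5$; but $\eta$ is detected by $a_\sigma\ab_1$ and $\eta^4=0$, so $(a_\sigma\ab_1)^4$ must be hit by a differential, which can only be a $d_3$, and degree reasons then pin down the value. You instead import the differential from $K$-theory by naturality along the $C_2$-equivariant map $Tmf_1(3)\to KU$ (strictly $KU\big[\tfrac13\big]$, and one should either precompose with $tmf_1(3)\to Tmf_1(3)$ or invoke the agreement of the two homotopy fixed point spectral sequences in the relevant degrees, exactly as the paper does when locating $\eta$). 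Your degree count is right ($a_\sigma^3\ab_1$ is the unique monomial in degree $1-2\sigma$ and filtration $3$), the classes $a_\sigma$ and $u_{2\sigma}$ do map to their namesakes because the identification of Corollary~\ref{cor:ROLandweber} is natural and the map on $\pi_0$ is unital, and $d_3(u_{2\sigma})=a_\sigma^3\bar{\beta}\neq 0$ in the $K\R$/$KO$ spectral sequence is classical; so the contradiction goes through. Two small remarks: your intermediate claim that $\ab_1$ maps to a unit multiple of the Bott class is not actually needed for the contradiction (only $u_{2\sigma}\mapsto u_{2\sigma}$ and the nonvanishing of the $KO$ differential are), and as stated it requires the extra observation that $\eta$ has filtration exactly one in the $KO$ spectral sequence, so that the image of its detecting class cannot vanish at $E_2$. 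What your route buys is that it replaces the paper's use of $\eta^4=0$ by a standard $K$-theoretic computation; what the paper's route buys is self-containedness, using the comparison with $KU$ only once (to locate $\eta$) and then arguing purely inside the spectral sequence for $tmf_1(3)$.
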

\begin{proof}
 As $\ab_1, \ab_3$ and $a_\sigma$ are permanent cycles, $d_3(u_{2\sigma}) = 0$ would imply that $E_2 = E_5$. On the other hand, we know that $\eta$ is detected by $a_\sigma\ab_1$. As $\eta^4 = 0$, the class $(a_\sigma\ab_1)^4$ must be hit by a differential, which necessarily must be a $d_3$. Therefore, $d_3(u_{2\sigma}) \neq 0$. For degree reasons we get that $d_3(u_{2\sigma}) = a_{\sigma}^{3}\ab_{1}$.
\end{proof}

There is no room for a $d_5$-differential; indeed, a non-trivial $d_5$-differential would imply a differential of the form $d_5(u_{2\sigma}^2) = a_\sigma^5 y$ with $y$ in the $0$-line of degree $3+\sigma$, which is impossible. Thus, $E_7 = E_4$.

\begin{proposition}
 We have the differential
 \[d_7(u_{2\sigma}^{2}) = a_{\sigma}^{7}\ab_{3}.\]
\end{proposition}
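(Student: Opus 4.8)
The plan is to argue exactly as for the $d_3$-differential, with $\nu$ replacing $\eta$. The non-equivariant input is that $\nu$ comes from $\pi_3^s$, so $\nu^4$ lies in the image of $\pi_{12}^s=0$ and hence vanishes in $\pi_{12}(tmf_1(3)^{hC_2})$. Since $\nu$ is detected by $a_\sigma^3\ab_3$ and $a_\sigma,\ab_3$ are permanent cycles, the class $a_\sigma^{12}\ab_3^4=(a_\sigma^3\ab_3)^4$ is a permanent cycle; moreover it survives $d_3$, since it is not divisible by $a_\sigma^3\ab_1$ and hence not in the image of $d_3$. As $\nu^4=0$, it cannot survive to $E_\infty$ (it would detect $\nu^4$), so it must be killed by some $d_r$ with $r\geq 7$. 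The proof then consists of identifying that differential.

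First I would record the elementary facts. By the Leibniz rule $d_3(u_{2\sigma}^2)=2u_{2\sigma}a_\sigma^3\ab_1=0$ because $2a_\sigma=0$, and there is no room for a $d_5$, so $u_{2\sigma}^2$ survives to $E_7=E_4$; the same applies to $u_{2\sigma}^2a_\sigma^5\ab_3^3$ and to $a_\sigma^{12}\ab_3^4$. Next, $d_7(u_{2\sigma}^2)$ sits in the same bidegree as $a_\sigma^7\ab_3$, while the only other monomial of the $E_2$-term in that bidegree, $a_\sigma^7\ab_1^3=(a_\sigma^3\ab_1)a_\sigma^4\ab_1^2=d_3(u_{2\sigma}a_\sigma^4\ab_1^2)$, is zero on $E_4$. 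Thus $d_7(u_{2\sigma}^2)\in\{0,\,a_\sigma^7\ab_3\}$, and the whole point is to exclude $0$.

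For this I would determine, by a monomial count in the $E_2$-term, exactly which classes of $E_7$ can hit $a_\sigma^{12}\ab_3^4$. Even differentials vanish, and the source of a $d_r$ hitting $a_\sigma^{12}\ab_3^4$ has filtration $12-r$, which is negative for $r\geq 13$; so only $d_7,d_9,d_{11}$ are possible. One then checks, using the relation $a_\sigma^3\ab_1=0$ on $E_4$ together with the fact that odd powers of $u_{2\sigma}$ support nonzero $d_3$'s: the $d_9$-source bidegree contains no monomial at all (the $u_{2\sigma}$-exponent is forced to be a non-integer); every monomial in the $d_{11}$-source bidegree is divisible by an odd power of $u_{2\sigma}$ and so does not survive $d_3$; and the unique monomial in the $d_7$-source bidegree surviving to $E_7$ is $u_{2\sigma}^2a_\sigma^5\ab_3^3$. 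Since $d_7(u_{2\sigma}^2a_\sigma^5\ab_3^3)=a_\sigma^5\ab_3^3\,d_7(u_{2\sigma}^2)$ by the Leibniz rule, the only way $a_\sigma^{12}\ab_3^4$ can die is if $d_7(u_{2\sigma}^2)=a_\sigma^7\ab_3$, which is therefore forced.

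I expect the last step — the monomial bookkeeping ruling out the longer differentials $d_9$ and $d_{11}$ on classes potentially hitting $a_\sigma^{12}\ab_3^4$ — to be the only real work; it is mechanical but slightly delicate, since one must combine the degree constraints with the behaviour of the $d_3$-differential. (The differential can alternatively be read off from the computation of $\pi_*TMF_0(3)$ in \cite{M-R09}, but the aim here is a self-contained derivation relying only on $\nu^4=0$.)
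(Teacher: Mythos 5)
Your proposal is correct and is essentially the paper's argument: both rest on the facts that $\nu$ is detected by $a_\sigma^3\ab_3$, that $a_\sigma,\ab_1,\ab_3$ (and transfers like $2u_{2\sigma}$) are permanent cycles, and that $\nu^4=0$ forces $(a_\sigma^3\ab_3)^4$ to be hit by some $d_n$ with $n\leq 12$, which degree considerations then pin down to $d_7(u_{2\sigma}^2)=a_\sigma^7\ab_3$. The only difference is cosmetic: where you enumerate monomials in the possible source tridegrees for $(a_\sigma^3\ab_3)^4$ to rule out $d_9$ and $d_{11}$, the paper instead notes that any differential on $u_{2\sigma}^{\pm 2}$ must have length $\equiv 7 \pmod 8$, so $d_7(u_{2\sigma}^2)=0$ would give $E_7=E_{15}$, contradicting the same constraint $n\leq 12$.
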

\begin{proof}
If $d_n(u_{2\sigma}^2) = a_\sigma^nx$, then $x$ is in degree $(7-n) + (n-4)\rho$. As $x$ can be written as $u_{2\sigma}^{2m}$ times a polynomial in $\ab_1$ and $\ab_3$, we see that $7-n$ must be divisible by $8$. As $\ab_1, \ab_3$, $a_\sigma$ and $2u_{2\sigma}$ are permanent cycles, $d_7(u_{2\sigma}^2) = 0$ would thus imply that $E_7 = E_{15}$. On the other hand, we know that $\nu$ is detected by $a_\sigma^3\ab_3$. As $\nu^4 = 0$, the class $(a_\sigma^3\ab_3)^4$ must be hit by a differential, which necessarily must be a $d_n$ with $n\leq 12$. Therefore, $d_7(u_{2\sigma}^2) \neq 0$. For degree reasons we get that $d_3(u_{2\sigma}^2) = a_{\sigma}^{7}\ab_{3}$ (as $a_\sigma^7\ab_1^3 = 0$ in $E_7$).
\end{proof}

The torsion produced by the first differential yields new $d_{7}$-cycles:
\[
\ab_{1}(k):=\ab_{1}u_{2\sigma}^{2k},
\]
for $k\in\mathbb Z$.
These also participate in the expected multiplicative relations:
\begin{align*}
\ab_1(k)\ab_{1}(j)&=\ab_{1} \cdot\ab_{1}(j+k),\\
\ab_1(j) v_{0}(k) &= \ab_{1}\cdot v_{0}(k+2j).
\end{align*}

\begin{remark}
The classes $v_{0}(k)$ and $\ab_{1}(j)$ form families exactly like the families $v_{0}(k)$ and $v_{1}(j)$ described by Hu-Kriz is the computation of the homotopy of $BP\mathbb{R}$.
\end{remark}

There is no room for further differentials in $E_8$, which is the subalgebra of
\[
\Z\big[\tfrac13\big][a_\sigma, u_{2\sigma}, \ab_1,\ab_3]/(2a_\sigma, \ab_1a_{\sigma}^3, \ab_3a_{\sigma}^7)
\]
 generated by $a_\sigma$, $\ab_1$, $\ab_3$, $v_0(1)$, $v_0(2)$, $v_0(3)$, $\ab_1(1)$ and $u_{2\sigma}^{\pm 4}$. Indeed, a non-trivial $d_k$-differential for $k\geq 8$ would imply a non-trivial differential of the form $d_k(\ab_1(1)) = a_\sigma^k x$ or $d_k(u_{2\sigma}^4) = a_\sigma^ky$ for some $x$ or $y$ in the $0$-line of degree $4+(k-3)\sigma$ or $7+(k-8)\sigma$, respectively; but the only of our generators of the $0$-line not killed by $a_\sigma^k$ is $u_{2\sigma}^{\pm 4}$ whose powers cannot be in degree $4+(k-3)\sigma$ or $7+(k-8)\sigma$. Therefore $E_8 = E_\infty$.

\begin{thm}\label{thm:computation}
We have
\[\pi^{C_2}_\bigstar F\big(EC_{2+},tmf_1(3)\big) \cong \Z\big[\tfrac13\big][a_\sigma, u_{2\sigma}^{\pm 4}, \ab_1,\ab_3, v_{0}(k), \ab_1(1)]/R,\]
where the ideal $R$ of relations is generated by
\begin{align*}
 a_{\sigma}v_{0}(k) &= 0 \\
 a_{\sigma}^3\big(\ab_1, \ab_1(1)\big) &= 0 \\
 a_\sigma^7\ab_3 &= 0 \\
 v_{0}(k+4)&=v_{0}(k)u_{2\sigma}^{4}\\
 v_{0}(k)v_{0}(j)&=2 v_{0}(j+k)\\
 \ab_1(1) v_{0}(k) &= \ab_{1} v_{0}(k+2) \\
 \ab_1(1)^2 &= \ab_1u_{2\sigma}^4.
\end{align*}
\end{thm}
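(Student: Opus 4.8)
The plan is to feed the differentials established above into the strong convergence of the $RO(C_2)$-graded homotopy fixed point spectral sequence and then dispose of the remaining extension problems. The preceding paragraphs already show $E_8=E_\infty$, realized concretely as the subalgebra $S$ of $\Z\big[\tfrac13\big][a_\sigma,u_{2\sigma},\ab_1,\ab_3]/(2a_\sigma,a_\sigma^3\ab_1,a_\sigma^7\ab_3)$ generated by $a_\sigma$, $\ab_1$, $\ab_3$, $u_{2\sigma}^{\pm 4}$, $v_0(k)=2u_{2\sigma}^k$ and $\ab_1(1)=\ab_1u_{2\sigma}^2$. In any fixed degree $a+b\sigma$ the relations $a_\sigma^3\ab_1=a_\sigma^7\ab_3=0$ force $S$ to be the sum of a finitely generated free $\Z\big[\tfrac13\big]$-module in filtration $0$ and a finite $2$-torsion group concentrated in finitely many positive filtrations; since the filtration is bounded below, the spectral sequence converges strongly, and $\pi^{C_2}_\bigstar F(EC_{2+},tmf_1(3))$ inherits a complete, exhaustive, multiplicative filtration with associated graded $S$. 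Thus the theorem reduces to two things: (i) identifying $S$ abstractly with the ring presented by the generators and relations in the statement; (ii) showing that this filtration splits, i.e.\ that there are no hidden additive or multiplicative extensions.

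Step (i) is pure algebra. First I would check that the listed generators of $S$ satisfy the relations in the ideal $R$; this is immediate from their descriptions as monomials in $a_\sigma,\ab_1,\ab_3,u_{2\sigma}$, for instance $a_\sigma v_0(k)=2a_\sigma u_{2\sigma}^k=0$, $a_\sigma^3\ab_1(1)=(a_\sigma^3\ab_1)u_{2\sigma}^2=0$, $v_0(j)v_0(k)=4u_{2\sigma}^{j+k}=2v_0(j+k)$, and likewise for the remaining relations. This yields a surjection from the presented ring onto $S$, and to see it is an isomorphism I would exhibit, degree by degree, a common additive basis: a $\Z\big[\tfrac13\big]$-basis of the torsion-free part consisting of the monomials $\ab_1^i\ab_3^ju_{2\sigma}^{4k}$ together with those $v_0$- and $\ab_1(1)$-shifted monomials not already of this form, and an $\F_2$-basis of the torsion part consisting of the finitely many surviving $a_\sigma$-tower monomials in each degree. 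Matching these with the corresponding monomials in the presented ring modulo $R$ is a bounded bookkeeping exercise that shows the surjection is bijective.

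Step (ii) is the crux, and I expect it to be the main obstacle. The free summand splits off because $\Z\big[\tfrac13\big]$-modules are projective (the quotient by the positive-filtration part is free in each degree), and it is not a source of a hidden $2$-extension since $2$ times each generating monomial $\ab_1^i\ab_3^ju_{2\sigma}^{4k}$ is again detected in filtration $0$. For the positive-filtration part the key input is that $2a_\sigma=0$ already holds in $\pi^{C_2}_{-\sigma}F(EC_{2+},tmf_1(3))$ — the bidegree in which a detecting class for $2a_\sigma$ could sit is empty — together with the $\pi^{C_2}_\bigstar S^0$-module structure: every positive-filtration element is, modulo higher filtration, an $a_\sigma$-multiple of a product of the permanent cycles $\ab_1,\ab_3,u_{2\sigma}^{\pm4},\ab_1(1)$, hence is $2$-torsion, so the positive-filtration part is elementary abelian and no hidden additive extension occurs. (Alternatively, the classes $v_0(k)=\tr(\res(u_{2\sigma}^k))$ are transfers, and the projection formula gives $a_\sigma v_0(k)=\tr(\res(a_\sigma)\cdot u_{2\sigma}^k)=0$ in homotopy.) For hidden multiplicative extensions, one computes each product of generators on $E_\infty$, where the asserted relations are visible, and then checks there is no room in higher filtration of the target degree; this sparseness check again follows from $a_\sigma^3\ab_1=a_\sigma^7\ab_3=0$, and the residual ambiguities, which all lie in the torsion-free part, are pinned down by restriction to the underlying spectrum $tmf_1(3)$, under which $\ab_1\mapsto a_1$, $\ab_3\mapsto a_3$ and $u_{2\sigma}^{\pm 4}$ maps to a unit. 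Combining (i) and (ii) gives the claimed presentation of $\pi^{C_2}_\bigstar F(EC_{2+},tmf_1(3))$.
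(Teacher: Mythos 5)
Your overall strategy (identify the presented ring with the $E_\infty$-term, then rule out hidden additive and multiplicative extensions) is the same as the paper's, and your step (i) together with the additive part of step (ii) is fine. The gap is in your treatment of hidden \emph{multiplicative} extensions for the relations involving $v_0(k)$. You assert that the residual ambiguities ``all lie in the torsion-free part'' and are ``pinned down by restriction to the underlying spectrum.'' Neither claim holds: the difference between the two sides of such a relation automatically lies in strictly positive filtration, the positive-filtration part of the abutment is $2$-torsion, and it is exactly the kernel of the restriction (edge) map, so restricting to the underlying spectrum can never detect it. Worse, the sparseness you invoke fails in precisely the relevant degrees: for $\ab_1(1)v_0(k)=\ab_1 v_0(k+2)$ with $k\equiv 3\pmod 4$ the class $a_\sigma^4\ab_3 u_{2\sigma}^{k+1}$ is a nonzero permanent cycle of filtration $4$ in the same $RO(C_2)$-degree (for $k=3$ both sides and this class live in degree $11-9\sigma$), so there is genuine room for a hidden error term which your argument does not exclude.

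The repair is exactly the transfer argument you mention only parenthetically for $a_\sigma v_0(k)=0$, and it is what the paper does for all three $v_0$-relations: choose $v_0(k)$ to be the transfer of the underlying class restricting $u_{2\sigma}^k$ (the generator of $\pi^e_{k(2-2\sigma)}tmf_1(3)\cong\Z\big[\tfrac13\big]$); then Frobenius reciprocity, $x\cdot\tr(y)=\tr(\res(x)\cdot y)$, reduces $v_0(k)v_0(j)=2v_0(k+j)$, $v_0(k)u_{2\sigma}^4=v_0(k+4)$ and $\ab_1(1)v_0(k)=\ab_1 v_0(k+2)$ to identities in the known underlying homotopy ring, with no filtration ambiguity at all. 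With that substitution your argument coincides with the paper's proof: the torsion relations $a_\sigma v_0(k)=0$, $a_\sigma^3\ab_1=a_\sigma^3\ab_1(1)=0$, $a_\sigma^7\ab_3=0$ and the final relation do follow, as you argue and as the paper argues, from the vanishing of the relevant higher-filtration groups of $E_\infty$ in those degrees.
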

\begin{proof}
 The presentation given was already shown to be a presentation of the $E_\infty$-term. We just have to check all the relations also to hold in $\pi^{C_2}_\bigstar F\big(EC_{2+},tmf_1(3)\big)$. Observe first that no two torsion classes in different filtrations can converge to the same bidegree. This implies the first three relations must hold. In the next three relations, both sides are in the image of the transfer and thus these relations can be checked on underlying homotopy groups. The last relation holds again since there is no element of filtration $\geq 1$ in this bidegree.
\end{proof}

\begin{remark}
We have $\pi^{C_2}_{a+b\sigma} F\big(EC_{2+},tmf_1(3)\big) \cong \pi^{C_2}_{a+b\sigma} tmf_1(3)$ for all $a,b\geq 0$ and $\pi_{a+b\sigma}^{C_2}tmf_1(3) = 0$ if $a<0$ and $a+b<0$; this follows from the cofiber sequence
\[S^{a+(b-1)\sigma} \to S^{a+b\sigma}\to S^{a+b}\sm (C_2)_+.\] Is it possible, but more complicated, to describe also the other groups in $\pi_\bigstar^{C_2}tmf_1(3)$. Note that $\pi^{C_2}_i F\big(EC_{2+},tmf_1(3)\big) \cong \pi_i tmf_1(3)^{hC_2}$ can certainly be non-trivial for $i<0$ (e.g.\ we have $a_\sigma^8u_{2\sigma}^{-4} \in \pi_{-8}tmf_1(3)^{hC_2}$) and thus $tmf_1(3) \not\simeq F\big(EC_{2+},tmf_1(3)\big)$.
\end{remark}

\begin{cor}\label{cor:StronglyEven}
The spectrum $tmf_1(3)$ is strongly even as a $C_2$-spectrum. In particular, it is Real orientable and thus $tmf_1(3)_{(2)}$ is a form of $BP\R\langle 2\rangle$. Furthermore, $tmf_1(3)_{(2)}[\ab_3^{-1}]$ is a form of $E\R(2)$. 
\end{cor}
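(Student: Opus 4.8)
The plan is to read off "strongly even" directly from the $RO(C_2)$-graded computation in Theorem~\ref{thm:computation} together with the Remark following it, and then to obtain "Real orientable" and the two "form of" assertions by combining this with Lemma~\ref{lem:RealOrientable} and the classical, non-equivariant fact that $tmf_1(3)_{(2)}$ is a form of $BP\langle 2\rangle$. Everything past the strong-evenness verification is formal.

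First I would check strong evenness, i.e.\ that $\mpi_{k\rho-1}tmf_1(3)=0$ and $\mpi_{k\rho}tmf_1(3)$ is constant for every $k\in\Z$. The underlying group $\mpi_{k\rho-1}tmf_1(3)(C_2/e)=\pi_{2k-1}^e tmf_1(3)$ vanishes because $\pi_*^e tmf_1(3)=\Z[\tfrac13][a_1,a_3]$ sits in even non-negative degrees, so only the $C_2$-fixed points are at issue. For $k<0$ the classes $k\rho=k+k\sigma$ and $k\rho-1=(k-1)+k\sigma$ (as well as $-1$ itself) lie in the range where the Remark following Theorem~\ref{thm:computation} forces the corresponding homotopy groups to vanish, so both Mackey functors are zero there. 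For $k\ge 0$ the class $k\rho$, and for $k\ge 1$ the class $k\rho-1$, is of the form $a+b\sigma$ with $a,b\ge 0$, so that Remark identifies $\pi^{C_2}_{k\rho}tmf_1(3)$ with $\pi^{C_2}_{k\rho}F(EC_{2+},tmf_1(3))$, which Theorem~\ref{thm:computation} describes explicitly. Comparing, for each ring generator $a_\sigma,u_{2\sigma}^{\pm 4},\ab_1,\ab_3,v_0(j),\ab_1(1)$, its $\sigma$-coefficient with its trivial coefficient, and using the relations $a_\sigma v_0(j)=0$, $a_\sigma^3\ab_1=a_\sigma^3\ab_1(1)=0$ and $a_\sigma^7\ab_3=0$ to kill everything else, one finds that no monomial has degree $k\rho-1$, while the monomials of degree $k\rho$ are precisely the $\ab_1^i\ab_3^j$ with $i+3j=k$. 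Hence $\pi^{C_2}_{k\rho-1}tmf_1(3)=0$, and the restriction map sends the $\Z[\tfrac13]$-basis $\{\ab_1^i\ab_3^j\}$ of $\pi^{C_2}_{k\rho}tmf_1(3)$ to the basis $\{a_1^ia_3^j\}$ of $\pi^e_{2k}tmf_1(3)$, hence is an isomorphism. So $tmf_1(3)$ is strongly even, in particular even, and Lemma~\ref{lem:RealOrientable} makes it Real orientable. This degree count --- ruling out "exotic" monomials in $a_\sigma$, $u_{2\sigma}^{\pm 4}$, $v_0(j)$, $\ab_1(1)$ --- is the one genuinely fiddly point of the argument.

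For the $BP\R\langle 2\rangle$ assertion I would use that, by strong evenness, $\mpi_{*\rho}tmf_1(3)_{(2)}$ is the constant Mackey functor on $\pi_{2*}tmf_1(3)_{(2)}=\Z_{(2)}[a_1,a_3]$, with $a_1$ in degree $\rho$ and $a_3$ in degree $3\rho$. Fixing a Real orientation $M\R\to tmf_1(3)_{(2)}$, its underlying map $BP_*\to\pi_*tmf_1(3)_{(2)}$ realizes the target as a form of $BP\langle 2\rangle$: the mod-$2$ Weierstrass curve $y^2+a_1xy+a_3y=x^3$ has Hasse invariant $a_1$, so a suitable $v_1$ maps to $a_1$ up to a unit and, modulo $(2,v_1)$, $v_2$ maps to $a_3$ up to a unit, whence $\Z_{(2)}[v_1,v_2]\xrightarrow{\cong}\Z_{(2)}[a_1,a_3]$ (cf.\ \cite{M-R09}). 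Since $\mpi_{*\rho}BP\R$ and $\mpi_{*\rho}tmf_1(3)_{(2)}$ are both constant Mackey functors, the induced map $\mZ_{(2)}[\vb_1,\vb_2]\subset\mpi_{*\rho}BP\R\to\mpi_{*\rho}tmf_1(3)_{(2)}$ is an isomorphism of Mackey functors as soon as it is one on underlying homotopy, which it is; so $tmf_1(3)_{(2)}$ is a form of $BP\R\langle 2\rangle$.

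Finally, for $E\R(2)$ I would set $E\R=tmf_1(3)_{(2)}[\ab_3^{-1}]$, inverting $\ab_3\in\pi^{C_2}_{3\rho}tmf_1(3)_{(2)}$. Localization is exact on $\mpi_\bigstar$ and compatible with restriction, so $E\R$ is again strongly even, with $\mpi_{*\rho}E\R$ the constant Mackey functor on $\Z_{(2)}[a_1,a_3^{\pm 1}]$, and it is Real orientable via $M\R\to tmf_1(3)_{(2)}\to E\R$. After replacing the standard $v_2\in BP_*$ by $v_2$ minus an appropriate multiple of $v_1^3$, I may assume $v_1\mapsto\ab_1$ and $v_2\mapsto\ab_3$ up to units; then the image of $v_2$ is invertible and $\mZ_{(2)}[\vb_1,\vb_2,\vb_2^{-1}]\to\mpi_{*\rho}E\R$ is an isomorphism of constant Mackey functors, again by checking on underlying homotopy. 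Hence $tmf_1(3)_{(2)}[\ab_3^{-1}]$ is a form of $E\R(2)$, as claimed.
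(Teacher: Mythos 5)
Your proposal is correct and takes essentially the same route as the paper: strong evenness is read off from the presentation in Theorem~\ref{thm:computation} together with the remark following it by exactly this kind of degree count (the paper's version phrases it as a divisibility-by-$8$ constraint on the $a_\sigma$-exponent), Real orientability comes from Lemma~\ref{lem:RealOrientable}, and the $BP\R\langle 2\rangle$ and $E\R(2)$ statements follow from the identification of $v_1,v_2$ with $a_1,a_3$ (cf.\ \cite{M-R09}) upgraded to constant Mackey functors via strong evenness. The only cosmetic differences are that the paper cites the fact that the Hazewinkel generators map \emph{exactly} to $a_1,a_3$, whereas you work up to units and adjust $v_2$ by a multiple of $v_1^3$ for the $E\R(2)$ case, and that you carry out the constancy check on $\mpi_{k\rho}$ which the paper leaves to the reader.
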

\begin{proof}
It follows from Theorem \ref{thm:computation} and the remark thereafter that $tmf_1(3)$ is even as a $C_2$-spectrum and also that the Mackey functor $\m{\pi}_{k\rho}tmf_1(3)$ is constant for all $k\in\Z$. We present the argument for evenness and leave the other part to the reader. Let $y = a_\sigma^lx$ be a nonzero class in degree $k\rho -1$ with $x$ of filtration $0$ and degree $(k-1) + (k+l)\sigma$. Clearly $l\geq 1$. In $E_2$, we can write $x$ as $\ab_1^i\ab_3^ju_{2\sigma}^{2m}$. We see that $(k-1) - (k+l) = -(l+1)$ is divisible by $8$; in particular, $l\geq 7$. This implies $i,j=0$ and leads to a contradiction.

In the following, we localize everywhere implicitly at $2$. The map $BP_* \to tmf_1(3)_*$ induced by the $2$-typification of the formal group law associated to the Weierstrass equation $y^2+a_1xy+a_3y = x^3$ sends the Hazewinkel generators $v_1$ and $v_2$ exactly to $a_1$ and $a_3$. This implies together with $tmf_1(3)$ being stronlgy even that $tmf_1(3)$ is a form of $BP\R\langle 2\rangle$ and that $tmf_1(3)[\ab_3^{-1}]$ is a form of $E\R(2)$.
\end{proof}

\begin{cor}
 There exists forms of $BP\R\langle 2\rangle$ and $E\R(2)$ that are strictly commutative $C_2$-ring spectrum.
\end{cor}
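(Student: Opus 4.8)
The plan is to follow the strict ($E_\infty$) commutative structure coming from $\OO^{top}$ all the way down: first through the cofree functor, then through the $C_2$-equivariant connective cover, then through $2$-localization, and finally through inverting the class $\ab_3$, using at each stage a result which preserves commutative $C_2$-ring spectra.

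First I would recall that $Tmf_1(3) = \OO^{top}(\MMb_1(3))$ is an $E_\infty$-ring spectrum and that the $(\Z/3)^\times = C_2$-action on $\MMb_1(3)$ induces, via the sheaf of $E_\infty$-rings $\OO^{top}$, a $C_2$-action on $Tmf_1(3)$ through maps of $E_\infty$-rings. Thus $Tmf_1(3)$ is a commutative ring spectrum with a $C_2$-action via commutative ring maps, so by Theorem \ref{thm:cofree} the cofree $C_2$-spectrum $IF(EC_{2+},Tmf_1(3))$---that is, $Tmf_1(3)$ regarded as a genuine $C_2$-spectrum as in Section \ref{sec:naive}---is, up to $E_\infty$-equivalence, a $C_2$-equivariant commutative ring spectrum. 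Theorem \ref{thm:connective} then equips the $C_2$-equivariant connective cover $tmf_1(3)$ with a commutative ring structure for which $tmf_1(3) \to Tmf_1(3)$ is a map of commutative ring spectra.

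Next I would localize. Since $3$ is already inverted and $(-)_{(2)}$ is a smashing Bousfield localization, it takes commutative $C_2$-ring spectra to commutative $C_2$-ring spectra, so $tmf_1(3)_{(2)}$ is a strictly commutative $C_2$-ring spectrum; by Corollary \ref{cor:StronglyEven} it is a form of $BP\R\langle 2\rangle$, proving the first half. For the second half I would invert the permanent cycle $\ab_3 \in \pi^{C_2}_{3\rho}\,tmf_1(3)_{(2)}$ of Corollary \ref{cor:StronglyEven} in the world of commutative $tmf_1(3)_{(2)}$-algebras, realizing
\[
tmf_1(3)_{(2)}[\ab_3^{-1}] = \hocolim\Big(tmf_1(3)_{(2)} \xrightarrow{\ab_3} \Sigma^{-3\rho}tmf_1(3)_{(2)} \xrightarrow{\ab_3} \Sigma^{-6\rho}tmf_1(3)_{(2)} \to \cdots\Big)
\]
as a smashing localization of $tmf_1(3)_{(2)}$-module spectra, which therefore inherits a canonical commutative $tmf_1(3)_{(2)}$-algebra structure. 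By Corollary \ref{cor:StronglyEven}, this strictly commutative $C_2$-ring spectrum is a form of $E\R(2)$.

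The step I expect to be the main obstacle is this last one: one must verify that inverting the $RO(C_2)$-graded class $\ab_3$ can be carried out in the category of genuine $C_2$-equivariant commutative ring spectra, rather than merely up to homotopy---i.e.\ that the telescope above is realized as a (smashing) localization of $tmf_1(3)_{(2)}$-modules so that the multiplication lifts strictly and compatibly with the unit $tmf_1(3)_{(2)} \to tmf_1(3)_{(2)}[\ab_3^{-1}]$. This is exactly the type of equivariant localization performed in \cite{HHR09} and used again later in this paper (for instance in the identification $TMF_1(3) \simeq tmf_1(3)[\overline{\Delta}^{-1}]$), so the argument is standard; apart from it, everything reduces formally to Theorems \ref{thm:cofree} and \ref{thm:connective} together with Corollary \ref{cor:StronglyEven}.
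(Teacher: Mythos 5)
Your first half is exactly the paper's argument: Theorem \ref{thm:cofree} applied to $Tmf_1(3)$, then Theorem \ref{thm:connective} for the equivariant connective cover, then Corollary \ref{cor:StronglyEven}, so nothing to object to there. The gap is in the $E\R(2)$ half, at precisely the step you flagged and then dismissed as ``standard''. A smashing localization of $tmf_1(3)_{(2)}$-modules inverting $\ab_3$ does give the telescope a commutative algebra structure in the naive sense (an algebra over an $E_\infty$-operad with trivial $C_2$-action), but ``strictly commutative $C_2$-ring spectrum'' in this paper means an algebra over a $C_2$-$E_\infty$ operad, i.e.\ a multiplication equipped with all norms, and inverting an $RO(C_2)$-graded class in such a ring does \emph{not} automatically yield such a ring: by work of Hill--Hopkins on localizations and norms, this holds only when the localization is compatible with the norm functors, and there are counterexamples in general. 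So the assertion that $tmf_1(3)_{(2)}[\ab_3^{-1}]$ inherits a strictly commutative $C_2$-ring structure requires a verification (roughly, that $N_e^{C_2}(a_3)$ acts invertibly on the telescope) that you do not supply. Your appeals to precedent are also off the mark: in \cite{HHR09} the class $D$ that gets inverted is chosen so that the localization is norm-compatible, and in this paper the identification $TMF_1(3)\simeq tmf_1(3)[\overline{\Delta}^{-1}]$ is only an equivalence of $C_2$-spectra, proved via cofreeness in Lemmas \ref{lem:invert} and \ref{lem:invert2}, not a localization carried out inside genuine commutative rings.

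The paper avoids the issue entirely: by Lemmas \ref{lem:invert} and \ref{lem:invert2}, $tmf_1(3)[\ab_3^{-1}]\simeq Tmf_1(3)[\ab_3^{-1}]\simeq \OO^{top}(D(a_3))$ is cofree, i.e.\ of the form $IF(EC_{2+},R)$ for $R$ a non-equivariant $E_\infty$-ring with $C_2$-action, so Theorem \ref{thm:cofree} hands over the strictly commutative structure directly. Your route can probably be repaired --- for instance, $\mathrm{res}\,N_e^{C_2}(a_3)=-a_3^2$ is invertible in the underlying localization, and the cofreeness of the telescope established in Lemma \ref{lem:invert} lets one promote this to invertibility of the norm itself, which is the kind of input the Hill--Hopkins criterion requires --- but that verification is exactly the content you omitted, and it essentially reduces to the cofreeness argument the paper uses anyway.
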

\begin{proof}
 By Theorem \ref{thm:connective}, the spectrum $tmf_1(3)$ has the structure of a strictly commutative $C_2$-ring spectrum. By the last corollary, it is a form of $BP\R\langle 2\rangle$. 
 
 As shown in the next section, the spectrum $tmf_1(3)[\ab_3^{-1}]$ is equivalent to $Tmf_1(3)[\ab_3^{-1}]$ and thus cofree. Thus, we see by Theorem \ref{thm:cofree} that it has the structure of a strictly commutative $C_2$-spectrum.
\end{proof}
\begin{remark}
 We do not know whether the forms of $BP\R\langle 2\rangle$ and $E\R(2)$ exhibited here are equivalent as $C_2$-spectra to other known forms, as for example defined via the Hazewinkel generators. Note though that two forms of $E\R(n)$ are equivalent if and only if their underlying spectra are equivalent by Proposition \ref{prop:JWUniqueness}. 
Note further that in contrast to our result, the existence of any kind of (homotopy unital) multiplication seems to be unknown for general forms of $BP\R\langle n\rangle$, even for $n=2$.    
\end{remark}

\subsection{The relationship between \texorpdfstring{$tmf_1(3)$, $Tmf_1(3)$ and $TMF_1(3)$}{tmf13, Tmf13, and TMF13}}\label{sec:relationship}
The following is proven in \cite[Theorem 7.12]{MM15}.
\begin{prop}\label{prop:Galois}
The map $Tmf_0(3) \to Tmf_1(3)$ is a faithful $C_2$-Galois extension in the sense of Rognes.
\end{prop}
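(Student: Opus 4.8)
The plan is to deduce the statement from the geometry of the forgetful map of moduli stacks $f\co\MMb_1(3)\to\MMb_0(3)$ together with the sheaf property of $\OO^{top}$. The first --- and main --- step is to show that over $\Spec\Z\big[\tfrac13\big]$ the map $f$ is a $C_2$-Galois covering of Deligne--Mumford stacks: it is finite \'etale of degree $2$, the group $C_2=(\Z/3)^\times$ acts on $\MMb_1(3)$ over $\MMb_0(3)$ by sending a point of exact order $3$ to its inverse, this action exhibits $\MMb_0(3)$ as the stack quotient $\MMb_1(3)/C_2$, and it is \emph{free} in the sense that the natural map
\[
\MMb_1(3)\times C_2\;\longrightarrow\;\MMb_1(3)\times_{\MMb_0(3)}\MMb_1(3)
\]
is an isomorphism. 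On the open substacks $\MM_1(3)\to\MM_0(3)$ this is classical: with $3$ invertible, a cyclic subgroup of order $3$ has exactly two generators, which are permuted freely by $C_2$. The real work is at the cusps, where one uses the Deligne--Rapoport--Conrad description (\cite{D-R73}, \cite{Con07}) of $\MMb_1(3)$ and $\MMb_0(3)$ as regular proper Deligne--Mumford stacks and checks that $f$ remains finite \'etale on the cuspidal locus with $C_2$ still acting freely there. This amounts to a concrete analysis of $\Gamma_1(3)$- and $\Gamma_0(3)$-structures on N\'eron polygons, and uses that, after inverting $3$, $\Gamma_1(3)$ has no irregular cusps.

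Granting this, the topological conclusion is essentially formal. Since $\OO^{top}$ is a sheaf of $E_\infty$-ring spectra and $f$ is \'etale, the structure sheaf $\OO^{top}_{\MMb_1(3)}$ is the pullback $f^{*}\OO^{top}_{\MMb_0(3)}$ and carries a $C_2$-action lifting that on $f$, with $Tmf_0(3)=\OO^{top}(\MMb_0(3))$ and $Tmf_1(3)=\OO^{top}(\MMb_1(3))$ on global sections. The freeness square becomes $\MMb_1(3)\times_{\MMb_0(3)}\MMb_1(3)\simeq\MMb_1(3)\sqcup\MMb_1(3)$; evaluating $\OO^{top}$ and using that $\OO^{top}$ converts fibre products of \'etale maps into relative tensor products (a consequence of \'etale descent for $\OO^{top}$ and its modules, \cite{MM15}), one obtains the Galois identity
\[
Tmf_1(3)\tensor_{Tmf_0(3)}Tmf_1(3)\;\xrightarrow{\ \sim\ }\;F\big(C_{2+},Tmf_1(3)\big)=\prod_{C_2}Tmf_1(3).
\]
Since $f$ is moreover finite flat, $f_{*}\OO^{top}_{\MMb_1(3)}$ is \'etale-locally on $\MMb_0(3)$ a free module of rank $2$, so that $Tmf_1(3)$ is dualizable as a $Tmf_0(3)$-module. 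Together these say that $Tmf_0(3)\to Tmf_1(3)$ is a $C_2$-Galois extension in the sense of Rognes.

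It remains to verify \emph{faithfulness}, i.e.\ that $Tmf_1(3)\tensor_{Tmf_0(3)}(-)$ is conservative on $Tmf_0(3)$-modules; this is not automatic from the Galois condition. One covers $\MMb_0(3)$ by its interior $\MM_0(3)$ --- which is, up to the $\G_m$-action, an affine scheme by the presentations of \cite{M-R09} and \cite{Beh06} --- together with affine \'etale neighbourhoods of the cuspidal locus. By \'etale descent for $\OO^{top}$ and its modules, a $Tmf_0(3)$-module is zero as soon as its restriction to each chart vanishes, and since restriction along an \'etale map commutes with the relative tensor product, it suffices to check faithfulness chart by chart. Over the interior the extension restricts to $TMF_0(3)\to TMF_1(3)$, which is faithful because $\MM_1(3)\to\MM_0(3)$ is finite \'etale surjective and both spectra are even periodic (compare Lemma~\ref{lem:Landweber}); over each cuspidal chart $\OO^{top}$ takes an even-periodic Landweber exact value and the same argument applies.

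The main obstacle is the first step: verifying that $f$ is a genuine $C_2$-Galois covering \emph{at the cusps} of $\MMb_0(3)$, which is exactly where the hypotheses ``$3$ inverted'' (and, for the general statement, ``squarefree level'') are needed. Away from the cusps, and throughout the passage from stacks to ring spectra, everything reduces to the sheaf- and descent-theoretic properties of $\OO^{top}$ recorded in \cite{HL13} and \cite{MM15}.
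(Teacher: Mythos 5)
The paper's entire proof of this proposition is a citation: it \emph{is} \cite[Theorem 7.12]{MM15}, so what you have written is in effect a re-derivation of the cited result. Your geometric step --- that $\MMb_1(3)\to\MMb_0(3)$ is a finite \'etale $C_2$-torsor of Deligne--Mumford stacks, with the verification concentrated at the cusps --- is indeed the geometric content of that theorem and is fine as an outline. The genuine gap is in the passage to spectra. You assert that, since $\OO^{top}$ is a sheaf and $f$ is \'etale, global sections carry the torsor square to the Galois equivalence $Tmf_1(3)\otimes_{Tmf_0(3)}Tmf_1(3)\simeq \prod_{C_2}Tmf_1(3)$ ``essentially formally'', as a consequence of \'etale descent for $\OO^{top}$ and its modules. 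This is not so: the sheaf property only gives $Tmf_0(3)\simeq Tmf_1(3)^{hC_2}$, and homotopy fixed point inclusions $R^{hG}\to R$ are generically \emph{not} Galois even when they arise from an \'etale $G$-torsor of stacks (already $\ast\to BG$ with a trivial $G$-action on $R$, say $R=H\F_p$ and $G=\Z/p$, gives $R\otimes_{F(BG_+,R)}R\not\simeq\prod_G R$). What is actually needed is that the relevant derived stacks are $0$-affine, i.e.\ that the global sections functor from quasi-coherent $\OO^{top}$-modules to modules over the global sections ring is an equivalence; this is the main theorem of \cite{MM15}, proved by a nontrivial descendability/finite-cohomological-dimension argument, and it is exactly how the paper proceeds in the parallel Proposition~\ref{prop:Galois2} (0-affineness plus \cite[Theorem 5.6]{MM15}). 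Your proposal treats as formal precisely the step that carries the real weight.

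The same issue makes your faithfulness argument circular: the assertion that a $Tmf_0(3)$-module vanishes as soon as its base changes to the charts vanish is itself an affineness/descendability statement about the module category of the global sections ring, not a consequence of \'etale descent for $\OO^{top}$; once $0$-affineness is in hand, faithfulness comes out of \cite[Theorem 5.6]{MM15} directly and no chart-by-chart argument is needed. A smaller but concrete error: on the interior you justify faithfulness by saying both $TMF_0(3)$ and $TMF_1(3)$ are even periodic, but $TMF_0(3)$ is not --- its homotopy contains odd-degree torsion (the paper even notes it fails to be Landweber exact for this reason), so that clause must be discarded in any repair.
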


\begin{lemma}\label{lem:invert}
Let $\overline{f}$ be a nonconstant homogeneous polynomial in $\ab_1$ and $\ab_3$. Then
\[
tmf_1(3)[\overline{f}^{-1}] \to Tmf_1(3)[\overline{f}^{-1}]
\]
is an equivalence.
\end{lemma}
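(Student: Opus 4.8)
The plan is to reduce the statement to a comparison of connective covers and a check that inverting $\overline{f}$ pushes us into the range where $tmf_1(3)$ and $Tmf_1(3)$ already agree. First I would recall that $tmf_1(3)$ is by definition the $C_2$-equivariant connective cover of $Tmf_1(3)$, so there is a canonical map $tmf_1(3)\to Tmf_1(3)$ which is an isomorphism on $\m{\pi}_n$ for $n\geq 0$ and in particular induces an isomorphism on $\m{\pi}_{k\rho}$ and $\m{\pi}_{k\rho-1}$ for $k\geq 1$ (and on $\m{\pi}_0$, $\m{\pi}_{-1}$). The point of inverting a nonconstant homogeneous polynomial $\overline{f}$ in $\ab_1,\ab_3$ is that multiplication by $\overline{f}$ is an isomorphism $\Sigma^{-|\overline f|}(-)\to(-)$ on the localized spectra, and $|\overline f| = d\rho$ for some $d\geq 1$ (since $|\ab_1|=\rho$, $|\ab_3|=3\rho$). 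Hence on the localization $\m{\pi}_{V}$ is computed as a colimit of $\m{\pi}_{V + m d\rho}$ along multiplication by $\overline f^m$, and for any fixed $V\in RO(C_2)$ this colimit system eventually lands in the region $\m{\pi}_{n\rho + j}$ with $n$ as large as we like.

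The key steps, in order: (1) Form the map $g\co tmf_1(3)[\overline f^{-1}]\to Tmf_1(3)[\overline f^{-1}]$ induced by the connective-cover map after inverting $\overline f$; note both sides make sense as $\overline f\in\pi^{C_2}_{\star}tmf_1(3)$ maps to $\pi^{C_2}_{\star}Tmf_1(3)$. (2) Observe that $tmf_1(3)$ is strongly even (Corollary~\ref{cor:StronglyEven}), so by Lemma~\ref{lem:regrep} it suffices to check that $g$ is an isomorphism on underlying homotopy $\pi_*$ and on the Mackey functors $\m{\pi}_{k\rho}$, together with injectivity on $\m{\pi}_{k\rho-1}$. (3) For underlying homotopy: non-equivariantly the connective cover map $\tau_{\geq 0}Tmf_1(3)\to Tmf_1(3)$ becomes an equivalence after inverting any power of $a_3$ (or $a_1$), since $\pi_*Tmf_1(3)=\Z[\tfrac13][a_1,a_3,\Delta^{-1}]$ and $\pi_*tmf_1(3)=\Z[\tfrac13][a_1,a_3]$ differ only by the $\Delta$-torsion in negative degrees which is killed once we invert $\overline f$, because $\Delta$ is a monomial in $a_1,a_3$ up to the unit factor $(a_1^3-27a_3)$ --- here I would use that $\overline f$ nonconstant homogeneous forces $a_1$ or $a_3$ to become a unit after finitely many steps, so $\pi_*tmf_1(3)[\overline f^{-1}]\to\pi_*Tmf_1(3)[\overline f^{-1}]$ is an isomorphism. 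Actually the cleanest phrasing: $\pi_*tmf_1(3)[\overline f^{-1}]$ and $\pi_*Tmf_1(3)[\overline f^{-1}]$ are both the localization of $\Z[\tfrac13][a_1,a_3]$ at $\overline f$ (for the latter, further inverting $\Delta$ is redundant once, say, $a_3$ is invertible, and if $\overline f$ is a power of $a_1$ only, one argues that $\Delta$-torsion elements are $a_1$-divisible-to-zero). (4) For $\m{\pi}_{k\rho}$ and $\m{\pi}_{k\rho-1}$: write these as $\colim_m \m{\pi}_{k\rho + m|\overline f|}$; since $|\overline f|$ is a positive multiple of $\rho$, for $m\gg 0$ we are in degrees $n\rho$ or $n\rho-1$ with $n\geq 0$, where $tmf_1(3)\to Tmf_1(3)$ is already an isomorphism, so the colimits agree. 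Conclude via Lemma~\ref{lem:regrep}.

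The main obstacle I expect is the non-equivariant step (3), specifically handling the case where $\overline f$ is a power of $\ab_1$ alone: then $a_3$ is not inverted, so $\pi_*Tmf_1(3)[\overline f^{-1}]$ is $\Z[\tfrac13][a_1^{\pm1},a_3][\Delta^{-1}]$ and one must argue that the negative-degree torsion classes $\frac{1}{a_1^i a_3^j}$ contributing to $\pi_*Tmf_1(3)$ (which are \emph{absent} from $\pi_*tmf_1(3)$) do not survive to $\pi_*Tmf_1(3)[a_1^{-1}]$ --- i.e.\ that $\Delta^{-1}$ can be rewritten so that $a_1$-localization kills the torsion. Equivalently, one needs $\pi_*Tmf_1(3)$ to be $a_1$-locally $a_3$-torsion-free above the right degree, which follows from the explicit description $\pi_*Tmf_1(3)=\Z[\tfrac13][a_1,a_3,\Delta^{-1}]$: every element is a Laurent polynomial in $\Delta=a_3^3(a_1^3-27a_3)$, so inverting $a_1$ makes $a_1^3-27a_3$ and $a_3$ each divide a unit times a power of $\Delta^{-1}$, forcing $a_3$ invertible after all, hence $\pi_*Tmf_1(3)[a_1^{-1}]=\Z[\tfrac13][a_1^{\pm1},a_3^{\pm1}]=\pi_*tmf_1(3)[a_1^{-1}]$. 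Once this algebraic identity is in hand the rest is the formal colimit bookkeeping above. A secondary technical point is justifying that localization at $\overline f$ commutes with the passage to $\m{\pi}_{\bigstar}$ and with taking underlying spectra, but this is standard since $\overline f$-localization is a filtered homotopy colimit.
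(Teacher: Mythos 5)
Your equivariant skeleton is a legitimate alternative to the paper's argument: instead of reducing to underlying spectra, the paper notes that $a_\sigma^7\overline{f}^k=0$ in $\pi_\bigstar^{C_2}tmf_1(3)$, so by Lemma~\ref{lem:Geometric} the geometric fixed points of $tmf_1(3)[\overline{f}^{-1}]$ vanish, whence this spectrum (and the $Tmf_1(3)$-side) is cofree by \cite[Corollary 10.6]{HHR09}, and cofreeness reduces everything to an underlying equivalence. Your route through Lemma~\ref{lem:regrep} plus the observation that $\overline{f}$-localization is a filtered colimit along degree shifts by $d\rho$, $d\geq 1$, avoids geometric fixed points altogether and is fine in outline, although the assertion that $tmf_1(3)\to Tmf_1(3)$ is an isomorphism on $\m{\pi}_{n\rho}$ and $\m{\pi}_{n\rho-1}$ for $n\geq 0$ does not follow merely from the definition of the connective cover (which controls the integer-graded $\m{\pi}_*$); it needs the standard cell-structure argument for $S^{n\rho}$ (cells in dimensions $n,\dots,2n$) applied to the coconnective fiber of the covering map.

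The genuine error is in your step (3). You repeatedly write $\pi_*Tmf_1(3)=\Z\big[\tfrac13\big][a_1,a_3,\Delta^{-1}]$; that is $\pi_*TMF_1(3)$, not $\pi_*Tmf_1(3)$. By the collapsing descent spectral sequence (end of Section~\ref{sec:Basics}), $\pi_*Tmf_1(3)$ is $\Z\big[\tfrac13\big][a_1,a_3]$ in nonnegative degrees together with the $H^1$-contribution $\Z\big[\tfrac13\big][a_1,a_3]/(a_1^\infty,a_3^\infty)$ in negative degrees; $\Delta$ is not inverted. Your deduction that ``inverting $a_1$ forces $a_3$ invertible, hence $\pi_*Tmf_1(3)[a_1^{-1}]=\Z[\tfrac13][a_1^{\pm1},a_3^{\pm1}]=\pi_*tmf_1(3)[a_1^{-1}]$'' is false on both sides of the last equality: $a_3$ is not a unit in $\pi_*tmf_1(3)[a_1^{-1}]=\Z[\tfrac13][a_1^{\pm1},a_3]$, and had $\Delta^{-1}$ really been present in $\pi_*Tmf_1(3)$, the lemma itself would fail for $\overline{f}=\ab_1$, since then $a_3$ and $a_1^3-27a_3$ would be units in the target but not in the source (indeed $tmf_1(3)[\ab_1^{-1}]$ and $TMF_1(3)[\ab_1^{-1}]$ are genuinely different, as the Picard computations in Section~\ref{sec:Picard} reflect). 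The correct non-equivariant input, which is exactly what the paper uses, is simpler: every negative-degree class in $\pi^e_*Tmf_1(3)$ is annihilated by powers of $a_1$ and $a_3$, and since the underlying $f$ is a nonconstant homogeneous polynomial it lies in the ideal $(a_1,a_3)$, so a power of $f$ kills each such class; hence after inverting $\overline{f}$ both sides have underlying homotopy $\Z\big[\tfrac13\big][a_1,a_3][f^{-1}]$ and the map matches them. With that replacement your argument closes.
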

\begin{proof}
For some $k>0$, we have $a_\sigma^7\overline{f}^k = 0$ in $\pi^{C_2}_\bigstar F((EC_2)_+, tmf_1(3))$ and $|a_\sigma^7\overline{f}^k| = r+s\sigma$ with $r,s\geq 0$. Hence also $a_\sigma^7\overline{f}^k = 0$ in $\pi_\bigstar^{C_2} tmf_1(3)$ and thus $\Phi^{C_2}(tmf_1(3)[\overline{f}^{-1}]) = 0$ by Lemma \ref{lem:Geometric}. By \cite[Corollary 10.6]{HHR09}, $tmf_1(3)[\overline{f}^{-1}]$ is then cofree. Thus, we have only to show that $tmf_1(3)[\overline{f}^{-1}] \to Tmf_1(3)[\overline{f}^{-1}]$ is an equivalence of underlying spectra. As every element of negative degree in $\pi^e_*Tmf_1(3)$ is killed by $a_1$ and $a_3$, the result follows.
\end{proof}

\begin{lemma}\label{lem:invert2}
Let $\overline{f}$ be a nonconstant homogeneous polynomial in $\ab_1$ and $\ab_3$. Denote by $D(f)$ the non-vanishing locus of the underlying element $f\in H^0(\MMb_1(3); \omega^*)$. Then there is an equivalence
\[
Tmf_1(3)[\overline{f}^{-1}] \to \OO^{top}(D(f))
\]
of $C_2$-spectra.
\end{lemma}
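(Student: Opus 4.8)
The plan is to reduce the lemma to its underlying non-equivariant version by exploiting that both $C_2$-spectra in the statement are \emph{cofree}, so that a map between them is a $C_2$-equivalence as soon as it is an equivalence of underlying spectra. To produce the map, first note that the $(\Z/3)^{\times}=C_2$-action on $\MMb_1(3)$ preserves the open substack $D(f)$: writing $f$ as a sum of monomials $a_1^{i}a_3^{j}$ with $i+3j=d$ (so that $\overline{f}$ has degree $d\rho$ and $f$ has degree $2d$), each such monomial satisfies $i+j\equiv d\pmod 2$, and as the generator of $C_2$ acts by $-1$ on $a_1$ and $a_3$ it sends $f$ to $(-1)^{d}f$ and hence fixes $V(f)$. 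Thus the sheaf restriction $\OO^{top}(\MMb_1(3))\to\OO^{top}(D(f))$ is a map of commutative ring spectra with $C_2$-action, and applying the cofree functor $IF(EC_{2+},-)$ turns it, by Theorem \ref{thm:cofree}, into a map of genuine $C_2$-equivariant commutative ring spectra $Tmf_1(3)\to\OO^{top}(D(f))$.

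I would then check that this map sends $\overline{f}\in\pi^{C_2}_{d\rho}Tmf_1(3)$ to an invertible element. By Proposition \ref{prop:StackIdentification} the open substack $D(f)$ has the form $(\Spec A_f)/\G_m$ with $A_f=\Z[\tfrac13][a_1,a_3][f^{-1}]$, a quotient of an affine scheme by $\G_m$; hence global sections of quasi-coherent sheaves on $D(f)$ are exact, the descent spectral sequence for $\OO^{top}(D(f))$ collapses onto the zero line, and $\pi_{2t}\OO^{top}(D(f))\cong(A_f)_t$ with $\pi_{2t-1}\OO^{top}(D(f))=0$. In particular the image of $\overline{f}$ becomes invertible on underlying homotopy; since $\OO^{top}(D(f))$ and $\Sigma^{-d\rho}\OO^{top}(D(f))$ are cofree, multiplication by $\overline{f}$ is then a $C_2$-equivalence on $\OO^{top}(D(f))$, i.e.\ the image of $\overline{f}$ is invertible in $\pi^{C_2}_{\bigstar}\OO^{top}(D(f))$. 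By the universal property of localization at an element, the map therefore factors through a map of genuine $C_2$-$E_\infty$-rings
\[
\varphi\co Tmf_1(3)[\overline{f}^{-1}]\longrightarrow\OO^{top}(D(f)).
\]

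Finally, I would show $\varphi$ is an equivalence. By Lemma \ref{lem:invert} the source is equivalent to $tmf_1(3)[\overline{f}^{-1}]$, which is shown to be cofree in the course of that proof (its geometric fixed points vanish), and the target is cofree by construction; hence it suffices to check that $\varphi$ is an equivalence of underlying spectra. The forgetful functor to spectra preserves the colimit defining the localization and sends $\Sigma^{-d\rho}$ to $\Sigma^{-2d}$ and $\overline{f}$ to $f$, so the underlying spectrum of the source is $\OO^{top}(\MMb_1(3))[f^{-1}]$. Now $\pi_*\OO^{top}(\MMb_1(3))$ is $\Z[\tfrac13][a_1,a_3]=:A$ in even degrees and the module $H^1(\MMb_1(3);\omega^{\otimes *})$, which is $(a_1,a_3)$-power torsion, in odd degrees; since $f$ is a nonconstant homogeneous polynomial it lies in the ideal $(a_1,a_3)$, so inverting $f$ annihilates the odd part and localizes the even part to $A_f$. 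Thus the underlying map of $\varphi$ induces the canonical isomorphism $A_f\to A_f$ on homotopy, so $\varphi$ is an equivalence. The two points genuinely beyond formal manipulation of cofree spectra are the cofreeness of $Tmf_1(3)[\overline{f}^{-1}]$ (Lemma \ref{lem:invert}) and the $f$-power-torsion of the $H^1$-summand of $\pi_*\OO^{top}(\MMb_1(3))$; the rest is bookkeeping.
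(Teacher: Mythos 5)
Your proposal is correct, and it follows the same skeleton as the paper's proof ($D(f)$ is $C_2$-invariant, so restriction gives an equivariant ring map; the image of $\overline{f}$ is invertible in $\pi^{C_2}_{\bigstar}\OO^{top}(D(f))$, so the map factors through the localization; the factorization is an underlying equivalence, and cofreeness of both sides upgrades this to a genuine equivalence), but you justify the two substantive steps differently. For invertibility of $\overline{f}$, the paper compares homotopy fixed point spectral sequences, identifies $\mathrm{HFPSS}(f)$ with $\mathrm{HFPSS}[\overline{f}^{-1}]$, and uses that $\OO^{top}(D(f))$ is strongly even to detect the relation $\overline{f}\,\overline{g}=1$ on underlying homotopy; you instead note that multiplication by the image of $\overline{f}$ is an underlying equivalence between cofree spectra (using that $S^{d\rho}\sm\OO^{top}(D(f))$ is again cofree because $S^{d\rho}$ is invertible), hence a $C_2$-equivalence — cleaner, and it avoids the spectral-sequence bookkeeping, though it loses the byproduct that $\OO^{top}(D(f))$ is strongly even, which the paper reuses later. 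For the underlying equivalence, the paper appeals to $0$-affineness of $(\MMb_1(3),\OO^{top})$ and \cite[Lemma 3.20]{MM15}, whereas you compute directly: the odd-degree part $H^1(\MMb_1(3);\omega^{\tensor *})$ of $\pi_*Tmf_1(3)$ is $(a_1,a_3)$-power torsion, hence $f$-power torsion (every monomial of $f^N$ has total weighted degree at least $N$), so inverting $f$ leaves exactly $\Z\big[\tfrac13\big][a_1,a_3][f^{-1}]$, matching the collapsed descent spectral sequence over $D(f)=\big(\Spec \Z\big[\tfrac13\big][a_1,a_3][f^{-1}]\big)/\G_m$; one should add the one-line remark that the induced map on homotopy is the canonical localization map, which follows from compatibility of the descent spectral sequence edge maps with restriction of sections. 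Your route is more elementary and self-contained but tied to the explicit knowledge of $\pi_*Tmf_1(3)$; the paper's route is more structural and would apply to any $0$-affine derived stack without computing homotopy groups. Both arguments share the reliance on Lemma \ref{lem:invert} (equivalently, vanishing of geometric fixed points) for cofreeness of $Tmf_1(3)[\overline{f}^{-1}]$ and on the exactness of global sections over $D(f)$ as in Lemma \ref{lem:Landweber}.
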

\begin{proof}
Note that the pullback of $D(f)$ along
\[
\Spec \Z[\tfrac13][a_1,a_3] - \{0\}\quad \longrightarrow \quad \MMb_1(3) \simeq \PP_{\Z[\tfrac13]}(1,3)
\]
is $\Spec \Z[\tfrac13][a_1,a_3][f^{-1}].$ By the same argument as in Lemma \ref{lem:Landweber}, the global sections functor
\[
\Gamma: \QCoh(D(f)) = \QCoh\Big(\Spec \big(\Z[\frac13][a_1,a_3][f^{-1}]\big)/\G_m\Big) \to \mathrm{Abelian Groups}
\]
is exact. Therefore, the descent spectral sequence for $\OO^{top}(D(f))$ collapses and we have $\pi_*\OO^{top}(D(f)) \cong \Z[\frac13][a_1,a_3][f^{-1}]$.

Note furthermore that $D(f)$ is $C_2$-invariant as $f^2$ is an invariant section. This induces a $C_2$-map of ring spectra $Tmf_1(3) = \OO^{top}(\MMb_1(3)) \to \OO^{top}(D(f))$. We want to show that the image of $\overline{f}$ is invertible in $\pi_\bigstar^{C_2} \OO^{top}(D(f))$. It is detected in the homotopy fixed point spectral sequence $\mathrm{HFPSS}(f)$ for $\OO^{top}(D(f))^{hC_2}$ by $fu_{2\sigma}^k$ for some $k$. As $f$ and $u_{2\sigma}$ are invertible, there exists an element 
\[
\overline{g} \in \pi^{C_2}_{-|\overline{f}|}\OO^{top}(D(f))
\]
detected by $f^{-1}u_{2\sigma}^{-k}$. Clearly, we have that the underlying class $\mathrm{res}(\overline{f}\overline{g}) \in \pi_0\OO^{top}(D(f))$ equals $1$. As $\mathrm{HFPSS}(f)$ receives a multiplicative map from the homotopy fixed point spectral sequence $\mathrm{HFPSS}$ for $tmf_1(3)^{hC_2}$, the identification of $\pi_*\OO^{top}(D(f))$ above implies that $\mathrm{HFPSS}(f) \cong \mathrm{HFPSS}[\overline{f}^{-1}]$. In particular, we can deduce that $\OO^{top}(D(f))$ is strongly even as a (cofree) $C_2$-spectrum. This implies that $\overline{f}\overline{g} = 1 \in \pi_0^{C_2}\OO^{top}(D(f))$ so that $\overline{f}$ is invertible. Thus, we get an induced map
\[Tmf_1(3)[\overline{f}^{-1}] \to \OO^{top}(D(f))\]
of $C_2$-spectra.

By \cite[Theorem 7.2]{MM15} and the proof of \cite[Theorem 7.12]{MM15}, the global sections functor
\[\Gamma\co \QCoh(\MMb_1(3),\OO^{top}) \to Tmf_1(3)\modules\]
is an equivalence.\footnote{We only really need that $\Gamma$ commutes with homotopy colimits. As observed in the proof of \cite[Proposition 3.8]{MM15}, this is automatic when the stack has finite cohomological dimension as $\MMb_1(3)$ does. This circumvents the use of most of the heavy machinery in \cite{MM15}.} Thus, we can apply \cite[Lemma 3.20]{MM15} to see that
\[Tmf_1(3)[\overline{f}^{-1}] \to \OO^{top}(D(f)) \]
is an equivalence of underlying spectra. As both spectra are cofree, the result follows.
\end{proof}

This applies in particular to $\overline{f} = \overline{\Delta}$. Thus,
\[tmf_1(3)[\overline{\Delta}^{-1}] \simeq Tmf_1(3)[\overline{\Delta}^{-1}] \simeq TMF_1(3)\]
as $C_2$-spectra (with $\overline{\Delta} = \overline{a_3}^3(\overline{a_1}^3-27\overline{a_3}))$. In particular, $TMF_1(3)$ is strongly even. Thus, Theorem \ref{thm:RealLandweber} implies:
\begin{prop}
The $C_2$-spectrum $TMF_1(3)$ is Real Landweber exact in the sense that there is a natural isomorphism
\[M\R_\bigstar(X) \tensor_{MU_{2*}} TMF_1(3)_{2*} \to TMF_1(3)_\bigstar(X)\]
for all $C_2$-spectra $X$.
\end{prop}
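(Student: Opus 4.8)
The plan is to deduce this directly from the Real Landweber exact functor theorem, Theorem~\ref{thm:RealLandweber}(b), so the work consists entirely in checking its two hypotheses for $E\R = TMF_1(3)$: that $TMF_1(3)$ is strongly even as a $C_2$-spectrum, and that its underlying spectrum is Landweber exact.

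For strong evenness I would use the chain of $C_2$-equivariant equivalences $tmf_1(3)[\overline{\Delta}^{-1}] \simeq Tmf_1(3)[\overline{\Delta}^{-1}] \simeq TMF_1(3)$ supplied by Lemmas~\ref{lem:invert} and~\ref{lem:invert2} applied to $\overline{f} = \overline{\Delta} = \overline{a_3}^3(\overline{a_1}^3 - 27\overline{a_3})$, which is a nonconstant homogeneous polynomial in $\ab_1$ and $\ab_3$ (and $D(\Delta) = \MM_1(3)$, so $\OO^{top}(D(\Delta)) = TMF_1(3)$). Since $tmf_1(3)$ is strongly even by Corollary~\ref{cor:StronglyEven}, it remains to observe that inverting the homogeneous class $\overline{\Delta}$ preserves strong evenness: the odd groups $\m{\pi}_{k\rho-1}$ stay zero and $\m{\pi}_{k\rho}$ of the localization is the localization of the (constant) $\m{\pi}_{*\rho}tmf_1(3)$ at $\overline{\Delta}$, hence still constant. (This is precisely the statement ``$TMF_1(3)$ is strongly even'' recorded in the preceding paragraph.) For Landweber exactness of the underlying spectrum, I would invoke Lemma~\ref{lem:Landweber} with $n=3$.

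With both hypotheses verified, Theorem~\ref{thm:RealLandweber}(b) says $TMF_1(3)$ is Real Landweber exact, which by the definition preceding that theorem means exactly that for every Real orientation $M\R \to TMF_1(3)$ — one exists because $TMF_1(3)$ is even, by Lemma~\ref{lem:RealOrientable} — the induced map $M\R_\bigstar(X) \tensor_{MU_{2*}} TMF_1(3)_{2*} \to TMF_1(3)_\bigstar(X)$ is an isomorphism natural in the $C_2$-spectrum $X$, which is the claim. I do not expect any genuine obstacle at this last step: the substantive inputs — the stack identification $\MMb_1(3) \simeq \PP_{\Z[1/3]}(1,3)$ of Proposition~\ref{prop:StackIdentification}, the resulting identification $TMF_1(3) \simeq tmf_1(3)[\overline{\Delta}^{-1}]$, the $RO(C_2)$-graded computation of Section~\ref{sec:specseq}, and the proof of Theorem~\ref{thm:RealLandweber} itself — are all already in place, and this proposition is just the bookkeeping that assembles them.
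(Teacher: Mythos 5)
Your proposal is correct and follows essentially the same route as the paper: the paper also deduces the proposition by combining the $C_2$-equivalence $TMF_1(3)\simeq tmf_1(3)[\overline{\Delta}^{-1}]\simeq Tmf_1(3)[\overline{\Delta}^{-1}]$ from Lemmas \ref{lem:invert} and \ref{lem:invert2}, the resulting strong evenness of $TMF_1(3)$, and Landweber exactness of the underlying spectrum (Lemma \ref{lem:Landweber}), and then applies Theorem \ref{thm:RealLandweber}(b). The only cosmetic difference is that you re-derive strong evenness by a telescope/colimit argument from Corollary \ref{cor:StronglyEven}, whereas the paper records it inside the proof of Lemma \ref{lem:invert2} via the identification of the localized homotopy fixed point spectral sequence; both arguments are fine.
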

Note that the equivalence $tmf_1(3)[\overline{\Delta}^{-1}] \simeq_{C_2} TMF_1(3)$ also directly implies together with the computations from the previous sections that $\pi_*TMF_0(3)$ has torsion and thus $TMF_0(3)$ cannot be Landweber exact.

The following fiber square will be useful later.
\begin{prop}\label{prop:FiberSquare}
We have a fiber square
\[
\xymatrix{ Tmf_1(3) \ar[r]\ar[d] & tmf_1(3)[\ab_1^{-1}] \ar[d] \\
tmf_1(3)[\ab_3^{-1}] \ar[r] & tmf_1(3)[(\ab_1\ab_3)^{-1}] }
\]
\end{prop}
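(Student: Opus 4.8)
The plan is to exhibit the square as an equivariant Mayer--Vietoris square coming from a Zariski cover of $\MMb_1(3)$. Under the equivalence $\MMb_1(3) \simeq \PP_{\Z[\frac13]}(1,3)$ of Proposition~\ref{prop:StackIdentification}, the non-vanishing loci $D(a_1)$ and $D(a_3)$ of the sections $a_1$ and $a_3$ are open substacks whose common complement is precisely the deleted locus $\{0\}$; hence $\{D(a_1),D(a_3)\}$ is an open cover of $\MMb_1(3)$, with $D(a_1)\cap D(a_3)=D(a_1 a_3)$. Since $a_1$ and $a_3$ are negated by the $C_2$-action (by \cite[Proposition~3.4]{M-R09}), all three of these open substacks are $C_2$-invariant, and $a_1^2$, $a_3^2$, $(a_1a_3)^2$ are invariant sections cutting them out; in particular the cover is $C_2$-equivariant.

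\textbf{Main steps.} First I would apply the sheaf property of $\OO^{top}$, restricted along $\MMb_1(3)\to\MMb_{ell}[\frac13]$, to this cover. For a sheaf of spectra a cover by two opens yields a homotopy pullback square, so we get a homotopy pullback square of $E_\infty$-ring spectra
\[
\xymatrix{ \OO^{top}(\MMb_1(3)) \ar[r]\ar[d] & \OO^{top}(D(a_1)) \ar[d] \\
\OO^{top}(D(a_3)) \ar[r] & \OO^{top}(D(a_1 a_3)).}
\]
Because the cover is $C_2$-equivariant, this is a square of $E_\infty$-rings with $C_2$-action; applying the cofree functor $IF(EC_{2+},-)$, which preserves homotopy limits, turns it into a homotopy pullback square of genuine $C_2$-equivariant $E_\infty$-rings whose top-left corner is $Tmf_1(3)$ by definition. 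It then remains to identify the other three corners: each of $a_1$, $a_3$, $a_1a_3$ is a nonconstant homogeneous polynomial in $\ab_1$ and $\ab_3$, so Lemma~\ref{lem:invert2} gives $C_2$-equivalences $\OO^{top}(D(f))\simeq Tmf_1(3)[\overline f^{-1}]$ and Lemma~\ref{lem:invert} gives $C_2$-equivalences $Tmf_1(3)[\overline f^{-1}]\simeq tmf_1(3)[\overline f^{-1}]$, for $f\in\{a_1,a_3,a_1a_3\}$, compatibly with the restriction maps of the square (each restriction $\OO^{top}(D(a_i))\to\OO^{top}(D(a_1a_3))$ corresponds to inverting the remaining generator, which is the unique such $tmf_1(3)$-algebra map up to homotopy). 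Substituting produces exactly the asserted square.

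\textbf{Main obstacle.} I expect the only point needing genuine care to be the equivariant bookkeeping: passing from the sheaf-theoretic Mayer--Vietoris square of spectra-with-$C_2$-action to a homotopy pullback square of \emph{genuine} $C_2$-spectra, and matching the genuine localizations $tmf_1(3)[\overline f^{-1}]$ with the cofree corners. This is harmless here because all four corners are cofree: the three localizations have vanishing geometric fixed points by Lemma~\ref{lem:Geometric} (exactly as in the proof of Lemma~\ref{lem:invert}), and $Tmf_1(3)$ is cofree by construction. Consequently being a homotopy pullback may be checked on underlying spectra, where it is the classical Zariski gluing square for $\OO^{top}$ on $\MMb_1(3)$, and the identification of corners on underlying spectra is immediate from the module-level localization statements. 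Everything else — commutativity of the identifying equivalences with the square's maps, and the collapse of higher Čech terms for a two-element cover by monomorphisms — is routine.
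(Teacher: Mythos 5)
Your proposal is correct and follows essentially the same route as the paper: the paper also covers $\MMb_1(3)\simeq\PP_{\Z[\frac13]}(1,3)$ by $D(a_1)$ and $D(a_3)$, invokes the sheaf property of $\OO^{top}$ to get the homotopy pullback square, and then identifies the corners using Lemmas \ref{lem:invert} and \ref{lem:invert2}. Your extra care about cofreeness and checking the pullback on underlying spectra is a reasonable elaboration of what the paper leaves implicit, so there is nothing to correct.
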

\begin{proof}
The square
\[
\xymatrix{ \MMb_1(3) & D(a_1) \ar[l] \\
D(a_3) \ar[u] & D(a_1a_3) \ar[u]\ar[l] }
\]
induces a fiber square
\begin{align}\label{FiberSquare}
\xymatrix{\OO^{top}(\MMb_1(3)) \ar[r]\ar[d] & \OO^{top}(D(a_1)) \ar[d] \\
\OO^{top}(D(a_3)) \ar[r] & \OO^{top}(D(a_1a_3)) }
\end{align}
as
\[
\MMb_1(3) \simeq \PP_{\Z\big[\tfrac13\big]}(1,3) = D(a_1)\cup D(a_3)
\]
and $\OO^{top}$ is a sheaf (see \cite[Appendix A]{MM15} for why the sheaf condition implies this).

By the last two lemmas, this is equivalent to
\[
\xymatrix{ Tmf_1(3) \ar[r]\ar[d] & tmf_1(3)[\ab_1^{-1}] \ar[d] \\
tmf_1(3)[\ab_3^{-1}] \ar[r] & tmf_1(3)[(\ab_1\ab_3)^{-1}] }
\]
as a square of $C_2$-spectra.
\end{proof}

\section{Slices and Anderson Duals}\label{sec:SliceAnderson}
In this section, we will compute the slices of $TMF_1(3)$ and $Tmf_1(3)$ and apply this to compute the Anderson dual of $Tmf_1(3)$. 
\subsection{Slices}
We can apply the computations of the regular representation homotopy groups of $tmf_{1}(3)$ and its localizations to determine their slices.

Since all of the odd slices vanish and the even slices are regular representation suspensions of $H\mZ\left[\tfrac13\right]$ by \ref{sec:Slices} and \ref{cor:StronglyEven}, the homotopy groups ``near multiples of regular representations'' are easy to compute since the slice spectral sequence is especially simple here.

To ensure transparency with later notation and gradings, we introduce some notation. Let $R=\Z[\tfrac13][a_1,a_3][f^{-1}]$ with $f$ homogeneous. If $S\subset R_{2n}$ is any subset of homogeneous rational functions of degree $2n$, then let $\overline{S}$ denote the same rational functions, but with every instance of $a_1$ and $a_3$ replaced with $\ab_1$ and $\ab_3$ respectively. This is a notational device to ensure that the reader keep track of the $RO(C_2)$-grading of barred elements, compared to the underling, $\mathbb Z$-grading of unbarred ones. Lemma \ref{lem:HomotopyGroups} now gives us a description of the homotopy groups of the localizations of $tmf_1(3)$:

\begin{corollary}\label{cor:HomotopyGroups}
Let $M$ be one of $tmf_1(3)$, $tmf_1(3)[\ab_1^{-1}]$, $tmf_1(3)[\ab_3^{-1}]$, or $tmf_1(3)[(\ab_1\ab_3)^{-1}]$.

For all $k$, we have
\begin{align*}
 \m{\pi}_{k\rho+1} M&= \m{G}\otimes \overline{\pi_{2k+2}^u M}, \\
 \m{\pi}_{k\rho} M&=\mZ\left[\tfrac13\right]\otimes\overline{\pi_{2k}^u M}, \\
 \m{\pi}_{k\rho-1} M&=0, \text{ and } \\
 \m{\pi}_{k\rho-2} M&=\mZ\left[\tfrac13\right]_{-}\otimes\overline{\pi_{2k-2}^uM}.
\end{align*}
\end{corollary}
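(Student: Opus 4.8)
### Proof proposal

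The plan is to reduce Corollary \ref{cor:HomotopyGroups} to a direct application of Lemma \ref{lem:HomotopyGroups}, so that essentially all the work is in verifying the two hypotheses of that lemma for each of the four spectra $M$ in the list. First I would recall from Proposition \ref{prop:StackIdentification} and the collapse of the descent spectral sequence (established at the end of Section \ref{sec:Basics}) that $\pi_*^u tmf_1(3) \cong \Z[\tfrac13][a_1,a_3]$, and that the corresponding localizations have $\pi_*^u tmf_1(3)[f^{-1}] \cong \Z[\tfrac13][a_1,a_3][f^{-1}]$ for $f$ a nonconstant homogeneous polynomial in $a_1,a_3$ (this is exactly the content of the descent-collapse argument reproved in the proof of Lemma \ref{lem:invert2}). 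In particular each such $\pi_{2n}^u M$ is a free $\Z[\tfrac13]$-module, hence has no $2$-torsion, and $\pi_{\mathrm{odd}}^u M = 0$.

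Next I would verify hypothesis (1) of Lemma \ref{lem:HomotopyGroups}, namely $\m{\pi}_{n\rho-1}M = 0$ for all $n$, and hypothesis (2), namely $\m{\pi}_{n\rho}M = \mZ \otimes \pi_{2n}^u M$ with $\pi_{2n}^u M$ $2$-torsion-free. Both follow from the fact — noted in Corollary \ref{cor:StronglyEven} and extended to the localizations in the proof of Lemma \ref{lem:invert2} — that $tmf_1(3)$ and each of $tmf_1(3)[\ab_1^{-1}]$, $tmf_1(3)[\ab_3^{-1}]$, $tmf_1(3)[(\ab_1\ab_3)^{-1}]$ is \emph{strongly even} as a $C_2$-spectrum: strong evenness says precisely that $\m{\pi}_{n\rho-1}M=0$ and that $\m{\pi}_{n\rho}M$ is the constant Mackey functor on $\pi_{2n}^u M$, i.e.\ $\mZ \otimes \pi_{2n}^u M$. (For the localizations other than by $\ab_3$, strong evenness is part of the argument in Lemma \ref{lem:invert2}; for $tmf_1(3)$ itself and $tmf_1(3)[\ab_3^{-1}]$ it is Corollary \ref{cor:StronglyEven}. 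One can also read it off from the $E_\infty$-page computed in Theorem \ref{thm:computation} together with the cofiber sequence $S^{a+(b-1)\sigma}\to S^{a+b\sigma}\to S^{a+b}\wedge (C_2)_+$, as in the remark after that theorem.) The $2$-torsion-freeness of $\pi_{2n}^u M$ was observed in the previous paragraph.

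With both hypotheses in hand, Lemma \ref{lem:HomotopyGroups} applies verbatim and gives, for $A_k := \pi_{2k}^u M$,
\[
\m{\pi}_{k\rho+1}M = \m{G}\otimes A_{k+1},\quad \m{\pi}_{k\rho}M = \mZ\otimes A_k,\quad \m{\pi}_{k\rho-1}M = 0,\quad \m{\pi}_{k\rho-2}M = \mZ_-\otimes A_{k-1}.
\]
The only remaining point is bookkeeping: since $M$ is $2$-locally nothing special here, $\mZ$ in that lemma should be read as $\mZ[\tfrac13]$ throughout (the Mackey functors $\m{G}$, $\mZ_-$, $\mZ^*$ of Definition \ref{def:Mackey} and the conclusion of Lemma \ref{lem:HomotopyGroups} make equal sense with $\Z$ replaced by $\Z[\tfrac13]$, since the slice $E_2$-term of Figure \ref{fig:E2} and the Gap-Theorem vanishing $\m{H}_{-2}(S^{-2\rho};-)=0$ are unaffected by inverting $3$). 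Finally, the barred notation $\overline{\pi_{2k}^u M}$ is, by the convention introduced just before the corollary, simply the abelian group $\pi_{2k}^u M$ regarded as sitting in $RO(C_2)$-degree $k\rho$ rather than $\Z$-degree $2k$; rewriting $A_{k+1}, A_k, A_{k-1}$ as $\overline{\pi_{2k+2}^u M}, \overline{\pi_{2k}^u M}, \overline{\pi_{2k-2}^u M}$ yields exactly the four displayed formulas. There is no real obstacle here; the only thing to be careful about is confirming strong evenness for all four spectra simultaneously and tracking the $\Z[\tfrac13]$-versus-$\Z$ coefficient throughout, which is why I would state that reduction explicitly rather than just citing Lemma \ref{lem:HomotopyGroups}.
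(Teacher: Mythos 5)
Your proposal is correct and follows essentially the same route as the paper: the corollary is a direct application of Lemma \ref{lem:HomotopyGroups}, whose hypotheses hold because $tmf_1(3)$ and its localizations are strongly even with $2$-torsion-free underlying homotopy (Corollary \ref{cor:StronglyEven} together with the strong evenness of $\OO^{top}(D(f))\simeq tmf_1(3)[\overline{f}^{-1}]$ established in Lemmas \ref{lem:invert} and \ref{lem:invert2}), and the barred notation is just the regrading convention. Your explicit verification of strong evenness for the localized spectra and the $\Z[\tfrac13]$-coefficient bookkeeping is exactly what the paper leaves implicit in the paragraph preceding the corollary.
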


Similarly, naturality of the slice spectral sequence implies that we understand the effect of the localization maps on homotopy groups in dimensions $k\rho-2,\dots,k\rho+1$.

\begin{corollary}
For $k\in\Z$ and for $j=-2, -1, 0, 1$, the localization maps
\[
\m{\pi}_{k\rho+j} tmf_1(3)[\ab_i^{-1}]\to\m{\pi}_{k\rho+j} tmf_1(3)[(\ab_1\ab_3)^{-1}]
\]
are induced by the obvious inclusions of graded pieces of these graded rings.
\end{corollary}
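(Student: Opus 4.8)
The plan is to deduce the statement from naturality of the slice spectral sequence, exactly as in the proof of Lemma~\ref{lem:HomotopyGroups}. Each localization map $tmf_1(3)[\ab_i^{-1}] \to tmf_1(3)[(\ab_1\ab_3)^{-1}]$ (for $i = 1, 3$) is a map of $C_2$-spectra, hence induces a map of slice towers and therefore a map of slice spectral sequences converging to the induced map on homotopy Mackey functors. So it suffices to identify this map of spectral sequences on the few slices that can contribute in total degrees $k\rho + j$ with $j\in\{-2,-1,0,1\}$, and to know that there is no room for differentials or extensions in this range.

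For the second point, recall that all four spectra $tmf_1(3)$, $tmf_1(3)[\ab_1^{-1}]$, $tmf_1(3)[\ab_3^{-1}]$, $tmf_1(3)[(\ab_1\ab_3)^{-1}]$ are strongly even: this is Corollary~\ref{cor:StronglyEven} for $tmf_1(3)$ itself, and for the localizations it follows from Lemmas~\ref{lem:invert} and~\ref{lem:invert2}, which identify $tmf_1(3)[\overline{f}^{-1}]$ with $\OO^{top}(D(f))$ and show the latter is strongly even. Hence by Propositions~\ref{prop:OddSlices} and~\ref{prop:EvenSlices} their odd slices vanish and $P^{2n}_{2n}M \simeq \Sigma^{n\rho} H\m{\pi_{2n}M}$, where $\m{\pi_{2n}M}$ is the constant Mackey functor on $\pi_{2n}^u M$. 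As in the proof of Lemma~\ref{lem:HomotopyGroups} (see Figure~\ref{fig:E2}), only $P^{2k-2}_{2k-2}M$, $P^{2k}_{2k}M$, $P^{2k+2}_{2k+2}M$ can contribute in total degrees $k\rho-2,\dots,k\rho+1$, and — using that $\m{H}_{-2}(S^{-2\rho};\m{\Z}\left[\tfrac13\right]\otimes A)=0$, the Gap Theorem of \cite{HHR09} — they land in pairwise distinct homotopy Mackey functors, with no possible differentials or extensions among them. Thus $\m{\pi}_{k\rho+j}M$ is, naturally in the strongly even spectrum $M$, the homotopy Mackey functor of the appropriate regular-representation suspension of $H\mZ\left[\tfrac13\right]$, which is precisely the description recorded in Corollary~\ref{cor:HomotopyGroups}.

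It remains to identify the map induced on these slices by a localization map $M \to M'$. Since $P^{2n}_{2n}$ is a functor and both $M, M'$ are strongly even, the induced map on $2n$-th slices is $\Sigma^{n\rho}H$ applied to the map of constant Mackey functors associated to $\pi_{2n}^u M \to \pi_{2n}^u M'$. On underlying spectra the localization map $tmf_1(3)[\ab_i^{-1}] \to tmf_1(3)[(\ab_1\ab_3)^{-1}]$ realizes the inclusion of the degree-$2n$ component of $\Z\left[\tfrac13\right][a_1,a_3][a_i^{-1}]$ into that of $\Z\left[\tfrac13\right][a_1,a_3][(a_1a_3)^{-1}]$, since the homotopy of each localization is the expected graded ring (by the collapse of the relevant descent spectral sequence). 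As tensoring with the fixed Mackey functors $\m{G}$, $\mZ\left[\tfrac13\right]$ and $\mZ\left[\tfrac13\right]_{-}$ is functorial, the induced maps on $\m{\pi}_{k\rho+j}$ are exactly the corresponding inclusions of graded pieces, as claimed.

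There is no genuine obstacle here beyond bookkeeping: the one substantive input — that the three relevant slices contribute to distinct homotopy Mackey functors with no crossing differentials or extension problems in the range $k\rho-2,\dots,k\rho+1$ — has already been established in the proof of Lemma~\ref{lem:HomotopyGroups}, and the rest is formal naturality of the slice filtration.
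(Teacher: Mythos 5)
Your proof is correct and follows essentially the same route as the paper, which deduces the corollary in one line from naturality of the slice spectral sequence together with the degeneration in the range $k\rho-2,\dots,k\rho+1$ established in the proof of Lemma~\ref{lem:HomotopyGroups}. Your additional bookkeeping (strong evenness of the localizations via Lemmas~\ref{lem:invert} and~\ref{lem:invert2}, and the identification of the induced maps on even slices with the inclusions of graded pieces of the underlying homotopy rings) just makes explicit what the paper leaves implicit.
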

\begin{remark}
We could also have read off these results from the homotopy fixed point spectral sequence, but the slice spectral sequence approach is both more conceptual and is easier for Mackey functor computations.
\end{remark}

We want now to compute the slices of $Tmf_1(3)$. To that purpose, we denote by $M[\ab_1,\ab_3]$ the monic monomials in $\Z\big[\tfrac13\big][\ab_1,\ab_3]$.
\begin{prop}\label{prop:SliceGraded}
The associated graded $C_2$-spectrum for the slice filtration of $Tmf_1(3)$ is
\[\bigvee_{P\in M[\ab_1, \ab_3]} S^{|P|}\sm H\underline{\Z}\left[\tfrac13\right]\quad \vee \bigvee_{P\in M[\ab_1, \ab_3]} S^{-|P|-4\rho-1} \sm H\underline{\Z}\left[\tfrac13\right].\]
\end{prop}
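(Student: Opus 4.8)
The plan is to deduce all the slices from the two families of homotopy Mackey functors $\m{\pi}_{n\rho}Tmf_1(3)$ and $\m{\pi}_{n\rho-1}Tmf_1(3)$. By Propositions~\ref{prop:OddSlices} and~\ref{prop:EvenSlices}, for \emph{any} $C_2$-spectrum the $(2n-1)$-slice is $\Sigma^{n\rho-1}H\m{\pi}_{n\rho-1}(-)$ and the $2n$-slice is $\Sigma^{n\rho}HP^0\m{\pi}_{n\rho}(-)$. Unlike its localizations, $Tmf_1(3)$ is not strongly even, so $\m{\pi}_{n\rho-1}$ will be nonzero and this is exactly what produces the second wedge summand; thus the task is to identify these two Mackey functors and, in the even case, to compute $P^0$.

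To do this I would feed the homotopy pullback square of Proposition~\ref{prop:FiberSquare} into the associated Mayer--Vietoris long exact sequence of homotopy Mackey functors, writing $A=tmf_1(3)[\ab_1^{-1}]$, $B=tmf_1(3)[\ab_3^{-1}]$, $C=tmf_1(3)[(\ab_1\ab_3)^{-1}]$. By Corollary~\ref{cor:HomotopyGroups} all of $A$, $B$, $C$ have $\m{\pi}_{k\rho-1}=0$ (and, underlying, vanishing odd homotopy), so the sequence degenerates: $\m{\pi}_{n\rho-1}Tmf_1(3)$ is the cokernel of $\m{\pi}_{n\rho}(A\oplus B)\to\m{\pi}_{n\rho}C$, and
\[
0\to\coker\big(\m{\pi}_{n\rho+1}(A\oplus B)\to\m{\pi}_{n\rho+1}C\big)\to\m{\pi}_{n\rho}Tmf_1(3)\to\ker\big(\m{\pi}_{n\rho}(A\oplus B)\to\m{\pi}_{n\rho}C\big)\to 0 .
\]
Corollary~\ref{cor:HomotopyGroups} rewrites every term here as $\mZ\left[\tfrac13\right]\otimes(-)$ (in the $n\rho$ and $n\rho-1$ rows) or $\m{G}\otimes(-)$ (in the $n\rho+1$ row) applied to the relevant underlying homotopy groups, which are the appropriate graded pieces of $\Z\left[\tfrac13\right][a_1^{\pm1},a_3]$, $\Z\left[\tfrac13\right][a_1,a_3^{\pm1}]$ and $\Z\left[\tfrac13\right][a_1^{\pm1},a_3^{\pm1}]$. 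As these are free over $\Z\left[\tfrac13\right]$, the functors $\mZ\left[\tfrac13\right]\otimes(-)$ and $\m{G}\otimes(-)$ commute with the kernels and cokernels in question, and those are precisely the kernel and cokernel of the \v{C}ech differential for the cover $\MMb_1(3)=D(\ab_1)\cup D(\ab_3)$, i.e.\ $H^0(\MMb_1(3);\omega^{\otimes\ast})=\Z\left[\tfrac13\right][a_1,a_3]$ and $H^1(\MMb_1(3);\omega^{\otimes\ast})=\Z\left[\tfrac13\right][a_1,a_3]/(a_1^\infty,a_3^\infty)$, as recorded at the end of Section~\ref{sec:Basics}.

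Assembling this, $\m{\pi}_{n\rho-1}Tmf_1(3)\cong\mZ\left[\tfrac13\right]\otimes\overline{H^1(\MMb_1(3);\omega^{\otimes n})}$, while $\m{\pi}_{n\rho}Tmf_1(3)$ is an extension of $\mZ\left[\tfrac13\right]\otimes\overline{H^0(\MMb_1(3);\omega^{\otimes n})}$ by $\m{G}\otimes\overline{H^1(\MMb_1(3);\omega^{\otimes n+1})}$. Because $\m{G}$ has vanishing underlying value, the $\m{G}$-subobject is exactly the kernel of the restriction map of $\m{\pi}_{n\rho}Tmf_1(3)$, so $P^0\m{\pi}_{n\rho}Tmf_1(3)=\mZ\left[\tfrac13\right]\otimes\overline{H^0(\MMb_1(3);\omega^{\otimes n})}$. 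Substituting into the slice formulas, the even slices contribute, over all $n$, one copy of $S^{n\rho}\sm H\mZ\left[\tfrac13\right]=S^{|P|}\sm H\mZ\left[\tfrac13\right]$ for each monic monomial $P=\ab_1^i\ab_3^j$ of weight $n$ (using $|\ab_1|=\rho$, $|\ab_3|=3\rho$), giving the first wedge; the odd slices contribute one copy of $S^{n\rho-1}\sm H\mZ\left[\tfrac13\right]$ for each basis monomial $\tfrac1{\ab_1^i\ab_3^j}$ with $i,j\ge1$ of $\overline{H^1}$ in weight $n$, and the substitution $i=a+1$, $j=b+1$, using $(a+3b+4)\rho=(a+3b)\rho+4\rho$, reindexes these as $S^{-|P|-4\rho-1}\sm H\mZ\left[\tfrac13\right]$ over $P=\ab_1^a\ab_3^b\in M[\ab_1,\ab_3]$, giving the second wedge. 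Wedging all the slices together yields the stated associated graded.

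The step I expect to be the main obstacle is the passage to $P^0$: one must check that the Mayer--Vietoris extension really has its $\m{G}$-part lying in, and equal to, the kernel of the restriction map of $\m{\pi}_{n\rho}Tmf_1(3)$. This rests on the exactness of $\mZ\left[\tfrac13\right]\otimes(-)$ and $\m{G}\otimes(-)$ on the flat $\Z\left[\tfrac13\right]$-modules at hand together with naturality of restriction; everything else is bookkeeping with the $RO(C_2)$-grading and the reindexing of the ``negative'' ($H^1$) monomials.
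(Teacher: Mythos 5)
Your proposal is correct and follows essentially the same route as the paper: the Mayer--Vietoris sequence of the fiber square of Proposition \ref{prop:FiberSquare}, combined with Corollary \ref{cor:HomotopyGroups}, identifies $\m{\pi}_{k\rho}Tmf_1(3)$ and $\m{\pi}_{k\rho-1}Tmf_1(3)$, and then Propositions \ref{prop:OddSlices} and \ref{prop:EvenSlices} give the slices, with the negative even slices vanishing because $P^0$ kills the $\m{G}$-part. Your extension/$P^0$ bookkeeping is exactly the paper's observation that $P^{0}\m{G}=0$ (in fact the $H^0$- and $H^1$-contributions live in disjoint ranges of $k$, so the extension is always degenerate), and the remaining reindexing agrees with the paper's.
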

\begin{proof}
%Since the slice $(-1)$-connective cover is the ordinary $(-1)$-connective cover, the non-negative slices of $Tmf_1(3)$ are those of its connective cover $tmf_1(3)$.
%
%For the remaining slices, w
We use Propositions~\ref{prop:OddSlices} and \ref{prop:EvenSlices} to read the slices out of the $RO(C_2)$-graded homotopy groups. The long exact sequence in homotopy associated to the fiber square \ref{prop:FiberSquare} and Corollary~\ref{cor:HomotopyGroups} identify the needed homotopy groups. For $k<0$, let $R_{k}$ denote the degree $2k$ piece of
\[
\Z\big[\tfrac13\big][a_{1}^{\pm 1},a_{3}^{\pm 1}]/\left(\Z\big[\tfrac13\big][a_{1}^{\pm 1},a_{3}]+\Z\big[\tfrac13\big][a_{1},a_{3}^{\pm 1}]\right).
\]
We then have isomorphisms
\[
\m{\pi}_{k\rho} Tmf_{1}(3)=\m{G}\tensor R_{k+1}
\]
and
\[
\m{\pi}_{k\rho-1} Tmf_{1}(3)=\mZ\tensor R_{k}.
\]
The functor $P^{0}$ applied to the Mackey functor $\m{G}$ yields zero, so we conclude by Proposition \ref{prop:EvenSlices} that there are no negative even slices, and and by Proposition \ref{prop:OddSlices} that all of the negative odd slices are of the desired form.
\end{proof}

This allows us to compute the $E_2$-term of the slice spectral sequence
\[
E_2^{s,t} = \pi_{t-s}^{C_2}P^t_tTmf_1(3) \Rightarrow \pi_{t-s}^{C_2}Tmf_1(3),
\]
where $P^t_t$ denotes the $t$-slice of $Tmf_1(3)$. For $t=2k\geq 0$, we get:
\begin{align*}
\pi_{2k-s}^{C_2}P^{2k}_{2k}Tmf_1(3)&= \bigoplus_{P\in M[\ab_1, \ab_3]_{k\rho}}\pi^{C_2}_{2k-s} S^{k\rho} \sm H\underline{\Z}\left[\tfrac13\right]\\
&= \bigoplus_{P\in M[\ab_1, \ab_3]_{k\rho}} H_{2k-s}^{C_2}\left(S^{k\rho}, \m{\Z}\left[\tfrac13\right]\right) \\
&= \bigoplus_{P\in M[\ab_1, \ab_3]_{k\rho}} H_{k-s}^{C_2}\left(S^{k\sigma},\m{\Z}\left[\tfrac13\right]\right)
\end{align*}
By \cite[Example 3.16]{HHR09}, we have:
\[
H_{k-s}^{C_2}\left(S^{k\sigma}, \m{\Z}\left[\tfrac13\right]\right) = \begin{cases} \Z\big[\tfrac13\big] & \text{ if } 2k-s \text{ divisible by }4\text{ and }s=0 \\
\Z/2 & \text{ if } 0<s\leq 2k-s \text{ and }(2k-s)-s \text{ divisible by } 4 \\
0 & \text{ else}\end{cases}
\]

Similarly, one can reduce the computation for $t<0$ to Bredon \emph{co}homology and use that
\[H^k_{C_2}\left(S^{d\sigma}, \m{\Z}\left[\tfrac13\right]\right) = \begin{cases} \Z\big[\tfrac13\big] & \text{ if } d \text{ even and }k=d \\
\Z/2 & \text{ if } k \text{ odd and }1<k\leq d \\
0 & \text{ else}\end{cases}\]

We depict the slice spectral sequence in Figure~\ref{fig:SlSpSq}. Here, the unboxed number $n$ denotes $n$ copies of $\Z/2$, a box denotes a copy of $\Z\big[\tfrac13\big]$ and a boxed $n$ denotes $n$ copies of $\Z\big[\tfrac13\big]$. The vertical coordinate is $s$ and the horizontal one is $t-s$. In positive degrees, the differentials follow from those for $MU\mathbb R$ via the Real orientation map, and these were determined in \cite{HHR09} and in \cite{H-K01}. For the differentials in negative degrees, we can use that this is a spectral sequence of algebras, so in particular, we have an action of the slice spectral sequence for $tmf_1(3)$ on that of $Tmf_1(3)$. This reduces the problem to understanding the differentials on the line $L$ of slope one in Figure~\ref{fig:SlSpSq} passing through the ``1'' in $(-8,-1)$. This class is infinitely divisible by $\eta=\ab_1a_\sigma$ and $\nu=\ab_3a_\sigma^{3}$. The classes $\eta^3$ and $\nu^3$ are hit by a $d_3$ and a $d_7$ respectively in the slice spectral sequence 
for $tmf_1(3)$. As a class $x$ on $L$ is not hit by any differential for degree reasons, $\eta^{-3}x$ has thus to support a $d_3$-differential and $\nu^{-3}x$ a $d_7$-differential (if it does not support a $d_3$-differential). This forces the negative differentials.

\begin{figure}\label{fig:SlSpSq}
\centering
\includegraphics[scale=0.6, angle=90]{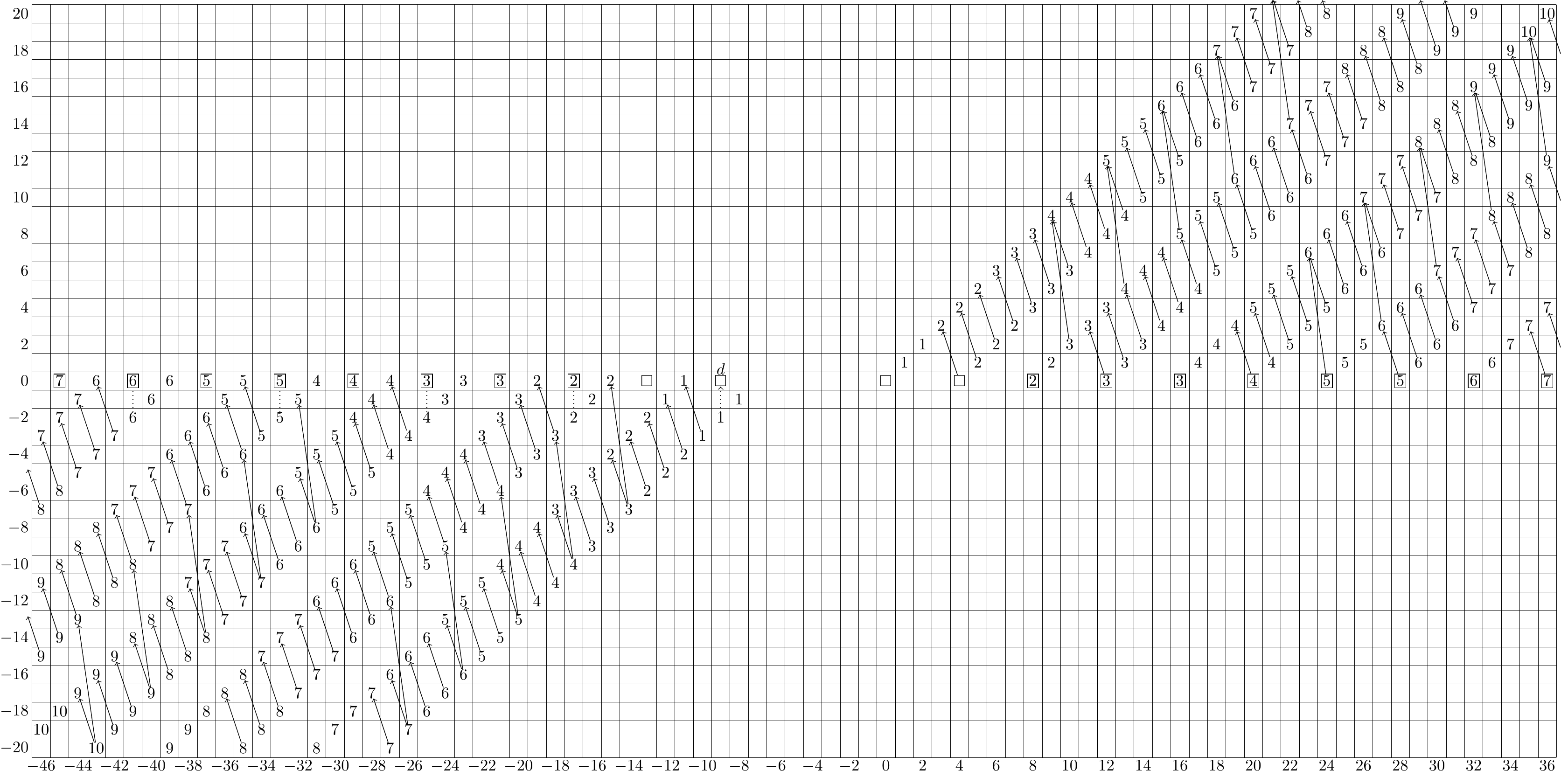}
\caption{The slice spectral sequence for $Tmf_1(3)$.}
\end{figure}

\subsection{Anderson Duality}
Let $G$ be a finite group. For an injective abelian group $J$, the functor
\[\text{(genuine) } G-\mathrm{Spectra} \to \text{graded abelian groups},\quad X \mapsto \Hom_\Z(\pi^G_{-*}X, J)\]
is representable by a $G$-spectrum $I_J$, as follows from Brown representability. If $A$ is an abelian group and $A \to J^0 \to J^1$ an injective resolution, we define the $G$-spectrum $I_A$ to be the fiber of $I_{J^0}\to I_{J^1}$. Given a $G$-spectrum $X$, we define its \emph{$A$-Anderson dual} $I_AX$ by $F(X, I_A)$. It satisfies for all $k\in\Z$ the following functorial short exact sequence:
\[0 \to \Ext^1_\Z(\pi^G_{-k-1}X, A) \to \pi^G_kI_AX \to \Hom_\Z(\pi_{-k}^GX, A) \to 0\]

For $G = \{e\}$ we get non-equivariant Anderson duality as explored in \cite{And70} and \cite{Sto12}. If $G$ is (possibly) non-trivial, denote by $\AA_G$ the \emph{stable Burnside category}, by which we mean the full subcategory of $\Ho(\Sp_G)$ on the cosets $\Sigma^\infty(G/H)_+$. Given again a $G$-spectrum $X$, we see by precomposing with the functor
\[\AA_G \to \Sp_G,\quad \Sigma^\infty(G/H)_+ \mapsto \Sigma^\infty(G/H)_+ \sm X\]
that the short exact sequence above refines to a short exact sequence of Mackey functors
\[0 \to \Ext^1_\Z(\mpi_{-k-1}X, A) \to \mpi_kI_AX \to \Hom_\Z(\mpi_{-k}X, A) \to 0.\]
By smashing $X$ with representation spheres, we see that it even refines to an $RO(G)$-graded sequence. Equivariant Anderson duality in the case $G=C_2$ has been explored in some detail in \cite{Ric14}.

One reason to be interested in Anderson (self) duality is the universal coefficient sequence, relating homology and cohomology. Let $E$ be a $G$-spectrum, $X$ be another $G$-spectrum and $A$ be an abelian group. Then $I_A(X\sm E) \simeq_G F(X, I_AE)$ implies the short exact sequence
\[0 \to \Ext_\Z^1(E_{V-1}X, A) \to (I_AE)^VX \to \Hom_\Z(E_VX, A) \to 0\]
for a real $G$-representation $V$. In particular, Anderson self-duality implies useful universal coefficient theorems; for example, $I_{\Z}KO \simeq \Sigma^4KO$ implies one of the main theorems of \cite{And70}.

Our goal in this section is to compute the $\Z\big[\tfrac13\big]$-Anderson dual of $Tmf_1(3)$ as a $C_2$-spectrum and then deduce a computation of the $\Z\big[\tfrac13\big]$-Anderson dual of $Tmf_0(3)$.\\

Observe that $H\underline{\Z}^* \simeq S^{4-2\rho} \sm H\underline{\Z}$ as $H^0(S^{4-2\rho}; \m{\Z}) \cong \m{\Z}^*$, where $\m{\Z}^*$ is as in Definition \ref{def:Mackey}. Thus, Proposition \ref{prop:SliceGraded} implies that the associated graded $C_2$-spectrum for the slice filtration of $Tmf_1(3)$ is
\[\bigvee_{P\in M[\ab_1, \ab_3]} S^{|P|}\sm H\underline{\Z}\left[\tfrac13\right]\quad \vee \bigvee_{P\in M[\ab_1, \ab_3]} S^{-|P|-2\rho-5} \sm H\underline{\Z}\left[\tfrac13\right]^*.\]
This suggests the following theorem:

\begin{thm}\label{thm:Anderson}
There is a $C_2$-equivariant equivalence $I_{\Z\big[\tfrac13\big]}Tmf_1(3) \simeq \Sigma^{5+2\rho} Tmf_1(3)$.
\end{thm}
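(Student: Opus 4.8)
The plan is to identify $I_{\Z[\frac13]}Tmf_1(3)$ via a comparison of slice towers, mirroring Stojanoska's strategy for $Tmf[\frac12]$ but using the $C_2$-equivariant slice filtration instead. First I would observe that Anderson duality $I_{\Z[\frac13]}(-)$ is exact and converts the slice filtration of $Tmf_1(3)$ into a (decreasing reindexed) tower whose associated graded pieces are the Anderson duals of the slices. Since each slice $P^t_t Tmf_1(3)$ is a regular-representation suspension of $H\underline{\Z}[\frac13]$ (by Proposition~\ref{prop:SliceGraded}), and since $I_{\Z[\frac13]} H\underline{\Z}[\frac13] \simeq H\underline{\Z}[\frac13]^*$ with $\underline{\Z}[\frac13]^* \simeq \Sigma^{4-2\rho} H\underline{\Z}[\frac13]$ shifted appropriately (using $H\underline{\Z}^* \simeq S^{4-2\rho}\sm H\underline{\Z}$ as noted just before the theorem, together with the fact that Anderson duality of a suspension is the desuspension of the Anderson dual), I would compute the associated graded of $I_{\Z[\frac13]}Tmf_1(3)$ and recognize it as exactly $\Sigma^{5+2\rho}$ times the associated graded of $Tmf_1(3)$ — this is precisely the rewriting displayed immediately before the theorem statement. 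The bookkeeping here: a slice summand $S^{|P|}\sm H\underline{\Z}[\frac13]$ dualizes to $S^{-|P|}\sm H\underline{\Z}[\frac13]^* \simeq S^{-|P|+4-2\rho}\sm H\underline{\Z}[\frac13]$, which after the shift $\Sigma^{5+2\rho}$ becomes $S^{-|P|+9-4\rho}$-ish, and one checks this matches the second family $S^{-|Q|-4\rho-1}\sm H\underline{\Z}[\frac13]$ of Proposition~\ref{prop:SliceGraded} under $|Q|=|P|-10$ or the appropriate monomial correspondence; the two families are interchanged by the duality.

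Next I would upgrade the associated-graded agreement to an equivalence of spectra. The natural way is to produce a map realizing the equivalence: using that $Tmf_1(3)$ is a $C_2$-$E_\infty$-ring, one has a fundamental class $Tmf_1(3) \to \Sigma^{-5-2\rho} I_{\Z[\frac13]}Tmf_1(3)$, equivalently an element of $\pi^{C_2}_{5+2\rho}I_{\Z[\frac13]}Tmf_1(3) = \pi^{C_2}_{-5-2\rho}(I_{\Z[\frac13]}(Tmf_1(3)))$-type group, coming from the bottom of the dual tower — concretely the generator of $\Hom_{\Z[\frac13]}(\pi^{C_2}_{\text{top}}Tmf_1(3),\Z[\frac13])$ detecting the relevant duality pairing. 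Alternatively, and perhaps more cleanly, I would argue filtration-by-filtration: the slice tower for $Tmf_1(3)$ is finite in each "column" in the relevant sense (the stack has finite cohomological dimension), so one can induct up the (co)slice tower, at each stage extending the equivalence on slices to an equivalence of truncations using that the obstruction groups — maps between the relevant Eilenberg–MacLane pieces in the wrong degree — vanish. This is exactly the strategy Stojanoska uses and the analogue of her argument goes through because the slices here are as simple as possible.

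To make the extension-of-maps argument rigorous I would compare the whole slice spectral sequence of $Tmf_1(3)$ (Figure~\ref{fig:SlSpSq}) with that of $\Sigma^{5+2\rho}Tmf_1(3)$: the $E_2$-pages are abstractly isomorphic by the associated-graded computation, and the differentials match because in positive degrees they are forced by the Real orientation from $M\mathbb{R}$ (hence from $MU\mathbb{R}$, computed in \cite{HHR09}, \cite{H-K01}) and in negative degrees they are forced by the module structure over the positive-degree part exactly as in the proof accompanying Figure~\ref{fig:SlSpSq} — so the $RO(C_2)$-graded homotopy Mackey functors of the two sides agree. Then the fundamental class map induces an isomorphism on $\underline{\pi}_\bigstar$ (it is compatible with the slice filtrations and an iso on associated graded), hence is a $C_2$-equivalence by the equivariant Whitehead theorem.

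The main obstacle I anticipate is the last point made precise: constructing an actual map $\Sigma^{5+2\rho}Tmf_1(3)\to I_{\Z[\frac13]}Tmf_1(3)$ (rather than just matching invariants) that is visibly a slice-filtered map inducing the identified isomorphism on associated graded. One has to pin down the correct element of $\pi^{C_2}_{-5-2\rho}Tmf_1(3)$-dual, check it restricts correctly under $C_2\to e$ to Stojanoska-type generator giving the known non-equivariant equivalence $I_{\Z[\frac13]}Tmf_1(3)\simeq \Sigma^9 Tmf_1(3)$ (note $9 = 5 + 2\cdot 2$, consistent with $\rho$ restricting to $2$), and verify it is a non-zero-divisor / unit in the appropriate slice-graded sense so that its induced map of towers is a levelwise equivalence. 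Handling the $\Ext$-versus-$\Hom$ contributions in the Anderson universal-coefficient sequence — i.e.\ that the $\Z/2$'s in the slice spectral sequence pair up correctly under duality and contribute the $\Ext^1$ terms — is where the genuinely equivariant bookkeeping (the Mackey functors $\underline{G}$, $\underline{\Z}_-$, $\underline{\Z}^*$ and the gap theorem) does real work, and is the step I would write out most carefully.
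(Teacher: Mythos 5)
Your associated-graded computation is exactly the observation the paper makes right before the theorem, but there it serves only as motivation (``this suggests the following theorem''); the actual proof has to produce a specific map and show it is an equivalence, and it is at these two points that your proposal has genuine gaps. First, the construction of the equivariant class $\tilde{\delta}\in\pi^{C_2}_{5+2\rho}I_{\Z\big[\tfrac13\big]}Tmf_1(3)$ refining the non-equivariant duality class is not a formality, and ``the generator of the Hom-part of the dual tower'' does not pin it down. The paper's key input is a computation from the slice spectral sequence that $\m{\pi}_{-5-2\rho}Tmf_1(3)\cong\mZ\big[\tfrac13\big]^{*}$, i.e.\ that the transfer $\pi^{e}_{-9}Tmf_1(3)\to\pi^{C_2}_{-5-2\rho}Tmf_1(3)$ is an isomorphism; via the universal coefficient sequence this says that restriction $\pi^{C_2}_{5+2\rho}I_{\Z[\frac13]}Tmf_1(3)\to\pi_{9}I_{\Z[\frac13]}Tmf_1(3)$ hits the non-equivariant class $\delta$ of Lemma \ref{lem:AndersonDual}, so the lifted map is an \emph{underlying} equivalence. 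If the Mackey functor in that spot were $\mZ\big[\tfrac13\big]$ rather than $\mZ\big[\tfrac13\big]^{*}$, any equivariant class would restrict to twice a generator and the whole argument would fail; your proposal names the task (``check it restricts correctly'') but supplies no mechanism for it. Relatedly, you treat the non-equivariant equivalence $I_{\Z[\frac13]}Tmf_1(3)\simeq\Sigma^{9}Tmf_1(3)$ as known, whereas it is part of what must be proved (the paper does it by Serre duality for $\PP_{\Z[\frac13]}(1,3)$, pairing against $D=\tfrac{1}{a_1a_3}$).

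Second, your route from the map to an equivalence does not close. Comparing the slice spectral sequence of $Tmf_1(3)$ with that of $\Sigma^{5+2\rho}Tmf_1(3)$ only shows the two sides have abstractly isomorphic invariants; it does not show that \emph{your} map induces an isomorphism on $\m{\pi}_{*}$. Moreover, the Anderson dual of the slice tower is not a priori the slice tower of $I_{\Z[\frac13]}Tmf_1(3)$, so ``compatible with the slice filtrations and an iso on associated graded'' is an unproved (and dangerously circular) assertion: verifying it essentially amounts to the homotopy computation you are trying to avoid. The paper sidesteps all of this with one structural fact: since $Tmf_0(3)\to Tmf_1(3)$ is a faithful $C_2$-Galois extension (Proposition \ref{prop:Galois}), $Tmf_1(3)^{tC_2}\simeq\Phi^{C_2}Tmf_1(3)$ vanishes, so $Tmf_1(3)$ and $I_{\Z[\frac13]}Tmf_1(3)$ are cofree by \cite[Cor 10.6]{HHR09}, and a map of cofree $C_2$-spectra which is an underlying equivalence is automatically a $C_2$-equivalence. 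I recommend replacing your spectral-sequence comparison by this cofreeness argument, and adding the transfer computation $\m{\pi}_{-5-2\rho}Tmf_1(3)\cong\mZ\big[\tfrac13\big]^{*}$ (which your slice data, Proposition \ref{prop:SliceGraded}, already gives) as the step that produces $\tilde{\delta}$ with the correct restriction.
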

Note that this theorem implies the universal coefficient sequence claimed in the introduction. To prove the theorem, we will start with two lemmas.

\begin{lemma}\label{lem:AndersonDual}
We have non-equivariantly $I_{\Z\big[\tfrac13\big]}Tmf_1(3) \simeq \Sigma^9 Tmf_1(3)$.
\end{lemma}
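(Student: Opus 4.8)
The plan is to prove the non-equivariant Anderson self-duality of $Tmf_1(3)$ directly from its homotopy groups, which we already understand completely: $\pi_* tmf_1(3) \cong \Z[\tfrac13][a_1,a_3]$ with $|a_1|=2$, $|a_3|=6$, and the fiber square of Proposition \ref{prop:FiberSquare} expresses $Tmf_1(3)$ in terms of the localizations $tmf_1(3)[a_1^{-1}]$, $tmf_1(3)[a_3^{-1}]$, and $tmf_1(3)[(a_1a_3)^{-1}]$. First I would use this fiber square (passing to underlying spectra) together with the long exact sequence in homotopy to compute $\pi_* Tmf_1(3)$ explicitly: it consists of $\Z[\tfrac13][a_1,a_3]$ in non-negative degrees and a shifted copy of the ``local cohomology'' module
\[
\Z\big[\tfrac13\big][a_1^{\pm 1},a_3^{\pm 1}]\big/\big(\Z\big[\tfrac13\big][a_1^{\pm 1},a_3]+\Z\big[\tfrac13\big][a_1,a_3^{\pm 1}]\big)
\]
in negative degrees, exactly as in the proof of Proposition \ref{prop:SliceGraded}. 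The key algebraic input is that this graded ring is \emph{Gorenstein}: the local cohomology module is, up to a shift, the graded $\Z[\tfrac13]$-linear dual of $\Z[\tfrac13][a_1,a_3]$. Concretely, $\Z[\tfrac13][a_1,a_3]$ is a polynomial ring on generators of degrees $2$ and $6$, so its graded Matlis/canonical dual is a shift by $-(2+6)=-8$; after accounting for the homological shift by $1$ coming from the local cohomology being concentrated in cohomological degree $2$ (a complete intersection of codimension $2$), one gets a total shift of $-9$, i.e.\ $\pi_{-k-9}$ of the negative part is $\Hom_{\Z[\tfrac13]}(\pi_k \text{ of the positive part}, \Z[\tfrac13])$.

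Next I would feed this into the defining short exact sequence for Anderson duality,
\[
0 \to \Ext^1_{\Z[\tfrac13]}\!\big(\pi_{-k-1}Tmf_1(3), \Z\big[\tfrac13\big]\big) \to \pi_k I_{\Z[\tfrac13]} Tmf_1(3) \to \Hom_{\Z[\tfrac13]}\!\big(\pi_{-k}Tmf_1(3), \Z\big[\tfrac13\big]\big) \to 0,
\]
valid over $\Z[\tfrac13]$ since all the homotopy groups in sight are $\Z[\tfrac13]$-modules. Because $\pi_* Tmf_1(3)$ is a free $\Z[\tfrac13]$-module in each degree (it is a direct sum of copies of $\Z[\tfrac13]$), the $\Ext^1$ terms vanish and $\pi_k I_{\Z[\tfrac13]} Tmf_1(3) \cong \Hom_{\Z[\tfrac13]}(\pi_{-k} Tmf_1(3), \Z[\tfrac13])$. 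Combining with the Gorenstein duality computed above, this is abstractly isomorphic to $\pi_{k-9} Tmf_1(3) = \pi_k \Sigma^9 Tmf_1(3)$ as graded abelian groups. To promote this to an actual equivalence $I_{\Z[\tfrac13]} Tmf_1(3) \simeq \Sigma^9 Tmf_1(3)$, I would construct a map: a generator of $\pi_{-9} I_{\Z[\tfrac13]} Tmf_1(3) = \Hom_{\Z[\tfrac13]}(\pi_9 Tmf_1(3), \Z[\tfrac13])$ — equivalently a suitable ``residue'' or fundamental-class functional on the local cohomology module — together with the $Tmf_1(3)$-module structure on $I_{\Z[\tfrac13]} Tmf_1(3)$ (coming from $Tmf_1(3)$ being a ring spectrum) gives a $Tmf_1(3)$-module map $\Sigma^9 Tmf_1(3) \to I_{\Z[\tfrac13]} Tmf_1(3)$; since this map is an isomorphism on all homotopy groups by the degree count just made, it is an equivalence.

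The main obstacle is the purely algebraic step of identifying the negative-degree part of $\pi_* Tmf_1(3)$ as the shifted $\Z[\tfrac13]$-linear dual of the positive part, i.e.\ pinning down the precise shift $9 = 2 + 6 + 1$ and checking the pairing is perfect rather than merely an abstract isomorphism; this is the manifestation of Serre duality on the weighted projective line $\PP_{\Z[\tfrac13]}(1,3)$ with its dualizing sheaf $\OO(-1-3) = \OO(-4)$, which under $p^*\omega \cong \OO(1)$ contributes the shift by $\omega^{\otimes(-4)}$, explaining the $\Sigma^9 = \Sigma^{8+1}$ after re-grading degrees of modular forms by a factor of two and adding the cohomological degree shift. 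One should be slightly careful that the $\Hom$ appearing in Anderson duality and the graded dual of the polynomial ring match up degree-by-degree with the correct sign/shift conventions, but this is exactly the kind of bookkeeping carried out by Stojanoska in \cite{Sto12} for $Tmf[\tfrac12]$, and the argument here is a simpler instance of it since $\MMb_1(3)$ has cohomological dimension one and no torsion phenomena intervene. Everything after the algebra is formal: freeness kills $\Ext^1$, and a homotopy-group isomorphism of module spectra is an equivalence.
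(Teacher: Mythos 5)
Your proposal is correct and is essentially the paper's own argument: the paper likewise identifies the negative-degree homotopy with the dual of $\Z\big[\tfrac13\big][a_1,a_3]$ via Serre duality on $\PP_{\Z\big[\tfrac13\big]}(1,3)$ with dualizing class $D=\frac{1}{a_1a_3}\in H^1\big(\PP(1,3);\OO(-4)\big)$ lifted to $\pi_{-9}Tmf_1(3)$, takes the dual functional $\delta$, extends it $Tmf_1(3)$-linearly to $\widehat{\delta}\colon \Sigma^9 Tmf_1(3)\to I_{\Z\big[\tfrac13\big]}Tmf_1(3)$, and concludes it is an equivalence because the multiplication pairing on homotopy is perfect --- exactly the Gorenstein/local-duality point you flag as the main obstacle, which is indeed what is needed (a mere degreewise abstract isomorphism would not suffice). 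The only blemish is an index transposition: the class inducing the map lies in $\pi_9 I_{\Z\big[\tfrac13\big]}Tmf_1(3)\cong \Hom_{\Z\big[\tfrac13\big]}\big(\pi_{-9}Tmf_1(3),\Z\big[\tfrac13\big]\big)$ (the residue functional on the local cohomology), not in $\pi_{-9}I_{\Z\big[\tfrac13\big]}Tmf_1(3)$, which vanishes.
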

\begin{proof}
By Proposition \ref{prop:StackIdentification}, the moduli stack $\MMb_1(3)$ is equivalent to the weighted projective stack $\PP(1,3) = \PP_{\Z\big[\tfrac13\big]}(1,3)$ and the sheaf $\omega$ on $\MMb_1(3)$ corresponds to $\OO(1)$ on $\PP(1,3)$. This weighted projective stack has Serre duality in the sense that there is a class 
\[
D = \frac1{a_1a_3} \in H^1\big(\PP(1,3); \OO(-4)\big)
\]
such that
\[
H^s\big(\PP(1,3); \FF\big) \tensor H^{1-s}\big(\PP(1,3); \FF^* \tensor \OO(-4)\big) \to H^1\big(\PP(1,3); \OO(-4)\big) \cong\Z\big[\tfrac13\big]\cdot D
\]
is a perfect pairing for $s=0,1$ for an arbitrary coherent sheaf $\FF$.

Let us write for brevity $R = Tmf_1(3)$. As $\PP(1,3)$ has cohomological dimension $1$, the element $D$ is a permanent cycle in the descent spectral sequence for $R$ and is represented by a unique element in $\pi_{-9}R \cong \Z\big[\tfrac13\big]$, which we will also denote by $D$. Denote by $\delta$ the element in $\pi_9 I_{\Z\big[\tfrac13\big]} R$ with $\phi(\delta)(D) = 1$, where 
\[
\phi\colon \pi_9 I_{\Z\!\big[\tfrac13\big]} R \xrightarrow{\cong} \Hom(\pi_{-9}R,\Z\big[\tfrac13\big]).
\]
The element $\delta$ induces a $R$-linear map $\widehat{\delta}\co \Sigma^9R\to I_{\Z\big[\tfrac13\big]} R$.

We obtain a commutative diagram
\[\xymatrix{\pi_{k-9}R \tensor \pi_{-k}R \ar[rr]^-{\widehat{\delta}_*\tensor \id}\ar[d] && \pi_kI_{\Z\big[\tfrac13\big]}R\tensor \pi_{-k}R \ar[rr]_-{\cong}^-{\phi\tensor\id} && \Hom(\pi_{-k}R, \Z\big[\tfrac13\big]) \tensor \pi_{-k}R\ar[d] \\
\pi_{-9}R\ar[rrrr]^{\phi(\delta)}_{\cong} &&&& \Z\big[\tfrac13\big] }
\]

The left vertical map is a perfect pairing because of Serre duality (as described above), as is the right vertical map by definition. Thus, the map $\widehat{\delta}_*\colon \pi_{k-9}R \to \pi_kI_{\Z\big[\tfrac13\big]}R$ is an isomorphism for all $k$. This shows that $\widehat{\delta}$ is an equivalence.
\end{proof}

The following key lemma uses our information about the slices of $Tmf_1(3)$:
\begin{lemma}
The transfer
\[\pi_{-9} Tmf_1(3) = \pi^{e}_{-5-2\rho}Tmf_1(3) \to \pi_{-5-2\rho}^{C_2}Tmf_1(3)\]
 is an isomorphism.
\end{lemma}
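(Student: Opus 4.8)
The plan is to read off the homotopy Mackey functor $\mpi_{-5-2\rho}Tmf_1(3)$ from the slice tower and then observe that, once the \emph{form} of the answer is known, the transfer is forced to be an isomorphism. Two facts are already at hand. First, from the collapsing descent spectral sequence of Section~\ref{sec:Basics}, $\pi^e_{-9}Tmf_1(3)=\pi^e_{-5-2\rho}Tmf_1(3)$ is free of rank one over $\Z\big[\tfrac13\big]$, generated by the class $D=\tfrac1{a_1a_3}$ appearing in Lemma~\ref{lem:AndersonDual}. Second, the associated graded of the slice filtration of $Tmf_1(3)$ is, as recorded just before Theorem~\ref{thm:Anderson},
\[
\bigvee_{P\in M[\ab_1,\ab_3]}S^{|P|}\sm H\mZ\big[\tfrac13\big]\ \vee\ \bigvee_{P\in M[\ab_1,\ab_3]}S^{-|P|-2\rho-5}\sm H\mZ\big[\tfrac13\big]^{*}.
\]

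First I would pin down which slices contribute to $\mpi_{-5-2\rho}$. Computing $\mpi_{-5-2\rho}$ of a summand $S^{|P|}\sm H\mZ\big[\tfrac13\big]$ or $S^{-|P|-2\rho-5}\sm H\mZ\big[\tfrac13\big]^{*}$ reduces to evaluating the $RO(C_2)$-graded homotopy Mackey functors of $H\mZ\big[\tfrac13\big]$ (note $H\mZ\big[\tfrac13\big]^{*}\simeq S^{4-2\rho}\sm H\mZ\big[\tfrac13\big]$), and these are supported only on the usual positive and negative cones; the precise vanishing lines can be extracted from the two formulas for $H_{*}^{C_2}\big(S^{k\sigma};\mZ\big[\tfrac13\big]\big)$ and $H^{*}_{C_2}\big(S^{d\sigma};\mZ\big[\tfrac13\big]\big)$ quoted in the computation of the slice $E_2$-term. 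Carrying out this bookkeeping, one finds that in total degree $-5-2\rho$ the only contributing slice is the $(-9)$-slice associated with $P=1$, namely $S^{-2\rho-5}\sm H\mZ\big[\tfrac13\big]^{*}$, which contributes $\mpi_0\big(H\mZ\big[\tfrac13\big]^{*}\big)=\mZ\big[\tfrac13\big]^{*}$; the analogous check on underlying homotopy shows no other slice contributes there either. Inspecting the two neighbouring total degrees $-5-2\rho\pm1$ one sees that every slice vanishes in exactly the positions that a slice differential into or out of this class would occupy --- this is a ``no room for differentials'' situation of the kind used in Lemma~\ref{lem:HomotopyGroups}. Hence the slice spectral sequence collapses in this degree and $\mpi_{-5-2\rho}Tmf_1(3)\cong\mZ\big[\tfrac13\big]^{*}$ as Mackey functors.

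Given this, the statement is immediate: by the definition of $\mZ^{*}$ in Definition~\ref{def:Mackey}, the transfer $\mZ\big[\tfrac13\big]^{*}(C_2/e)\to\mZ\big[\tfrac13\big]^{*}(C_2/C_2)$ is multiplication by $1$, hence an isomorphism, and since the slice filtration is by Mackey functors this is precisely the transfer $\pi^e_{-9}Tmf_1(3)\to\pi^{C_2}_{-5-2\rho}Tmf_1(3)$. If one prefers to avoid the Mackey-functor identification and only use $\pi^{C_2}_{-5-2\rho}Tmf_1(3)\cong\Z\big[\tfrac13\big]$, then the restriction to $\pi^e_{-9}$ is multiplication by $2$, which is invertible in $\Z\big[\tfrac13\big]$ and hence an isomorphism; since $C_2$ acts by $-1$ on $a_1$ and $a_3$ it acts trivially on $D=\tfrac1{a_1a_3}$, so $\res\circ\tr=1+\gamma=2$, and an isomorphism $\res$ together with $\res\circ\tr$ invertible forces $\tr$ to be an isomorphism.

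The only genuine work is the slice bookkeeping in the middle step --- verifying that a single slice contributes in this $RO(C_2)$-degree and that no slice differential can touch it; everything else is formal, and the conclusion drops out of the fact that the contributing slice is a suspension of $H\mZ\big[\tfrac13\big]^{*}$ rather than of $H\mZ\big[\tfrac13\big]$.
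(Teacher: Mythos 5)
Your main argument is correct and is essentially the paper's own proof: the paper likewise reads off
\[
\m{\pi}_{-5-2\rho}Tmf_1(3)\;\cong\;\m{\pi}_{-5-2\rho}\bigl(S^{-4\rho-1}\sm H\m{\Z}\bigl[\tfrac13\bigr]\bigr)\;\cong\;\m{\Z}\bigl[\tfrac13\bigr]^{*}
\]
from the $2\rho$-shifted slice spectral sequence (note $S^{-2\rho-5}\sm H\m{\Z}[\tfrac13]^{*}\simeq S^{-4\rho-1}\sm H\m{\Z}[\tfrac13]$, so you identified the same slice), with your bookkeeping that only the $(-9)$-slice contributes and that no differential can enter or leave being exactly what the paper's Figure for $\pi_{*-2\rho}Tmf_1(3)$ records; the transfer in $\m{\Z}^{*}$ is the identity, so the lemma follows. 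One caveat: your optional fallback argument is wrong as stated, since $2$ is \emph{not} invertible in $\Z[\tfrac13]$ (only $3$ is), so ``res is multiplication by $2$, hence an isomorphism'' fails and the step ``$\res\circ\tr=2$ invertible forces $\tr$ to be an isomorphism'' collapses; indeed, knowing only that both groups are $\Z[\tfrac13]$ cannot suffice, because for the constant Mackey functor $\m{\Z}[\tfrac13]$ the transfer would be multiplication by $2$ and not an isomorphism, so the Mackey-functor identification as $\m{\Z}[\tfrac13]^{*}$ (your first argument) is genuinely needed.
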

\begin{proof}
The slice spectral sequence for $\Sigma^{2\rho}Tmf$ (as shown in Figure~\ref{fig:tworhoslicess}, where dots stand for the Mackey functor $G$ and a box with a cross stands for $\underline{\Z}^*$) gives an isomorphism of Mackey functors
\[
\m{\pi}_{-5-2\rho}Tmf_1(3) \cong \m{\pi}_{-5-2\rho} S^{-4\rho-1}\sm H\underline{\Z}\left[\tfrac13\right] \cong  H^2\big(S^{2\sigma}; \underline{\Z}\left[\tfrac13\right]\big)\cong \mZ\left[\tfrac{1}{3}\right]^{\ast}.  \qedhere
\]
\end{proof}

\begin{figure}
\includegraphics{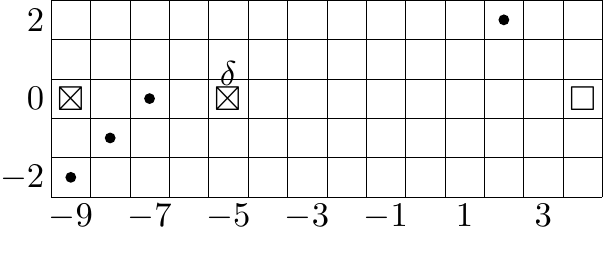}
\caption{The $E^2$-term of the Slice Spectral Sequence for $\pi_{k-2\rho} Tmf_{1}(3)$}
\label{fig:tworhoslicess}
\end{figure}

\begin{proof}[Proof of Theorem]
Consider the commutative diagram
\[\xymatrix{\pi^{C_2}_{5+2\rho}I_{\Z\big[\tfrac13\big]}Tmf_1(3) \ar[d]^{\mathrm{res}} \ar@{->>}[r] & \Hom(\pi_{-5-2\rho}^{C_2}Tmf_1(3), \Z\big[\tfrac13\big]) \ar[d]^{\mathrm{res} = \mathrm{tr}^*}_{\cong} \\
\pi_9 I_{\Z\big[\tfrac13\big]}Tmf_1(3) \ar[r]^-{\cong} & \Hom(\pi_{-9}Tmf_1(3), \Z\big[\tfrac13\big])}
\]
By the last lemma, $\mathrm{tr}^*$ is an isomorphism. This implies that we can refine the element $\delta \in \pi_9 I_{\Z\big[\tfrac13\big]}Tmf_1(3)$ corresponding to the equivalence $\Sigma^9Tmf_1(3) \to I_{\Z\big[\tfrac13\big]}Tmf_1(3)$ from Lemma \ref{lem:AndersonDual} to an element $\tilde{\delta} \in \pi^{C_2}_{5+2\rho}I_{\Z\big[\tfrac13\big]}Tmf_1(3)$. This induces a $C_2$-equivariant $Tmf_1(3)$-linear map
\[\Sigma^{5+2\rho} Tmf_1(3) \to I_{\Z\big[\tfrac13\big]}Tmf_1(3)\]
that is an equivalence of underlying spectra. By \ref{prop:Galois} and \cite[Prop 6.3.3]{Rog08}, we know that $Tmf_1(3)^{tC_2} \simeq \Phi^{C_2}Tmf_1(3)$ vanishes and thus $Tmf_1(3)$ and $I_{\Z\big[\tfrac13\big]}Tmf_1(3)$ are cofree $C_2$-spectra by \cite[Cor 10.6]{HHR09}. Thus, the theorem follows.
\end{proof}

This allows us also to compute the Anderson dual of $Tmf_0(3)$. As in \cite{Sto12}, we will use the following lemma:
\begin{lemma}
Let $A$ be an abelian group and $X$ be a spectrum with an action by a finite group $G$. Assume that the norm map $X_{hG} \to X^{hG}$ is an equivalence. Then there is an equivalence $(I_AX)^{hG} \simeq I_A(X^{hG})$.
\end{lemma}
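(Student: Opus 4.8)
The plan is to rewrite everything in terms of the non-equivariant Anderson spectrum $I_A$, which represents $I_A(-) = F(-,I_A)$ and carries the trivial $G$-action; all of the $G$-equivariance of $I_AX = F(X,I_A)$ therefore comes from the action on $X$ alone. Once this is set up, the desired equivalence drops out of the smash--hom adjunction together with the norm hypothesis, with no further input.

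Concretely, I would run the chain
\[
I_A(X^{hG}) = F(X^{hG}, I_A) \xrightarrow{\ \simeq\ } F(X_{hG}, I_A) = F(EG_+ \wedge_G X, I_A),
\]
where the middle equivalence is $F(-,I_A)$ applied to the assumed norm equivalence $X_{hG} \xrightarrow{\ \simeq\ } X^{hG}$ and the last equality is the definition of homotopy orbits. Then, since $EG_+ \wedge X$ with the diagonal action is a free $G$-spectrum,
\[
F(EG_+ \wedge_G X, I_A) \simeq F(EG_+ \wedge X, I_A)^G \simeq F\big(EG_+, F(X, I_A)\big)^G = (I_AX)^{hG},
\]
using for the middle step the smash--hom adjunction $F(EG_+ \wedge X, I_A) \simeq F(EG_+, F(X,I_A))$ (with $G$ acting diagonally on $EG_+\wedge X$ on the left and by conjugation through $X$ on the right), and for the last equality the definition $(I_AX)^{hG} = F(EG_+, I_AX)^G$. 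Composing these gives the claimed equivalence $I_A(X^{hG}) \simeq (I_AX)^{hG}$, natural in $X$.

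The only step that is not a purely formal adjunction manipulation is the first equivalence in the second display: that the strict fixed points $F(EG_+ \wedge X, I_A)^G$ compute $F(EG_+ \wedge_G X, I_A)$. This is the standard fact that for a free (cofibrant) $G$-spectrum $W$ and a spectrum $Z$ with trivial $G$-action one has $F(W, Z)^G \simeq F(W_{hG}, Z)$; it is immediate on free cells $W = G_+ \wedge S^n$ (both sides being $F(S^n, Z)$) and extends to $W = EG_+ \wedge X$ through its filtration by $G$-cells. Equivalently — and this is how I would actually phrase it to keep things short — both $(I_AX)^{hG}$ and $F(X_{hG}, I_A)$ corepresent the functor $W \mapsto [\,W \wedge X, I_A\,]^G$ on spectra, by the $(-)_{hG}\dashv(\text{trivial action})$ adjunction and the freeness of $EG_+ \wedge X$, so they agree by Yoneda. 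Finally, I would note that this lemma is exactly what upgrades Theorem~\ref{thm:Anderson} to the statement $I_{\Z[\tfrac13]}Tmf_0(3) \simeq (\Sigma^{5+2\rho}Tmf_1(3))^{hC_2}$: the norm map is an equivalence for $X = Tmf_1(3)$ because $Tmf_1(3)^{tC_2} \simeq \Phi^{C_2}Tmf_1(3) \simeq \ast$, as already used in the proof of Theorem~\ref{thm:Anderson}.
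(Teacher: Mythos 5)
Your argument is correct and is essentially the paper's own proof: the paper runs the same chain $(I_AX)^{hG}\simeq F(X,I_A)^{hG}\simeq F(X_{hG},I_A)\simeq F(X^{hG},I_A)\simeq I_A(X^{hG})$, merely stating the orbits/fixed-points adjunction that you spell out via $EG_+\wedge X$. The only difference is that you traverse the chain in the opposite direction and justify the middle adjunction explicitly, which is a harmless elaboration.
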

\begin{proof}
We have the following chain of equivalences:
\[(I_AX)^{hG} \simeq F(X, I_A)^{hG} \simeq F(X_{hG}, I_A) \simeq F(X^{hG}, I_A) \simeq I_A(X^{hG})\qedhere\]
\end{proof}

As note in the proof of Theorem \ref{thm:Anderson}, $Tmf_1(3)^{tC_2}$ vanishes. Thus, we get:
\begin{cor}
There is an equivalence $I_{\mathbb{Z}\left[\tfrac13\right]} Tmf_0(3) \simeq (\Sigma^{5+2\rho}Tmf_1(3))^{hC_2}$.
\end{cor}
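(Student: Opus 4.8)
The plan is to deduce the corollary from Theorem~\ref{thm:Anderson} together with the preceding lemma, applied with $X = Tmf_1(3)$ (regarded as a spectrum with $C_2$-action), $G = C_2$, and $A = \Z\big[\tfrac13\big]$. First I would verify the hypothesis of that lemma: the norm map $Tmf_1(3)_{hC_2} \to Tmf_1(3)^{hC_2}$ is an equivalence if and only if the Tate spectrum $Tmf_1(3)^{tC_2}$ is contractible, and this was already established in the proof of Theorem~\ref{thm:Anderson} (using Proposition~\ref{prop:Galois} and \cite[Prop 6.3.3]{Rog08}, which give $Tmf_1(3)^{tC_2}\simeq\Phi^{C_2}Tmf_1(3)\simeq\ast$). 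The lemma then supplies an equivalence
\[
\big(I_{\Z\big[\tfrac13\big]}Tmf_1(3)\big)^{hC_2} \simeq I_{\Z\big[\tfrac13\big]}\big(Tmf_1(3)^{hC_2}\big) \simeq I_{\Z\big[\tfrac13\big]}Tmf_0(3),
\]
where the last step is the identification $Tmf_0(3)\simeq Tmf_1(3)^{hC_2}$ recorded in Section~\ref{sec:Basics}.

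It remains to identify the left-hand side with $\big(\Sigma^{5+2\rho}Tmf_1(3)\big)^{hC_2}$. Here I would use that $Tmf_1(3)$ is cofree and that, by the proof of Theorem~\ref{thm:Anderson}, its genuine $C_2$-equivariant Anderson dual $I_{\Z\big[\tfrac13\big]}Tmf_1(3)$ is cofree as well (via \cite[Cor 10.6]{HHR09}). Consequently categorical and homotopy fixed points agree on both spectra, and since restriction to spectra with $C_2$-action is closed symmetric monoidal and carries the genuine Anderson dualizing object to the non-equivariant one, the Borel-type spectrum $\big(I_{\Z\big[\tfrac13\big]}Tmf_1(3)\big)^{hC_2}$ computed above coincides with the $C_2$-fixed points of the genuine Anderson dual. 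Now Theorem~\ref{thm:Anderson} gives
\[
\big(I_{\Z\big[\tfrac13\big]}Tmf_1(3)\big)^{C_2} \simeq \big(\Sigma^{5+2\rho}Tmf_1(3)\big)^{C_2} \simeq \big(\Sigma^{5+2\rho}Tmf_1(3)\big)^{hC_2},
\]
the last equivalence again by cofreeness, since $\Sigma^{5+2\rho}$ is invertible and hence preserves cofree objects. Concatenating the displayed equivalences yields $I_{\Z\big[\tfrac13\big]}Tmf_0(3)\simeq\big(\Sigma^{5+2\rho}Tmf_1(3)\big)^{hC_2}$.

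The only step requiring any care is the bookkeeping between the genuine $C_2$-equivariant Anderson dual appearing in Theorem~\ref{thm:Anderson} and the function-spectrum construction $F(-,I_{\Z\big[\tfrac13\big]})$ underlying the preceding lemma; this is entirely controlled by the cofreeness of $Tmf_1(3)$ and of its genuine Anderson dual, which collapses the distinction between $(-)^{C_2}$ and $(-)^{hC_2}$. Beyond that, the proof is a short chain of formal equivalences with no computation involved.
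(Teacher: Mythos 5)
Your argument is correct and follows the paper's own route: the paper likewise deduces the corollary from the preceding lemma via the vanishing of $Tmf_1(3)^{tC_2}$ (noted in the proof of Theorem \ref{thm:Anderson}) combined with the equivalence $I_{\Z\big[\tfrac13\big]}Tmf_1(3) \simeq \Sigma^{5+2\rho}Tmf_1(3)$. The only difference is that you spell out the genuine-versus-Borel bookkeeping (via cofreeness) that the paper leaves implicit, which is a fine addition but not a different proof.
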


\section{The Picard Groups}\label{sec:Picard}
In this section we will compute the Picard groups of $TMF_0(3)$, $Tmf_0(3)$ and related spectra. We recommend \cite{M-S14} for a good introduction to Picard groups and our techniques are very similar to theirs.

\subsection{Generalities}\label{subsection:Generalities}
In the following, we will often use the language of $\infty$-categories. We choose the same model as \cite{JoyCRM} and \cite{HTT}, namely quasicategories. For the theory of (symmetric) monoidal $\infty$-categories see either \cite{HA} or \cite{Gro10} for a shorter introduction.

If $\CC$ is a monoidal category, we denote by $\Pic(\CC)$ the group of isomorphism classes of invertible spectra; note that this is a priori a proper class (or large set, depending on set-theoretic conventions), but will always be a (small) set in our situation. If $\CC$ is a monoidal $\infty$-category, we denote by $\mathcal{P}ic(\CC)$ the maximal $\infty$-subgroupoid (Kan complex) of the full subcategory of invertible objects. Clearly, $\pi_0\mathcal{P}ic(\CC) \cong \Pic(\Ho(\CC))$. If $\CC$ is a symmetric monoidal $\infty$-category, $\mathcal{P}ic(\CC)$ inherits the structure of a group-like $E_\infty$-space; indeed, $\mathcal{P}ic(\CC)$ is a symmetric monoidal $\infty$-category and thus by \cite[Ex 2.1.2.18, Rem 2.4.2.6, Cor 5.1.1.5]{HA} a $\mathrm{Comm} = E_\infty$-algebra in the $\infty$-category of $\infty$-groupoids, which agrees with that of spaces. Thus, there is a connective spectrum $\mathfrak{pic}(\CC)$ with $\Omega^{\infty}\mathfrak{pic}(\CC) \simeq \mathcal{P}ic(\CC)$ by a result of Boardman--
Vogt and May (see \cite[Rem 5.2.6.26]{HA} for an $\infty$-categorical treatment). Note that we have $\pi_i \mathfrak{pic}(\CC) \cong 
\pi_i \mathcal{P}ic(\CC)$ in this situation.

Given an $E_2$-ring spectrum $R$, its $\infty$-category $R\modules$ of (left) $R$-modules has the structure of a monoidal $\infty$-category (\cite[Proposition 7.1.2.6]{HA}). We define the \emph{Picard group} $\Pic(R)$ of $R$ to be $\Pic(\Ho(R\modules))$ and the \emph{Picard space} $\mathcal{P}ic(R)$ to be $\mathcal{P}ic(R\modules)$. If $R$ is an $E_\infty$-ring spectrum, then $R\modules$ is even a \emph{symmetric} monoidal $\infty$-category. We define then $\mathfrak{pic}(R)$ to be $\mathfrak{pic}(R\modules)$.

For us, a \emph{derived stack} will be a pair $\XX = (X,\OO^{top})$, where $X$ is a Deligne--Mumford stack and $\OO^{top}$ is a sheaf of even-periodic $E_\infty$-ring spectra with $\pi_0\OO^{top}$ isomorphic to the structure sheaf $\OO_X$ of $X$. For example, $X$ might be a moduli stack of elliptic curves. For a derived stack $\XX = (X,\OO^{top})$, we write $\Pic(\XX)$ etc.\ for the Picard group, space or spectrum of the symmetric monoidal $\infty$-category of quasi-coherent $\OO^{top}$-modules $\QCoh(\XX)$ on $\XX$. For a short treatment of quasi-coherent sheaves in this context see \cite[Section 2.3]{MM15} and for a full-blown treatment see \cite{DAGQC}.

\begin{defi}
We call a derived stack $\XX = (X,\OO^{top})$ \emph{$0$-affine} if the global sections functor
\[\Gamma\co \QCoh(\XX) \to \OO^{top}(X)\modules \]
is an equivalence of symmetric monoidal $\infty$-categories.
\end{defi}

Clearly, $\mathfrak{pic}(\XX) \simeq \mathfrak{pic}(\OO^{top}(X))$ if $\XX$ is $0$-affine. It was shown in \cite{MM15} that the (compactified) moduli stack of elliptic curves with arbitary level structure together with its derived structure sheaf $\OO^{top}$ is $0$-affine.

The following Mayer--Vietoris principle will be useful later.

\begin{lemma}\label{lem:MV}
Let $\XX = (X,\OO_\XX)$ be a $0$-affine derived stack and $U,V\subset X$ be a covering by open substacks. Then we have a long exact sequence
\[\cdots \to GL_1 \pi_0\OO_\XX(U\cap V) \xrightarrow{\partial} \Pic(\OO_\XX(X)) \to \Pic(\OO_\XX(U)) \times \Pic(\OO_\XX(V)) \to \Pic(\OO_\XX(U\cap V))\]
of abelian groups.
\end{lemma}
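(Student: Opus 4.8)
The plan is to deduce this Mayer--Vietoris sequence from the fracture/gluing square for quasi-coherent sheaves on $\XX$ together with the identification of $\mathfrak{pic}$ with a connective spectrum. First I would record the basic descent input: since $\OO^{top}$ is a sheaf and $X = U \cup V$, the square of $\infty$-categories
\[
\xymatrix{\QCoh(\XX) \ar[r]\ar[d] & \QCoh(\XX|_U)\ar[d] \\ \QCoh(\XX|_V)\ar[r] & \QCoh(\XX|_{U\cap V})}
\]
is a pullback of symmetric monoidal $\infty$-categories. Applying the functor $\mathfrak{pic}(-)$, which by \cite[Prop 2.2.3]{M-S14} sends such limits of symmetric monoidal $\infty$-categories to limits (homotopy pullbacks) of connective spectra, yields a homotopy pullback square of spectra
\[
\xymatrix{\mathfrak{pic}(\XX) \ar[r]\ar[d] & \mathfrak{pic}(\XX|_U)\ar[d] \\ \mathfrak{pic}(\XX|_V)\ar[r] & \mathfrak{pic}(\XX|_{U\cap V}).}
\]
Using $0$-affineness of $\XX$ (and hence of $\XX|_U$, $\XX|_V$, $\XX|_{U\cap V}$, which are $0$-affine because open substacks of a $0$-affine derived stack arising from level structures are again $0$-affine, or alternatively because we only need the global sections functor to be an equivalence on each piece, which holds here), I would rewrite this square as
\[
\xymatrix{\mathfrak{pic}(\OO_\XX(X)) \ar[r]\ar[d] & \mathfrak{pic}(\OO_\XX(U))\ar[d] \\ \mathfrak{pic}(\OO_\XX(V))\ar[r] & \mathfrak{pic}(\OO_\XX(U\cap V)).}
\]

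Next I would take the long exact sequence of homotopy groups associated to this homotopy pullback square (equivalently, the Mayer--Vietoris sequence of the fiber sequence $\mathfrak{pic}(X) \to \mathfrak{pic}(U) \times \mathfrak{pic}(V) \to \mathfrak{pic}(U\cap V)$, where the last map is the difference of restrictions). This gives
\[
\cdots \to \pi_1\mathfrak{pic}(\OO_\XX(U\cap V)) \xrightarrow{\partial} \pi_0\mathfrak{pic}(\OO_\XX(X)) \to \pi_0\mathfrak{pic}(\OO_\XX(U)) \times \pi_0\mathfrak{pic}(\OO_\XX(V)) \to \pi_0\mathfrak{pic}(\OO_\XX(U\cap V)).
\]
The remaining step is to identify these homotopy groups with the groups in the statement. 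By the general theory (see \cite[Section 2]{M-S14}), for a connective $E_\infty$-ring spectrum $A$ one has $\pi_0\mathfrak{pic}(A) \cong \Pic(A)$ and $\pi_1\mathfrak{pic}(A) \cong GL_1(\pi_0 A) = (\pi_0 A)^\times$. Here our stack is even-periodic rather than connective, so I would instead use that for a (weakly even periodic, or more generally any) $E_\infty$-ring $A$ one still has $\pi_1\mathfrak{pic}(A) \cong \pi_0(gl_1 A) \cong (\pi_0 A)^\times = GL_1\pi_0 A$, which is elementary since $\Omega\mathfrak{pic}(A) \simeq gl_1 A$ and $gl_1 A$ has the same homotopy in degrees $\geq 1$ as $A$. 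Substituting $A = \OO_\XX(X), \OO_\XX(U), \OO_\XX(V), \OO_\XX(U\cap V)$ gives exactly the claimed sequence, with the connecting map $\partial$ landing in $\Pic(\OO_\XX(X))$.

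I do not expect a serious obstacle here; this is a formal consequence of results already invoked in the paper. The one point requiring a sentence of care is the claim that the open substacks $\XX|_U$, $\XX|_V$, $\XX|_{U\cap V}$ are themselves $0$-affine so that their Picard spaces compute the Picard spaces of the module categories over the section rings; in our applications $U$ and $V$ will be explicit open substacks (like $D(a_1)$, $D(a_3)$ of $\MMb_1(3)$) whose $0$-affineness follows from the results of \cite{MM15} (or from the exactness of global sections used in Lemma \ref{lem:Landweber} and Lemma \ref{lem:invert2}), and in the general statement one may simply take $0$-affineness of the opens as part of the hypothesis or note it is automatic in the relevant cases. The only other mild subtlety is that $\mathfrak{pic}$ of a pullback of symmetric monoidal $\infty$-categories is a pullback of spectra only after passing to connective covers of the naturally-arising $\mathfrak{gl}_1$-type object --- but since $\mathfrak{pic}$ is by definition connective, and the relevant $\infty$-categorical limit is computed by \cite{M-S14}, this causes no trouble for the portion of the long exact sequence displayed (which only involves $\pi_0$ and $\pi_1$).
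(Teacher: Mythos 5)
Your proposal is correct and is essentially the paper's own argument: the paper likewise obtains a homotopy pullback square of Picard spaces from the sheaf property of $\mathcal{P}ic$ (which in \cite[Section 3.1]{M-S14} is proved exactly by your route, descent for $\QCoh$ together with the fact that $\mathcal{P}ic$ preserves limits of symmetric monoidal $\infty$-categories), identifies the corners with the Picard spaces of the section rings using $0$-affineness, and reads off the displayed long exact sequence using $\pi_1\mathcal{P}ic(A)\cong GL_1\pi_0 A$. The one point you leave slightly informal, the $0$-affineness of $U$, $V$ and $U\cap V$, is handled in the paper by citing \cite[Proposition 3.27]{MM15}, so no additional hypothesis is needed and your connective-cover caveat is likewise immaterial for the portion of the sequence involving only $\pi_0$ and $\pi_1$.
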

\begin{proof}
As shown in \cite[Section 3.1]{M-S14}, the presheaf $\mathcal{P}ic$ defined by
\[ \mathcal{P}ic(W \to X) = \mathcal{P}ic(\OO^{top}(W \to X))\]
(where $W \to X$ is \'etale) is actually a sheaf. Thus, we have a homotopy pullback square
\[\xymatrix{\mathcal{P}ic(X,\OO_\XX) \ar[r]\ar[d] & \mathcal{P}ic(U, \OO_\XX|_U) \ar[d] \\
\mathcal{P}ic(V,\OO_\XX|_V) \ar[r] & \mathcal{P}ic(U\cap V,\OO_\XX|_{U\cap V}) }
\]
The identification of these Picard spaces with those of $\OO_\XX(X)$ etc.\ follows from the fact that $X$, $U$, $V$ and $U\cap V$ are $0$-affine (see \cite[Proposition 3.27]{MM15}). This fiber square induces the long exact sequence in the lemma. 
\end{proof}
\begin{remark}By the last proof the boundary map 
$$GL_1 \pi_0\OO_\XX(U\cap V) \to \Pic(\OO_\XX(X))$$
is induced by the map 
\[GL_1\OO_\XX(U\cap V) \simeq \Omega \mathcal{P}ic(\OO_\XX(U\cap V)) \to \mathcal{P}ic(\OO_\XX(U)) \times_{\mathcal{P}ic(\OO_\XX(U\cap V))}^h \mathcal{P}ic(\OO_\XX(V))\]
of spaces. Thus, it can be described as follows: An element $g\in GL_1 \pi_0\OO_\XX(U\cap V)$ induces an $\OO_\XX$-linear self-equivalence $f$ of $\OO_\XX|_{U\cap V}$. The triple $(\OO_{\XX}|_U, \OO_{\XX}|_V, f)$ defines an element of the homotopy fiber product $\mathcal{P}ic(\OO_\XX(U)) \times_{\mathcal{P}ic(\OO_\XX(U\cap V))}^h \mathcal{P}ic(\OO_\XX(V))$. As noted above, this gluing datum defines an invertible $\OO_\XX$-module on $\XX$ and this invertible module represents $\partial(g)$. 
\end{remark}

Let now $A\to B$ be a faithful $G$-Galois extension in the sense of \cite{Rog08}. Then by \cite[Section 3.3]{M-S14}, we have the following theorem:
\begin{thm}\label{thm:picGalois}
There is an equivalence $\mathfrak{pic}(A) \simeq \tau_{\geq 0} \mathfrak{pic}(B)^{hG}$.
\end{thm}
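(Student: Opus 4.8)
The plan is to deduce this from \emph{Galois descent} for module $\infty$-categories, combined with the fact that the Picard-space functor commutes with homotopy limits. Write $\CC = A\modules$ and $\DD = B\modules$, regarded as symmetric monoidal $\infty$-categories, with $G$ acting on $\DD$ via its action on $B$. Since $A\to B$ is a \emph{faithful} $G$-Galois extension in the sense of Rognes \cite{Rog08}, the unit map exhibits $B$ as a descendable commutative $A$-algebra, so the Amitsur (cobar) cosimplicial object furnishes a limit diagram
\[
A\modules \;\xrightarrow{\ \simeq\ }\; \lim_{[n]\in\Delta}\ \big(B^{\tensor_A (n+1)}\big)\modules
\]
in symmetric monoidal $\infty$-categories. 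Using the defining Galois equivalences $B\tensor_A B\simeq \prod_{g\in G}B$ and their iterates $B^{\tensor_A (n+1)}\simeq \prod_{G^n}B$, this cosimplicial object is identified with the standard cobar resolution computing the homotopy fixed points of the $G$-action on $\DD$; one thereby obtains an equivalence $A\modules \simeq (B\modules)^{hG}$ of symmetric monoidal $\infty$-categories. This is precisely \cite[Section 3.3]{M-S14}.

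Next I would apply $\mathcal{P}ic(-)$. An object of a limit $\lim_i \CC_i$ of symmetric monoidal $\infty$-categories is invertible exactly when each of its components is invertible (inverses are unique, hence assemble), and the maximal-subgroupoid functor preserves limits; therefore $\mathcal{P}ic(-)$ sends limits of symmetric monoidal $\infty$-categories to limits of group-like $E_\infty$-spaces. Applied to the descent equivalence above, this yields an equivalence of group-like $E_\infty$-spaces
\[
\mathcal{P}ic(A) \;=\; \mathcal{P}ic(A\modules)\;\simeq\;\mathcal{P}ic\big((B\modules)^{hG}\big)\;\simeq\;\mathcal{P}ic(B\modules)^{hG}\;=\;\mathcal{P}ic(B)^{hG}.
\]

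Finally I would deloop. By construction $\mathcal{P}ic(R)=\Omega^\infty\mathfrak{pic}(R)$ with $\mathfrak{pic}(R)$ connective, and $\Omega^\infty$ restricts to an equivalence from connective spectra onto group-like $E_\infty$-spaces; moreover $\Omega^\infty$ commutes with the homotopy-fixed-point limit, so $\mathcal{P}ic(B)^{hG}=\Omega^\infty\big(\mathfrak{pic}(B)^{hG}\big)=\Omega^\infty\big(\tau_{\geq 0}\mathfrak{pic}(B)^{hG}\big)$. Combining this with the previous display and using full faithfulness of $\Omega^\infty$ on connective spectra gives the asserted equivalence $\mathfrak{pic}(A)\simeq \tau_{\geq 0}\mathfrak{pic}(B)^{hG}$. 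The truncation is genuinely needed: $\mathfrak{pic}(B)^{hG}$ is typically non-connective, its groups $\pi_{-s}$ for $s>0$ receiving contributions from $H^s(G;\Pic(B))$ in the homotopy-fixed-point spectral sequence.

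The main obstacle is the first paragraph: verifying that a faithful Galois extension satisfies descent for module $\infty$-categories, and carefully identifying the Amitsur tower $\big(B^{\tensor_A(n+1)}\big)\modules$ with the cobar resolution for the $G$-action — this uses that faithful Galois extensions are descendable and a bookkeeping argument with the Galois isomorphisms $B^{\tensor_A(n+1)}\simeq\prod_{G^n}B$. Once that equivalence of symmetric monoidal $\infty$-categories is in hand, everything afterward is formal manipulation of limit-preserving functors and the adjunction between connective spectra and group-like $E_\infty$-spaces.
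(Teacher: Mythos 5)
Your proposal is correct and coincides with the paper's approach: the paper offers no independent argument but simply invokes \cite[Section 3.3]{M-S14}, and the proof there is exactly the chain you give — Galois descent $A\modules \simeq (B\modules)^{hG}$ via descendability and the identification $B\otimes_A B\simeq \prod_G B$, preservation of limits by $\mathcal{P}ic(-)$ on symmetric monoidal $\infty$-categories, and the equivalence between connective spectra and group-like $E_\infty$-spaces to account for the truncation $\tau_{\geq 0}$.
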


There is also another equivariant interpretation of the Picard group of $A$ if $A\to B$ is a faithful $G$-Galois extension. View $B \simeq F(EG_+, B)$ as a cofree $G$-spectrum.  Denote the category of equivariant $B$-modules by $G\mhyphen B\modules$. As $B$ is cofree and $A\to B$ is a faithful Galois extension, $\Phi^GB \simeq B^{tG}$ is contractible. By \cite[Corollary 10.6]{HHR09} every (equivariant) $B$-module is thus cofree again. Therefore, a map in $G\mhyphen B\modules$ is a weak equivalence if it is an underlying weak equivalence. It is then a consequence of Galois descent in the form \cite[Lemma 6.1.4, Proposition 6.2.6]{Mei12} that there is a monoidal equivalence $\Ho(A\modules) \simeq \Ho(G\mhyphen B\modules)$. Thus, $\Pic(R) \cong \Pic(\Ho(G\mhyphen B\modules))$, the group of equivariant invertible $B$-modules. We will denote the latter group by $\Pic_G(B)$.

\subsection{A generalized Baker--Richter theorem}
Baker and Richter proved in \cite{B-R05} that the Picard group of an $E_\infty$-ring spectrum $R$ is completely algebraic if $R$ is even periodic and $\pi_0R$ is a regular complete local ring. This applies, for example, to the Lubin--Tate spectra $E_n$. Mathew and Stojanoska generalized this in \cite{M-S14} by dropping the condition that $\pi_0R$ is complete and local (and also weakened the periodicity requirement). The main purpose of this subsection is to show that the assumption of periodicity is superfluous.

Let $R$ be an $E_2$-ring spectrum. Let $\overline{L}$ be an invertible $\pi_*R$-module. Then $\overline{L}$ is projective over $\pi_*R$. Thus,  there is an $R$-module $L$ with $\pi_* L \cong \overline{L}$ and this module $L$ is well-defined up to isomorphism in $\Ho(R\modules)$. This defines a map $\Pic(\pi_*R) \to \Pic(R)$. By the degenerate K\"unneth spectral sequence, this is a homomorphism. %As it splits (via taking $\pi_*$), it is even a monomorphism.

Let $R_*$ be a commutative graded ring. By an element $x\in R_*$ we will always mean a \emph{homogeneous} element and by an ideal $I\subset R_*$ we will always mean a \emph{homogeneous} ideal. We call $R_*$ \emph{local} if it has a unique maximal ideal $\mathfrak{m}$. We call a graded local ring \emph{regular} if the maximal ideal is generated by a finite regular sequence. We call a graded local ring \emph{complete} if the map $R_* \to \lim_k R/\mathfrak{m}^k$ is an isomorphism. We call an arbitrary commutative graded ring \emph{regular} if every localization of it at a prime ideal is regular.

We have the following generalization of \cite[Theorem 38]{B-R05}.
\begin{thm}\label{thm:BakerRichter}
Let $R$ be an $E_2$-ring spectrum. Assume that $\pi_*R$ is concentrated in even degrees and regular. Then the morphism $\Pic(\pi_*R) \to \Pic(R)$ is an isomorphism.
\end{thm}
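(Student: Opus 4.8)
The plan is to follow the strategy of Baker--Richter and Mathew--Stojanoska, reducing to the (graded) local case and then, in the local case, to a descent-style argument along the tower of truncations $\pi_*R \to \pi_*R/\mathfrak{m}^k$. First I would show injectivity: if $L$ is an $R$-module with $\pi_*L \cong \pi_*R$, I want to conclude $L \simeq R$. The degenerate Künneth spectral sequence already shows that $L \mapsto \pi_*L$ is a homomorphism $\Pic(R) \to \Pic(\pi_*R)$ splitting the map in question on the algebraic side, so injectivity follows once we know every invertible $R$-module $L$ has projective (hence invertible) homotopy $\pi_*L$ over $\pi_*R$ — this is automatic since $L \wedge_R L^{-1} \simeq R$ forces $\pi_*L$ to be an invertible $\pi_*R$-module by the (collapsing) Künneth spectral sequence, $\pi_*R$ being concentrated in even degrees so there is no room for differentials or extension problems. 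Thus the content is surjectivity: every invertible $R$-module is, up to equivalence, induced from an invertible $\pi_*R$-module.

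For surjectivity I would first reduce to the case that $\pi_*R$ is graded local and regular. Given an invertible $R$-module $L$, its homotopy $\pi_*L$ is a finitely generated projective $\pi_*R$-module of rank one; by the usual argument (a rank-one projective over a commutative ring is locally free of rank one, and $\mathrm{Spec}$ of a regular ring is covered by the basic opens where such a module trivializes) one can find homogeneous elements $f_1,\dots,f_n \in \pi_*R$ generating the unit ideal such that $\pi_*L[f_i^{-1}]$ is free over $\pi_*R[f_i^{-1}]$. Passing to the localizations $R[f_i^{-1}]$ — which are again $E_2$-rings with even, regular homotopy, and whose spectra of prime ideals are the basic opens — and using a Mayer--Vietoris / descent argument along this finite cover (as in \cite[Section 3.1]{M-S14}, or directly via the fact that $\Pic$ is a sheaf for the relevant topology on $\mathrm{Spec}\,\pi_0 R$ with its $\mathbb G_m$-action), I reduce to showing: if $\pi_*R$ is local regular and $L$ is invertible with $\pi_*L$ \emph{free} of rank one, then $L \simeq \Sigma^d R$ for the appropriate $d$. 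Replacing $L$ by a suspension, we may assume $\pi_*L \cong \pi_*R$ as graded modules, and we must promote the generator of $\pi_0 L$ to an equivalence $R \to L$.

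In the local regular case the argument is the truncation-tower descent of Baker--Richter. Write $\mathfrak m = (x_1,\dots,x_r)$ with the $x_j$ a regular sequence. The key point is that $R$ is the homotopy limit of its $\mathfrak m$-adic tower $R/\mathfrak m^k$ in an appropriate sense — more precisely, one works with the pro-object $\{\pi_*R/\mathfrak m^k\}$ and shows that an invertible module is determined by its reductions, using that each successive quotient $\mathfrak m^k/\mathfrak m^{k+1}$ is free over $\pi_*R/\mathfrak m$ by regularity, so there are no obstruction groups ($\mathrm{Ext}^1$ and $\mathrm{Ext}^2$ over the residue field vanish in the relevant internal degrees because everything is even and free). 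Concretely: a generator $g \in \pi_0 L$ gives a map $R \to L$; reducing mod $\mathfrak m$ it becomes an equivalence $R/\mathfrak m \to L/\mathfrak m$ (as $\pi_*(L/\mathfrak m) \cong \pi_*(R/\mathfrak m)$, using the Künneth SS for the Koszul complex and evenness to see it collapses); then one climbs the tower, at each stage the obstruction to lifting the equivalence $R/\mathfrak m^k \simeq L/\mathfrak m^k$ to $R/\mathfrak m^{k+1} \simeq L/\mathfrak m^{k+1}$ lying in a group built from $\mathrm{Ext}^{*}_{\pi_*R/\mathfrak m}(\pi_*R/\mathfrak m, \mathfrak m^k/\mathfrak m^{k+1} \otimes \pi_*L)$ shifted into odd total degree, which vanishes since $\pi_*L$ and all the graded pieces are concentrated in even degrees; and finally $\mathrm{Milnor}$--$\lim^1$ vanishing (guaranteed by the completeness one gets for free, or by the pro-argument of \cite{B-R05}) lets one assemble the compatible equivalences into $R \simeq L$.

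The main obstacle is the last paragraph: making the truncation-tower descent work without assuming periodicity, i.e.\ checking that dropping even-periodicity does not introduce new obstruction classes or $\lim^1$ problems. In the periodic case of \cite{B-R05} periodicity is used to control the homotopy of $R/\mathfrak m^k$; here one must instead argue purely from ``$\pi_*R$ even and regular'' that the relevant $\mathrm{Ext}$-obstruction groups sit in odd internal degree and hence vanish, and that the pro-system $\{\pi_*L/\mathfrak m^k\}$ is Mittag--Leffler. I expect this to go through essentially because evenness of $\pi_*R$ forces every module that appears (the quotients $R/\mathfrak m^k$, the graded pieces $\mathfrak m^k/\mathfrak m^{k+1}$, and $\pi_*L$) to be even as well, so the obstruction groups — which always carry a total-degree shift by $1$ relative to the input data — land in odd degrees and die, exactly as in \cite{M-S14} but now without needing the periodic resolution; the rest is bookkeeping with the Künneth spectral sequence for Koszul complexes.
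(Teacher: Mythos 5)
There is a genuine gap, and it sits exactly at the point where you declare the problem solved. You assert twice --- once in your ``injectivity'' paragraph and again at the start of your ``surjectivity'' paragraph --- that for an arbitrary invertible $R$-module $L$ the homotopy $\pi_*L$ is automatically an invertible (finitely generated projective, rank one) $\pi_*R$-module, ``by the collapsing K\"unneth spectral sequence, $\pi_*R$ being concentrated in even degrees.'' This is circular: the K\"unneth spectral sequence $\Tor^{\pi_*R}_{*,*}(\pi_*L,\pi_*L^{-1})\Rightarrow \pi_*(L\wedge_R L^{-1})\cong\pi_*R$ has no reason to collapse just because $\pi_*R$ is even --- $\pi_*L$ could a priori have odd-degree classes and nontrivial higher $\Tor$, and knowing the abutment is $\pi_*R$ does not by itself force $\pi_*L$ to be flat or projective. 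Establishing precisely this projectivity is the entire content of the theorem (it is why Baker--Richter and the present paper have to work); once it is known, everything else is immediate, since a projective invertible $\pi_*L$ realizes uniquely and the realization maps to $L$ by a $\pi_*$-isomorphism. In particular your two elaborate reduction steps (Zariski patching over basic opens of $\pi_*R$, and the $\mathfrak m$-adic truncation tower with obstruction classes and $\lim^1$) are aimed at a statement that is trivial once projectivity is granted: if $\pi_*L$ is free of rank one on a generator in degree $d$, that generator already gives a map $\Sigma^d R\to L$ which is a $\pi_*$-isomorphism, hence an equivalence --- no tower climbing is needed.

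The paper's proof is devoted to the step you skipped, and its route is different from your tower. It reduces to showing that $\pi_*M$ is projective after completion at each maximal ideal $\mathfrak m\subset\pi_*R$; the completion is realized topologically as an $R$-module using Lurie's completion theory, so that invertibility is preserved. Then one kills a regular sequence $x_1,\dots,x_n$ generating $\mathfrak m$: evenness guarantees each $x_i$ acts trivially on $\widehat R_{\mathfrak m}/x_i$, and an EKMM-style multiplication (unital up to homotopy, no $A_\infty$ obstruction theory needed) makes $\widehat R_{\mathfrak m}/\underline x$ a ring spectrum whose homotopy is a graded field. A K\"unneth isomorphism over this graded field shows $\pi_*(\widehat M_{\mathfrak m}/\underline x)$ is invertible, hence free of rank one; graded Nakayama then shows the quotients by powers of the $x_i$ are cyclic, and completeness lets one conclude that $\pi_*\widehat M_{\mathfrak m}$ is a shift of $\widehat{\pi_*R}_{\mathfrak m}$, which gives the desired projectivity. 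If you want to salvage your outline, the part to keep is the reduction to the complete local case; the part to replace is the claim that evenness alone collapses the K\"unneth spectral sequence --- you must instead pass to the residue graded field as above (or reproduce the Baker--Richter induction) to get the rank-one freeness off the ground.
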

This is not really new as this generalization is just a combination of \cite[Remark 39]{B-R05} and \cite[Theorem 2.4.6]{M-S14}. We will sketch a proof anyhow as we introduce one simplification, avoiding the use of obstruction theory for $A_\infty$-structures.

Let $M$ be an invertible $R$-module with $M\sm_R N \simeq R$ for some $R$-module $N$. It is enough to show that $\pi_*M$ is a projective $\pi_*R$-module. For this, it is enough to show that the completion $\widehat{(\pi_*M)}_\mathfrak{m}$ is a projective $\widehat{\pi_*R}_\mathfrak{m}$-module for every maximal ideal $\mathfrak{m}\subset \pi_*R$.

\newcommand{\M}{\widehat{M}_\mathfrak{m}}
\renewcommand{\N}{\widehat{N}_\mathfrak{m}}
\renewcommand{\R}{\widehat{R}_\mathfrak{m}}

The theory from \cite[Section 4.2]{DAGXII} implies that there is actually an $R$-module $\M$ with $\pi_*\M \cong \widehat{(\pi_*M)}_\mathfrak{m}$.\footnote{Lurie only considers ideals in $\pi_0R$, but the theory also works for homogeneous ideals in $\pi_*R$ under our assumptions.}  We have $\M \sm_{\R} \N \simeq \R$ by \cite[Remark 4.2.6]{DAGXII} and because $N$ is a finite $R$-module. Note here that $\R$ also inherits an $E_2$-structure.

Let $x_1,\dots, x_n$ be a regular sequence generating $\mathfrak{m}$. Consider the $\R$-module 
$$\R/\underline{x} = \R/x_1\sm_{\R} \dots \sm_{\R} \R/x_n,$$
obtained by killing the regular sequence $x_1,\dots, x_n$. As $\R$ is even, every $x_i$ acts trivially on $\R/x_i$ and hence on $\R/\underline{x}$. Indeed, the composite 
$$\Sigma^{|x_i|}\R \to \Sigma^{|x_i|}\R/x_i \xrightarrow{\cdot x_i} \R/x_i$$
is zero and thus the second map factors over an $\R$-linear map $\Sigma^{2|x_i|+1}\R \to \R/x_i$, which must be zero as well. 

By \cite[Theorem V.2.6]{EKMM}\footnote{While the source states the result only for $E_\infty$-ring spectra, the same proof works also for $E_2$-ring spectra.} $\R/\underline{x}$ has the structure of an $\R$-ring spectrum in the sense that there exists a map
\[\R/\underline{x} \sm_{\R} \R/\underline{x} \to \R/\underline{x}\]
 that is unital up to homotopy.\footnote{For our argument, this naive result suffices, while Baker and Richter use that $\R/\underline{x}$ has an $A_\infty$-structure.} For an arbitrary $\R$-module $X$, set $X/\underline{x} = X\sm_{\R}\R/\underline{x}$.

\begin{claim}The map
\[\pi_*(X_1/\underline{x}) \tensor_{\pi_*\R} \pi_*(X_2/\underline{x}) \to \pi_*((X_1/\underline{x}\sm_{\R} X_2/\underline{x}) \to \pi_*((X_1\sm_{\R} X_2)/\underline{x})\]
factors over a map
$$\pi_*(X_1/\underline{x}) \tensor_{\pi_*\R/\underline{x}} \pi_*(X_2/\underline{x}) \to \pi_*((X_1\sm_{\R} X_2)/\underline{x}),$$
which is an isomorphism for all $\R$-modules $X_1$ and $X_2$.
\end{claim}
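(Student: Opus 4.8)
The plan is to reduce everything to the fact that $\pi_*(X/\underline{x})$ is a \emph{free} module over the graded field $\kappa:=\pi_*(\R/\underline{x})=\pi_*\R/\mathfrak{m}$. First I would note that, since each $x_i$ acts as zero on $\R/\underline{x}$ (as recorded above, using that $\pi_*\R$ is even), and since multiplication by $x_i$ on $X/\underline{x}=X\sm_{\R}\R/\underline{x}$ is $\id_X\sm_{\R}(\text{multiplication by }x_i\text{ on }\R/\underline{x})$, the whole maximal ideal $\mathfrak{m}$ acts as zero on $\pi_*(X/\underline{x})$ for every $\R$-module $X$. As $\pi_*\R$ is local, $\kappa$ is a graded field, so $\pi_*(X/\underline{x})$ is automatically free over $\kappa$. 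In particular $\pi_*(X_1/\underline{x})$ and $\pi_*(X_2/\underline{x})$ are $\kappa$-modules on which the $\pi_*\R$-action factors through $\pi_*\R\twoheadrightarrow\kappa$; for such modules $P,Q$ the canonical map $P\otimes_{\pi_*\R}Q\to P\otimes_{\pi_*\R/\underline{x}}Q=P\otimes_\kappa Q$ is an isomorphism (both sides have the same universal property), which already supplies the asserted factorization. It then remains to show that the resulting map
\[
\mu\colon\pi_*(X_1/\underline{x})\otimes_\kappa\pi_*(X_2/\underline{x})\longrightarrow\pi_*\bigl((X_1\sm_{\R}X_2)/\underline{x}\bigr)
\]
is an isomorphism.

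To prove that, I would fix $X_2$ and regard both sides of $\mu$ as functors of $X_1$ on the homotopy category of $\R$-modules. Both are homology theories: the source because $X_1\mapsto\pi_*(X_1/\underline{x})=(\R/\underline{x})_*(X_1)$ is one and $-\otimes_\kappa\pi_*(X_2/\underline{x})$ is exact and coproduct-preserving (this is where $\kappa$ being a graded field, hence $\pi_*(X_2/\underline{x})$ flat, enters), and the target because $(X_1\sm_{\R}X_2)/\underline{x}\simeq X_1\sm_{\R}(X_2/\underline{x})$, i.e.\ it is $\pi_*$ of $X_1$ smashed over $\R$ with the fixed module $X_2/\underline{x}$. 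The map $\mu$, built from the external product followed by the multiplication $\R/\underline{x}\sm_{\R}\R/\underline{x}\to\R/\underline{x}$, is a natural transformation of these homology theories. Then I would check $\mu$ on the generator $X_1=\R$: there both sides are canonically $\pi_*(X_2/\underline{x})$, and $\mu$ is the identity, since it amounts to inserting the unit $\R\to\R/\underline{x}$ into the extra smash factor and then multiplying that factor against the one already present — which is the identity by unitality. Because the homotopy category of $\R$-modules is compactly generated by $\R$, a natural transformation of homology theories that is an isomorphism on $\R$ is an isomorphism on all $\R$-modules; hence $\mu$ is an isomorphism for every $X_1$, and symmetrically for every $X_2$.

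The point to be most careful about — and the reason for routing the argument through homology theories rather than through $\R/\underline{x}$-module structures or $\sm_{\R/\underline{x}}$ — is that $\R/\underline{x}$ carries only a homotopy-unital multiplication, with no associativity or commutativity available. The homology-theory formulation is designed so that the \emph{only} property of that multiplication ever used is its unitality, in the single computation for $X_1=\R$; this is precisely what lets us avoid any $A_\infty$-obstruction theory, which is the promised simplification over \cite{B-R05}. The remaining ingredients — that $x_i$ acts trivially on $\R/x_i$ by evenness, that every graded module over a graded field is free, and that the category of $\R$-modules is compactly generated by $\R$ — are all standard.
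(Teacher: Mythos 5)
Your proposal is correct and follows essentially the same route as the paper's own (much terser) proof: the factorization comes from every $x_i$ acting trivially on $\pi_*(X/\underline{x})$, and the isomorphism is obtained by checking the case $X_1=\widehat{R}_\mathfrak{m}$ (where unitality of the homotopy multiplication gives the identification) and then using that both sides are homology theories in $X_1$ compatible with arbitrary coproducts, the exactness on the algebraic side coming from $\pi_*(\widehat{R}_\mathfrak{m}/\underline{x})$ being a graded field. Your write-up merely makes explicit the details (the residue-field identification, flatness, compact generation by the free module of rank one) that the paper leaves implicit.
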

\begin{proof}
It factors as every $x_i$ acts trivially on $X_1/\underline{x} =X_1\sm_{\R}\R/\underline{x}$.

The map is clearly an isomorphism if $X_1 =\R$. Both sides are homological in $X_1$ -- as $\pi_*(\R/\underline{x})$ is a graded field -- and compatible with arbitrary coproducts. Thus, it is an isomorphism for all $X_1\in\R\modules$.
\end{proof}

In particular $\pi_*(\M/\underline{x})$ is in the Picard group of $\pi_*(\R/\underline{x})$. Thus, $\pi_*(\M/\underline{x})$ is a free $\pi_*\R/\underline{x}$-module of rank $1$.

As in \cite{B-R05}, one can show that $\pi_*(\M/(x_1^{i_1}, \dots, x_n^{i_n}))$ is a cyclic $\pi_*\R$-module for $i_1,\dots, i_n\geq 1$, using the Nakayama lemma for graded rings. Using the completeness of $\pi_*\R$, one can show as in \cite{B-R05} that $\pi_*\M$ is a shift of $\pi_*\R$. In particular, $\pi_*\M$ is projective over $\pi_*\R$ as we wanted to show.

\subsection{The case of \texorpdfstring{$TMF_1(3)$ and $Tmf_1(3)$}{TMF13 and Tmf13}}
\begin{lemma}\label{lem:NonEquivariantPic}
We have isomorphisms
\begin{align*}
\Pic TMF_1(3) &\cong \Z/6 \\
\Pic tmf_1(3)[a_1^{-1}] &\cong \Z/2\\
\Pic tmf_1(3)[a_3^{-1}] &\cong \Z/6\\
\Pic tmf_1(3)[a_1^{-1}\ab_3^{-1}] &\cong \Z/2
\end{align*}
In all the cases, all the invertible modules are equivalent to suspensions of the ground ring spectrum.
\end{lemma}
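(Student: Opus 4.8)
The strategy is to apply the generalized Baker--Richter theorem (Theorem \ref{thm:BakerRichter}) in each of the four cases, so the computation reduces to understanding the algebraic Picard group $\Pic(\pi_*R)$ of each graded ring. First I would verify the hypotheses: each of the four spectra is an $E_\infty$-ring (being the global sections of $\OO^{top}$ over an open substack of $\MMb_1(3)$, or $tmf_1(3)$ localized at a homogeneous element, which is still $E_\infty$ by Theorem \ref{thm:connective} together with the fact that localization at a homotopy element preserves $E_\infty$-structures), and each has homotopy concentrated in even degrees: indeed $\pi_*tmf_1(3) \cong \Z[\tfrac13][a_1,a_3]$ with $|a_1|=2,|a_3|=6$, and the localizations are $\Z[\tfrac13][a_1,a_3][f^{-1}]$ for the relevant homogeneous $f$, while $\pi_*TMF_1(3) \cong \Z[\tfrac13][a_1,a_3,\Delta^{-1}]$. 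So Theorem \ref{thm:BakerRichter} applies once I check regularity, which is immediate since each is a localization of the polynomial ring $\Z[\tfrac13][a_1,a_3]$, hence regular.

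The core of the argument is then the purely algebraic computation of $\Pic$ of these graded rings. I would proceed as follows. For the polynomial ring $\Z[\tfrac13][a_1,a_3]$ itself, every projective module is free (e.g.\ by Quillen--Suslin, or more simply because it is a graded ring with $(\Z[\tfrac13][a_1,a_3])_0 = \Z[\tfrac13]$ a PID and the graded Picard group of a graded ring is detected by its degree-zero part when the positive part is generated by non-zero-divisors), so the only subtlety is the \emph{grading}: an invertible graded module over a connected-type graded ring is a shift of the ring, but which shifts are distinct depends on the units. Thus $\Pic$ of a graded ring $R_*$ with all ungraded projectives free is the cokernel of $\Z \to \Z$, $n \mapsto$ (degree of the unit $u$) for the generating unit $u$ of positive degree, i.e.\ $\Z/d$ where $d$ is the smallest positive degree of a homogeneous unit (and $\Z$ if there is none). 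So I must determine the degrees of homogeneous units in each ring. In $\pi_*TMF_1(3) = \Z[\tfrac13][a_1,a_3,\Delta^{-1}]$ with $\Delta = a_3^3(a_1^3 - 27a_3)$ of degree $12$: the units are $\Z[\tfrac13]^\times \cdot \Delta^{\Z}$ up to checking $a_1^3 - 27 a_3$ is not a unit — but one shows the smallest positive degree of a unit is $6$ (coming from $a_1 a_3 / (a_1^3-27a_3)$-type combinations; more carefully, $a_3^3(a_1^3-27a_3)$ inverted means $a_3$ and $a_1^3 - 27a_3$ are units, and the unit of smallest positive degree is then degree $6$, e.g.\ $a_3(a_1^3-27a_3)^{-1}\cdot$something — this needs a short direct check), giving $\Z/6$. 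In $tmf_1(3)[a_1^{-1}] = \Z[\tfrac13][a_1^{\pm},a_3]$ the only homogeneous units are $\Z[\tfrac13]^\times \cdot a_1^{\Z}$, smallest positive degree $2$, giving $\Z/2$. In $tmf_1(3)[a_3^{-1}] = \Z[\tfrac13][a_1,a_3^{\pm}]$ the units are $\Z[\tfrac13]^\times \cdot a_3^{\Z}$, smallest positive degree $6$, giving $\Z/6$. In $tmf_1(3)[(a_1a_3)^{-1}] = \Z[\tfrac13][a_1^\pm, a_3^\pm]$ the units are generated by $a_1$ and $a_3$, and $\gcd(2,6)=2$, giving $\Z/2$.

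The main obstacle is the determination of the homogeneous units of $\Z[\tfrac13][a_1,a_3,\Delta^{-1}]$, since $\Delta$ factors and one must be sure no smaller-degree unit hides among rational functions — for instance ruling out that $a_3$ alone (degree $6$) or $a_1^3-27a_3$ (degree $6$) become units individually, or that some clever ratio has degree $2$ or $4$; the cleanest route is: inverting $\Delta = a_3^3(a_1^3-27a_3)$ is the same as inverting both $a_3$ and $a_1^3-27a_3$, so $R = \Z[\tfrac13][a_1,a_3][a_3^{-1},(a_1^3-27a_3)^{-1}]$, and then one computes the unit group of this localization of a UFD directly (units are $\Z[\tfrac13]^\times$ times monomials in the inverted primes $a_3$ and $a_1^3-27a_3$), whose positive-degree part is generated in degrees $6$ and $6$, hence by degree $6$, yielding $\Z/6$. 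The phrasing ``in all cases all invertible modules are suspensions of the ground ring'' then follows from Theorem \ref{thm:BakerRichter} plus the fact that over these rings all projectives are free, so every invertible module is a graded shift $\Sigma^k R$ of the ground ring.
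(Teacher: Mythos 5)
Your proposal is correct in its conclusions and shares the paper's first step --- the reduction $\Pic(R)\cong\Pic(\pi_*R)$ via the generalized Baker--Richter theorem (and your verifications of the hypotheses: $E_\infty$, even, regular, are all fine) --- but the algebraic half proceeds by a genuinely different route. The paper splits an invertible graded module into its even and odd parts, identifies evenly graded $\pi_*TMF_1(3)$-modules with quasi-coherent sheaves on $\MM_1(3)\simeq \Spec\big(\Z[\tfrac13][a_1,a_3][\Delta^{-1}]\big)/\G_m$, and computes $\Pic(\MM_1(3))\cong\Z/3$ generated by $\omega$ by extending any line bundle to the compactification $\MMb_1(3)\simeq\PP_{\Z[\frac13]}(1,3)$ (reflexive extension, rank-one reflexive $=$ invertible on a regular stack, and the classification of line bundles on a weighted projective stacky line as the $\OO(k)$), then uses the unit $a_3$ and the absence of units in weights $1,2$; this yields $\Z/6$ after adding the even/odd $\Z/2$. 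You instead stay entirely ring-theoretic: the ungraded Picard group vanishes because each ring is a localization of the UFD $\Z[\tfrac13][a_1,a_3]$, so the graded Picard group is $\Z$ (shifts) modulo the subgroup of degrees of homogeneous units, which you compute from the prime factorization $\Delta=a_3^3(a_1^3-27a_3)$ (both primes of degree $6$), and similarly in the other three cases. Your route is more elementary, treats all four cases uniformly, and avoids the compactification; the paper's stacky argument dovetails with its later use of $\PP(1,3)$ and would survive situations where the underlying ring is not a UFD. One step you state loosely and should justify: that a graded invertible module which is ungraded-trivial must be a shift $\Sigma^kR$. Quillen--Suslin and your parenthetical about ``detection by the degree-zero part'' do not literally apply to these $\Z$-graded localizations (which have units in nonzero degrees and elements of negative degree); the clean fix is either the exact sequence $\Z=\widehat{\G}_m\to\Pic(\Spec R/\G_m)\to\Pic(\Spec R)$ for the quotient stack, or the observation that over a $\Z$-graded domain every divisor of a homogeneous element is homogeneous, so a graded invertible fractional ideal over a graded UFD has a homogeneous generator. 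Also, your sample ``unit of degree $6$'' $a_3(a_1^3-27a_3)^{-1}$ actually has degree $0$; the correct point is simply that $a_3$ itself is a unit of degree $6$ and all homogeneous units have degrees in $6\Z$. With these repairs your argument is complete and gives the same answers as the paper.
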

\begin{proof}
We will just prove the lemma for $TMF_1(3)$ -- the other cases are analogous. By Theorem \ref{thm:BakerRichter}, 
$$\Pic TMF_1(3) \cong \Pic(\pi_*TMF_1(3)).$$
An evenly graded $\pi_{2*}TMF_1(3)$-module is an equivalent datum to a quasi-coherent sheaf on $\MM_1(3) \simeq \Spec \Z\big[\tfrac13\big][a_1,a_3][\Delta^{-1}]/\G_m$. Furthermore, an arbitrary graded $\pi_{*}TMF_1(3)$-module splits into an even and an odd part. Thus an invertible $\pi_{*}TMF_1(3)$-module has to be either completely even or completely odd. We have hence a short exact sequence
\[0 \to \Pic(\MM_1(3))\to \Pic(\pi_*TMF_1(3))\to \Z/2 \to 0,\]
where the first map corresponds to the inclusion of the even part and the map to $\Z/2$ indicates whether the invertible module is even or odd. 

Given a line bundle $\LL$ on $\MM_1(3)$, we can extend it to the weighted projective stacky line $\MMb_1(3)$. Indeed, by \cite[Lemma 3.2]{Mei15}, we can extend $\LL$ to a reflexive sheaf on $\MMb_1(3)$ and every reflexive sheaf of rank $1$ is a line bundle by \cite[Proposition 1.9]{Har80}. Every line bundle on a weighted projective stacky line is of the form $\OO(k)$ for some $k\in\Z$ as can be seen, for example, along the lines of \cite[Proposition 3.4]{Mei15}. As noted after Proposition \ref{prop:StackIdentification}, the line bundle $\OO(k)$ restricts to the (pullback of) the line bundle $\omega^{\tensor k}$ on $\MM_1(3)$. Thus, the map $\phi\colon \Z \to \Pic(\MM_1(3))$ sending $k$ to $\omega^{\tensor k}$ is surjective. 

It follows from the identification of $\MM_1(3)$ above that $H^0(\MM_1(3); \omega^{\tensor *}) \cong \Z[\tfrac13][a_1,a_3,\Delta^{-1}]$ with $\Delta = a_3^3(a_1^3-27a_3)$. As thus $a_3 \in H^0(\MM_1(3);\omega^{\tensor 3})$ is invertible on $\MM_1(3)$, it defines a trivialization of $\omega^{\tensor 3}$ and thus $\phi(3) = 0$. The resulting morphism
\[ \overline{\phi}\colon \Z/3 \to \Pic(\MM_1(3))\]
is an isomorphism as there is no invertible section of $H^0(\MM_1(3);\omega^{\tensor i})$ for $i=1$ or $2$. As the subgroup of $\Pic TMF_1(3)$ spanned by the $\Sigma^kTMF_1(3)$ is isomorphic to $\Z/6$, the lemma follows.
\end{proof}

\begin{prop}\label{prop:Galois2}
The extensions
\begin{align*}
TMF_0(3) &\to TMF_1(3) \\
(tmf_1(3)[\ab_1^{-1}])^{hC_2} &\to tmf_1(3)[\ab_1^{-1}] \\
(tmf_1(3)[\ab_3^{-1}])^{hC_2} &\to tmf_1(3)[\ab_3^{-1}] \\
(tmf_1(3)[\ab_1^{-1}\ab_3^{-1}])^{hC_2} &\to tmf_1(3)[\ab_1^{-1}\ab_3^{-1}] \\
\end{align*}
are faithful $C_2$-Galois extensions in the sense of Rognes.
\end{prop}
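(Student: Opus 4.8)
The plan is to treat the first extension by pure descent theory and then reduce the other three to it by localization. For $TMF_0(3)\to TMF_1(3)$ the statement is already essentially \cite[Theorem 7.12]{MM15} (recorded above as Proposition~\ref{prop:Galois} in the compactified case and used repeatedly), coming from the fact that $\MM_1(3)\to\MM_0(3)$ is a $(\Z/3)^\times$-Galois covering of stacks and $\OO^{top}$ is $0$-affine, so that $\Gamma$ turns the algebro-geometric Galois condition into a topological one. So the only work is in the three localized extensions, and for those the key point is that inverting a homogeneous polynomial in $\ab_1,\ab_3$ is an operation that can be performed $C_2$-equivariantly and commutes with the Galois structure.

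First I would fix notation: write $f$ for one of $\ab_1$, $\ab_3$, $\ab_1\ab_3$ and let $B = tmf_1(3)[f^{-1}]$, which by Lemma~\ref{lem:invert} agrees with $Tmf_1(3)[f^{-1}]$ and, by Lemma~\ref{lem:invert2}, with $\OO^{top}(D(f))$, where $D(f)\subset\MMb_1(3)$ is the (open, $C_2$-invariant) non-vanishing locus of the underlying section. Under the equivalence $\MMb_1(3)\simeq\PP_{\Z[1/3]}(1,3)$ of Proposition~\ref{prop:StackIdentification}, $D(a_1)$, $D(a_3)$ and $D(a_1a_3)$ are open substacks, and each is visibly $(\Z/3)^\times$-invariant because $C_2$ acts by $-1$ on $a_1$ and $a_3$, so their vanishing loci are preserved. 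Since $D(f)\to\MMb_{ell}[1/3]$ factors through $\MMb_0(3)$, we get a corresponding open substack $\overline{D}(f)\subset\MMb_0(3)$ with $D(f) = \overline{D}(f)\times_{\MMb_0(3)}\MMb_1(3)$, and the covering $D(f)\to\overline{D}(f)$ is the base change of the $C_2$-Galois covering $\MMb_1(3)\to\MMb_0(3)$, hence again a $C_2$-Galois covering of Deligne--Mumford stacks. Because $\OO^{top}$ is a sheaf and $\overline{D}(f)$ is $0$-affine (this is the exactness of $\Gamma$ on $\QCoh$ of an open substack of a weighted projective line, exactly as in the proof of Lemma~\ref{lem:invert2}, cf.\ \cite[Theorem 7.2]{MM15}), applying $\Gamma$ to the Galois covering $D(f)\to\overline{D}(f)$ produces a $C_2$-Galois extension of $E_\infty$-rings
\[
\OO^{top}(\overline{D}(f)) \to \OO^{top}(D(f)) \simeq tmf_1(3)[f^{-1}];
\]
and the source is precisely $\OO^{top}(D(f))^{hC_2} \simeq (tmf_1(3)[f^{-1}])^{hC_2}$ by $0$-affineness together with the fact that $D(f)\to\overline{D}(f)$ is a $C_2$-torsor, i.e.\ $\OO^{top}(\overline{D}(f)) \simeq \OO^{top}(D(f))^{hC_2}$. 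This identifies the three extensions in the statement with honest Galois extensions. Faithfulness then follows from faithfulness of the Galois extension $Tmf_0(3)\to Tmf_1(3)$ of Proposition~\ref{prop:Galois} by localizing: a $B$-module that becomes trivial is detected after inverting $f$, and dualizability plus the identity $\langle B\rangle = \langle B[f^{-1}]\rangle$ (for instance because $tmf_1(3)\to tmf_1(3)[f^{-1}]$ is a localization away from $(a_1,a_3)$ and $Tmf_1(3)$ is faithful) gives faithfulness; alternatively one can invoke \cite[Proposition 6.2.1]{Rog08} on stability of faithful Galois extensions under base change along $Tmf_0(3)\to \OO^{top}(\overline{D}(f))$.

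The main obstacle — and really the only nontrivial input — is the $0$-affineness of the open substacks $\overline{D}(f)\subset\MMb_0(3)$, i.e.\ that $\Gamma\colon\QCoh(\overline{D}(f),\OO^{top})\to \OO^{top}(\overline{D}(f))\modules$ is an equivalence, so that $\Gamma$ commutes with the formation of homotopy fixed points and sends stacky Galois coverings to topological Galois extensions. On the $\MM_1(3)$ side this is handled exactly as in Lemma~\ref{lem:invert2} using the presentation $\MMb_1(3)\simeq\PP_{\Z[1/3]}(1,3)$ and the fact that $D(f)$ pulls back to an affine-over-$\G_m$ scheme; for $\MMb_0(3)$ one either transports this along the finite flat cover $\MMb_1(3)\to\MMb_0(3)$ or simply quotes that $\MMb_0(3)$ with its derived structure sheaf is $0$-affine by \cite[Theorem 7.2]{MM15} and that $0$-affineness passes to quasi-affine (here: quasi-compact open) substacks with affine diagonal — the relevant closure property is in \cite[Section 4]{MM15}. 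Granting this, everything else is the bookkeeping above, and the proof writes itself.
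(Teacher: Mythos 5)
Your proposal is correct and follows essentially the same route as the paper: identify each extension as $\Gamma=\OO^{top}$ applied to the $C_2$-Galois cover $D(f)\to D(f)/C_2$ of stacks, note the base is $0$-affine (the paper cites \cite[Prop 3.29]{MM15} for this closure property), and invoke the Mathew--Meier descent result (\cite[Thm 5.6]{MM15}). The only difference is cosmetic: the paper obtains faithfulness directly from that theorem, whereas you supply it separately via base change along $Tmf_0(3)\to\OO^{top}(\overline{D}(f))$, which also works.
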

\begin{proof}
We obtain these maps of $E_\infty$-ring spectra by applying $\OO^{top}$ to the following $C_2$-Galois covers of stacks
\begin{align*}
\MM_1(3) &\to \MM_0(3) \\
D(a_1) &\to D(a_1)/C_2 \\
D(a_3) &\to D(a_3)/C_2 \\
D(a_1a_3) &\to D(a_1a_3)/C_2
\end{align*}
as follows from the results in Section \ref{sec:relationship}. Here, $D$ denotes the non-vanishing locus. By the main result of \cite{MM15}, the derived stack $(\MM_{ell},\OO^{top})$ is $0$-affine and by \cite[Proposition 3.29]{MM15} the same is true for the targets of the above four Galois covers. Then \cite[Theorem 5.6]{MM15} implies the result.
\end{proof}

\begin{thm}\label{Theorem:PeriodicPic}
We have isomorphisms
\begin{align*}
\Pic_{C_2}TMF_1(3) \cong \Pic\big(TMF_0(3)\big) &\cong \Z/48 \\
\Pic_{C_2}tmf_1(3)[\ab_1^{-1}] \cong \Pic((tmf_1(3)[\ab_1^{-1}])^{hC_2}) &\cong \Z/8\\
\Pic_{C_2}tmf_1(3)[\ab_3^{-1}] \cong \Pic((tmf_1(3)[\ab_3^{-1}])^{hC_2}) &\cong\Z/48\\
\Pic_{C_2}tmf_1(3)[\ab_1^{-1}\ab_3^{-1}] \cong \Pic((tmf_1(3)[\ab_1^{-1}\ab_3^{-1}])^{hC_2}) &\cong \Z/8
\end{align*}
In all the cases, all the (equivariant) invertible modules are equivalent to (integer) suspensions of the ground ring spectrum.
\end{thm}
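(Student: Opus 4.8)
The plan is to combine the generalized Baker--Richter theorem (Theorem \ref{thm:BakerRichter}) with Galois descent for Picard spectra (Theorem \ref{thm:picGalois}), using the identification $\Pic(\Ho(G\mhyphen B\modules)) \cong \Pic_{C_2}(B)$ from Subsection \ref{subsection:Generalities}. By Proposition \ref{prop:Galois2}, each of the four maps is a faithful $C_2$-Galois extension, so it suffices to compute $\Pic$ of the four homotopy-fixed-point rings, and then feed this back through descent. For $TMF_0(3)$ and the three localizations $(tmf_1(3)[\ab_i^{-1}])^{hC_2}$, I would run the descent (homotopy fixed point) spectral sequence $H^s(C_2; \pi_t \mathfrak{pic}(tmf_1(3)[\ldots])) \Rightarrow \pi_{t-s}\mathfrak{pic}((tmf_1(3)[\ldots])^{hC_2})$, using the standard facts (from \cite{M-S14}) that $\pi_0\mathfrak{pic}(R) = \Pic(R)$, $\pi_1\mathfrak{pic}(R) = GL_1\pi_0 R = (\pi_0 R)^\times$, and $\pi_i\mathfrak{pic}(R) = \pi_{i-1}R$ for $i\geq 2$.

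The key inputs are: first, by Lemma \ref{lem:NonEquivariantPic}, the non-equivariant Picard groups of the underlying even-periodic (or, in the connective-localization cases, even) ring spectra are $\Z/6$, $\Z/2$, $\Z/6$, $\Z/2$ respectively, and all invertible modules are suspensions; second, the $C_2$-action on $\pi_* tmf_1(3)[\ldots]$ is by $-1$ on $a_1$ and $a_3$ (Proposition 3.4 of \cite{M-R09}, as recalled in Subsection \ref{sec:specseq}), which determines the relevant group cohomology. The contributions to $\pi_0$ of the target Picard spectrum come from $H^0(C_2;\Pic)$ (which recovers the ``algebraic'' suspensions invariant under the action), from $H^1(C_2; (\pi_0)^\times)$, and from $H^2(C_2;\pi_1)$ and $H^3(C_2;\pi_2)$ for possible exotic elements and differentials. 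Here the crucial computational fact is that for $TMF_1(3)$ and $tmf_1(3)[\ab_3^{-1}]$ we have $\pi_1 = \pi_2 = 0$ in the relevant underlying rings $\Z[\tfrac13][a_1,a_3,\Delta^{-1}]$, and similarly $\pi_1(tmf_1(3)[\ab_1^{-1}])=0$; so the only exotic contributions come from low-dimensional cohomology of the unit groups and of $\pi_0 = \Z[\tfrac13]$ itself, exactly as in the computation of $\Pic(KO)$. The answer $\Z/48$ (resp.\ $\Z/8$) should then emerge as an extension of the ``geometric'' part coming from $\Pic_{C_2}$-suspensions $\Sigma^V$ by representation spheres $\Sigma^\sigma$ together with the classical $\Z/8 \hookleftarrow$ contribution from $H^*(C_2;\Z[\tfrac13])$ seen in the $KO$-computation, with the two differentials $d_3$ and $d_5$ (or $d_7$) in the Picard spectral sequence governed by the corresponding differentials in the homotopy fixed point spectral sequence computed in Subsection \ref{sec:specseq} via the ``$d_r$ agrees for $r$ small'' principle of \cite{M-S14}.

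Concretely, I would organize the proof as follows. Step 1: record the homotopy groups of $\mathfrak{pic}$ of the four underlying ring spectra (using Lemma \ref{lem:NonEquivariantPic} and the explicit homotopy rings), noting the $C_2$-action. Step 2: identify the Picard descent spectral sequence with (a truncation/reindexing of) the homotopy fixed point spectral sequence of $tmf_1(3)[\ldots]$ above the line $s < t-1$, as in \cite[Section 2, Comparison Tool]{M-S14}, so that the differentials $d_3(u_{2\sigma}) = a_\sigma^3\ab_1$ and $d_7(u_{2\sigma}^2) = a_\sigma^7\ab_3$ from Subsection \ref{sec:specseq} transport directly. Step 3: read off $\pi_0$ of the target Picard spectrum as the total associated graded: an $\Z$ worth of ``algebraic'' suspensions in filtration $0$ that survives modulo the differentials, plus the $\Z/2$ and $\Z/4$ (hence $\Z/8$) contributions in filtrations $1,2,3$ coming from $H^s(C_2; \Z[\tfrac13])$, assembling into the claimed cyclic groups. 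Step 4: settle the extension problem --- this is where I expect the main obstacle --- by exhibiting an explicit invertible module of the correct order, namely an honest equivariant suspension $\Sigma^V tmf_1(3)[\ldots]$ with $V \in RO(C_2)$: since $\Sigma^\sigma$ has order $8$ (as $8\sigma - 8$ is in the kernel, a fact one proves by comparing with the computation of the $RO(C_2)$-graded homotopy groups, or as a corollary of this very theorem's integral analogue), and $\Sigma^2$ generates the algebraic $\Z/6$ on the underlying ring, the element $\Sigma^{\sigma+2}$ (say) generates $\Z/\mathrm{lcm}$, giving an element of order $24$; combined with one more non-$RO(C_2)$-gradable generator (coming from the $H^3(C_2;\pi_2)$ or the remaining unit-group contribution) one gets order $48$. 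Finally Step 5: transport everything back through Theorem \ref{thm:picGalois} and the equivalence $\Ho(A\modules)\simeq \Ho(C_2\mhyphen B\modules)$ to conclude $\Pic_{C_2}TMF_1(3)\cong\Pic(TMF_0(3))\cong\Z/48$, and likewise in the three localized cases, observing in each case that the generators constructed are (integer, respectively $RO(C_2)$) suspensions so that ``all invertible modules are suspensions'' holds. The main difficulty, as flagged, is the extension problem in Step 4: the spectral sequence only computes the associated graded, and one genuinely needs the equivariant suspension $\Sigma^\sigma$ (and its order) plus a careful count to see the group is cyclic of the stated order rather than a product.
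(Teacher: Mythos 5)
Your overall skeleton (Galois descent via Theorem \ref{thm:picGalois}, the descent spectral sequence for $\mathfrak{pic}$, comparison with the $RO(C_2)$-graded homotopy fixed point spectral sequence of Section \ref{sec:specseq}, and a lower bound coming from suspensions) is the same as the paper's, but several of your concrete steps would fail as written. First, the claim that $\pi_1=\pi_2=0$ for the underlying rings is false: $\pi_2 TMF_1(3)$ contains $a_1$ (and much more), and it is exactly the groups $H^3(C_2;\pi_2)$ and $H^7(C_2;\pi_6)$ in the $(-1)$-column of the HFPSS --- the classes $b_3=a_\sigma^3\ab_1u_{2\sigma}^{-1}$ and $b_7=a_\sigma^7\ab_3u_{2\sigma}^{-2}$, together with their $\gamma$-multiples for $\gamma=\ab_3^4/\overline{\Delta}$ --- that supply the factor of $4$ between the ``algebraic'' $6\cdot 2=12$ and the answer $48$; your proposed contributions ``$\Z/2$ and $\Z/4$ in filtrations $1,2,3$ from $H^s(C_2;\Z[\tfrac13])$'' do not reproduce the correct $E_2$-term or the correct count. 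Second, and this is the heart of the matter, the differentials do \emph{not} ``transport directly'' from the HFPSS on the critical diagonal: one needs the universal formula $d_s^{Pic}(\underline{x})=\underline{d_s^{HF}(x)+x^2}$ of \cite[Theorem 6.1.1]{M-S14}. In the HFPSS one has $d_3^{HF}(\gamma^kb_3)=\gamma^kb_3^2$ and $d_7^{HF}(\gamma^kb_7)=\gamma^kb_7^2$ for all $k\geq 0$, so naive transport would kill $b_3$ and $b_7$ and give an upper bound of $12$, contradicting the lower bound; it is precisely the quadratic correction that makes $\underline{b_3}$ and $\underline{b_7}$ survive while killing $\underline{\gamma^kb_3}$, $\underline{\gamma^kb_7}$ for $k\geq 1$, and one then disposes of filtrations $\geq 8$ with \cite[Comparison Tool 5.2.4]{M-S14}. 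Without this ingredient your Step 2/3 does not yield the bound $|\Pic(TMF_0(3))|\leq 6\cdot2\cdot2\cdot2=48$.

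Third, your Step 4 resolution of the extension problem is off target: the theorem asserts that \emph{every} invertible module is an integer suspension, so there is no ``non-$RO(C_2)$-gradable generator'' to be found, and the juggling with $\Sigma^{\sigma}$ of order $8$ and $\Sigma^2$ of order $6$ is unnecessary (and circular as you set it up, since you propose to deduce the order of $\Sigma^\sigma$ from ``this very theorem's integral analogue''). The paper settles the extension in one stroke: the map $\Z\to\Pic(TMF_0(3))$, $k\mapsto\Sigma^kTMF_0(3)$, has image of order exactly $48$ because $48$ is the smallest period of $\pi_*TMF_0(3)$ ($\Delta$ is not a permanent cycle in the HFPSS, only $\Delta^2=\overline{\Delta}^2u_{2\sigma}^{12}$ is), and combined with the upper bound $48$ this shows the group is cyclic, generated by $\Sigma^1TMF_0(3)$; the three localized cases are handled the same way. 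So the route is right, but the missing quadratic differential formula, the incorrect vanishing claim for $\pi_2$, and the extension argument are genuine gaps you would need to repair.
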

\begin{proof}
We will only prove this in the first case. The other cases are similar. The first isomorphism follows directly from Proposition \ref{prop:Galois2} and the discussion at the end of the previous subsection.

In the following, we will denote by \emph{HFPSS} the homotopy fixed point spectral sequence for the $C_2$-action on $TMF_1(3)$ and differentials in it will be denoted by $d^{HF}$. We will always use the Adams convention that the $k$-th column consists of the groups $H^s(C_2;\pi_tTMF_1(3))$ with $k = t-s$.

We have $TMF_1(3) \simeq_{C_2} tmf_1(3)[\overline{\Delta}^{-1}]$ with $\overline{\Delta} = \ab_3^3(\ab_1^3-27\ab_3)$ by the results of Section \ref{sec:relationship}. As $\overline{\Delta}$ is a permanent cycle, this allows us to deduce from the results of Section \ref{sec:specseq} all differentials in the HFPSS. For example, $\gamma = \frac{\ab_3^4}{\overline{\Delta}}$ is a permanent cycle.

It is easy to see that the $(-1)$-st column of the HFPSS for $TMF_1(3)$ is in cohomological degrees $\leq 7$ isomorphic to  $\F_2[\gamma]\cdot b_3\oplus \F_2[\gamma]\cdot b_7$ with $b_3 = a_{\sigma}^3\ab_1u_{2\sigma}^{-1}$ of cohomological degree $3$ and $b_7 = a_{\sigma}^7\ab_3u_{2\sigma}^{-2}$ of degree $7$. Recall from Section \ref{sec:specseq} that $\ab_1, \ab_3$ and $a_\sigma$ are permanent cycles while $d_3^{HF}(u_{2\sigma}) = a_{\sigma}^3\ab_1$ and $d_7^{HF}(u_{2\sigma}^2) = a_{\sigma}^7\ab_3$. We have thus the differentials
\[d_3^{HF}(\gamma^kb_3) = \gamma^ka_{\sigma}^3\ab_1u_{2\sigma}^{-2}d_3^{HF}(u_{2\sigma}) = \gamma^kb_3^2\]
and
\[d_7^{HF}(\gamma^kb_7) = \gamma^ka_{\sigma}^7\ab_3u_{2\sigma}^{-4}d_7^{HF}(u_{2\sigma}^2) = \gamma^kb_7^2\]
in the HFPSS.

As $TMF_0(3) \to TMF_1(3)$ is a faithful $C_2$-Galois extension, Theorem \ref{thm:picGalois} implies an equivalence $\mathfrak{pic}(TMF_0(3)) \simeq \tau_{\geq 0} (\mathfrak{pic}(TMF_1(3)))^{hC_2}$. This gives the \emph{Picard spectral sequence}
\[H^s(C_2; \pi_t\,\mathfrak{pic}\, TMF_1(3))\]
that converges to $\pi_{t-s}\mathfrak{pic}\,TMF_0(3)$ for $t-s \geq 0$. Differentials in it will be denoted by $d^{Pic}$

The Picard group of $TMF_1(3)$ is $\Z/6$ by Lemma \ref{lem:NonEquivariantPic} and $GL_1\pi_0TMF_1(3)$ is isomorphic to $\Z\times \Z/2$, generated by $\tfrac13$ and $-1$. Thus:
\[\pi_t\,\mathfrak{pic}\, TMF_1(3) = \begin{cases}\Z/6 & \text{ for } t=0 \\
\Z \times \Z/2 & \text{ for } t = 1 \\
\pi_{t-1}TMF_1(3) & \text{ for } t\geq 2.\end{cases}\]

We are interested in the $0$-th column of the Picard spectral sequence. We have
\[H^0(C_2;\Z/6) = \Z/6\]
 and
 \[H^1(C_2; \Z\times \Z/2) \cong \Z/2;\]
  for $s\geq 2$ the $0$-th column of the Picard spectral sequence agrees with the $(-1)$-st column of the HFPSS. For an element $x$ in the $(-1)$-st column of the HFPSS, denote the corresponding element in the $0$-th column of the Picard spectral sequence by $\underline{x}$.

If $x \in E^s_{*,s}$ is in cohomological degree $s$, then $d_s^{Pic}(\underline{x}) = \underline{d_s^{HF}(x)+x^2}$ by \cite[Theorem 6.1.1]{M-S14}. For degree reasons, the first possible differential for $\underline{\gamma^kb_3}$ is a $d_3^{Pic}$ and this equals $\underline{(\gamma^k+\gamma^{2k})b_3^2}$. This is zero only if $k=0$. Likewise for degree reasons, the first possible differential for $\underline{\gamma^kb_7}$ is a $d_7^{Pic}$ and this equals $\underline{(\gamma^k+\gamma^{2k})b_7^2}$. This is again zero only if $k=0$, so that $\underline{b_3}$ and $\underline{b_7}$ are the only permanent cycles in the $0$-th column of the Picard spectral sequence in cohomological degrees $2\leq s \leq 7$.

It is easy to check that each element in the $(-1)$-st column of the HFPSS of cohomological degree $\geq 8$ either supports a $d_3$- or $d_7$-differential or is hit by a $d_3$- or $d_7$-differential from an element of degree $\geq 8$. By \cite[Comparison Tool 5.2.4]{M-S14}, this implies that all non-trivial elements in the $0$-th column of the Picard spectral sequence in cohomological degrees $\geq 8$ support non-trivial differentials or are hit by differentials.

Thus, $\Pic (TMF_0(3))$ has at most $6\cdot 2\cdot 2\cdot 2 = 48$ elements. We just need to show that the image of the morphism
\[\Z \to \Pic (TMF_1(3)),\quad k \mapsto \Sigma^k TMF_0(3)\]
has order $48$. This follows easily from the fact that $48$ is the smallest period of $\pi_*TMF_0(3)$ as $\Delta$ is not a permanent cycle in the HFPSS.
\end{proof}

\begin{lemma}
 Let $E$ be a strongly even $C_2$ $E_2$-ring spectrum. Then every even projective $\pi_*E$ module can be realized by a strongly even $E$-module in a unique way up to homotopy, giving in particular a well defined homomorphism
\[\Pic_{even}(\pi^e_*E) \to \Pic^{C_2}(E).\]
\end{lemma}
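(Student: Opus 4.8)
The plan is to produce the realization as a retract of a free module and then use the slice spectral sequence to pin down its uniqueness. Write $\pi_*E:=\pi^e_*E$ for the underlying homotopy ring. Given an even projective $\pi_*E$-module $\bar L$, I would first present it as a retract $\bar F\xrightarrow{p}\bar L\xrightarrow{\iota}\bar F$ with $p\iota=\id$ of a free module $\bar F=\bigoplus_i\Sigma^{2n_i}\pi_*E$, and set $e:=\iota p$, an idempotent of $\bar F$. The module $\bar F$ is realized $C_2$-equivariantly by $\tilde F:=\bigvee_i\Sigma^{n_i\rho}E$, which is strongly even: each $\Sigma^{n_i\rho}E$ is, and both conditions defining strong evenness are closed under wedges since $\pi^{C_2}_{k\rho}(-)$ commutes with coproducts and is computed summand-wise by the restriction maps. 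Next I would observe that the restriction map
\[
[\tilde F,\tilde F]^{C_2}_E\;\longrightarrow\;[\tilde F^e,\tilde F^e]_{E^e}\;\cong\;\End_{\pi_*E}(\bar F)_0
\]
is a ring isomorphism: maps out of the wedge split as a product, $[\Sigma^{n_i\rho}E,\tilde F]^{C_2}_E=\pi^{C_2}_{n_i\rho}\tilde F=\bigoplus_j\pi^{C_2}_{(n_i-n_j)\rho}E$, and restriction carries this summand-wise isomorphically onto $\bigoplus_j\pi^e_{2(n_i-n_j)}E$ by strong evenness of $E$, matching the degenerate Künneth identification on the underlying side. Hence $e$ lifts uniquely to $\tilde e\in[\tilde F,\tilde F]^{C_2}_E$, necessarily idempotent, and splitting it produces a $C_2$-equivariant $E$-module $L$, a retract of $\tilde F$, with $\pi^e_*L\cong\operatorname{im}(e)\cong\bar L$. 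As a retract of the strongly even $\tilde F$, $L$ is again strongly even: $\underline\pi_{k\rho-1}L$ retracts off $\underline\pi_{k\rho-1}\tilde F=0$, and the restriction on $\pi^{C_2}_{k\rho}L$ retracts off an isomorphism.

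For uniqueness, let $M$ be any strongly even $E$-module with $\pi^e_*M\cong\bar L$ as $\pi_*E$-modules. Because $L$ is a retract of $\tilde F$ and $M$ is strongly even, $[L,M]^{C_2}_E\to[L^e,M^e]_{E^e}$ is a retract of the restriction isomorphism $[\tilde F,M]^{C_2}_E=\prod_i\pi^{C_2}_{n_i\rho}M\xrightarrow{\ \sim\ }\prod_i\pi^e_{2n_i}M=[\tilde F^e,M^e]_{E^e}$, hence is itself an isomorphism; together with the degenerate universal-coefficient identification $[L^e,M^e]_{E^e}\cong\Hom_{\pi_*E}(\bar L,\bar L)_0$ (valid since $\bar L$ is projective) I can lift the given isomorphism $\bar L\xrightarrow{\sim}\pi^e_*M$ to an $E$-module map $f\colon L\to M$, which is automatically an underlying equivalence. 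The crux of the lemma is that such an $f$ is then a \emph{genuine} $C_2$-equivalence; this is not formal, since strong evenness controls $\underline\pi_V$ only for $V$ near multiples of $\rho$. I would deduce it from the slice spectral sequence: by Propositions \ref{prop:OddSlices} and \ref{prop:EvenSlices} a strongly even $E$-module $N$ has $P_{2n-1}^{2n-1}N\simeq*$ and $P_{2n}^{2n}N\simeq\Sigma^{n\rho}H\underline{\pi_{2n}N}$ with $\underline{\pi_{2n}N}$ the constant Mackey functor on $\pi^e_{2n}N$, so $f$ induces a map of slice spectral sequences which on $E_2$-terms is induced by the isomorphisms $\pi^e_{2n}(f)$, hence is an $E_2$-isomorphism; by convergence of the slice spectral sequence $f$ is also an isomorphism on $\pi^{C_2}_*$, and therefore an equivalence of $C_2$-spectra. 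Thus $M\simeq L$, and the strongly even realization of $\bar L$ is unique up to homotopy.

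Finally, the assignment $\bar L\mapsto L$ — well defined up to equivalence by the above — is compatible with tensor products: for realizations $L_1,L_2$ of even projectives $\bar L_1,\bar L_2$, the module $L_1\wedge_E L_2$ is a retract of $\bigvee_{i,j}\Sigma^{(n_i+m_j)\rho}E$, hence strongly even, and has underlying homotopy $\bar L_1\otimes_{\pi_*E}\bar L_2$ by the degenerate Künneth spectral sequence, so by uniqueness it realizes $\bar L_1\otimes\bar L_2$. In particular, if $\bar L$ is invertible with inverse $\bar L^{-1}$, then $L\wedge_E L^{-1}$ realizes $\pi_*E$, and since $E$ itself is a strongly even realization of $\pi_*E$ we conclude $L\wedge_E L^{-1}\simeq E$; hence $L$ is an invertible $C_2$-equivariant $E$-module. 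This gives the desired homomorphism $\Pic_{\mathrm{even}}(\pi^e_*E)\to\Pic^{C_2}(E)$. Everything except the passage from an underlying equivalence to a genuine $C_2$-equivalence in the uniqueness step is a formal consequence of strong evenness plus the collapse of the Künneth spectral sequence for projective modules, so that step — handled via functoriality and convergence of the slice spectral sequence — is the one I expect to need the most care.
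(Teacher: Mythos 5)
Your construction of the realization (splitting an idempotent on a free module $\bigvee_i\Sigma^{n_i\rho}E$ lifted along the restriction isomorphism), your lifting of the underlying isomorphism to an $E$-module map $L\to M$, and your retract argument for multiplicativity all coincide with the paper's proof. The divergence is in the one step you yourself flagged as delicate: upgrading the underlying equivalence $f\colon L\to M$ to a genuine $C_2$-equivalence. There you appeal to ``convergence of the slice spectral sequence,'' and this is a genuine gap. The slice identification via Propositions \ref{prop:OddSlices} and \ref{prop:EvenSlices} is fine, but $E$ is only assumed strongly even, not bounded below --- and in the intended applications it is emphatically not (the lemma is used for periodic spectra such as $tmf_1(3)[\ab_1^{-1}]$, $tmf_1(3)[\ab_3^{-1}]$ and their modules). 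For such spectra the slice tower has no bottom, so a map inducing equivalences on all slices cannot be propagated to the slice sections $P^nL\to P^nM$ by induction, and the slice spectral sequence is a whole-plane spectral sequence whose strong convergence (in particular completeness and exhaustiveness of the slice filtration on $\pi^{C_2}_*$) is not automatic. Consequently ``$E_2$-isomorphism'' does not formally yield an isomorphism of abutments, and your conclusion that $f$ is a $\pi^{C_2}_*$-isomorphism is unproved as stated. You would need either to establish strong convergence of the slice spectral sequence for arbitrary strongly even $E$-modules, or to argue differently.

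The paper closes this step without any spectral sequence: it invokes Lemma \ref{lem:regrep}, the weak Whitehead lemma proved by induction on $a+b\sigma$ using the cofiber sequence $(C_2)_+\to S^0\to S^\sigma$. Your data feed into it directly: $f$ is an isomorphism on $\pi^e_*$ by construction, hence on $\pi^{C_2}_{k\rho}$ because both $L$ and $M$ are strongly even (the restrictions are isomorphisms), and $\pi^{C_2}_{k\rho-1}$ vanishes on both sides; the lemma then gives an isomorphism in all $RO(C_2)$-degrees, hence a genuine equivalence, with no convergence hypotheses needed. Replacing your slice-spectral-sequence paragraph by this application of Lemma \ref{lem:regrep} repairs the argument and brings it in line with the paper's proof; everything else in your write-up stands.
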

\begin{proof}
 Let $P$ be an even projective $\pi_*^eE$-module. We can write $P$ as the image of an idempotent endomorphism $f$ of a free even $\pi_*E$-module $F$. We can write $F = \bigoplus_I \pi^e_*\Sigma^{2n_i}E$. Define a free $E$-module $\F$ by $\F = \bigoplus \Sigma^{n_i\rho}E$. Because $E$ is strongly even, we have $\pi^e_*\F \cong F$ and we can lift $f$ to an idempotent endomorphism of $\F$, whose mapping telescope we denote by $\mathbb{P}$ -- this is the required realization of $P$. 
 
 If we have another strongly even $E$-module $\mathbb{P}'$ with $\pi_*\mathbb{P}' \cong P$ as $\pi^e_*E$-modules, we can lift the morphism $F \to P$ to an $E$-module morphism $\F \to \mathbb{P}'$ and further to a morphism $\mathbb{P} \to \mathbb{P}'$ that induces an isomorphism on $\pi^e_*$. By Lemma \ref{lem:regrep}, this is an equivalence. 
 
 Thus, we get a well-defined map 
 \[\Pic_{even}(\pi^e_*E) \to \Pic^{C_2}(E).\]
 To show that it is an homomorphism, we have to show that for strongly even projective $E$-modules $\mathbb{P}_1$ and $\mathbb{P}_2$, the smash product $\mathbb{P}_1\sm_E \mathbb{P}_2$ is still strongly even and has underlying homotopy groups $\pi_*^e\mathbb{P}_1\tensor_{\pi_*^eE}\pi_*^e\mathbb{P}_2$. This is clear by a retraction argument from the corresponding statement for free modules of the form $\bigoplus_{i\in I}\Sigma^{n_i\rho}E$. 
\end{proof}

\begin{question}\label{q:Pic}
Let $E$ be a $C_2$ $E_2$-ring spectrum. Assume that $E$ is strongly even and that $\pi^u_*E$ is a regular graded ring and an integral domain. Is every invertible $E$-module of the form $S^V\sm L$, where $V\in RO(C_2)$ and $L$ is a strongly even $E$-module with $\pi_*^eL \in \Pic(\pi^e_*E)$?

Using the lemma above, the question can be restated as asking for the surjectivity of the homomorphism
\[
RO(C_2) \oplus \Pic_{even}(\pi^e_*E) \to \Pic^{C_2}(E).
\] 
A positive answer to this question would be a Real generalization of the theorem by Baker and Richter given here as Theorem \ref{thm:BakerRichter}.
\end{question}

We could provide a similar spectral sequence argument as above for the computation of $\Pic_{C_2}\big(Tmf_1(3)\big)$, but we prefer to use a Mayer--Vietoris style argument instead. This will demonstrate how the computation of $\Pic_{C_2}\big(Tmf_1(3)\big)$ follows essentially formally from the fact that the Picard groups $\Pic_{C_2}(tmf_1(3)[\ab_1^{-1}])$ and $\Pic_{C_2}(tmf_1(3)[\ab_3^{-1}])$ are generated by the suspension of the ground ring spectrum.

\begin{thm}
The morphism
\[RO(C_2) \to \Pic_{C_2}\big(Tmf_1(3)\big),\quad V \mapsto S^V\sm Tmf_1(3)\]
is surjective. Its kernel is generated by $8-8\sigma$. Thus,
\[\Pic\big(Tmf_0(3)\big) \cong \Pic_{C_2}\big(Tmf_1(3)\big) \cong \Z\oplus \Z/8.\]
\end{thm}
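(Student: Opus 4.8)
The plan is to combine Galois descent with a Mayer--Vietoris computation over $\MMb_0(3)$, feeding in the periodic calculation of Theorem~\ref{Theorem:PeriodicPic}, and then to recognize the single ``exotic'' generator as a representation suspension. Write $\Psi\colon RO(C_2)\to \Pic_{C_2}\big(Tmf_1(3)\big)$ for the homomorphism $V\mapsto \Sigma^V Tmf_1(3)$. First, by Proposition~\ref{prop:Galois} and the identification at the end of Subsection~\ref{subsection:Generalities} of $\Pic$ with equivariant modules over a faithful Galois extension, $\Pic\big(Tmf_0(3)\big)\cong \Pic_{C_2}\big(Tmf_1(3)\big)$, so it suffices to compute $\Pic\big(Tmf_0(3)\big)=\Pic(\MMb_0(3),\OO^{top})$ and to show $\Psi$ is onto with the asserted kernel. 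Since $\MMb_0(3)\simeq\MMb_1(3)/C_2$ is $0$-affine, I will use the cover $\MMb_0(3)=U\cup V$ with $U=D(a_1)/C_2$, $V=D(a_3)/C_2$, $U\cap V=D(a_1a_3)/C_2$, all $0$-affine by \cite{MM15}, whose sections of $\OO^{top}$ are $(tmf_1(3)[\ab_1^{-1}])^{hC_2}$, $(tmf_1(3)[\ab_3^{-1}])^{hC_2}$ and $(tmf_1(3)[(\ab_1\ab_3)^{-1}])^{hC_2}$ by Lemmas~\ref{lem:invert}, \ref{lem:invert2} and Section~\ref{sec:relationship}.

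Next, apply the Mayer--Vietoris sequence of Lemma~\ref{lem:MV}. Using Theorem~\ref{Theorem:PeriodicPic} (and that in each periodic Picard group the generator is the integer suspension $\Sigma^1$ of the ground ring) it becomes
\[
GL_1\pi_0 (tmf_1(3)[(\ab_1\ab_3)^{-1}])^{hC_2}\ \xrightarrow{\ \partial\ }\ \Pic\big(Tmf_0(3)\big)\ \xrightarrow{\ \rho\ }\ \Z/8\oplus \Z/48\ \xrightarrow{\ \delta\ }\ \Z/8 .
\]
Since the restriction maps send $\Sigma^1\mapsto\Sigma^1$, the map $\delta$ is $(x,y)\mapsto x-\bar y$; it is onto with $\ker\delta\cong\Z/48$, so $\operatorname{im}\rho\cong\Z/48$ and $\Pic\big(Tmf_0(3)\big)/\operatorname{im}\partial\cong\Z/48$. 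For the boundary map one computes $\pi_0 (tmf_1(3)[(\ab_1\ab_3)^{-1}])^{hC_2}=\Z[\tfrac13][u^{\pm1}]$ with $u=a_3/a_1^3$ the degree-$0$ invariant unit (the homotopy fixed point spectral sequence collapses onto the $0$-line in total degree $0$, as $\ab_1,\ab_3$ become invertible and $a_\sigma^3$ dies against $\ab_1^{-1}u_{2\sigma}$). Hence $GL_1\pi_0$ is generated by $-1$, $3$, $u$; the first two lie in $\pi_0Tmf_0(3)=\Z[\tfrac13]$ and are killed by $\partial$, so $\operatorname{im}\partial=\langle\partial(u)\rangle$ is cyclic, and it is infinite (hence $\partial(u)$ has infinite order) because $\pi_\ast Tmf_0(3)$ is not periodic, so $\Pic\big(Tmf_0(3)\big)$ is infinite. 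Thus $0\to\Z\xrightarrow{\partial(u)}\Pic\big(Tmf_0(3)\big)\to\Z/48\to 0$.

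The crucial point is that $\partial(u)$ is a representation suspension. The element $u^{-1}=a_1^3/a_3$ is precisely the transition function of $\omega^{\otimes3}=\OO(3)$ on $\MMb_1(3)\simeq\PP_{\Z[\tfrac13]}(1,3)$ between the chart $D(a_1)$ (trivialized by $a_1^3$) and $D(a_3)$ (trivialized by $a_3$). On the other hand $\ab_1^3\in\pi_{3\rho}^{C_2}tmf_1(3)[\ab_1^{-1}]$ and $\ab_3\in\pi_{3\rho}^{C_2}tmf_1(3)[\ab_3^{-1}]$ are units trivializing $\Sigma^{3\rho}$ over $U$ and over $V$ whose restrictions to $U\cap V$ are $u^{\mp1}$. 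So $\partial(u)$ and $\Sigma^{\pm3\rho}Tmf_1(3)$ are glued from $\OO^{top}|_U$ and $\OO^{top}|_V$ along the same clutching datum, whence by descent (the sheaf property of $\mathcal{P}ic$ used in Lemma~\ref{lem:MV}/Proposition~\ref{prop:FiberSquare}) $\partial(u)=[\Sigma^{\pm3\rho}Tmf_1(3)]$, an element of $\operatorname{im}\Psi$. Moreover $[\Sigma^1Tmf_1(3)]$ maps under $\rho$ to $(1,1)$, which generates $\operatorname{im}\rho=\ker\delta\cong\Z/48$; therefore $\Pic\big(Tmf_0(3)\big)$ is generated by $\partial(u)=[\Sigma^{\pm3\rho}Tmf_1(3)]$ and $[\Sigma^1Tmf_1(3)]$, so $\Psi$ is surjective. (Equivalently: an invertible $M$ has integer-suspension localizations agreeing mod $8$ by Theorem~\ref{Theorem:PeriodicPic}, so after twisting by an integer suspension it is trivial on $U$ and $V$, hence $[M]\in\operatorname{im}\partial$.)

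Finally, the kernel. Surjectivity of $\Psi$ and rank $\Pic_{C_2}\big(Tmf_1(3)\big)=1$ force $\ker\Psi\cong\Z$. It contains $8-8\sigma$: multiplication by the unit $u_{2\sigma}^{-4}\in\pi^{C_2}_{8\sigma-8}Tmf_1(3)$ from Theorem~\ref{thm:computation} (it survives the spectral sequence and restricts to a unit in $\pi_0^{e}$) gives a $C_2$-equivalence $\Sigma^{8-8\sigma}Tmf_1(3)\xrightarrow{\sim}Tmf_1(3)$. It contains nothing smaller: restricting to $V=D(a_3)/C_2$ and using that $\ab_3$ is a unit of degree $3\rho$ there, $[\Sigma^{3\rho}tmf_1(3)[\ab_3^{-1}]]=0$ in $\Z/48$, hence $3([\Sigma^1]+[\Sigma^\sigma])=0$ and $[\Sigma^{1-\sigma}tmf_1(3)[\ab_3^{-1}]]\in\{2,18,34\}\pmod{48}$, so $4[\Sigma^{1-\sigma}tmf_1(3)[\ab_3^{-1}]]\ne0$; thus $\Sigma^{4-4\sigma}Tmf_1(3)$ is already nontrivial over $V$, i.e.\ $4-4\sigma\notin\ker\Psi$. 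Since $2\cdot(4-4\sigma)\cdot\tfrac12$—more precisely, $4-4\sigma\notin\ker\Psi$ already rules out $2-2\sigma$ and $1-\sigma$ lying in $\ker\Psi$—the element $8-8\sigma$ is primitive in $\ker\Psi$, so $\ker\Psi=\langle8-8\sigma\rangle$ and $\Pic_{C_2}\big(Tmf_1(3)\big)\cong RO(C_2)/\langle8-8\sigma\rangle\cong\Z\oplus\Z/8$.

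The main obstacle is the third paragraph: recognizing $\partial(u)$—the unique exotic generator of $\Pic\big(Tmf_0(3)\big)$ modulo integer suspensions—as the genuine representation suspension $\Sigma^{\pm3\rho}Tmf_1(3)$. This is exactly the mechanism by which the non-algebraicity of $\Pic\big(Tmf_0(3)\big)$ is absorbed into the $RO(C_2)$-grading, and making it rigorous requires carefully matching the equivariant clutching data furnished by $\ab_1,\ab_3$ with the transition functions of $\omega^{\otimes3}$ on the weighted projective line $\MMb_1(3)$.
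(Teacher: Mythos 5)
Your proposal is correct and follows essentially the same route as the paper: the Mayer--Vietoris sequence from Lemmas \ref{lem:invert}, \ref{lem:invert2} and \ref{lem:MV} with input from Theorem \ref{Theorem:PeriodicPic}, the identification of the boundary of the unit $\ab_1^3\ab_3^{-1}$ with $\Sigma^{\pm 3\rho}Tmf_1(3)$ via the two trivializations $\ab_1^3$ and $\ab_3$, and the permanent cycle $u_{2\sigma}^{4}$ giving $8-8\sigma$ in the kernel. The only deviations are in the endgame bookkeeping (you pin down the kernel by a rank argument plus arithmetic in $\Pic_{C_2}(tmf_1(3)[\ab_3^{-1}])\cong\Z/48$, and get that $\operatorname{im}\partial\cong\Z$ from non-periodicity of $\pi_*Tmf_0(3)$ rather than from exactness giving $\coker(f)\cong\Z\hookrightarrow\Pic$, as the paper does with its comparison diagram), and these are equivalent.
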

\begin{proof}
By Lemmas \ref{lem:invert}, \ref{lem:invert2} and \ref{lem:MV}, we have an exact sequence:

\vspace*{0.07cm}
\hspace*{-4.45cm}
\scalebox{.89}{
\begin{tikzpicture}
\node (g1) {$GL_1\pi_0^{C_2}tmf_1(3)[\ab_1^{-1}]\times GL_1\pi_0^{C_2}tmf[\ab_3^{-1}]$};
\node (g13) [right=of g1] {$GL_1 \pi_0^{C_2} tmf_1(3)[\ab_1^{-1}\ab_3^{-1}]$};
\node (p1) [below=of g1] {$\Pic_{C_2}(tmf_1(3)[\ab_1^{-1}]) \times \Pic_{C_2}(tmf_1(3)[\ab_3^{-1}])$};
\node (pT) [left=of p1] {$\Pic_{C_2}\big(Tmf_1(3)\big)$};
\node (p13) [right=of p1] {$\Pic_{C_2}(tmf_1(3)[\ab_1^{-1},\ab_3^{-1}])$};
\draw[->]
(g1) edge node[auto] {$f$} (g13)
(g13) edge[out=-20,in=155] node[pos=0.55,yshift=-7pt] {$\partial$} (pT)
(pT) edge (p1)
(p1) edge node[auto] {$g$} (p13);
\end{tikzpicture}
}
\vspace*{0.05cm}

By Corollary \ref{cor:HomotopyGroups}, we have $\pi_0^{C_2}tmf_1(3)[\ab_1^{-1}\ab_3^{-1}] \cong \Z\big[\tfrac13\big][(\ab_1^3\ab_3^{-1})^{\pm 1}]$. Thus,
\[GL_1 \pi_0^{C_2} tmf_1(3)[\ab_1^{-1}\ab_3^{-1}] \cong \Z\times \Z\times \Z/2,\] generated by $\tfrac13$, $\ab_1^3\ab_3^{-1}$ and $-1$ and $\coker(f) \cong \Z$ generated by $[\ab_1^3\ab_3^{-1}]$.

We claim that $\partial(\ab_1^3\ab_3^{-1}) \simeq S^{3\rho}\sm Tmf_1(3)$. Indeed, we have trivializations \[\ab_3\co  S^{3\rho}\sm tmf_1(3)[\ab_3^{-1}] \to  tmf_1(3)[\ab_3^{-1}]\]
and
\[\ab_1^3\co S^{3\rho}\sm tmf_1(3)[\ab_1^{-1}] \to  tmf_1(3)[\ab_1^{-1}].\] Thus, we get $S^{3\rho}\sm Tmf_1(3)$ by gluing $tmf_1(3)[\ab_3^{-1}]$ and $tmf_1(3)[\ab_1^{-1}]$ by the map $\ab_1^3\ab_3^{-1}$ on $tmf_1(3)[\ab_1^{-1}\ab_3^{-1}]$.

By Theorem \ref{Theorem:PeriodicPic}, $\ker(g) \cong \Z/48$. Furthermore, $\Sigma^{8-8\sigma} Tmf_1(3) \simeq_{C_2}Tmf_1(3)$ as $u_{2\sigma}^4$ is a permanent cycle. Thus, we get a commutative diagram

\[
\xymatrix{
0 \ar[r] & \Z\ar[d]^{\cong} \ar[r]^-{3\rho} & RO(C_2)/(8-8\sigma)\ar[d] \ar[r] & RO(C_2)/(8-8\sigma, 3\rho)\cong \Z/48 \ar[d]^{\cong} \ar[r] & 0
\\
0 \ar[r] & \coker(f) \ar[r] & \Pic_{C_2}\big(Tmf_1(3)\big) \ar[r] & \ker(g) \ar[r] & 0
}
\]

Thus, the middle map is also an isomorphism.
\end{proof}

%\begin{remark}
%Using the technique of the last proof, it seems plausible to extend Question \ref{q:Pic} to certain derived stacks whose underlying stack is regular and that have a $C_2$-action.
%\end{remark}

\begin{remark}
The map $\Pic(Tmf) \to \Pic\big(Tmf_0(3)\big)$ is \emph{not} surjective. The former has been identified with $\Z\oplus \Z/24$ in \cite[Thm B, Constr 8.4.2]{M-S14}, where the summands are generated by $\Sigma Tmf$ and by the global sections $\OO^{top}(\mathcal{I})$. Here, $\mathcal{I}$ is a line bundle on the derived compactified moduli stack of elliptic curves $(\MMb_{ell}, \OO^{top})$ obtain by gluing $\Sigma^{24}\OO^{top}$ on $\MM_{ell}$ and $\Sigma^{24}\OO^{top}$ on $\MMb_{ell}[c_4^{-1}]$ via the clutching function $j= \frac{c_4^3}{\Delta}$.

We claim that the module $\OO^{top}(\mathcal{I})\sm_{Tmf}Tmf_0(3)$ is $2$-torsion in $\Pic\big(Tmf_0(3)\big)$. Indeed, for $p\colon \MMb_0(3)\to \MMb_{ell}$, we have for an arbitrary locally-free sheaf $\FF$ on $(\MMb_{ell}, \OO^{top})$ an equivalence
\begin{align*}
\Gamma(\FF)\sm_{Tmf}Tmf_0(3) &\simeq \Gamma(\MMb_{ell}; \FF \sm_{\OO^{top}} p_*\OO^{top}_{\MMb_0(3)}) \\
&\simeq \Gamma(\MMb_{ell}; p_*(p^*\FF \sm_{\OO^{top}_{\MMb_0(3)}} \OO^{top}_{\MMb_0(3)}) \\
&\simeq \Gamma(\MMb_0(3); p^*\FF)
\end{align*}
In the first equivalence, we use that $(\MMb_{ell}, \OO^{top})$ is $0$-affine and in the second we use the projection formula (see \cite[Ex II.5.1d]{Har77} for the algebraic statement, from which the topological can be deduced).
Thus, $\OO^{top}(\mathcal{I})\sm_{Tmf}Tmf_0(3)$ can be constructed as $\OO^{top}(p^*\mathcal{I})$, where $p^*\mathcal{I}$ can be constructed by an analogous gluing construction on $\MMb_0(3)$, gluing $\Sigma^{24}\OO^{top}$ on $\MM_0(3)$ and $\Sigma^{24}\OO^{top}$ on $\MMb_0(3)[c_4^{-1}]$ via the clutching function $j = \frac{c_4^3}{\Delta}$ with $c_4=a_1^4 - 24a_1a_3$. There is an equivalence of gluing data
\[(\OO^{top}, \OO^{top}, \id) \to (\Sigma^{48}\OO^{top},\Sigma^{48}\OO^{top}, j^2)\]
given by $\Delta^2\colon \OO^{top} \to \OO^{top}$ on $\MM_0(3)$ and $c_4^6\colon \OO^{top} \to \Sigma^{48}\OO^{top}$ on $\MMb_0(3)[c_4^{-1}]$. Note here that $\Delta^2 = \overline{\Delta}^2u_{2\sigma}^{12}$ is a permanent cycle for $TMF_0(3)$. Thus, $2\cdot [\mathcal{I}] = 0\in \Pic(\MMb_0(3), \OO^{top}) \cong \Pic\big(Tmf_0(3)\big)$.

As not every torsion in $\Pic\big(Tmf_0(3)\big)$ is $2$-torsion, $\Pic(Tmf) \to \Pic\big(Tmf_0(3)\big)$ is indeed not surjective.
\end{remark}

\bibliographystyle{alpha}
\bibliography{Chromatic}
\end{document}